\newenvironment{enumeratea}{\begin{enumerate}[\upshape (a)]}{\end{enumerate}}
\newcommand\blfootnote[1]{%
  \begingroup
  \renewcommand\thefootnote{}\footnote{#1}%
  \addtocounter{footnote}{-1}%
  \endgroup
}
\long\def\/*#1*/{}
\newsavebox\myboxA
\newsavebox\myboxB
\newlength\mylenA
\newcommand*\xoverline[2][0.75]{%
    \sbox{\myboxA}{$\m@th#2$}%
    \setbox\myboxB\null% Phantom box
    \ht\myboxB=\ht\myboxA%
    \dp\myboxB=\dp\myboxA%
    \wd\myboxB=#1\wd\myboxA% Scale phantom
    \sbox\myboxB{$\m@th\overline{\copy\myboxB}$}%  Overlined phantom
    \setlength\mylenA{\the\wd\myboxA}%   calc width diff
    \addtolength\mylenA{-\the\wd\myboxB}%
    \ifdim\wd\myboxB<\wd\myboxA%
       \rlap{\hskip 0.5\mylenA\usebox\myboxB}{\usebox\myboxA}%
    \else
        \hskip -0.5\mylenA\rlap{\usebox\myboxA}{\hskip 0.5\mylenA\usebox\myboxB}%
    \fi}
\numberwithin{equation}{section}
\def\sss{\scriptscriptstyle}
\newcommand{\var}[1]{\ensuremath{\mathrm{Var}\left(#1\right)}}
\newcommand{\floor}[1]{\ensuremath{\left\lfloor #1 \right\rfloor}}
\newcommand{\ind}[1]{\ensuremath{\mathbbm{1}_{\left\{#1\right\}}}}
\newcommand{\pto}{\ensuremath{\xrightarrow{\mathbbm{P}}}}
\newcommand{\dto}{\ensuremath{\xrightarrow{d}}}
\newcommand{\PR}{\ensuremath{\mathbbm{P}}}
\newcommand{\E}{\ensuremath{\mathbbm{E}}}
\newcommand{\Z}{\ensuremath{\mathbbm{Z}}}
\newcommand{\e}{\ensuremath{\mathrm{e}}}
\newcommand{\OP}{\ensuremath{O_{\sss\PR}}}
\newcommand{\oP}{\ensuremath{o_{\sss\PR}}}
\newcommand{\thetaP}{\ensuremath{\Theta_{\sss\PR}}}
\newcommand{\shortarrow}{{\sss\downarrow}}
\newcommand{\dif}{\mathrm{d}}
\newcommand{\bld}[1]{\boldsymbol{#1}}
\newcommand{\GRG}{\mathrm{GRG}_n(\bld{w})}
\newcommand{\NR}{\mathrm{NR}_n(\bld{w})}
\newcommand{\CL}{\mathrm{CL}_n(\bld{w})}
\newcommand{\NRp}{\mathrm{NR}_n(\bld{w},\perc)}
\newcommand{\rGRG}{\mathrm{GRG}_n}
\newcommand{\rNR}{\mathrm{NR}_n}
\newcommand{\rCL}{\mathrm{CL}_n}
\newcommand{\bw}{\bld{w}}
\newcommand{\sC}{\mathscr{C}}
\newcommand{\cGinf}{\mathscr{G}_{\sss \infty}}
\newcommand{\sW}{\mathscr{W}}
\newcommand{\sCi}{\mathscr{C}_{\sss (i)}}
\newcommand{\ber}{\mathrm{Bernoulli}}
\newcommand{\poi}{\mathrm{Poisson}}
\newcommand{\dTV}{\mathrm{d}_{\sss \mathrm{TV}}}
\newcommand{\cf}{c_{\sss \mathrm{F}}}
\newcommand{\dst}{\mathrm{d}}
\newcommand{\sB}{\mathscr{B}}
\newcommand{\cS}{\mathcal{S}}
\newcommand{\spn}{\mathrm{Span}_a}
\newcommand{\rspn}{\mathrm{Span}}
\newtheorem{theorem}{Theorem}
\newtheorem{lemma}[theorem]{Lemma}
\newtheorem{proposition}[theorem]{Proposition}
\newtheorem{corollary}[theorem]{Corollary}
\newtheorem{assumption}[theorem]{Assumption}
\newtheorem{remark}[theorem]{Remark}
\newtheorem{fact}[theorem]{Fact}
\newtheorem{defn}[theorem]{Definition}
\numberwithin{theorem}{section}
\newcommand{\erdos}{Erd\H{o}s-R\'enyi }
\def\qed{ \hfill $\blacksquare$}
\newcommand{\cA}{\mathcal{A}}\newcommand{\cB}{\mathcal{B}}\newcommand{\cC}{\mathcal{C}}
\newcommand{\cE}{\mathcal{E}}
\newcommand{\cG}{\mathcal{G}}\newcommand{\cI}{\mathcal{I}}
\newcommand{\cN}{\mathcal{N}}
\newcommand{\cR}{\mathcal{R}}
\newcommand{\cW}{\mathcal{W}}\newcommand{\cX}{\mathcal{X}}
\newcommand{\mvtheta}{\boldsymbol{\theta}}
\newcommand{\bZ}{\mathbb{Z}}
\newcommand{\xx}{\boldsymbol{x}} 
\newcommand{\yy}{\boldsymbol{y}}
\newcommand{\set}[1]{\left\{#1\right\}}
\newcommand{\Var}{\mathrm{Var}}
\newcommand{\cT}{\mathcal{T}}
\newcommand{\sCa}{\mathscr{C}^a}
\newcommand{\eqn}[1]{\begin{equation} #1 \end{equation}}
\newcommand{\eqan}[1]{\begin{align} #1 \end{align}}
\newcommand{\vep}{\varepsilon}
\newcommand{\nn}{\nonumber}
\newcommand{\conn}{\longleftrightarrow}
\newcommand{\Poi}{{\sf Poisson}}
\newcommand{\indic}[1]{\mathbbm{1}_{\{#1\}}}
\newcommand{\indicwo}[1]{\mathbbm{1}_{#1}}
\newcommand{\seta}{\eta_s}
\newcommand{\per}{\pi}
\newcommand{\perc}{\pi_c}
\newcommand{\percl}{\pi_c(\lambda)}
\newcommand{\percn}{\pi_n}
\newcommand{\bfT}{\mathbf{T}}
\begin{document}
\title{Multiscale genesis of a tiny giant for\\
percolation
on scale-free random graphs}
\author[Bhamidi, Dhara, van der Hofstad]{Shankar Bhamidi$^1$, Souvik Dhara$^{2}$, Remco van der Hofstad$^3$}
  \date{\today}
 \maketitle
 \blfootnote{\emph{Emails:} 
 \href{mailto:bhamidi@email.unc.edu}{bhamidi@email.unc.edu},
 \href{mailto:sdhara@mit.edu}{sdhara@mit.edu},
 \href{mailto:r.w.v.d.hofstad@tue.nl}{r.w.v.d.hofstad@tue.nl}} 
\blfootnote{$^1$Department of Statistics and Operations Research,  University of North Carolina}
\blfootnote{$^2$Department of Mathematics, Massachusetts Institute of Technology}
\blfootnote{$^3$Department of Mathematics and Computer Science, Eindhoven University of Technology}
\blfootnote{2010 \emph{Mathematics Subject Classification.} Primary: 60C05, 05C80.}
\blfootnote{\emph{Keywords and phrases}. Critical percolation, scale-free,  inhomogeneous random graphs}

\begin{abstract}
We study the critical behavior for percolation on inhomogeneous random networks on~$n$ vertices, where the weights of the vertices follow a power-law distribution with exponent $\tau \in (2,3)$. 
Such networks, often referred to as \emph{scale-free networks}, exhibit critical behavior when the percolation probability tends to zero at an appropriate rate, as $n\to\infty$. 
We identify the critical window for a host of scale-free random graph models such as the Norros-Reittu model, Chung-Lu model and generalized random graphs.
Surprisingly, there exists a finite time inside the critical window, after which, we see a sudden emergence of a tiny giant component.  This is a novel behavior which is in contrast with the critical behavior in other known universality classes with $\tau \in (3,4)$ and $\tau >4$.

Precisely, for edge-retention probabilities $\percn = \lambda  n^{-(3-\tau)/2}$, there is an explicitly computable $\lambda_c>0$ such that the critical window is of the form $\lambda \in (0,\lambda_c),$ where the largest clusters have size of order $n^{\beta}$ with $\beta=(\tau^2-4\tau+5)/[2(\tau-1)]\in[\sqrt{2}-1, \tfrac{1}{2})$ and have non-degenerate scaling limits, 
while in the supercritical regime $\lambda > \lambda_c$, a unique 
`tiny giant' component of size $\sqrt{n}$ emerges.
For $\lambda \in (0,\lambda_c),$ 
the scaling limit of the maximum component sizes can be described in terms of components of a one-dimensional inhomogeneous percolation model on $\Z_+$ studied in a seminal work by Durrett and Kesten~\cite{DK90}. 
For $\lambda>\lambda_c$, we prove that the sudden emergence of the tiny giant is caused by a phase transition inside a smaller core of vertices of weight~$\Omega(\sqrt{n})$.
\end{abstract}

\section{Introduction}
\subsection{Background}
Percolation phase transitions are one of the foundational tenets in the application of probabilistic combinatorics to areas ranging from statistical physics to social dynamics \cite{Gri99}.
At the simplest level, one starts with a base (potentially random) graph. For a parameter $\pi$, each edge in the graph is retained with probability $\pi$ and deleted with probability $1-\pi$,  independently across edges. The first questions of interest is understanding the emergence of a giant connected component as one increases the value of $\pi$, and identifying critical values of this parameter where abrupt changes in the connectivity occurs. 
These question arise as building blocks for more complex interacting particle systems  
 e.g.\ in the study of epidemics,  condensed matter theory, robustness of networks such as the Internet when the edges of the underlying network experience random failure \cite{Bar16,Newman-book,DGM08}.

Unlike phase transition on infinite graphs such as lattices, there is typically no unique value for phase transition in large but finite graphs. Instead, there is an interval of $\pi$-values, often referred to as the \emph{critical window}, where this structural transition in the component sizes takes place. To fix ideas, let us recall classical results for percolation on complete graphs with~$n$ vertices and $\pi = c/n$ or Erd\H{o}s-R\'enyi random graphs $\mathrm{ER}(n,c/n)$. 
It is well known that the critical window is given by $c(\lambda)=1+\lambda n^{-1/3}$ for $-\infty<\lambda<\infty$ \cite{JLR00,A97,JKLP93}, i.e., if $\sCi$ denotes the $i$-th largest component, then 
\begin{enumeratea}
	\item If $\lambda=\lambda_n \to -\infty$ and $|\lambda_n| = o(n^{1/3})$, then $\frac{|\sCi|}{2\lambda_n^2 n^{2/3}\log |\lambda_n|} \pto 1$ for all $i\geq 1$. 
	\item If $\lambda=\lambda_n \to +\infty$ and $\lambda_n = o(n^{1/3})$, then $\frac{|\sC_{\sss (1)}|}{2|\lambda_n| n^{2/3}} \pto 1$ and $\frac{|\sCi|}{2\lambda_n^2 n^{2/3}\log |\lambda_n|} \pto 1$ for all $i\geq 2$. 
	\item Inside the critical window when $\lambda$ is fixed, $n^{-2/3}|\sCi|$ converges in distribution to non-degenerate strictly positive random variables whose distribution depends on $\lambda$.
\end{enumeratea}
Thus the largest component sizes concentrate outside the critical window, whereas they yield  non-degenerate  scaling limits in the critical window which \emph{sensitively} depends on the \emph{precise} location in the scaling window given by $\lambda$. 
Starting with the pioneering work by  Janson, Knuth, {\L}uczak, Pittel \cite{JKLP93} and  Aldous~\cite{A97},  the study of critical behavior has inspired an enormous literature with several  scaling-limit results showing qualitatively similiar behavior as in \erdos random graph for largest component sizes \cite{AP00,BHL10,DHLS15,Jo10,NP10a,R12} and their metric structure \cite{ABG09,BBSX14,BDW20}, as well as qualitatively different behavior for component sizes \cite{BHL12,Jo10,DHLS16,AL98} and their metric structure \cite{BHS15,BDHS17,CG20,BDW20}.
See \cite[Chapter 1]{Dha18} for a detailed literature overview.

\subsection{Overview of our contributions}
In this paper, we prove a new type of phase transition phenomenon in the emergence of maximally connected components in certain random graphs, and develop techniques in probabilistic combinatorics necessiated by such models. The starting point is random network models with power-law degree distributions with exponent $\tau \in (2,3)$.  These models are enormously popular in applications owing to empirical observations that many real world systems (World-Wide Web, social networks, protein interaction networks \cite{Bar16}) seem to exhibit 
qualitative properties similar to such models. Mathematically these models turn out to be significantly challenging (as will be further evident below), since they contain extremal degree vertices \emph{at many different scales} 
which play crucial and central roles in the connectivity pattern at specific phases of the percolation process. 
In this context, our main contributions include:  

\paragraph{New universality class:} This paper considers a number of major families of scale-free random graph models with degree exponent $\tau \in (2,3)$ related to Aldous's multiplicative coalescent~\cite{A97}; these include models such as the {\em Norros-Reittu model}, the {\em Chung-Lu model}, and the {\em generalized random graph} (see Section~\ref{sec-RG-mod} for more details). This class of models has turned out to be central in understanding universality phenomenon for critical random graphs in the sense that, once these models have been understood, a host of other canonical random graph models can all be proven to have the same asymptotic behavior in the critical regime, see e.g.  \cite{BBSX14,BDHS17,BHS15,BBW12}. We show that the critical window for percolation on scale-free random graphs will be given by 
	\begin{eq}
	\label{pc-def-NR}
	\perc (\lambda) = \lambda n^{- (3-\tau)/2}, \qquad \lambda\in (0,\lambda_c),
	\end{eq}
for some explicitly computable model dependent critical time $\lambda_c$. 
Thus, surprisingly, 
the critical window is given by a bounded interval $\lambda \in (0,\lambda_c)$. 
In other words, if we look at the coalescence of the critical components as the percolation parameter transitions through the critical window, the components evolve in a non-trivial manner only up to a finite time $\lambda_c$, after which all of the critical components suddenly coalesce with each other. 
This is in contrast with Aldous' multiplicative coalescent, where the coalescence happens over an infinite length window. 
This phenomenon had not been predicted by the extensive investigation via numerical approaches on these models carried out in areas such as statistical physics and condensed matter theory.  

\paragraph{Multiscale emergence of connectivity and technical novelty:} Analyzing the critical regime of models in this class present significant technical challenges as standard techniques based on exploration processes or differential equations cannot be implemented; rather one needs to carefully understand the contribution of extremal degree vertices or \emph{hubs} of different scales contributing to connectivity at each value of $\pi_c(\cdot)$. More precisely: 
\begin{enumeratea}
\item {\bf Critical scaling window:} For $\lambda\in (0, \lambda_c)$, we show that the maximal component sizes scale like $n^{\beta}$ with $\beta = \frac{1}{\tau-1} - \frac{3-\tau}{2}\in[\sqrt{2}-1, \frac{1}{2})$,
and the rescaled vector of ordered component sizes converges to a non-degenerate random vector in $\ell^2$-topology. 
The distributional asymptotics can be derived in terms of an inhomogeneous percolation model on $\Z_+$, which represents the core connectivity structure between the hubs. In this regime,  connectivity emerges owing to interconnections between \emph{macro-hubs}, namely maximal degree vertices (with weights $n^{1/(\tau-1)}$). However note that with $\pi_c(\lambda)\to 0$, these macro-hubs cannot be directly connected; rather (with positive probability) they are connected via \emph{two step paths} thorugh intermediete scale \emph{meso-hubs} of weight $n^{(\tau-2)/(\tau-1)}$.  This interconnected structure forms the \emph{core} of the critical components, and we use path-counting techniques to show that the 1-neighborhood of the core spans the critical components (see Figure~\ref{fig:exploration}). 
The core can be coupled with a one-dimensional inhomogeneous percolation model on $\Z_+$, which was studied in a seminal work of Durrett and Kesten~\cite{DK90} and in follow-up work 
by Zhang~\cite{zhang1991power}.
\item {\bf Supercritical regime:} For $\lambda > \lambda_c$, instead, we show that there is a \emph{unique giant component} of size $\sqrt{n}\gg n^\beta$, and the size of the rescaled giant component concentrates. In this case we show that the graph restricted to a special set of vertices of weight at least $\sqrt{n}$ can be approximated by a well-behaved inhomogeneous random graph in the spirit of Bollob\'as, Janson and Riordan \cite{BJR07}.
A small giant component (of size $\sqrt{n}$) appears inside this restricted set precisely when  $\lambda>\lambda_c$. 
This forms the core of the giant component in the whole graph, and again the 1-neighborhood of the core spans the giant component (and, in fact, the core itself is, in size, negligible to this 1-neighborhood). Analyzing the resulting random structure requires several delicate estimates of multi-type branching process as well as a careful topological analysis of paths exiting and returning to these special class of vertices. 
\end{enumeratea}

\section{Main results}
\label{sec-mod-res}

\subsection{Preliminaries: notation, convergence and topologies} 
\label{sec:notation}
To describe the main results of this paper, we need some definitions and notations. 
We use $\xrightarrow{\sss\PR}$ and $\xrightarrow{\sss d}$ to denote convergence in probability and in distribution respectively.
The topology needed for convergence in distribution will be specified unless clear from the context. 
We use the Bachmann-Landau notation $O(\cdot)$, $o(\cdot)$, $\Theta(\cdot)$ for large $n$ asymptotics of real numbers.
For two real sequences $(a_n)_{n\geq 1},(b_n)_{n\geq 1} $, write $a_n \asymp b_n$ for $a_n/b_n = (1+o(1))$.
 A sequence of events $(\mathcal{E}_n)_{n\geq 1}$ is said to occur with high probability~(whp) with respect to the associated sequence of probability measures $(\mathbbm{P}_n)_{n\geq 1}$  if $\mathbbm{P}_n\big( \mathcal{E}_n \big) \to 1$. 
For two sequences of real-valued random variables $(X_n)_{n\geq 1}$ and $(Y_n)_{n\geq 1}$, write $X_n = O_{\sss\mathbbm{P}}(Y_n)$ if  $ ( |X_n|/|Y_n| )_{n \geq 1} $ is a tight sequence; $X_n =o_{\sss\mathbbm{P}}(Y_n)$ when $X_n/Y_n  \xrightarrow{\sss\PR} 0 $; $X_n =\thetaP(Y_n)$ if both $X_n=\OP(Y_n) $ and $Y_n=\OP(X_n)$. 
We use $C,C',C_1,C_2$ etc as generic notation for positive constants whose value can change from line to line.   Fix $\tau \in (2,3)$.  Throughout this paper, we denote 
	\begin{equation}\label{eqn:notation-const}
 	\alpha= 1/(\tau-1),\qquad \rho=(\tau-2)/(\tau-1),\qquad \eta=(3-\tau)/(\tau-1), \qquad \seta = (3-\tau)/2.
	\end{equation}
For $p> 0$, let $\ell^p$ denote the collection of sequences $ \xx= (x_1, x_2, x_3, ...)$ with $\sum_{i=1}^{\infty} |x_{i}|^p < \infty$. Equip this space with the $p$-norm metric $d(\xx, \yy)= \big( \sum_{i=1}^{\infty} |x_i-y_i|^p \big)^{1/p}$. Let $\ell^p_{\shortarrow} \subset \ell^p$ be the collection of sequences $\xx$ with $x_i\geq 0$ for all $i$ and the elements of the sequence arranged in non-increasing order.

\subsection{Scale-free random graph models}
\label{sec-RG-mod}
We now describe the main models studied in this paper. Given a set of weights $\bw=(w_i)_{i\in [n]}$ on the vertex set $[n]$, the Poissonian random graph or Norros-Reittu model \cite{NR06}, denoted by $\NR$, is generated by creating an edge between vertex $i$ and $j$ independently with probability
	\begin{equation}
	\label{eq:p-ij-NR-defn}
	p_{ij} = p_{ij}^{\sss \rm NR}:=1-\e^{-w_iw_j/\ell_n},
	\end{equation}
where $\ell_n = \sum_{i\in [n]}w_i$ denotes the total weight. 
Our results for the critical window  
hold more generally, for example, for the Chung-Lu Model~\cite{CL02,CL02b} (denoted by $\CL$) with 
	\eqn{
	\label{CL-pij}
	p_{ij}^{\sss\rm CL}: = \min \{w_iw_j/\ell_n,1\},
	}
and the generalized random graph model~\cite{BDM06} (denoted by $\GRG$)  with 
	\eqn{
	\label{GRG-pij}
	p_{ij}^{\sss \rm GRG}: = \frac{w_iw_j}{\ell_n+w_iw_j}. 
	}
The final model has the property that, conditionally on the degree sequence~$\bld{d}=(d_i)_{i\in[n]}$, the law of the obtained random graph is the same as that of a uniformly chosen graph from the space of all simple graphs with degree distribution $\bld{d}$ (cf.~\cite[Theorem 6.15]{RGCN1}).

The percolated graph $\rNR(\bw,\per)$ is obtained by keeping each edge of the graph independently with probability $\per$. This deletion process is also independent of the randomization of the graph. Naturally, the behavior of $\rNR(\bw)$, and thus of $\rNR(\bw,\per)$ depends sensitively on the choice of vertex weights. The following choice of vertex weights will give rise to scale-free random graphs:
	\begin{assumption}[Scale-free weight structure]
	\label{assumption-NR}
	\normalfont
	For some $\tau \in (2,3)$, consider the distribution function~$F$ satisfying $[1-F](w) = Cw^{-(\tau-1)}$ for some $C>0$, and let $w_i = [1-F]^{-1}(i/n)$. 
	\end{assumption} 
In this setting, if $W_n$ denotes the weight of a vertex chosen uniformly at random, then $W_n$ will satisfy an asymptotic power-law in the sense that for any $w> 0$,  $\PR(W_n>w)\to Cw^{-(\tau-1)},$ and, as a result, the asymptotic weight distribution will have the same exponent~$\tau$ (see \cite[Chapter 6]{Hof17}) resulting in a scale-free random graph. Further,
		\begin{equation}
		\E[W_n] = \frac{1}{n}	\sum_{i\in [n]}w_i \to \mu = \E[W],
		\end{equation}
and, for all $i\in[n]$,
	\eqn{
	\label{theta(i)-def}
	n^{-\alpha}w_i  = \cf i^{-\alpha},
	}
for some constant $\cf >0$. Throughout $\cf$ will denote the special constant appearing above.

\subsection{Results}
\label{sec-NR-res} 
We start by describing our results for the barely subcritical regime, then the critical window, and end with the super-critical regime. To explicitly describe limit constants, we will phrase the results with respect to the Norros-Reittu model deferring statements to other models to Theorem \ref{thm:extensions}. The phase transition is described in terms of functionals of the relevant {components}. Let $(|\sCi(\per)|)_{i\geq 1}$ be the component sizes of $\rNR(\bw,\per)$, arranged in non-increasing order (breaking ties arbitrarily). Further, let $(W_{\sss (i)}(\per))_{i\geq 1}$ denote the corresponding weight of these clusters, i.e.,
	\eqn{
	\label{weight-cluster}
	W_{\sss (i)}(\per)=\sum_{j\in \sCi(\per)} w_j.
	}
The phase transitions will be described in terms of these two functionals.  

\subsubsection{Behavior in the barely sub-critical regime} 
\label{sec-GRG-sub-res}
Recall the constants related to the degree exponent in  \eqref{eqn:notation-const}.
\begin{theorem}[Subcritical regime for  $\mathrm{NR}_n(\bw, \percn)$]
\label{thm:barely-subcrit-single-edge}
 Suppose that $\bw$ satisfies {\rm Assumption~\ref{assumption-NR}}, and consider $\rNR(\bw,\per)$ with  $\percn=\lambda_n n^{-\seta}$ with $\lambda_n =o(1)$, and $\percn \gg n^{-\alpha}$. 
Then, for any fixed $i\geq 1$, as $n\to\infty$, 
	\begin{equation}
	\frac{|\sCi(\percn)|}{n^{\alpha}\percn} \pto \cf i^{-\alpha},\quad \text{and}\quad \frac{W_{\sss (i)}(\percn)}{n^{\alpha}} \pto \cf i^{-\alpha}.
	\end{equation}
\end{theorem}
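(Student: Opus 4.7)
The plan is to identify, for each fixed $i\ge 1$ and with high probability, the $i$-th largest component $\sCi(\percn)$ with the component $\sC(i)$ containing vertex $i$ (which by our labeling has weight $w_i=\cf n^\alpha i^{-\alpha}(1+o(1))$ by \eqref{theta(i)-def}), and to show that $\sC(i)$ is essentially the star consisting of $i$ together with its direct neighbors $N_1(i)$ in $\rNR(\bw,\percn)$.

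First I would analyze $N_1(i)$. Since $|N_1(i)|$ is a sum of independent Bernoullis with parameters $\percn(1-\e^{-w_iw_j/\ell_n})$, a direct computation using $\sum_j w_j/\ell_n=1$ and isolating the top $J=\Theta(n^{3-\tau})$ vertices (those with $w_j\gtrsim n^{\rho}$, for which $w_iw_j/\ell_n\gtrsim 1$ and the edge probability saturates at $\percn$) yields $\E[|N_1(i)|]=\percn w_i(1+o(1))=\cf i^{-\alpha}\percn n^\alpha(1+o(1))$. Because $\percn\gg n^{-\alpha}$ forces $\percn w_i\to\infty$, the variance bound $\Var(|N_1(i)|)\le\E[|N_1(i)|]$ gives $|N_1(i)|/\E[|N_1(i)|]\pto 1$. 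The same splitting argument applied to the weight gives $\E[W(N_1(i))]=O(\lambda_n n^{(-\tau^2+6\tau-7)/(2(\tau-1))})$, which is $\oP(w_i)$ since the difference of exponents with $\alpha$ is the strictly positive $(\tau-3)^2/(2(\tau-1))$.

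Next, I would show that $\sC(i)\setminus(\{i\}\cup N_1(i))$ is negligible in both size and weight. The crucial observation is that a typical neighbor of $i$ in $\rNR(\bw,\percn)$ has expected weight of order $n^{\rho(3-\tau)}$, strictly smaller than the naive size-biased value $n^\eta=n^{2\alpha-1}$, precisely because the edge probability saturates at $\percn$ in the hub range $w_j\gtrsim n^{\rho}$ rather than growing linearly in $w_j$. Consequently the expected further degree of such a typical neighbor is
\[
\percn\cdot n^{\rho(3-\tau)}=\lambda_n\cdot n^{-(3-\tau)^2/(2(\tau-1))}=o(1)
\]
under $\lambda_n=o(1)$. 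Iterating, $\E[|N_k(i)|]$ for $k\ge 2$ is geometrically dominated by this vanishing multiplier, so $\sum_{k\ge 2}\E[|N_k(i)|]=o(\percn w_i)$ and the analogous weight sum is $\oP(w_i)$. Markov's inequality then yields $|\sC(i)|=|N_1(i)|(1+\oP(1))$ and $W_{\sss (i)}(\percn)=w_i(1+\oP(1))$, giving the two claimed convergences once $\sC(i)$ is identified with $\sCi(\percn)$.

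For this final identification, one shows that for any fixed $K$, with high probability $\sC(1),\ldots,\sC(K)$ are pairwise disjoint in $\rNR(\bw,\percn)$ (by bounding, for each fixed $k$, the expected number of length-$k$ paths between two top-$K$ hubs via the same expansion technique as in the $N_k(i)$ step) and that $|\sC(j)|<|\sC(K)|$ for all $j>K$ (by the star argument applied to vertex $j$, since $w_j<w_K$). The main difficulty is the uniform-in-$k$ control of these inter-hub path counts, because paths staying within the hub core of size $J$ contribute at order $\lambda_n^k n^{(k-2)\seta}$, whose summability in $k$ relies essentially on $\lambda_n=o(1)$ together with a careful accounting of the rarity of re-entering the hub core at each step; this is the heart of the subcritical analysis and is where the combination of Assumption~\ref{assumption-NR}, the window $\percn\gg n^{-\alpha}$, and $\lambda_n=o(1)$ is used most delicately.
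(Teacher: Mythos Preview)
Your overall strategy matches the paper's: show each $\sC(i)$ is essentially the star on $i$, show the top-$K$ hub components are pairwise disjoint, and identify $\sCi=\sC(i)$. The gap is in your second step, the claimed geometric decay of $\E[|N_k(i)|]$.

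Your multiplier $\percn\cdot n^{\rho(3-\tau)}=\lambda_n n^{-(3-\tau)^2/(2(\tau-1))}$ is the correct ratio $\E[|N_2(i)|]/\E[|N_1(i)|]$, but this does \emph{not} iterate. The reason is that the weight profile of $N_k(i)$ changes with $k$: at distance~1 from a macro-hub the saturation effect you describe depresses the typical weight, but at distance~2 some vertices are again hubs (the meso-vertices can connect back to other macro-hubs), and for these the outgoing weight $\sum_l w_l p_{jl}$ is of order $n^{3-\tau}w_j^{\tau-2}$, not $w_j$. If you try to close the recursion on $\E[\sum_{j\in N_k(i)}w_j^{\tau-2}]$ you hit $\sum_l w_l^{\tau-1}p_{jl}$, and $\sum_l w_l^{\tau-1}\asymp n\log n$ by \eqref{theta(i)-def}, so the bound blows up. Relatedly, your crude inter-hub path estimate $\lambda_n^k n^{(k-2)\seta}$ is a geometric series with ratio $\lambda_n n^{\seta}$, which diverges for any $\lambda_n$ decaying slower than $n^{-\seta}$ (e.g.\ $\lambda_n=1/\log n$), so it cannot be summed over $k$ in the full range $n^{-\alpha}\ll\percn\ll n^{-\seta}$ of the theorem.

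What actually makes this work is a \emph{two-step} path-counting estimate that keeps track of vertex indices rather than weights. The paper's Proposition~\ref{lem:geometric-decay} shows that the probability of a length-$2k$ path between $i$ and $j$ is at most $(1+\vep)^{2k}(\lambda/\lambda_c)^{2k}(i\wedge j)^{-(1-b)}(i\vee j)^{-b}$ for a suitable $b\in(\tfrac12,\alpha)$; the proof is an induction in $k$ on the perfectly homogeneous kernel $h(i,j)=B_\alpha(i\wedge j)^{-(1-\alpha)}(i\vee j)^{-\alpha}$ that exactly encodes the saturation effect. This bound sums in $k$ for any $\lambda<\lambda_c$ (in particular for $\lambda_n\to 0$) and, via Proposition~\ref{prop:tail-particular-component} and Lemma~\ref{lem:odd-even-com-size}, gives both $W(i)=w_i(1+\oP(1))$ and $|\sC(i)|=\percn w_i(1+\oP(1))$ in one stroke. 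The identification $\sCi=\sC(i)$ is then handled by the $\ell^2$-tightness in Proposition~\ref{prop-tight-l2}, which circumvents the need for the uniform-in-$j$ statement $|\sC(j)|<|\sC(K)|$ you propose (your star argument controls only $|N_1(j)|$, not $|\sC(j)|$, and a union bound over all $j>K$ would fail).
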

Theorem~\ref{thm:barely-subcrit-single-edge} implies that the largest percolation clusters  with $\percn \ll n^{-\seta}$ are the clusters of the hubs, i.e., the vertices with the largest weights ($w_i = \Theta(n^{\alpha})$). Further, the hubs with high probability lie in disjoint components. Since, after percolation, the number of neighbors of hub $i$ is close to $\percn w_i\approx n^{\alpha}\percn \cf i^{-\alpha}$, these largest clusters consist mostly of the hubs with their immediate neighbors. In particular, since the largest cluster sizes {\em concentrate}, we are not in the critical window when $\pi_n \ll n^{-\eta_s}$.

\subsubsection{Behavior in the critical window} 
\label{sec-NR-crit-res}
As discussed in the introduction, this critical window consists of $\percn=\lambda n^{-\seta}$ for some explicit bounded interval of~$\lambda$. In particular, such values of $\percn$ are much larger than the values considered in the previous section. We will see that there is a surprising phase transition in $\lambda$, occurring at a finite positive value $\lambda_c$. Below $\lambda_c$, the scaling limits of the largest connected components have {\em non-degenerate} scaling limits, and any two hubs are in the same component with asymptotic probabilities strictly bounded between 0 and 1. 
Recall~\eqref{eqn:notation-const}, and define
	\begin{equation}
	\label{eq:scaling-window}
	\perc =\percl:= \lambda n^{-\seta}, 
	\quad \text{for }\lambda \in (0,\lambda_c),
	\end{equation} 
where $\lambda_c$ is 
given by 
\begin{gather}
    \lambda_c := \sqrt{\frac{\eta}{4B_{\alpha}}}= \frac{\cf^{-1/\alpha}}{2}\sqrt{\frac{(3-\tau)\mu^{1/\alpha}}{A_\alpha}}, \label{eqn:lambc-def}\\
     A_\alpha:= \int_0^\infty \frac{1-\e^{-z}}{z^{1/\alpha}} dz, \quad B_\alpha:= \frac{\cf^{2/\alpha} A_\alpha}{\alpha\mu^{1/\alpha}}. \label{eqn:A-B-alpha-def}
\end{gather}
\begin{theorem}[Critical regime for  $\mathrm{NR}_n(\bw, \percn)$] 
\label{thm:main-crit}
Suppose that $\bw$ satisfies {\rm Assumption~\ref{assumption-NR}}, and consider $\rNR(\bw,\percn)$ with $\percn=\percl$ for $\lambda\in (0,\lambda_c)$, with $\lambda_c$ as in \eqref{eqn:lambc-def}.  Then, as $n\to \infty$,
	\begin{equation}
	(n^{\alpha}\perc)^{-1} (|\sCi(\percl)|)_{i\geq 1} \dto (\sW_{\sss (i)}^{\infty}(\lambda))_{i\geq 1} \quad \text{and} \quad n^{-\alpha} (W_{\sss (i)} (\percl))_{i\geq 1} \dto (\sW_{\sss (i)}^{\infty}(\lambda))_{i\geq 1} 
	\end{equation} 
with respect to the $\ell^2_{\shortarrow}$-topology and $\ell^2$-topology respectively. 
The limiting random variables $(\sW_{\sss \mathrm{(I)}}^{\infty}(\lambda))_{i\geq 1}$ are non-degenerate and described in {\rm Definition~\ref{defn:g-lambda}} below.
\end{theorem}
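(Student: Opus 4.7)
The plan is to reduce the analysis of critical clusters to a one-dimensional inhomogeneous percolation model on $\Z_+$ of the type studied by Durrett and Kesten~\cite{DK90}, and to show that each critical cluster is essentially the union of the 1-neighborhoods of the \emph{macro-hubs} ($w_i\asymp\cf n^\alpha i^{-\alpha}$) that it contains. The proof will proceed in three stages: (i) identify a core of hubs and compute two-step connection probabilities between macro-hubs; (ii) transfer limit theorems for the one-dimensional model to core cluster sizes and weights; (iii) extend from the core to the full graph via 1-neighborhoods of the hubs, controlling the negligible corrections.

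First, I would define the core to consist of the macro-hubs $i\in\Z_+$ together with the \emph{meso-hubs} (vertices of weight $\Theta(n^\rho)$). For a pair of macro-hubs $i\neq j$, the direct edge survives percolation with probability only $\asymp\percn\to 0$; instead, the dominant contribution to connectivity comes from two-step paths through meso-hubs. A Riemann-sum computation using the power-law weight profile $w_k=\cf n^\alpha k^{-\alpha}$ together with a change of variables---which rescales the Norros--Reittu bond probability $1-\e^{-w_iw_k/\ell_n}$ into a universal form involving the integrand of $A_\alpha$ in~\eqref{eqn:A-B-alpha-def}---yields $\sum_k\percn^2 p_{ik}p_{jk}\to B_\alpha\lambda^2(ij)^{-\alpha}$ with $B_\alpha$ as in~\eqref{eqn:A-B-alpha-def}. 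A Poissonization argument, exploiting asymptotic independence of two-step connectivity indicators across macro-hub pairs (since each meso-hub carries $O(1)$ percolated edges from the macro-hub set), will then couple the core structure to Bernoulli bond percolation on $\Z_+$ with edge probabilities $1-\exp(-B_\alpha\lambda^2(ij)^{-\alpha})$. The critical point $\lambda_c^2=\eta/(4B_\alpha)$ in~\eqref{eqn:lambc-def} is exactly the transition point for this model, as established in~\cite{DK90,zhang1991power}, and in the subcritical phase $\lambda<\lambda_c$ their results furnish the non-degenerate limit $(\sW_{\sss (i)}^\infty(\lambda))_{i\geq 1}$.

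Second, I would translate core-cluster sizes and weights into full-graph cluster sizes and weights. For each macro-hub $i$, Theorem~\ref{thm:barely-subcrit-single-edge} already shows that its 1-neighborhood in the percolated graph has size concentrating at $\percn w_i\asymp\lambda\cf i^{-\alpha}n^\beta$ (with $\beta=\alpha-\seta$) and weight concentrating at $\cf i^{-\alpha}n^\alpha$. If the core-cluster containing $i$ is $\cS\subseteq\Z_+$, then the full-graph cluster equals $\cS\cup\bigcup_{j\in\cS}\mathcal{N}(j)$ up to higher-order corrections, so its size and weight converge in probability to the corresponding sums over $\cS$, producing the scaling factors $n^\alpha\percn$ for size and $n^\alpha$ for weight. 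A second-moment path-counting argument will then be needed to rule out that (a) clusters containing no macro-hub reach size $\Theta(n^\alpha\percn)$, and (b) paths of length $\geq 3$ between macro-hubs, or paths through typical-weight vertices, contribute more than higher-order corrections.

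The main technical obstacle is passing from finite-dimensional convergence to full $\ell^2_{\shortarrow}$-convergence, which requires the uniform tail estimate $\lim_{K\to\infty}\limsup_{n\to\infty}\PR\big(\sum_{i\geq K}|\sCi(\percn)|^2/(n^\alpha\percn)^2>\vep\big)=0$. Since the power-law kernel $(ij)^{-\alpha}$ sits on the borderline of summability, this bound is not automatic from finite-dimensional convergence and will require a uniform integrability estimate for squared cluster sizes in the subcritical Durrett--Kesten model on $\Z_+$, together with a truncation at a level $K=K(n)\to\infty$ and a diagonal argument. A secondary difficulty will be controlling the coupling error between the full random graph and the core percolation model uniformly in $K$: contributions from long-range paths and from large-index macro-hubs accumulate in the $\ell^2$ tail and must be bounded via higher-moment estimates on the one-dimensional limiting model.
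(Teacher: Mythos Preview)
Your overall architecture matches the paper's: couple the two-step hub connectivity to a one-dimensional long-range percolation model of Durrett--Kesten type, show that the component of each hub is essentially the union of the 1-neighborhoods of the hubs it contains, and then upgrade finite-dimensional convergence to $\ell^2_{\shortarrow}$ via a tail bound. However, two steps in your sketch contain genuine errors.

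\medskip
\textbf{The two-step probability is wrong.} You claim $\sum_k\percn^2 p_{ik}p_{jk}\to B_\alpha\lambda^2(ij)^{-\alpha}$ and then couple to Bernoulli percolation on $\Z_+$ with edge probabilities $1-\exp(-B_\alpha\lambda^2(ij)^{-\alpha})$. This is incorrect on two counts. First, the exact limit is the integral $\lambda_{ij}=\lambda^2\int_0^\infty \Theta_i(x)\Theta_j(x)\,\dif x$ of Definition~\ref{defn:g-lambda}, not a closed-form power; the limiting graph $\cGinf(\lambda)$ has Poisson($\lambda_{ij}$) edges between $i$ and $j$, and this is the object that appears in the scaling limit. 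Second, even the \emph{asymptotic} (large $i,j$) form is $\lambda^2 h(i,j)$ with $h(i,j)=B_\alpha\,(i\wedge j)^{-(1-\alpha)}(i\vee j)^{-\alpha}$, not $(ij)^{-\alpha}$. The distinction is essential: $h$ is perfectly homogeneous of degree $-1$, which is exactly the hypothesis of \cite{DK90}, whereas $(ij)^{-\alpha}$ is homogeneous of degree $-2\alpha\neq -1$ and the Durrett--Kesten machinery does not apply to it. Your Riemann-sum computation has evidently used $1-\e^{-x}\leq x$ on \emph{both} factors $p_{ik}$ and $p_{jk}$; one must keep the smaller-index factor as $1-\e^{-\cf^2 i^{-\alpha}x^{-\alpha}/\mu}$ and only linearize the larger-index factor, then change variables to extract the $A_\alpha$ integral. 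The paper carries this out in Lemma~\ref{lem:pij-h-rel} and Proposition~\ref{prop:hub-connection}.

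\medskip
\textbf{The tightness argument is circular.} You propose to obtain the $\ell^2_{\shortarrow}$ tail bound from ``a uniform integrability estimate for squared cluster sizes in the subcritical Durrett--Kesten model'' plus a diagonal argument with $K=K(n)\to\infty$. But the required statement is uniform in $n$: one needs $\limsup_{n\to\infty}\PR\big(\sum_{j>K}|\sC_{\leq}(j)|^2>\vep(n^\alpha\perc)^2\big)\to 0$ as $K\to\infty$, and this cannot be inherited from the limiting model---finite-dimensional convergence gives no control on the tail of the pre-limit sequence. The paper's route (Proposition~\ref{prop-tight-l2}) is to delete $[K]$ and bound the expected sum of squared component sizes in the remaining graph directly on the finite model, via the path-counting estimate of Proposition~\ref{lem:geometric-decay}: for $\lambda<\lambda_c$ one has $f_{2k}(i,j)\leq ((1+\vep)\lambda/\lambda_c)^{2k}(i\wedge j)^{-(1-b)}(i\vee j)^{-b}$ for some $b\in(\tfrac12,\alpha)$, which gives geometric decay in $k$ and summable decay in $i,j$. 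This yields a bound of the form $C(n^\alpha\perc)^2\sum_{v>K}v^{-(\alpha+b)}$, vanishing as $K\to\infty$ uniformly in $n$. Your sketch does not supply such a finite-$n$ path-counting input, and without it the tightness step does not close.

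\medskip
A minor point: invoking Theorem~\ref{thm:barely-subcrit-single-edge} for the 1-neighborhood size is a category error---that theorem concerns full components in the barely subcritical regime, where hubs are disconnected. In the critical window the 1-neighborhood concentration must be (and in the paper is) re-derived conditionally on the hub core; see the proof of Theorem~\ref{thm:comp-size-i}.
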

The non-degenerate scaling limit of the component sizes, as well as their weights is the hallmark of critical behavior.
To define the limiting variables in Theorem~\ref{thm:main-crit}, we need the following infinite weighted random graph which belongs to a general class of models studied by Durrett and Kesten in \cite{DK90}. 

\begin{defn}[Limiting variables] 
\label{defn:g-lambda} \normalfont
Fix vertex set $\Z_+$ and let vertex $i\in \Z_+$ have weight $\theta_i:=\cf i^{-\alpha}\mu^{-1}$. Consider the random multi-graph $\cGinf(\lambda)$ on $\Z_+$ where vertices $i$ and $j$ are joined independently by Poisson$(\lambda_{ij})$ many edges with $\lambda_{ij}$ given by 
	\begin{equation}\label{defn:lambda-ij}
	\lambda_{ij}:=\lambda^2\int_0^\infty \Theta_i(x) \Theta_j(x) \dif x, \quad \text{where}\quad
	\Theta_i(x):= 1-\e^{-\cf\theta_i x^{-\alpha}}.
	\end{equation}
For $i \geq 1$, let $\sW_{\sss (i)}^{\infty}(\lambda)$ denote the $i$-th largest element of the set 
    \[\bigg\{\sum_{i\in \sC}\theta_i\colon \sC \text{ is a connected component}\bigg\},
    \]
which is well-defined when $(\sW_{\sss (i)}^{\infty}(\lambda))_{i\geq 1}\in \ell^2_{\shortarrow}$ almost surely.
 \end{defn}

We will see that asymptotically there are Poisson($\lambda_{ij}$) many {\em two-step} paths between macro-hubs $i$ and $j$ via intermediete meso-scale hubs of size $\Theta(n^\rho)$ in $\rNR(\bw, \perc(\lambda))$, for $i,j$ fixed as $n\rightarrow \infty$. These two-step paths between hubs form the backbone of the largest connected components. The connectivity structure of these two-step connections undergoes a phase transition, as we next explain. The following result implies that the limiting object is well-defined for $\lambda \in (0, \lambda_c]$, and undergoes a phase transition at $\lambda=\lambda_c$:

\begin{proposition}[Phase transition for the limiting model]
\label{prop-limit-as-finite}
\begin{enumerate}[(a)]
	\item For $\lambda\leq \lambda_c$, $ (\sW_{\sss (i)}^{\infty}(\lambda))_{i\geq 1}$ is in $\ell^2_{\shortarrow}$ almost surely.
	\item For $\lambda > \lambda_c$, $\cGinf(\lambda)$ is connected almost surely, in particular $\sW_{\sss (1)}^{\infty}(\lambda)=\infty$ and $\sW_{\sss (2)}^{\infty}(\lambda)=0$ almost surely.
\end{enumerate} 
\end{proposition}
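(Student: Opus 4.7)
The plan is to analyze the symmetric nonnegative operator $\Lambda=(\lambda_{ij})_{i,j\in\Z_+}$, viewed as the mean matrix for the Poisson multigraph $\cGinf(\lambda)$. Scaling the defining integral \eqref{defn:lambda-ij} via $y=\cf\theta_i x^{-\alpha}$ produces the tail asymptotic $\lambda_{ij}\sim \lambda^2 B_\alpha\, i^{\alpha-1} j^{-\alpha}$ for $j\gg i$, and a direct computation then shows that the power function $f_i:=i^{-1/2}$ is an approximate eigenvector of $\Lambda$: $(\Lambda f)_i\sim \kappa(\lambda)f_i$ with $\kappa(\lambda):=4\lambda^2 B_\alpha/\eta$. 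Since $\eta=2\alpha-1$ and $\lambda_c^2=\eta/(4B_\alpha)$, one has $\kappa(\lambda_c)=1$ exactly, so $\kappa(\lambda)<1\Longleftrightarrow\lambda<\lambda_c$. This single dichotomy drives both parts of the phase transition.

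For part (a) with $\lambda<\lambda_c$, I would begin from the identity
\[
\sum_{k\geq 1}\bigl(\sW^{\infty}_{(k)}(\lambda)\bigr)^2=\sum_{i,j\geq 1}\theta_i\theta_j\mathbbm{1}\{i\leftrightarrow j\text{ in }\cGinf(\lambda)\},
\]
and apply the union-bound path-counting estimate $\PR(i\leftrightarrow j)\leq \mathbbm{1}\{i=j\}+\sum_{L\geq 1}(\Lambda^L)_{ij}$ together with $\theta_i\leq C f_i$ (which uses $\alpha>1/2$). Iterating the supersolution inequality $\Lambda f\leq \kappa(\lambda)f+(\text{small-}i\text{ correction})$ gives $(\Lambda^L\theta)_i\leq C_1\kappa(\lambda)^L f_i$, and pairing against $\theta$ yields
\[
\E\bigl[\|\sW^{\infty}(\lambda)\|^2_{\ell^2_\shortarrow}\bigr]\leq \sum_{L\geq 0}\langle\theta,\Lambda^L\theta\rangle\leq C_2\sum_{L\geq 0}\kappa(\lambda)^L\langle\theta,f\rangle<\infty,
\]
since $\langle\theta,f\rangle\propto \sum_i i^{-\alpha-1/2}<\infty$ by $\alpha+1/2>1$. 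Markov's inequality then gives a.s.\ $\ell^2_\shortarrow$ membership. The critical value $\lambda=\lambda_c$ is recovered by monotone coupling from below: a Poisson thinning realizes $\cGinf(\lambda)\subset \cGinf(\lambda_c)$ for $\lambda\leq\lambda_c$, and a truncation argument (restrict to indices $\geq K$, where after a finite-rank surgery removing the hubs the tail operator is strictly subcritical) upgrades the a.s.\ $\ell^2_\shortarrow$ bound to $\lambda_c$.

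For part (b) with $\lambda>\lambda_c$, note first that every vertex $i$ has $\deg(i)=\infty$ a.s.\ in $\cGinf(\lambda)$ (for every $\lambda>0$), since $\sum_j\lambda_{ij}=\infty$ from $\alpha<1$; so each cluster has infinite cardinality, and the real question is the $\theta$-mass and the uniqueness of the infinite-mass cluster. For $\kappa(\lambda)>1$, a multi-type Galton-Watson lower bound on the size-biased cluster exploration (with mean operator $\Lambda$) shows that the $\theta$-mass of the cluster of vertex $1$ is infinite with positive probability, and a 0-1 argument on the Poisson edges (the event ``infinite $\theta$-mass'' is asymptotic in the edge variables, given the infinite-degree backbone) upgrades this to probability one. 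Uniqueness of the infinite-mass cluster follows by showing that two disjoint subsets $A,B\subset\Z_+$ of infinite $\theta$-mass satisfy $\sum_{i\in A,j\in B}\lambda_{ij}=\infty$ (using the tail $\lambda_{ij}\asymp i^{\alpha-1}j^{-\alpha}$ together with sufficiently many low-index vertices forced into $A$ and $B$), so by independence of the Poisson edges they must be joined by an edge a.s.\ -- ruling out two infinite-mass clusters. Finally, every vertex attaches to this unique infinite-mass cluster, again using the infinite-degree property. Hence $\cGinf(\lambda)$ is connected a.s.

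The main obstacle I foresee is the a.s.\ finiteness at exactly $\lambda=\lambda_c$ in part (a): the first-moment path-counting sum diverges at $\kappa(\lambda_c)=1$, and one has to rely on the monotone coupling plus tail truncation, following the Durrett-Kesten framework~\cite{DK90}. A secondary subtlety is that the sharp threshold $\lambda_c=\sqrt{\eta/(4B_\alpha)}$ is sensitive to the exact constant $B_\alpha$ in the tail asymptotic of $\lambda_{ij}$, so the analysis must track constants throughout and cannot settle for crude inequalities; likewise the uniqueness step in (b) is delicate because the lack of translation invariance blocks a direct Burton-Keane argument and one must substitute the strong mean-connectivity of $\Lambda$.
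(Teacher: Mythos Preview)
Your sketch has two genuine gaps, one in each part.

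For part~(a), your first-moment path-counting argument with the approximate eigenvector $f_i=i^{-1/2}$ is correct and clean for $\lambda<\lambda_c$. The problem is at $\lambda=\lambda_c$, which you yourself flag. Your proposed fix---remove the hubs $[K]$ so that the tail operator becomes strictly subcritical---does not work: the tail asymptotic $\lambda_{ij}\sim\lambda^2 B_\alpha\,(i\wedge j)^{\alpha-1}(i\vee j)^{-\alpha}$ is unchanged on $\{K,K+1,\dots\}$ (the function $h$ is homogeneous of degree $-1$, so after relabelling the restricted model has the same asymptotic kernel and hence the \emph{same} critical value $\lambda_c$). You are therefore still exactly critical after truncation, and the geometric sum $\sum_L\kappa(\lambda_c)^L$ still diverges. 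The paper closes this gap by a different route: it dominates $p_\infty(i,j)\leq\lambda^2 h(i,j)$, reduces to the Durrett--Kesten model $\cG_{\sss\mathrm{DK}}(\lambda_c)$, and then invokes Zhang's polynomial bound on the \emph{connection probability at criticality}, $\PR_{\lambda_c,{\sss\mathrm{DK}}}(i\leftrightarrow j)\leq c_1\log(i\vee 2)/\sqrt{ij}$. This bound (not available from first-moment path counts) is combined with a BK-inequality decomposition of the three-point function to get $\E\big[\sum_j(\sum_{i\in\cC_\leq(j)}\theta_i)^2\big]<\infty$. Zhang's estimate is the missing ingredient in your sketch.

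For part~(b), the paper simply checks $p_\infty(i,j)/(\lambda^2 h(i,j))\to 1$ and then cites the Durrett--Kesten connectivity theorem for $\lambda>\lambda_c$; nothing more is needed. Your from-scratch argument has holes. First, the event ``the cluster of vertex $1$ has infinite $\theta$-mass'' is not a tail event in the Poisson edge variables (it depends on the edges incident to $1$), so the 0--1 upgrade requires an argument you have not supplied. Second, and more seriously, your uniqueness step asserts that if $A,B$ are two disjoint infinite-$\theta$-mass clusters then $\sum_{i\in A,j\in B}\lambda_{ij}=\infty$ forces an $A$--$B$ edge a.s.\ ``by independence''; but $A$ and $B$ are \emph{random} and are themselves determined by the full edge configuration, so no independence remains after conditioning on them. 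Untangling this is precisely what the Durrett--Kesten renormalization argument does, and it is not a short exercise.
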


\subsubsection{Behavior in the supercritical regime} 
\label{sec-GRG-super-res}
Let us now consider percolation with probability $\percn = \lambda n^{-\seta}$ for $\lambda>\lambda_c$. 
Since $\cGinf(\lambda)$ represents the connectivity structure between the hubs, Proposition~\ref{prop-limit-as-finite}~(b) suggests that (1) the hubs are in the same component whp, (2) the largest connected component after $\lambda_c$ is much larger than the components before $\lambda_c$.
Our result next result shows that  in fact a \emph{unique} giant component of size $\sqrt{n}$ 
appears in the graph, and the size of this giant component concentrates.
Moreover, the giant component is {\em unique} in the sense that the second largest component is of a smaller order. 
To describe the limiting size of the giant component, fix $a>0$, and 
define 
\begin{eq}\label{defn:zeta-lambda}
\zeta_a^\lambda := \lambda\int_0^a  \cf u^{-\alpha} \rho_a^\lambda(u)\dif u,
\end{eq}
where $\rho_a^\lambda: (0,a] \mapsto [0,1]$ is the maximum solution to the fixed point equation 
\begin{eq}\label{defn:surv-prob}
\rho_a^\lambda(u)=1-\e^{-\lambda \int_0^a \kappa(u,v)\rho_a^\lambda(v)\dif v}, \quad \text{with }\quad  \kappa(u,v) := 1-\e^{-\cf^{2} (uv)^{-\alpha}/\mu}. 
\end{eq}
In Proposition~\ref{prop-large-a-original}, we will see that $\zeta^{\lambda} = \lim_{a\to\infty} \zeta_a^\lambda$ exists and $\zeta^\lambda \in (0,\infty)$, whenever $\lambda>\lambda_c$. 
We now state our result for the emergence of the giant component for $\lambda>\lambda_c$:
\begin{theorem}[$\sqrt{n}$-asymptotics of size and uniqueness giant]
\label{thm:supcrit-bd}
Suppose that $\bw$ satisfies {\rm Assumption~\ref{assumption-NR}}, and consider $\rNR(\bw,\percn)$ with $\percn=\lambda n^{-\seta}$ for some $\lambda>\lambda_c$. 
Then, as $n\to\infty$, 
\begin{eq}
n^{-1/2}|\sC_{\sss (1)}(\percn)|\pto \zeta^\lambda, \quad \text{and} \quad n^{-1/2}|\sC_{\sss (2)}(\percn)|\pto 0,
\end{eq}
where $\zeta^{\lambda} = \lim_{a\to\infty} \zeta_a^\lambda$ with $\zeta_a^\lambda$ given by \eqref{defn:zeta-lambda}. 
Further, $\{v\colon w_v\geq  n^{1/2+\delta}\}\subseteq \sC_{\sss (1)}(\percn)$ whp for every $\delta>0.$
\end{theorem}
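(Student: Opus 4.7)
The strategy is the ``core plus $1$-neighborhood'' expansion outlined in the introduction. Fix a large truncation $a>0$ and define the core $V_a:=\{i\in[n]\colon i\leq a n^{\seta}\}$, parameterizing $i\in V_a$ by $u_i:=i/n^{\seta}\in (0,a]$. The identity $\alpha(1-\seta)=1/2$ combined with~\eqref{theta(i)-def} gives $w_i=\cf u_i^{-\alpha}\sqrt{n}(1+o(1))$ for $i\in V_a$, whence $w_iw_j/\ell_n\to\cf^2(u_iu_j)^{-\alpha}/\mu$ and the percolated edge probability inside $V_a$ satisfies
\begin{equation*}
\percn(1-\e^{-w_iw_j/\ell_n})=\frac{\lambda}{n^{\seta}}\kappa(u_i,u_j)(1+o(1)),
\end{equation*}
with $\kappa$ as in~\eqref{defn:surv-prob}. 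Thus the percolated subgraph on $V_a$ is asymptotically a Bollob\'as--Janson--Riordan (BJR) inhomogeneous random graph on $\lfloor a n^{\seta}\rfloor$ vertices of type space $(0,a]$ (with Lebesgue measure) and kernel $\lambda\kappa$.

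The core giant then follows from the classical BJR theorem. Proposition~\ref{prop-limit-as-finite}(b), which characterizes $\lambda_c$ through $\cGinf(\lambda)$, has a continuum counterpart asserting that the integral operator $T_{\lambda,a}f(u):=\lambda\int_0^a\kappa(u,v)f(v)\,\dif v$ on $L^2((0,a])$ has spectral norm exceeding $1$ for $\lambda>\lambda_c$ and $a$ sufficiently large. BJR then delivers a unique giant core-cluster $\sC_a^{\mathrm{core}}$ of size $n^{\seta}\int_0^a\rho_a^\lambda(u)\,\dif u\,(1+\oP(1))$ with survival profile $\rho_a^\lambda$ solving~\eqref{defn:surv-prob}, all other core-components being of size $\oP(n^{\seta})$. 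A parallel concentration argument for the weighted mass gives
\begin{equation*}
\sum_{i\in\sC_a^{\mathrm{core}}}w_i = \lambda^{-1}n^{\seta+1/2}\zeta_a^\lambda(1+\oP(1)),
\end{equation*}
with $\zeta_a^\lambda$ as in~\eqref{defn:zeta-lambda}.

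For the $1$-neighborhood expansion, conditionally on $\sC_a^{\mathrm{core}}$ each $v\notin V_a$ is attached to $\sC_a^{\mathrm{core}}$ with probability
\begin{equation*}
1-\prod_{i\in\sC_a^{\mathrm{core}}}\bigl(1-\percn(1-\e^{-w_iw_v/\ell_n})\bigr)=1-\exp\bigl(-w_v\zeta_a^\lambda/(\mu\sqrt{n})\bigr)+\oP(1),
\end{equation*}
using the weighted-mass identity together with $\ell_n\sim\mu n$. Since $w_v=o(\sqrt{n})$ off $V_a$, expanding $1-\e^{-x}=x+O(x^2)$ and summing yields expected $1$-neighborhood size $\sim\ell_n\zeta_a^\lambda/(\mu\sqrt{n})\sim\zeta_a^\lambda\sqrt{n}$; a second-moment argument upgrades this to convergence in probability. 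Combining with $|V_a|=O(n^{\seta})=o(\sqrt{n})$, the cluster containing $\sC_a^{\mathrm{core}}$ has size $\zeta_a^\lambda\sqrt{n}(1+\oP(1))$. Letting $a\to\infty$ and invoking Proposition~\ref{prop-large-a-original} so that $\zeta_a^\lambda\uparrow\zeta^\lambda\in(0,\infty)$ proves $n^{-1/2}|\sC_{\sss (1)}(\percn)|\pto\zeta^\lambda$. For the heavy-vertex inclusion, any $v$ with $w_v\geq n^{1/2+\delta}$ has expected degree $\percn w_v=\Theta(n^{(\tau-2)/2+\delta})$, a positive (in $a$) fraction of which lands in $\sC_a^{\mathrm{core}}$ by weighted-mass considerations, so a Chernoff bound places $v$ in $\sC_{\sss (1)}(\percn)$ with probability $1-\e^{-\Omega(n^{\delta})}$, and a union bound over the $O(n^{\seta-\delta(\tau-1)})$ such vertices closes the last assertion.

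The \emph{main obstacle} is uniqueness, $n^{-1/2}|\sC_{\sss (2)}(\percn)|\pto 0$. One must rule out a second component of size $\Theta(\sqrt{n})$ forming entirely through off-core paths. The plan is to choose $a$ large enough that the percolated subgraph on $[n]\setminus V_a$ is subcritical in the BJR sense (its integral operator having spectral norm less than $1$ as $a\to\infty$), and then to show that all off-core clusters have size $O(\log n)$ whp; any $\Omega(\sqrt{n})$ cluster must therefore enter $V_a$, whence BJR-uniqueness inside the core forces it to coincide with $\sC_a^{\mathrm{core}}$. Executing this rigorously requires a careful stratification of $[n]\setminus V_a$ into weight layers (analogous to the mesoscopic hub hierarchy used for Theorem~\ref{thm:main-crit}) together with path-counting across these layers, and is where the proof is most delicate.
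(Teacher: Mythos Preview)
Your core strategy matches the paper's exactly: identify the giant inside $[N_n(a)]$ via Bollob\'as--Janson--Riordan, compute its one-neighborhood in $[n]\setminus[N_n(a)]$, and send $a\to\infty$. Your lower bound on $|\sC_{\sss (1)}|$ is essentially correct.

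There is, however, a genuine gap on the \emph{upper bound} side. When you write ``the cluster containing $\sC_a^{\mathrm{core}}$ has size $\zeta_a^\lambda\sqrt{n}(1+\oP(1))$'' after computing the one-neighborhood, you are silently identifying the full component with $\sC_a^{\mathrm{core}}\cup\cN_1(\sC_a^{\mathrm{core}})$. Two extra contributions must be controlled: (i) vertices at distance $\geq 2$ from the core along off-core paths, and (ii) the \emph{return paths}---pairs $i,j\in[N_n(a)]$ in distinct core-components that become connected only through $[N_n(a)]^c$. The first is handled by path counting (the off-core second moment $\bar\nu_n(a)=\percn\ell_n^{-1}\sum_{k>N_n(a)}w_k^2=O(a^{-(3-\tau)/(\tau-1)})$ is small for large $a$). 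The second is more delicate: return edges can merge $\sC_a^{\mathrm{core}}$ with many small core-clusters, each bringing its own span. The paper introduces a second truncation level $b\gg a$, shows that the number of return paths touching $[N_n(b)]^c$ is $o(N_n)$, and then invokes the stability of $\cT_{\geq k}^a$ (from BJR) under the addition of $o(N_n)$ edges, together with a uniform bound on $|\spn(V)|$ over all small $V\subset[N_n(a)]$. Without this, you have no matching upper bound.

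The same gap reappears in your uniqueness sketch. Suppose a second component of size $\Theta(\sqrt{n})$ enters $V_a$ at some vertex $j$. BJR uniqueness only tells you $j$ is not in another \emph{giant} core-component; $j$ could sit in a core-component of size $O(\log n)$ whose span, amplified by return-path mergers, is $\Theta(\sqrt{n})$. Ruling this out again requires the return-path analysis and the uniform small-span lemma. Your plan of showing the off-core graph is BJR-subcritical is also not quite right: the off-core graph has unbounded type space and non-integrable kernel, so BJR theory does not apply directly; the paper instead uses direct moment bounds on $|\sC_{\leq}(j)|$ for $j>N_n(a)$.
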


\subsection{Discussion}
\label{sec:discussion}
In this section, we discuss some insights to our results, extensions and open problems.
\paragraph{Critical window for other rank-1 models. } 
Our results for the subcritical regime, and the critical window 
hold more generally for the Chung-Lu Model $\CL$ and the generalized random graph model $\GRG$ described in \eqref{CL-pij} and \eqref{GRG-pij}. To state this formally, define 
\begin{eq}\label{eq:A-CL-GRG}
A_{\alpha}^{\sss \mathrm{CL}} = \int_0^\infty \min\{1,z\}z^{-1/\alpha} \dif z, \quad A_{\alpha}^{\sss \mathrm{GRG}} = \int_0^\infty \frac{z^{1-1/\alpha}}{1+z} \dif z,
\end{eq}
and define $B_\alpha^{\sss \mathrm{CL}}$, $B_\alpha^{\sss \mathrm{GRG}}$, and the critical values $\lambda_c^{\sss \mathrm{CL}}$ and $\lambda_c^{\sss \mathrm{GRG}}$ identically as in \eqref{eqn:lambc-def} and  \eqref{eqn:A-B-alpha-def} with the above choices of $A_{\alpha}^{\sss \mathrm{CL}}$ and $A_{\alpha}^{\sss \mathrm{GRG}}$ respectively. 
To define the limiting object, let 
\begin{eq}\label{eq:theta-i-CL-GRG}
\Theta_i^{\sss \mathrm{CL}}(x) = \min\Big\{\frac{\cf^2i^{-\alpha}x^{-\alpha}}{\mu},1\Big\}, \quad \Theta_i^{\sss \mathrm{GRG}}(x)= \frac{\cf^2 i^{-\alpha}x^{-\alpha}}{\mu+\cf^2 i^{-\alpha}x^{-\alpha}}.
\end{eq}
Denote the graph $\rCL(\bw,p)$, $\rGRG(\bw,p)$  obtained by independently keeping each edge of the graph $\CL$ and $\GRG$ respectively. 
\begin{theorem}[Extensions to other rank-1 models] 
	\label{thm:extensions}
Under {\rm Assumption~\ref{assumption-NR}}, {\rm Theorems~\ref{thm:barely-subcrit-single-edge},~\ref{thm:main-crit}} hold for $\rCL(\bw,\percl)$ and $\rGRG(\bw,\percl)$ with $\lambda_c$ replaced by 
$\lambda_c^{\sss \mathrm{CL}}$ and $\lambda_c^{\sss \mathrm{GRG}}$ defined below \eqref{eq:A-CL-GRG} respectively, and the scaling limits given by {\rm Definition~\ref{defn:g-lambda}} with $\Theta_i(x)$ replaced by $\Theta_i^{\sss \mathrm{CL}}(x)$ and $ \Theta_i^{\sss \mathrm{GRG}}(x)$ defined in \eqref{eq:theta-i-CL-GRG}, respectively. 
\end{theorem}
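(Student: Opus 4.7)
The plan is to observe that the $\NR$-proofs of Theorems~\ref{thm:barely-subcrit-single-edge} and~\ref{thm:main-crit} rely on only three features of the edge probabilities $p_{ij}$: (i) independence of edges; (ii) the stochastic upper bound $p_{ij}\le w_iw_j/\ell_n$, which holds by definition for $\CL$ and via $x/(1+x)\le x$ for $\GRG$; and (iii) the precise asymptotic form of $p_{ij}$ in the regime where $w_i \asymp \cf i^{-\alpha} n^{\alpha}$ is a macro-hub and $w_k$ ranges over meso-hubs of weight $\Theta(n^{\rho})$. In this regime $w_iw_k/\ell_n=\Theta(1)$, which is precisely where the three models disagree; substituting the $\CL$ and $\GRG$ edge-probability formulas under the scaling $w_k = n^{\rho} x$ produces exactly the functions $\Theta_i^{\sss\mathrm{CL}}(x)$ and $\Theta_i^{\sss\mathrm{GRG}}(x)$ of~\eqref{eq:theta-i-CL-GRG} in place of the Poissonian $\Theta_i(x)$ in~\eqref{defn:lambda-ij}.

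For the barely-subcritical analogue of Theorem~\ref{thm:barely-subcrit-single-edge}, I would rerun the first- and second-moment computation for the percolated degree of hub $i$. Under Assumption~\ref{assumption-NR}, the bulk of this degree comes from light-weight vertices (where $w_iw_j/\ell_n \to 0$ and all three edge probabilities agree to leading order with $w_iw_j/\ell_n$), yielding mean degree $(1+o(1))w_i$; the meso-hub contribution is $\Theta(n^{3-\tau})$, which is strictly smaller than $w_i=\Theta(n^{\alpha})$ throughout $\tau\in(2,3)$. Hence the cluster of hub $i$ has size $(1+o(1))\cf i^{-\alpha} n^{\alpha}\percn$ for both $\CL$ and $\GRG$. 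The disjointness argument showing that distinct macro-hubs whp lie in different clusters uses only $p_{ij}\le w_iw_j/\ell_n$, and carries over verbatim.

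For the critical-window analogue of Theorem~\ref{thm:main-crit}, the central step is to count, for fixed macro-hubs $i,j$, the number $N_{ij}$ of retained two-step paths $i$--$k$--$j$ through meso-hubs $k$. Parametrising $w_k = n^{\rho} x$ and changing variables from $k$ to $x$, one obtains
\[
\expec\bigl[N_{ij}^{\text{model}}\bigr] \longrightarrow \lambda^{2}\int_0^{\infty}\Theta_i^{\text{model}}(x)\,\Theta_j^{\text{model}}(x)\,\dif x,
\]
where $\Theta_i^{\text{model}}$ is read off directly from $p_{ij}^{\text{model}}$ under the limit. Joint Poisson convergence of the family $(N_{ij})_{i,j}$ then follows from the same Chen-Stein total-variation bound used for $\NR$, which is insensitive to the exact form of $p_{ij}$. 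The resulting Durrett-Kesten-type limiting core is the graph $\cGinf^{\text{model}}(\lambda)$ with edge intensities $\lambda_{ij}^{\text{model}}$; its critical parameter, obtained by a branching-process moment calculation for this limit, is precisely $\lambda_c^{\sss\mathrm{CL}}$ or $\lambda_c^{\sss\mathrm{GRG}}$, with $A_\alpha^{\sss\mathrm{CL}}, A_\alpha^{\sss\mathrm{GRG}}$ in~\eqref{eq:A-CL-GRG} replacing $A_\alpha$ in~\eqref{eqn:lambc-def}. Contributions from paths of length $\ge 3$ and from non-meso-scale intermediaries are controlled by second-moment bounds that again use only $p_{ij}\le w_iw_j/\ell_n$, so they transfer without modification, as does the 1-neighborhood-spans-component step and the $\ell^2_{\shortarrow}$-tightness argument.

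The main obstacle I anticipate is the analogue of Proposition~\ref{prop-limit-as-finite} for the new kernels: showing that $(\sW_{\sss (i)}^{\infty,\text{model}}(\lambda))_{i\ge 1}\in\ell^2_{\shortarrow}$ almost surely for $\lambda<\lambda_c^{\text{model}}$ and that $\cGinf^{\text{model}}(\lambda)$ is almost surely connected for $\lambda>\lambda_c^{\text{model}}$. The general framework of~\cite{DK90} applies to the new kernels, but pinpointing the precise critical value requires redoing the transcendental threshold computation with $\Theta_i^{\sss\mathrm{CL}}$ or $\Theta_i^{\sss\mathrm{GRG}}$ in place of $\Theta_i$; this reduces to verifying that the threshold equation factorises through $A_\alpha^{\sss\mathrm{CL}}$ or $A_\alpha^{\sss\mathrm{GRG}}$ in exactly the same way $A_\alpha$ appears in~\eqref{eqn:lambc-def}, which I expect to be tedious but mechanical.
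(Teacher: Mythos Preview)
Your proposal is correct and matches the paper's approach essentially verbatim: the paper also observes (Remarks~\ref{rem:related-1} and~\ref{rem:related-2}) that every bound in the proofs of Theorems~\ref{thm:barely-subcrit-single-edge} and~\ref{thm:main-crit} uses only $p_{ij}\le \min\{w_iw_j/\ell_n,1\}$, while the model-specific form enters solely through the Poisson mean~\eqref{eq:stein-mean}, yielding $\Theta_i^{\sss\mathrm{CL}}$ or $\Theta_i^{\sss\mathrm{GRG}}$. Your anticipated obstacle is in fact easier than you expect: the analogue of Lemma~\ref{lem:pij-h-rel} gives $h^{\text{model}}(i,j)=(B_\alpha^{\text{model}}/B_\alpha)\,h(i,j)$, a constant multiple of the original perfectly homogeneous kernel, so the Durrett--Kesten threshold computation~\eqref{crit-vallue-deduction} carries over immediately with $B_\alpha$ replaced by $B_\alpha^{\text{model}}$, and Proposition~\ref{prop-limit-as-finite} requires no new work.
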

The proof of Theorem~\ref{thm:extensions} only requires minor adaptations on the proofs of {\rm Theorems~\ref{thm:barely-subcrit-single-edge},~\ref{thm:main-crit}}. We point out the key modifications in  Remarks~\ref{rem:related-1},~\ref{rem:related-2} and skip redoing the whole proof for Theorem~\ref{thm:extensions}. We also believe that a result analogous to Theorem~\ref{thm:supcrit-bd} holds for the giant component in $\rCL(\bw,\percn)$ and $\rGRG(\bw,\percn)$ with
\begin{eq}\label{eq:kappa-CL-GRG}
\kappa^{\sss \mathrm{CL}}(u,v) := \min\Big\{\frac{\cf^2(uv)^{-\alpha}}{\mu},1\Big\}, \quad \kappa^{\sss \mathrm{GRG}}(u,v) := \frac{\cf^2 (uv)^{-\alpha}}{\mu+\cf^2 (uv)^{-\alpha}}. 
\end{eq}
However, since the proof of Theorem~\ref{thm:supcrit-bd} is extremely delicate, we leave this as an open question. 

\paragraph*{When does the single-edge constraint matter?} In concurrent works \cite{DH21,DHL19}, we study percolation  on scale-free networks around criticality for models that allow for multi-edges such as the configuration model~\cite{DHL19} and the Norros-Reittu model~\cite{DH21},  where in the latter model, the number of edges between vertices $i$ and $j$ is $\Poi(w_iw_j/\ell_n)$. It turns out that a giant emerges in these multi-edge models when 
\begin{eq}
	\label{def-pc-CM}
	\percn = \lambda_n n^{- (3-\tau)/(\tau-1)}, \quad \text{where } \lambda_n \to \infty.
\end{eq}
Thus, the emergence of giant happens in multi-edge models for much smaller $\percn$ values. Interestingly, when $\percn = \lambda n^{-\seta}$ and $\lambda>\lambda_c$, both the single-edge and multi-edge version of the Norros-Reittu model contain a giant component of size $\sqrt{n}$, but the description of their asymptotic sizes are vastly different. In fact, we  believe that the asymptotic proportions are strictly different although we do not prove it in this article. On the other hand, if $\percn = \lambda_n n^{-\seta}$ with $\lambda_n \to \infty$, then the giants in both the single and multi-edge Norros-Reittu model turn out to have the same size~\cite{DH21}. Such differences in multi-edge versus single-edge settings are  absent in the $\tau>3$ settings.

\paragraph*{Critical windows: emergence of hub connectivity.}
The critical window changes due to the single-edge constraint as noted in the previous paragraph.
However, there are some common features. First, the component sizes are of the order $n^{\alpha} \percl$ in both the regimes. 
This is due to the fact that the main contribution to the component sizes comes from hubs and their direct neighbors.
Second, in both cases, the critical window is the regime in which hubs start getting connected. 
More precisely, the critical window is given by those values of $\pi$ such that, for any fixed $i, j\geq 1$,
	\begin{eq}
	\liminf_{n\to\infty} \PR(i, j\text{ are in the same component in the }\pi \text{-percolated graph} ) \in (0,1).
	\end{eq}
For multi-edge models, hubs are connected {\em directly} with strictly positive probability, while under the single-edge constraint, 
hubs are connected with positive probability via intermediate vertices of degree $\Theta (n^{\rho})$. In the barely subcritical regime, instead, all the hubs are in different components.
Hubs start forming the critical components as connection probability $\pi$ varies over the critical window.
Finally in the barely super-critical regime the giant component is formed, and this giant contains all the hubs. 
This feature is also observed in the $\tau \in (3,4)$ case~\cite{BHL12}. 
However, the distinction between $\tau \in (3,4)$ and $\tau\in (2,3)$ is that, for $\tau \in (3,4)$, the paths between the hubs have lengths that grow with $n$, namely as $n^{(\tau-3)/(\tau-1)}$.

\paragraph*{Open problems.} 
We believe that the results proved in this paper are universal for percolation problems on a host of scale-free random graph models. For example, we believe that our results carry over to the setting of {\em uniform} random graphs with a given degree distribution, for which the probability that hubs $i$ and $j$ are connected is close to $d_id_j/(\ell_n+d_id_j)$ (see, e.g., \cite{GHSS18}). 
Further, we believe that similar results apply to {\em site} percolation on scale-free random graphs, irrespective of whether the model has a single-edge constraint or not. In fact, for site percolation, the clusters for the erased and normal configuration models are identical, so that also their scaling behavior is expected to be identical.

The previous discussion suggests that the typical distances in large critical components are quite small, and it would be of interest to describe their distributions in more detail. Further, it would be of interest to derive the scaling limit of the {\em diameter} of the large critical components. 
Finally, we show that the critical window in the single-edge case is $\percn= \lambda n^{-(3-\tau)/2}$ for $\lambda\in (0,\lambda_c)$, which does not include the critical point $\lambda_c$. However, $\lambda_c$ {\em is} included in the critical case for the limiting graph in Proposition \ref{prop-limit-as-finite}. This raises the question what happens for $\percn=n^{-(3-\tau)/2}\lambda_c(1+\vep_n)$ for $\vep_n=o(1)$. Is barely supercritical behavior then observed, or does a second type of critical behavior emerge? We leave this as an interesting open question.

\subsection{Proof outline}
\paragraph*{The critical window (Section \ref{sec:proof-GRG}).}
\begin{figure}
    \centering
    \includegraphics[scale = 0.045]{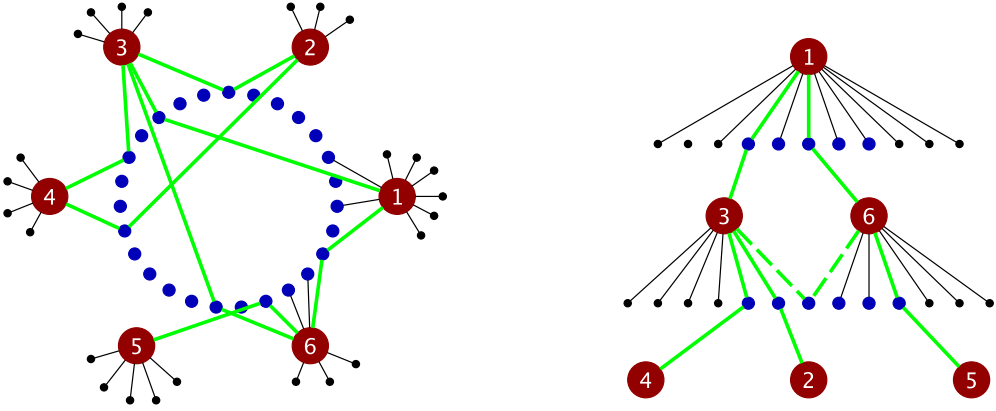}
    \caption{Visualization of the core structure of components and the exploration of the neighborhood. Red vertices indicate hubs with $w_i = \Theta(n^{\alpha})$, and blue vertices having $w_i = \Theta(n^{\rho})$ are intermediate vertices that connect hubs via~two-step paths (indicated by green edges).
    }
    \label{fig:exploration}
\end{figure} 
The key idea is that the largest critical components  correspond to \emph{connected components} containing macro-hubs (maximal weight vertices).
However since $\pi_n\to 0$ any two macro-hubs cannot be directly connected in the large network limit, rather these have non-trivial probability of being  connected via~a two-step path passing through meso-scale intermediete hubs of weight $w_i = \Theta (n^{\rho})$. 
 In fact, we can couple the hubs and these two-step connections to the infinite graph $\cGinf (\lambda)$ as in Definition \ref{defn:g-lambda} in total variation distance (see Proposition~\ref{prop:hub-connection} below).
Next, we show that the primary contribution to the component sizes come from the one-neighborhood of the subgraph consisting of hubs and their two-step connections. 
This is reflected in the fact that, when we  explore the graph starting from hubs in a breadth-first manner, we see an alternating structure with the hubs appearing in the even generations, and the odd generations consisting of vertices having weight $\Theta (n^{\alpha})$, see Figure~\ref{fig:exploration} and Proposition~\ref{prop:tail-particular-component} below. The main technique here is to use appropriate path-counting techniques (see Proposition~\ref{lem:geometric-decay} below).
Finally, we conclude the proof of Theorem~\ref{thm:main-crit} by showing that the vector of component sizes is tight in $\ell^2_{\shortarrow}$ when $\lambda<\lambda_c$ (see Proposition~\ref{prop-tight-l2}). 
The phase transition at $\lambda=\lambda_c$ is exemplified in Proposition~\ref{prop-limit-as-finite}, as   $\cGinf (\lambda)$ becomes connected for $\lambda>\lambda_c$.

\paragraph*{Supercritical regime (Sections~\ref{sec:small-giant} and \ref{sec-size-tiny-giant}).} 
The key observation is that the core of the giant component can be identified by looking at a special set of vertices $V$ consisting of vertices with $w_i = \Omega(\sqrt{n})$. Note that these vertices are  present only in the $\tau\in (2,3)$ regime (for $\tau>3$, the maximum weight is $o(\sqrt{n})$). 
Now, the subgraph restricted to $V$ is an inhomogeneous random graph with kernel approximately equal to $\kappa$ given by \eqref{defn:surv-prob}.
Using  general results from inhomogeneous random graphs \cite{BJR07}, this allows us to conclude that the graph restricted to $V$ exhibits a {\em phase transition}, and a unique giant component of approximate size $|V| \zeta^{\lambda}$  appears for some $\lambda>\lambda_1$, where $\lambda_1$ is given by the inverse of norm of a suitable integral operator. Thus, a \emph{giant component} appears inside $V$ precisely after $\lambda_1$. 
This constitutes the core of vertices, and the 1-neighborhood of this tiny giant spans almost the entire giant component. 
The quantity $\zeta^\lambda$ in \eqref{defn:zeta-lambda} should be interpreted as the size of the 1-neighborhood of the small giant. 
Thus, we see two structural transitions occurring at $\lambda = \lambda_c$ and at $\lambda = \lambda_1$. 
These values have rather different origins, namely $\lambda_c$ arises as the connectivity threshold for an inhomogeneous percolation on the integers in Proposition~\ref{prop-limit-as-finite},
and $\lambda_1$ as the critical value of an appropriate inhomogeneous random graph, 
described in terms of an operator of some branching process.
However, an explicit computation shows that, in fact, $\lambda_c = \lambda_1$, see Lemma \ref{lem-lambdacs-equal} below.

\section{Proofs for subcritical and critical regimes}
\label{sec:proof-GRG}
We begin with the proof of the critical regime,  starting in Section~\ref{sec:limit-object-finite} by proving  Proposition~\ref{prop-limit-as-finite} and in particular  showing that the asserted limiting object is finite.
In Section~\ref{sec-hub-conn}, we set up technical ingredients to study the connectivity structure between macro-hubs.
In Section~\ref{sec:key-ingredients}, we derive path-counting estimates, which are used in Section~\ref{sec:negligible-contributions} to show that if we start exploring the components containing hubs, then the total number of vertices at {\em even} distances is negligible, and the total number of vertices at \emph{large and odd distances is also} negligible (the same estimates will be useful in the sub-critical regime, which explains why we start with the critical regime first). 
This allows us to compute the size of the components containing hubs in Section~\ref{sec:component-with-hubs}. We  conclude the proof of Theorem~\ref{thm:main-crit} in Section~\ref{sec:l2-tightness} by showing that the vector of component sizes is tight in $\ell^2_{\shortarrow}$.  The subcritical regime is analyzed in Section \ref{sec-proof-barely-sub-GRG} where using path counting techniques we show that the largest components are essentially stars with hubs as centers.

\subsection{Finiteness of the limiting object for \texorpdfstring{$\lambda\leq \lambda_c$}{TEXT}: proof of Proposition \ref{prop-limit-as-finite}}
\label{sec:limit-object-finite}
Recall $\lambda_c$ from \eqref{eqn:lambc-def}, and the  constants $A_\alpha,B_\alpha$ from \eqref{eqn:A-B-alpha-def}.  
Define the symmetric function $h:(0,\infty)^2\to(0,\infty)$ by 
	\begin{equation}
	\label{eqn:hij-def}
	h(x,y):= { B_\alpha}{(x\wedge y)^{-(1-\alpha)}(x\vee y)^{-\alpha}}.
	\end{equation} 
Note that $h(i,j)$ is perfectly homogeneous of exponent $-1$, i.e., $h(tx,ty) = t^{-1}h(x,y)$ for all $x,y,t>0$.
Analogous to $\cGinf(\lambda)$ in Definition \ref{defn:g-lambda}, consider the following random graph which belongs to a general class of models studied by Durrett and Kesten~\cite{DK90}:
\begin{defn}[Inhomogeneous percolation model] \normalfont \label{defn:DK-model}
Consider the random graph $\cG_{\sss \mathrm{DK}}(\lambda)$ on $\Z_+$ where vertices $i$ and $j$ are joined with probability $\min\{\lambda^2 h(i,j), 1\}$, independently across edges.  
\end{defn}
\begin{theorem}[Previous results for $\cG_{\sss \mathrm{DK}}(\lambda)$, \cite{DK90,zhang1991power}]
\label{thm:DK-known-results}
	\begin{enumerate}[(a)]
		\item By \cite{DK90}, the random graph $\cG_{\sss \mathrm{DK}}(\lambda)$ is connected almost surely for $\lambda > \lambda_c$. 
		\item 
		For $i,j\in \bZ_+$, write $\PR_{\lambda, {\sss \mathrm{DK}}}(i\leftrightarrow j)$ for the probability that $i,j$ are connected by some path in $\cG_{\sss \mathrm{DK}}(\lambda)$. 
		By \cite{zhang1991power}, there exists $c_1<\infty$ such that for $\lambda=\lambda_c$, and for any $1\leq i<j$,
		\[\PR_{\lambda_c, {\sss \mathrm{DK}}}(i\leftrightarrow j) \leq c_1 \log(i\vee 2)/\sqrt{ij}.  \]   
	\end{enumerate}
\end{theorem}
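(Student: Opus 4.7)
The strategy is to compare $\cGinf(\lambda)$ with the Durrett--Kesten graph $\cG_{\sss \mathrm{DK}}(\lambda)$ of Definition~\ref{defn:DK-model}. The key analytic input is the two-sided estimate
\[
c_{\star}\, \lambda^2 h(i,j) \;\leq\; \lambda_{ij} \;\leq\; \lambda^2 h(i,j), \qquad \text{with}\quad \lambda_{ij}/(\lambda^2 h(i,j)) \to 1 \text{ as } j/i \to \infty,
\]
for $1 \leq i \leq j$ and some universal constant $c_{\star} \in (0,1]$. To prove this, substitute $y = \cf\theta_i x^{-\alpha}$ in \eqref{defn:lambda-ij} and use $1 - \e^{-\cf\theta_j x^{-\alpha}} \leq \cf\theta_j x^{-\alpha}$ together with $\int_0^\infty(1-\e^{-y})y^{-1/\alpha}\dif y = A_\alpha$; the upper bound then matches $h(i,j)$ exactly through the definitions of $B_\alpha$ and $\theta_i$ in \eqref{eqn:A-B-alpha-def} and \eqref{eqn:hij-def}. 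The asymptotic equivalence follows from a further rescaling $z = (i/j)^\alpha y$ and dominated convergence.

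\textbf{Part (a).} From $1 - \e^{-\lambda_{ij}} \leq \lambda_{ij} \leq \min\{\lambda^2 h(i,j), 1\}$ one obtains, by coupling via common uniforms, that $\cGinf(\lambda)$ is a subgraph of $\cG_{\sss \mathrm{DK}}(\lambda)$. For $\lambda \leq \lambda_c$, monotonicity and Theorem~\ref{thm:DK-known-results}(b) give
\[
\PR_{\lambda, \cGinf}(k \leftrightarrow l) \leq \PR_{\lambda_c, \cG_{\sss \mathrm{DK}}}(k \leftrightarrow l) \leq c_1 \log(k\vee 2)/\sqrt{kl}, \qquad 1 \leq k \leq l.
\]
Expanding and bounding
\[
\E\Big[\sum_{i\geq 1} \sW_{\sss (i)}^{\infty}(\lambda)^2\Big] = \sum_{k,l \geq 1} \theta_k \theta_l\, \PR_{\lambda, \cGinf}(k\leftrightarrow l) \leq C\sum_{1 \leq k\leq l} k^{-\alpha-1/2}l^{-\alpha-1/2}\log(k\vee 2),
\]
the inner sum $\sum_{l\geq k}l^{-\alpha-1/2}\lesssim k^{1/2-\alpha}$ (using $\alpha > 1/2$) reduces the expectation to a multiple of $\sum_{k\geq 1} k^{-2\alpha}\log(k\vee 2)$, which is finite since $2\alpha > 1$ when $\tau \in (2,3)$. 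Hence $(\sW_{\sss (i)}^\infty(\lambda))_{i\geq 1} \in \ell^2_{\shortarrow}$ almost surely.

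\textbf{Part (b).} Given $\lambda > \lambda_c$, pick $\lambda^* \in (\lambda_c, \lambda)$. Using the sharp asymptotic $\lambda_{ij}/(\lambda^2 h(i,j)) \to 1$ and $1 - \e^{-x} \geq x(1 - x/2)$ for $x \in [0,1]$, choose $M, N$ large enough that
\[
1 - \e^{-\lambda_{ij}} \;\geq\; (\lambda^*)^2 h(i,j), \qquad \text{whenever } N \leq i \leq j \text{ and } j/i \geq M.
\]
Thus the ``long-range'' subgraph of $\cGinf(\lambda)$ on $\{N, N+1, \ldots\}$ (restricted to edge pairs in $E_{\mathrm{lr}} := \{(i,j):N\leq i \leq j,\ j/i \geq M\}$) stochastically dominates the corresponding long-range subgraph of $\cG_{\sss \mathrm{DK}}(\lambda^*)$. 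I would then invoke a mild strengthening of Theorem~\ref{thm:DK-known-results}(a)---namely, that the connectivity proof of \cite{DK90} uses only edges with $j/i \geq M$ in its multi-scale renormalization, which is natural since the construction chains connections across geometric scales---to conclude that this long-range subgraph is almost surely connected. Finally, each excluded vertex $i < N$ has $\sum_{j \geq N}(1-\e^{-\lambda_{ij}}) = \infty$ since $\lambda_{ij} \asymp j^{-\alpha}$ with $\alpha < 1$, so by Borel--Cantelli $i$ is almost surely joined by an edge of $\cGinf(\lambda)$ to some vertex in the (unique) infinite cluster, yielding $\sW_{\sss (1)}^\infty(\lambda) = \infty$ and $\sW_{\sss (2)}^\infty(\lambda) = 0$.

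\textbf{Main obstacle.} The crux is part (b): a naive stochastic domination via $\lambda_{ij} \geq c_\star \lambda^2 h(i,j)$ with $c_\star < 1$ would only yield connectivity for $\lambda > \lambda_c/\sqrt{c_\star}$, not for every $\lambda > \lambda_c$. Obtaining the sharp threshold requires both the refined asymptotic $\lambda_{ij}/(\lambda^2 h(i,j)) \to 1$ as $j/i \to \infty$ and a careful verification that the Durrett--Kesten renormalization needs only long-range edges; the latter is the most delicate step and the main place where one must go beyond citing Theorem~\ref{thm:DK-known-results} as a black box.
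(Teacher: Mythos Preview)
Your proposal does not address the given statement. Theorem~\ref{thm:DK-known-results} is a citation of results from \cite{DK90} and \cite{zhang1991power} about the Durrett--Kesten graph $\cG_{\sss \mathrm{DK}}(\lambda)$; the paper does not prove it (it only verifies afterwards that the $\lambda_c$ in \eqref{eqn:lambc-def} coincides with the critical value \cite[(1.5)]{DK90}). What you have written is instead an attempted proof of Proposition~\ref{prop-limit-as-finite} about the limiting graph $\cGinf(\lambda)$, \emph{using} Theorem~\ref{thm:DK-known-results} as a black-box input.

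Viewed as a proof of Proposition~\ref{prop-limit-as-finite}, your part~(a) is correct and actually cleaner than the paper's. The paper decomposes the $\ell^2$ sum via the minimal-index components $\cC_{\sss\leq}(j)$ and then invokes the BK inequality to control three-point connection events, whereas you observe directly that $\E\big[\sum_i \sW_{\sss(i)}^\infty(\lambda)^2\big] = \sum_{k,l}\theta_k\theta_l\,\PR(k\leftrightarrow l)$ and bound this using only the two-point estimate of Theorem~\ref{thm:DK-known-results}(b). Both routes yield finiteness because $\alpha>\tfrac12$, but yours avoids the BK machinery.

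Your part~(b), however, has a genuine gap. You rely on a ``mild strengthening'' of Theorem~\ref{thm:DK-known-results}(a)---that the connectivity proof in \cite{DK90} uses only edges with $j/i\geq M$---which you neither prove nor cite, and you correctly flag this as the main obstacle. The paper sidesteps the issue entirely: it quotes Corollary~\ref{rem:extentsion-DK}, which is \cite[Extension~(a)]{DK90} and asserts that almost-sure connectivity for $\lambda>\lambda_c$ persists when the edge probabilities $r(i,j)$ satisfy $\lim_{i\to\infty}\lim_{j\to\infty} r(i,j)/(\lambda^2 h(i,j))=1$. Lemma~\ref{lem:pij-h-rel} establishes exactly this asymptotic for $p_\infty(i,j)$, so the conclusion is immediate. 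In short, the extension you need is already packaged in \cite{DK90}; there is no need to reopen their renormalization argument.
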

\noindent To see that the $\lambda_c$ in \eqref{eqn:lambc-def} gives the same 
critical value as \cite[(1.5)]{DK90}, we compute 
\begin{eq}\label{crit-vallue-deduction}
\bigg[\int_0^\infty \frac{h(1,y)}{\sqrt{y}}\dif y\bigg]^{-1} = \frac{1}{B_\alpha} \bigg[\int_0^1\frac{\dif y}{y^{\frac{1}{2}+1-\alpha}}+\int_1^\infty\frac{\dif y}{y^{\frac{1}{2}+\alpha}}\bigg]^{-1} = \frac{2\alpha -1}{4B_\alpha}  = \frac{\eta}{4B_\alpha}, 
\end{eq}
where we have used that $\alpha\in (1/2,1)$. 
The square root in \eqref{eqn:lambc-def} is due to the fact that we have used $\lambda^2$ in Definition~\ref{defn:DK-model} instead of $\lambda$ as in \cite{DK90}. The factor $\lambda^2$ arises for us, since we deal with two-step paths. We next discuss an extension where the connection probabilities are {\em asymptotically} equal to $h(i,j)$ also proven  in {\cite[Extension (a)]{DK90}}.

\begin{corollary}[Extension to asymptotic edge probabilities]\label{rem:extentsion-DK}
Consider the graph $\cG'_{\sss \mathrm{DK}}(\lambda)$ constructed by keeping an edge between $i$ and $j$ independently with probability $r(i,j)$, and let   
\begin{eq}
\lim_{i\to\infty}\lim_{j\to\infty}{r(i,j)}/{ \lambda^2h(i,j)} = 1.
\end{eq}
 Then, $\cG'_{\sss \mathrm{DK}}(\lambda)$ is connected almost surely if $\lambda > \lambda_c$. 
\end{corollary}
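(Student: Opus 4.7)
The plan is to reduce this extension to Theorem~\ref{thm:DK-known-results}(a) by exploiting the slack $\lambda > \lambda_c$ to absorb the asymptotic error in $r(i,j)/(\lambda^2 h(i,j))$. First, I would fix an auxiliary $\lambda' \in (\lambda_c, \lambda)$ and set $\epsilon = 1 - (\lambda'/\lambda)^2 > 0$. The iterated-limit hypothesis furnishes an index $i_0$ so that $L(i) := \lim_{j \to \infty} r(i,j)/(\lambda^2 h(i,j)) > 1 - \epsilon/2$ for every $i \geq i_0$, and hence a finite threshold $J(i)$ with $r(i,j) \geq \lambda'^2 h(i,j)$ whenever $j \geq J(i)$.

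On sufficiently large scales the edges of $\cG'_{\sss \mathrm{DK}}(\lambda)$ thus stochastically dominate those of $\cG_{\sss \mathrm{DK}}(\lambda')$, and I want to transfer the connectivity in Theorem~\ref{thm:DK-known-results}(a) via this domination. A natural first attempt is a diagonal extraction: build an increasing sequence $i_0 = n_1 < n_2 < \cdots$ with $n_{k+1} > \max\{J(n_1), \ldots, J(n_k)\}$, so that $\cG'_{\sss \mathrm{DK}}(\lambda)$ restricted to $S := \{n_k\}$ dominates $\cG_{\sss \mathrm{DK}}(\lambda')|_S$. One would then try to apply DK's criterion on $S$, and finally attach the remaining vertices in $\N \setminus S$ via Borel--Cantelli, using that for each fixed $v$ the sum $\sum_j r(v,j)$ diverges (since $r(v,j)$ is eventually bounded below by a positive multiple of $h(v,j) = B_\alpha v^{-(1-\alpha)} j^{-\alpha}$ for $j > v$, and $\sum_j j^{-\alpha} = \infty$ since $\alpha \in (1/2,1)$), giving each such $v$ infinitely many neighbors in the infinite tail component almost surely.

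The main obstacle is that the diagonal selection may force $(n_k)$ to grow arbitrarily fast, in which case $\cG_{\sss \mathrm{DK}}(\lambda')|_S$ may itself violate the integrability criterion \eqref{crit-vallue-deduction} underlying $\lambda_c$. I would circumvent this by bypassing the strong coupling and instead adapting the multi-scale renormalization proof of Theorem~\ref{thm:DK-known-results}(a) so that it uses only the asymptotic equivalence $r(i,j) \sim \lambda^2 h(i,j)$ at large scales. The DK renormalization is robust to $(1\pm\epsilon)$-multiplicative perturbations of edge probabilities on each scale, and the slack $\lambda' < \lambda$ absorbs these perturbations at all sufficiently large scales. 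This robustness is precisely the content of \cite[Extension~(a)]{DK90}, which I would invoke to obtain connectivity of the tail subgraph on $\{i_0, i_0+1, \ldots\}$, after which the Borel--Cantelli step attaches the initial segment $\{1, \ldots, i_0-1\}$.
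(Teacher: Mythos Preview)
Your proposal ultimately lands on the same argument as the paper: the corollary is simply a restatement of \cite[Extension~(a)]{DK90}, which the paper cites without further proof. Your surrounding discussion --- the failed diagonal-extraction attempt, then falling back on the robustness of the DK renormalization --- is reasonable scaffolding, but the paper treats the result as a direct citation and provides no independent argument.

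One small gap worth flagging: your Borel--Cantelli step for attaching the initial segment $\{1,\dots,i_0-1\}$ claims that $\sum_j r(v,j)=\infty$ for \emph{every} fixed $v$, justified by the eventual lower bound $r(v,j)\geq c\,h(v,j)$. But the iterated-limit hypothesis only guarantees this lower bound for $v\geq i_0$; it says nothing about $r(v,\cdot)$ for $v<i_0$, so in principle a small-index vertex could be isolated. This is really a deficiency of the corollary's stated hypothesis rather than of your reasoning, and in the paper's sole application (Lemma~\ref{lem:pij-h-rel}) the edge probabilities $p_\infty(i,j)=1-\e^{-\lambda_{ij}}$ are strictly positive for all $i,j$, so the issue does not arise there.
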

We next state the following lemma which allows us to compare the connection probabilities in Definitions~\ref{defn:g-lambda}~and~\ref{defn:DK-model}. 
Let $p_{\infty}(i,j):=1-\e^{-\lambda_{ij}}$, with $\lambda_{ij}$ as in \eqref{defn:lambda-ij}, be the probability that there is an edge between $i,j$ in $\cGinf (\lambda)$ in Definition~\ref{defn:g-lambda}.
\begin{lemma}[Asymptotics of two-step probabilities]
\label{lem:pij-h-rel}
For all $i,j\in \Z_+$, $p_{\infty}(i,j)\leq \lambda^2 h(i,j)$. Further,
	\eqn{ \label{lambda-ij-ratio}
	\
	\lim_{i\to\infty}\lim_{j\to \infty}{p_{\infty}(i,j)}/{\lambda ^2 h(i,j)} =1.
	}
Consequently, $\cGinf(\lambda)$ is almost surely connected for $\lambda>\lambda_c$. 
\end{lemma}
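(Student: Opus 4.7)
The plan is to study $\lambda_{ij}=\lambda^2\int_0^\infty \Theta_i(x)\Theta_j(x)\,\dif x$ directly, using the elementary bound $1-\e^{-z}\le z$ on one of the factors (the smaller one) to decouple the two indices; the bounds for $p_\infty(i,j)$ then follow from $p_\infty(i,j)=1-\e^{-\lambda_{ij}}\le \lambda_{ij}$ and the fact that $p_\infty(i,j)\sim \lambda_{ij}$ whenever $\lambda_{ij}\to 0$. The almost-sure connectivity is a direct consequence of the asymptotic via Corollary~\ref{rem:extentsion-DK}.

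Without loss of generality assume $i\le j$, so $\theta_j\le \theta_i$. Applying $\Theta_j(x)\le \cf\theta_j x^{-\alpha}$ yields
\[
\int_0^\infty \Theta_i(x)\Theta_j(x)\,\dif x \;\le\; \cf\theta_j\int_0^\infty \Theta_i(x)\,x^{-\alpha}\,\dif x,
\]
and the right-hand integral evaluates in closed form to $(A_\alpha/\alpha)(\cf\theta_i)^{1/\alpha-1}$ via the substitution $y=\cf\theta_i x^{-\alpha}$. Substituting $\theta_k=\cf k^{-\alpha}/\mu$ and using the definition $B_\alpha=\cf^{2/\alpha}A_\alpha/(\alpha\mu^{1/\alpha})$ shows that the product $\cf\theta_j\cdot (A_\alpha/\alpha)(\cf\theta_i)^{1/\alpha-1}$ equals exactly $B_\alpha i^{-(1-\alpha)}j^{-\alpha}=h(i,j)$. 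This proves $\lambda_{ij}\le \lambda^2 h(i,j)$ and hence $p_\infty(i,j)\le \lambda^2 h(i,j)$.

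For the asymptotic, I would rewrite
\[
\frac{\lambda_{ij}}{\lambda^2\,\cf\theta_j}=\int_0^\infty \Theta_i(x)\,x^{-\alpha}\cdot\frac{\Theta_j(x)}{\cf\theta_j\, x^{-\alpha}}\,\dif x.
\]
For each fixed $x>0$, the ratio $\Theta_j(x)/(\cf\theta_j x^{-\alpha})\to 1$ as $j\to\infty$, since $(1-\e^{-y})/y\to 1$ as $y\downarrow 0$. The inequality above also gives the uniform domination $\Theta_j(x)/(\cf\theta_j x^{-\alpha})\le 1$, and $\Theta_i(x)x^{-\alpha}$ is integrable by the preceding step. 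Dominated convergence then forces $\lambda_{ij}/(\lambda^2\cf\theta_j)\to (A_\alpha/\alpha)(\cf\theta_i)^{1/\alpha-1}$, which by the constant identity of Step~1 equals $h(i,j)/(\cf\theta_j)$. Thus $\lambda_{ij}/(\lambda^2 h(i,j))\to 1$ for every fixed $i$ as $j\to\infty$; since $\lambda_{ij}\to 0$ in this limit, the same iterated limit holds with $p_\infty(i,j)=1-\e^{-\lambda_{ij}}$ in place of $\lambda_{ij}$.

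Finally, the simple graph underlying the multigraph $\cGinf(\lambda)$ retains each edge $\{i,j\}$ independently with probability $r(i,j):=p_\infty(i,j)$, and two vertices lie in the same component of $\cGinf(\lambda)$ iff they are connected in this simple graph. The previous step verifies the iterated-limit hypothesis $\lim_{i\to\infty}\lim_{j\to\infty} r(i,j)/(\lambda^2 h(i,j))=1$ of Corollary~\ref{rem:extentsion-DK}, so that corollary yields the almost-sure connectivity of $\cGinf(\lambda)$ for $\lambda>\lambda_c$. The one mildly delicate point in the whole argument is the interchange of the limit in $j$ with the integral in $x$, and the uniform bound from Step~1 is precisely the dominating function that makes DCT applicable; everything else reduces to a careful chase of the constants $\cf,\alpha,\mu,A_\alpha,B_\alpha$.
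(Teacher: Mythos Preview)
Your proof is correct and follows essentially the same approach as the paper: bound $p_\infty(i,j)\le\lambda_{ij}$, apply $1-\e^{-z}\le z$ to the factor with the larger index, and evaluate the remaining integral by the substitution $y=\cf\theta_i x^{-\alpha}$ to recover $h(i,j)$ exactly. Your use of dominated convergence to justify the asymptotic is slightly more explicit than the paper's one-line appeal to ``the same calculation together with $\lim_{x\to 0}(1-\e^{-x})/x=1$,'' but the underlying argument is the same.
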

\begin{proof} Without loss of generality, let $i<j$.
We first show the first assertion on domination. 
Using $1-\e^{-x} \leq x$ for all $x>0$ twice, as well as \eqref{defn:lambda-ij}, we note that 
\begin{eq}\label{ub-rij}
	p_{\infty}(i,j) \leq \lambda_{ij} = \lambda^2 \int_0^\infty \Theta_i(x)\Theta_j(x)\dif x\leq \frac{\lambda^2 \cf^2j^{-\alpha} }{\mu}  \int_0^\infty \Big(1- \e^{-\frac{\cf^2 }{\mu} i^{-\alpha} x^{-\alpha}}\Big) x^{-\alpha}\dif x. 
\end{eq}
Substituting $z = (i^{-\alpha}x^{-\alpha}) \cf^2/\mu$ with $x = \cf^{2/\alpha} \mu^{-1/\alpha} z^{-1/\alpha} i^{-1}$ and $\dif z = -\alpha \cf^2\mu^{-1} i^{-\alpha} x^{-\alpha-1} \dif x$,  
\begin{eq}
	p_{\infty}(i,j) \leq \frac{\lambda^2}{j^\alpha} \int_0^\infty (1-\e^{-z}) \frac{\cf^{2/\alpha}}{\mu^{1/\alpha}} \frac{\dif z}{\alpha i^{1-\alpha} z^{1/\alpha}} = \frac{\lambda^2\cf^{2/\alpha}}{\alpha\mu^{1/\alpha}} \frac{1}{i^{1-\alpha} j^\alpha} \int_0^\infty \frac{1-\e^{-z}}{z^{1/\alpha}} \dif z, 
\end{eq}
and thus using \eqref{eqn:A-B-alpha-def}, it follows that $p_{\infty}(i,j)\leq \lambda^2 h(i,j)$. 
For the second assertion, note that  $\lim_{i\to\infty}\lim_{j\to\infty}\lambda_{ij} = 0$. Thus, we can use the same calculation as above, together with the fact that $\lim_{x\to 0}[1-\e^{-x}]/x = 1$ to conclude \eqref{lambda-ij-ratio}.
\end{proof}
\begin{remark}[Related rank-one models] \label{rem:related-1}
\normalfont
If we replace $\Theta_i(x)$ by $\Theta_i^{\sss \mathrm{CL}}(x)$ and $\Theta_i^{\sss \mathrm{GRG}}(x)$ from \eqref{eq:theta-i-CL-GRG} respectively, then 
Lemma~\ref{lem:pij-h-rel} holds with $h(i,j)$ replaced by $h^{\sss \mathrm{CL}}(i,j) = (B_{\alpha}^{\sss \mathrm{CL}}/B_\alpha )h(x,y)$ and $h^{\sss \mathrm{GRG}}(i,j) = (B_{\alpha}^{\sss \mathrm{GRG}}/B_\alpha )$ respectively from \eqref{eq:theta-i-CL-GRG}. 
\end{remark}

\begin{proof}[Proof of Proposition~\ref{prop-limit-as-finite}] 
Recall that $\theta_i = \cf i^{-\alpha} \mu^{-1}$. 
Using Corollary~\ref{rem:extentsion-DK}, together with Theorem~\ref{thm:DK-known-results} and Lemma \ref{lem:pij-h-rel}, the graph $\cGinf(\lambda)$ is almost surely connected for $\lambda>\lambda_c$.
Thus, Proposition~\ref{prop-limit-as-finite}~(b) follows from Theorem \ref{thm:DK-known-results}~(a) and the fact that $\sum_{i=1}^{\infty} \theta_i =\infty$. 
Next, by the upper bound in Lemma~\ref{lem:pij-h-rel} and monotonicity in $\lambda$ for the connection probabilities, it is enough to show Proposition~\ref{prop-limit-as-finite}~(a) for $\cG_{\sss \mathrm{DK}}(\lambda_c)$.
To this end, let $\cC(j)$ denote the component of vertex~$j$ in $\cG_{\sss \mathrm{DK}}(\lambda_c)$. Define 
	\begin{equation}
	\label{C-leq-def}
        \cC_{\sss \leq}(j) = \begin{cases}
        \cC(j) &\text{if }j= \min\{i\colon i\in \cC(j)\},\\
        \varnothing & \text{otherwise.}
        \end{cases}
	\end{equation}
Then it is enough to show that
	\begin{equation}
	\label{eqn:ldown-ents}
	L:=	\E\bigg(\sum_{j=1}^\infty \Big(\sum_{i\in \cC_{\sss \leq}(j)}\theta_i\Big)^2\bigg)< \infty.
	\end{equation}
Expanding the above, we obtain
\begin{eq}
L\leq \sum_{j=1}^\infty\theta_j^2 +2\sum_{i_1> i_2\geq j}\theta_{i_1}\theta_{i_2} 
\PR_{\lambda_c, {\sss \mathrm{DK}}}(i_1\leftrightarrow i_2 \text{ in  } [j,\infty), i_1, i_2\in \cC(j)):=||\mvtheta||_2 +2L_1. 
\end{eq}
Here the event $\{i_1\leftrightarrow i_2 \text{ in  } [j,\infty)\}$ is the event that there exists a path in $\cG_{\sss \mathrm{DK}}(\lambda_c)$ from  $i_1$ and $i_2$ with intermediate vertices in  $[j,\infty)$. 
Since $\mvtheta\in \ell^2_{\shortarrow}$, it is enough to show that $L_1<\infty$. 
Splitting into cases depending on whether $i_2=j$ or $i_2>j$, we get $L_1=L_2+L_3$, where
	\begin{equation}
	\label{eqn:fe3}
	L_2:=\sum_{j=1}^\infty \sum_{i_1>j} \theta_{i_1} \theta_j\PR_{\lambda_c, {\sss \mathrm{DK}}}(i_1\leftrightarrow j \text{ in  } [j,\infty))\leq \sum_{j=1}^{\infty} \frac{1}{j^{\alpha}}\sum_{i>j} \frac{1}{i^{\alpha}}	
	\frac{c_1\log{j}}{\sqrt{ij}}<\infty,
	\end{equation}
where the second inequality follows from Theorem~\ref{thm:DK-known-results}~(b) and the last inequality uses $\alpha> \tfrac{1}{2}$. The final term to bound is $L_3$.
For any $i_1, i_2 > j$ write $\set{i_1\leftrightarrow i_2}_j$ for the event $\set{i_1\leftrightarrow i_2 \text{ in  } [j,\infty)}$ in $\cG_{\sss \mathrm{DK}}(\lambda_c)$.
Next note that 
	\eqn{
	\{i_1\leftrightarrow i_2 \text{ in  } [j,\infty), i_1, i_2\in \cC(j)\}
	\subseteq \bigcup_{z\geq j} \big[\set{z\leftrightarrow j}_j\circ \set{i_1\leftrightarrow z}_j \circ \set{i_2\leftrightarrow z}_j\big],
	}
where $\set{z\leftrightarrow j}_j\circ \set{i_1\leftrightarrow z}_j \circ \set{i_2\leftrightarrow z}_j$ denotes the event that the implied connections are realized using {\em disjoint} sets of edges.
The union bound combined with the BK-inequality \cite[Theorem 3.3]{BK85} implies that, for fixed $i_1>i_2>j$,
	\begin{eq}
	\label{eqn:bk}
	&\PR_{\lambda_c, {\sss \mathrm{DK}}}(i_1\leftrightarrow i_2 \text{ in  } [j,\infty), i_1, i_2\in \cC(j))\\
	&\quad \leq \sum_{z\geq j} \PR_{\lambda_c,{\sss \mathrm{DK}}}(\set{j\leftrightarrow z}_j) \PR_{\lambda_c,	
	{\sss \mathrm{DK}}}(\set{z\leftrightarrow i_1}_j) \PR_{\lambda_c,{\sss \mathrm{DK}}}(\set{z\leftrightarrow i_2}_j)\\
	&\quad \leq \sum_{z\geq j} c_1^3 \frac{\log(j\vee 2)\log(i_1\vee z)\log(i_2\vee z)}{\sqrt{i_1i_2j^3z^3}}\leq C\frac{\log(j\vee 2)\log(i_1\vee 2)\log(i_2\vee 2)}{\sqrt{i_1i_2j^3}},
	\end{eq}
where we have once again used Theorem \ref{thm:DK-known-results}(b) for the final inequality. Thus, 
	\eqn{
	L_3\leq C \sum_{j=1}^\infty \sum_{i_1>i_2>j} \theta_{i_1} \theta_{i_2} \frac{\log(j\vee 2)\log(i_1\vee 2)\log(i_2\vee 2)}{\sqrt{i_1i_2j^3}}.
	}
Together with the statement that $\theta_i=\cf i^{-\alpha}\mu^{-1}$ with $\alpha >\tfrac{1}{2}$, we obtain that $L_3 <\infty$. This completes the proof of \eqref{eqn:ldown-ents} and hence Proposition~\ref{prop-limit-as-finite}~(a).
\end{proof}

\subsection{Connectivity structure between hubs}
\label{sec-hub-conn} 
In this section, we estimate the connection probabilities between macro-hubs. 
Recall $p_{ij}$ from~\eqref{eq:p-ij-NR-defn}. 
Henceforth, in this section we simply write $\perc$ for $\percl$.
For any $i\neq j$, let $X_{ij}$ denote the number of paths of length 2 from $i$ to $j$. 
For $v\notin \{i,j\}$, let $\xi_{ij}(v)$ denote the indicator that $\{i,v\}$ and $\{v,j\}$ create edges in $\NRp$.
Thus,
\begin{eq}\label{eq:distn-Xij}
X_{ij} = \sum_{v\neq i,j} \xi_{ij}(v), \quad \text{with } \xi_{ij}(v) \sim \ber\big(\perc^2p_{iv}p_{vj} \big),  \text{ independently.}
\end{eq}
\begin{proposition}[Hub connectivity]\label{prop:hub-connection} 
For each fixed $i,j\geq 1$, 
\begin{eq}\label{eq:X-ij-asymp}
\lim_{n\to\infty}\dTV(X_{ij},P_{ij}) = 0, \quad \text{where }P_{ij} \sim \poi(\lambda_{ij}),
\end{eq}and $\dTV(\cdot,\cdot)$ denotes the total variation distance. 
Moreover, for any fixed $K\geq 1$, $(X_{ij})_{1\leq i<j\leq K}$ are asymptotically independent. 
\end{proposition}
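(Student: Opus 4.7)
The plan is to reduce Proposition~\ref{prop:hub-connection} to a mean computation via Le Cam's inequality, identify the mean as $\lambda_{ij}$ through a Riemann-sum argument, and then upgrade to joint asymptotic independence via a decoupling truncation that removes overlaps in shared edges. For the marginal statement, \eqref{eq:distn-Xij} expresses $X_{ij}$ as a sum of \emph{independent} Bernoulli variables with success probabilities bounded by $\perc^2 \to 0$, so Le Cam's inequality yields
\[
\dTV\bigl(X_{ij},\poi(\E X_{ij})\bigr) \leq \sum_{v\neq i,j}(\perc^2 p_{iv}p_{vj})^2 \leq \perc^2\,\E X_{ij},
\]
which vanishes once $\E X_{ij}$ is shown to converge to $\lambda_{ij}$; continuity of the Poisson law in its parameter then delivers \eqref{eq:X-ij-asymp}. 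The mean itself is computed using the scaling identity $2\alpha-1=\eta$ (so $\eta/\alpha=3-\tau$) and the substitution $v=xn^{\eta/\alpha}$, under which $w_iw_v/\ell_n\to \cf^2\mu^{-1}i^{-\alpha}x^{-\alpha}$ and hence $p_{iv}\to \Theta_i(x)$; the resulting Riemann step size $\Delta x = n^{-\eta/\alpha}$ exactly cancels the prefactor $\perc^2=\lambda^2 n^{-(3-\tau)}$, so a Riemann-sum approximation combined with dominated convergence (with tail majorant $\Theta_i(x)\Theta_j(x)\leq C(ij)^{-\alpha}x^{-2\alpha}$ arising from $1-\e^{-z}\leq z$, as in the proof of Lemma~\ref{lem:pij-h-rel}) yields $\E X_{ij}\to \lambda_{ij}$.

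For the joint asymptotic independence, the key observation is that the dependence among $(X_{ij})_{i<j\leq K}$ comes solely from the shared edges incident to the hubs $1,\ldots,K$. Writing $A_k^{(v)}$ for the indicator that $\{k,v\}$ is present in $\NRp$, so that $\xi_{ij}(v)=A_i^{(v)}A_j^{(v)}$, the family $\{A_k^{(v)}\colon k\leq K,\,v\in[n]\}$ is genuinely independent. I would introduce the ``clean'' indicator
\[
\xi_{ij}^{*}(v) := A_i^{(v)}A_j^{(v)}\prod_{k\in[K]\setminus\{i,j\}}\bigl(1-A_k^{(v)}\bigr),\qquad X_{ij}^{*}:=\sum_{v\neq i,j}\xi_{ij}^{*}(v),
\]
whose crucial property is that, for each fixed $v$, at most one of $(\xi_{ij}^{*}(v))_{i<j\leq K}$ is nonzero---two simultaneous $1$'s would force three distinct vertices of $[K]$ to be adjacent to $v$. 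Hence $(X_{ij}^{*})_{i<j\leq K}$ is a sum of independent vectors valued in $\{\mathbf{0}\}\cup\{\mathbf{e}_{ij}\colon i<j\leq K\}$, to which a multivariate Le Cam bound applies and produces joint convergence to independent Poissons with means $\lim_n \E X_{ij}^{*}$. The inequality $\prod_{k\leq K}(1-\perc p_{kv})\geq 1-K\perc$ combined with the mean computation of the previous paragraph yields $\E X_{ij}^{*}\to \lambda_{ij}$, and the first-moment estimate
\[
\E(X_{ij}-X_{ij}^{*}) \leq \sum_{k\in[K]\setminus\{i,j\}}\perc^3\sum_v p_{iv}p_{jv}p_{kv} \leq K\perc\cdot\perc^2\sum_v p_{iv}p_{jv} = O(\perc)\to 0
\]
shows $X_{ij}-X_{ij}^{*}\pto 0$, completing the plan.

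The main obstacle is the uniform control of the Riemann sum for $\E X_{ij}$: the limit integrand $\Theta_i(x)\Theta_j(x)$ is integrable on $(0,\infty)$, but the corresponding summand has exponent $w_iw_v/\ell_n$ of order $n^{\eta}\to\infty$ near $v\approx 1$, so a naive pointwise-convergence argument on a fixed compact does not apply. The remedy is to split the sum at cutoffs $v\in[\epsilon n^{\eta/\alpha},Mn^{\eta/\alpha}]$ and bound the small-$v$ piece by the trivial inequality $(1-\e^{-x})(1-\e^{-y})\leq 1$, contributing $O(\epsilon\lambda^2)$, and control the large-$v$ tail via $1-\e^{-x}\leq x$ to obtain a summable Riemann estimate. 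Everything else is routine bookkeeping.
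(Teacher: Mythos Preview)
Your proposal is correct. For the marginal Poisson approximation the two arguments are essentially the same: both apply a Le Cam/Stein bound $\sum_v(\perc^2 p_{iv}p_{vj})^2$ and compute the mean by a Riemann-sum approximation with truncation at scales $\epsilon n^{3-\tau}$ and $Mn^{3-\tau}$ (the paper writes this as $\delta n^{\rho}\leq w_v\leq \delta^{-1}n^{\rho}$, which is the same range); your observation $\sum_v(\perc^2 p_{iv}p_{vj})^2\leq \perc^2\,\E X_{ij}$ is in fact slightly cleaner than the paper's, which truncates \emph{before} applying the Stein bound and then estimates $\sum w_v^4$. The genuine difference is in the asymptotic-independence step. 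The paper invokes the Poisson Cram\'er--Wold device of \cite{AHH16}: for every Bernoulli thinning $(J_{ij})$ it shows that $\sum_{i<j} J_{ij}X_{ij}^{\sss\mathrm{(II)}}(\delta)$ is approximately Poisson, which in turn requires Stein's method with a dependency graph (two summands interact only when they share the intermediate vertex $v$) and the estimate of the cross-moment $b_2$. Your route instead \emph{removes} the dependency at the outset by passing to the disjointified indicators $\xi_{ij}^*(v)$; since at most one of these can fire for any fixed $v$, the vector $(X_{ij}^*)_{i<j}$ is a sum of independent categorical increments and a multivariate Le Cam coupling (sum of independent $\{\mathbf 0\}\cup\{\mathbf e_{ij}\}$-valued vectors $\Rightarrow$ independent Poissons) applies directly, with error $\sum_v(\sum_{i<j}q_{v,ij})^2\leq K^2\perc^2\sum_v\sum_{i<j}q_{v,ij}=O(\perc^2)$. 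This is more elementary---no Cram\'er--Wold, no dependency-graph Stein---at the modest cost of the extra first-moment comparison $\E(X_{ij}-X_{ij}^*)=O(\perc)$, which you handle correctly. Both routes reach the same conclusion with comparable effort.
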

Before embarking on the proof of Proposition~\ref{prop:hub-connection}, we describe moment estimates on the weights  $\bw$. Recall that $\ell_n = \sum_{i\in [n]} w_i$, and $a_n \asymp b_n$ denotes that $a_n=b_n(1+o(1))$.

\begin{lemma}[Moment estimates]\label{lem:order-estimates}
Under {\rm Assumption~\ref{assumption-NR}},
for any fixed $a>0$, 
\begin{eq}
\#\{k: w_k \geq a\ell_n/w_i\} &\asymp  n \bigg(\frac{\cf w_i}{a \ell_n}\bigg)^{\tau -1},\qquad \sum_{w_k>a\ell_n/w_i}w_k \asymp \frac{\cf^{\tau-1} n}{1-\alpha}\bigg(\frac{w_i}{a\ell_n}\bigg)^{\tau-2}, \\
&\sum_{k: w_{k}\leq a\ell_n/w_{i}}w_k^2 \asymp \frac{\cf^{\tau-1} n}{2\alpha-1}\bigg(\frac{a\ell_n}{w_i}\bigg)^{3-\tau}, 
\end{eq} 
where the approximations are uniform over $i\in[n]$.
\end{lemma}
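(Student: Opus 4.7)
}
The plan is to use the explicit form of the weights. From Assumption \ref{assumption-NR}, the relation $[1-F](w)=Cw^{-(\tau-1)}$ inverts to $[1-F]^{-1}(u)=(C/u)^{\alpha}$, so
\begin{equation*}
w_i=\bigl(Cn/i\bigr)^{\alpha},\qquad i\in[n],
\end{equation*}
and comparing with \eqref{theta(i)-def} gives $C=\cf^{\tau-1}$. Since $k\mapsto w_k$ is strictly decreasing, the condition $w_k\geq a\ell_n/w_i$ is equivalent to $k\leq K_i$, where
\begin{equation*}
K_i:=\Bigl\lfloor Cn\bigl(w_i/(a\ell_n)\bigr)^{\tau-1}\Bigr\rfloor=\Bigl\lfloor n\bigl(\cf w_i/(a\ell_n)\bigr)^{\tau-1}\Bigr\rfloor.
\end{equation*}
This identification immediately yields the first estimate.

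For the remaining two estimates, I would reduce the sums to partial sums of the form $\sum_k k^{-\beta}$ and invoke a Riemann/integral approximation. More precisely,
\begin{equation*}
\sum_{k\colon w_k>a\ell_n/w_i}w_k=(Cn)^{\alpha}\sum_{k=1}^{K_i}k^{-\alpha},\qquad \sum_{k\colon w_k\leq a\ell_n/w_i}w_k^{2}=(Cn)^{2\alpha}\sum_{k=K_i+1}^{n}k^{-2\alpha}.
\end{equation*}
For $\alpha\in(1/2,1)$ (equivalently $\tau\in(2,3)$), Euler--Maclaurin (or direct integral comparison) gives
\begin{equation*}
\sum_{k=1}^{K}k^{-\alpha}=\frac{K^{1-\alpha}}{1-\alpha}+O(1),\qquad \sum_{k=K}^{\infty}k^{-2\alpha}=\frac{K^{1-2\alpha}}{2\alpha-1}+O(K^{-2\alpha}),
\end{equation*}
as $K\to\infty$. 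Plugging in $K=K_i$ and using the identities $(\tau-1)(1-\alpha)=\tau-2$ and $(\tau-1)(1-2\alpha)=\tau-3$, together with $C=\cf^{\tau-1}$, yields the leading constants $\cf^{\tau-1}/(1-\alpha)$ and $\cf^{\tau-1}/(2\alpha-1)$ asserted in the lemma after straightforward algebra.

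The only subtle point is the claim that the approximations are \emph{uniform} in $i\in[n]$. The error terms in the Euler--Maclaurin expansions depend only on $K_i$, and the $(1+o(1))$-factor in $\asymp$ comes from the ratio of the next-order term to the leading term, which is $O(1/K_i)$ in the first two estimates and $O(1/K_i)$ in the third. Hence uniformity holds for those $i$ with $K_i\to\infty$, i.e., essentially those with $w_i\gg n^{\rho}$ (equivalently $i\ll n$). For indices with $K_i$ bounded, both sides of the asymptotic relation are bounded, so the estimates continue to hold in the sense of a uniform $(1+o(1))$-factor after absorbing edge cases into the error. This uniformity bookkeeping is the one mildly delicate point, but it is purely arithmetic: all three bounds boil down to exact formulas for $w_i$ together with standard partial-sum asymptotics.
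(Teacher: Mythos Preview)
Your proposal is correct and follows essentially the same approach as the paper: invert the explicit weight formula $w_k=\cf(n/k)^{\alpha}$ to identify the threshold index $K_i=n(\cf w_i/(a\ell_n))^{\tau-1}$, then reduce the two sums to partial sums $\sum k^{-\alpha}$ and $\sum k^{-2\alpha}$ and apply integral comparison. The paper's proof is in fact terser than yours---it does not spell out the Euler--Maclaurin remainder or the uniformity bookkeeping---so your added care on those points is fine but not a different route.
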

\begin{proof}
The first approximation follows from  \eqref{theta(i)-def} by noting that 
\begin{eq}\label{eq:larger-than-elln-order}
w_k \geq \frac{a\ell_n}{w_i} \iff \cf\bigg(\frac{ n}{k}\bigg)^{\alpha} \geq   \frac{a\ell_n}{w_i} \iff k\leq   n \bigg(\frac{\cf w_i}{a\ell_n}\bigg)^{\tau -1}.
\end{eq}
Moreover, 
\begin{eq}
\sum_{k:w_k>a\ell_n/w_i}w_k &= \cf n^\alpha \sum_{k<  n(\cf w_i/a\ell_n)^{\tau-1}} k^{-\alpha} \\
&\asymp \frac{\cf n^{\alpha}}{1-\alpha} \bigg( n\Big(\frac{\cf w_i}{a
\ell_n}\Big)^{\tau-1}\bigg)^{1-\alpha} \asymp  \frac{\cf^{\tau-1}  n}{1-\alpha}\bigg(\frac{w_i}{a\ell_n}\bigg)^{\tau-2},
\end{eq}
and
\begin{eq}
\sum_{k: w_{k}\leq a\ell_n/w_{i}}w_k^2 &= \cf^{2}n^{2\alpha} \sum_{k\geq  n(\cf w_i/a \ell_n)^{\tau-1}} k^{-2\alpha} 
\\&
\asymp\frac{\cf^{2} n^{2\alpha}}{2\alpha-1} \bigg(\frac{a\ell_n}{\cf w_i}\bigg)^{3-\tau}  n^{1-2\alpha}\asymp \frac{\cf^{\tau-1} n}{2\alpha-1}\bigg(\frac{a\ell_n}{w_i}\bigg)^{3-\tau},
\end{eq}
where the approximations are uniform over $i\in [n]$.
Thus, the proof follows. 
\end{proof}

\begin{proof}[Proof of Poisson approximation in  Proposition~\ref{prop:hub-connection}]
We first prove the Poisson approximation in \eqref{eq:X-ij-asymp}, followed by the asserted asymptotic independence. Fix $\delta>0$. Recall $\rho = 1-\alpha$.
We start by splitting the sum in \eqref{eq:distn-Xij} over three sets $\{v: w_v < \delta n^{\rho}\}$, $\{v: \delta n^{\rho}\leq w_v \leq \delta^{-1} n^{\rho}\}$ and $\{v: w_v > \delta^{-1} n^{\rho}\}$. 
Let us denote these three partial sums by $X_{ij}^{\sss \rm\mathrm{(I)}}(\delta)$, $X_{ij}^{\sss \rm\mathrm{(II)}}(\delta)$ and $X_{ij}^{\sss \rm\mathrm{(III)}}(\delta)$ respectively. 
Now, using Lemma~\ref{lem:order-estimates}, 
\begin{eq} \label{hub-connection-term-1}
\E[X_{ij}^{\sss \rm\mathrm{(I)}}(\delta)] &\leq \frac{w_iw_j\perc^2}{\ell_n^2} \sum_{v:w_v<\delta n^{\rho}}w_v^2 \leq C \delta^{3-\tau}n^{2\alpha - 2+1+(3-\tau)\rho}\perc^2 \leq C\delta^{3-\tau}, \\ 
\E[X_{ij}^{\sss \rm\mathrm{(III)}}(\delta)] &\leq \perc^2 \times \#\{v:w_v>\delta^{-1}n^{\rho}\} =C \delta^{\tau-1}.
\end{eq}
For non-negative integer-valued random variables $X,Y,Z$, with $X,Y$ being independent, by the triangle inequality,
\begin{align}
&\dTV(X+Y, Z)  = \sum_{k = 0}^\infty \big|\PR(X+Y = k) - \PR(Z=k)\big| \notag\\
& \leq \sum_{k = 0}^\infty \big|\PR(X = k) - \PR(Z=k)\big| +\sum_{k = 0}^\infty \big|\PR(X = k) - \PR(X=k,Y=0)\big| +\sum_{k = 0}^\infty  \PR(X=k,Y\geq 1)  \notag\\
& \leq \dTV(X,Z) + 2\PR(Y\geq 1) \leq  \dTV(X,Z) + 2\E[Y]\label{eq:total-var-bound},
\end{align}
where the last step uses Markov's inequality. 
Using \eqref{hub-connection-term-1}~and~\eqref{eq:total-var-bound}, in order to prove \eqref{eq:X-ij-asymp}, it suffices to show that
\begin{eq}\label{hub-connection-term-2}
\lim_{\delta\to 0}\lim_{n\to\infty}\dTV(X_{ij}^{\sss \rm\mathrm{(II)}}(\delta),P_{ij}) = 0, \quad \text{where}\quad P_{ij} \sim \poi(\lambda_{ij}).
\end{eq}
Define 
\begin{eq}\label{hub-connection-term-poisson}
P_{ij}^{\sss(n)}(\delta) \sim \poi(\lambda_{ij}^{\sss(n)}(\delta)), \quad \text{where }\lambda_{ij}^{\sss(n)}(\delta) = \sum_{v: w_v\in [\delta n^{\rho},\delta^{-1}n^{\rho}]} \perc^2 p_{iv}p_{vj}.
\end{eq}
Using standard inequalities from Stein's method \cite[Theorem 2.10]{RGCN1}, it follows that, as $n\to\infty$,
\begin{eq}\label{pois-total-var-stein}
&\dTV\big(X_{ij}^{\sss \rm\mathrm{(II)}}(\delta),P_{ij}^{\sss(n)}(\delta))\big) \leq \sum_{v: w_v\in [\delta n^{\rho},\delta^{-1}n^{\rho}]} \big(\perc^2 p_{iv}p_{vj}\big)^2
\leq Cn^{4\alpha - 4}\perc^4 \sum_{v: w_v\in [\delta n^{\rho},\delta^{-1}n^{\rho}]}w_v^4
 \\
 &\leq \frac{C}{\delta^2}n^{2\alpha - 2}\perc^4\sum_{v: w_v\in [\delta n^{\rho},\delta^{-1}n^{\rho}]}w_v^2= \frac{C}{\delta^2} n^{2\alpha-2}n^{-2(3-\tau)}n^{1+ (3-\tau) \rho} = \frac{Cn^{-(3-\tau)}}{\delta^2} \to 0.
\end{eq}
Further, 
\begin{eq}\label{eq:stein-mean}
	\lambda_{ij}^{\sss(n)}(\delta) 
	&\asymp \perc^2  \sum_{k=\delta^{\tau-1}n^{3-\tau}}^{\delta^{-(\tau-1)}n^{3-\tau}}  \Big(1- \e^{-\frac{\cf^2}{\mu} n^{\frac{3-\tau}{\tau -1}} i^{-\alpha} k^{-\alpha}}\Big)\Big(1- \e^{-\frac{\cf^2}{\mu} n^{\frac{3-\tau}{\tau -1}} j^{-\alpha} k^{-\alpha}}\Big) \\
	&  \asymp \perc^2  \int_{\delta^{\tau-1}n^{3-\tau}}^{\delta^{-(\tau-1)}n^{3-\tau}} \Big(1- \e^{-\frac{\cf^2}{\mu} n^{\frac{3-\tau}{\tau -1}} i^{-\alpha} y^{-\alpha}}\Big)\Big(1- \e^{-\frac{\cf^2}{\mu} n^{\frac{3-\tau}{\tau -1}} j^{-\alpha} y^{-\alpha}}\Big)\dif y\\
	& \asymp\lambda^2\int_{\delta^{\tau-1}}^{\delta^{-(\tau-1)}} \Big(1- \e^{-\frac{\cf^2}{\mu}  i^{-\alpha} x^{-\alpha}}\Big)\Big(1- \e^{-\frac{\cf^2}{\mu} j^{-\alpha} x^{-\alpha}}\Big) \dif x:= \lambda_{ij}(\delta).
	\end{eq}
As $\delta\to 0$, we have $\lambda_{ij}(\delta) \to \lambda_{ij}$.  Since the total variation distance between two Poisson distributions is at most the difference of their means, we conclude  \eqref{hub-connection-term-2}, and hence the proof of \eqref{eq:X-ij-asymp} also follows.
\end{proof}

\begin{remark}[No hubs connected via two hop paths in subcritical regime]\normalfont 
When $\percn = \lambda_n n^{-\seta}$ with $\lambda_n = o(1)$, we can use identical argument as above to show that for any $K\geq 1$,
\begin{eq}\label{eq:sub-hub-con}
\lim_{n\to\infty}\PR(X_{ij} \geq 1\text{ for some }1\leq i<j\leq K) =0.
\end{eq}
Indeed, the bounds in \eqref{hub-connection-term-1},  \eqref{pois-total-var-stein} and \eqref{eq:stein-mean} would all tend to zero as $n\to\infty$.
\end{remark}

\begin{proof}[Proof of asymptotic independence in  Proposition~\ref{prop:hub-connection}]
Fix $K\geq 1$. 
Note that for pairs $(i,j)$, and $(k,l)$ with $\{i,j\}\cap \{k,l\} = \varnothing$, $X_{ij}$ and $X_{kl}$ are independent due to the independence of the occupancy of edges in $\NRp$.
The only dependence between $X_{ij}$ and $X_{ik}$ arises due to potential connections $(i,v)$, $(v,j)$ and $(v,k)$. To simplify notation we give a full proof for the asymptotic independence of $(X_{12}, X_{13})$, and a minor adaptation of this proof holds for any general $K\geq 1$.  Fix $\delta >0$ and let $V_n(\delta) = \{v\colon \delta n^{\rho}\leq w_v \leq \delta^{-1} n^{\rho}\}$. Let $X_{12}^{\sss \rm\mathrm{(II)}}(\delta), X_{13}^{\sss \rm\mathrm{(II)}}(\delta)$ be the random variables as in \eqref{hub-connection-term-2}. Recall the definition of the constant $\lambda_{ij}(\delta)$ from \eqref{eq:stein-mean}.  Arguing as in the convergence of the marginals, it is enough to prove that as $n\to\infty$
\begin{eq}\label{eq:joint-poisson}
\dTV[(X_{12}^{\sss \rm\mathrm{(II)}}(\delta), X_{13}^{\sss \rm\mathrm{(II)}}(\delta)), (P_{12}(\delta), P_{13}(\delta)) ]\to 0,
\end{eq}
where $P_{12}(\delta), P_{13}(\delta)$ are  independent Poisson random variables with means $\lambda_{12}(\delta), \lambda_{13}(\delta)$ respectively. 

We need some additional  notation to prove this proposition. For $v\in V_n(\delta)$ and for $i\in \set{1,2,3}$, let $I_{iv}$ be the indicator representing presence of edge $\set{i,v}$ in $\NRp$, so that the two hop indicator equals $\xi_{ij}(v) = I_{iv} I_{jv}$. Fix two constants $p,q\in [0,1]$ and for each $v\in V_n(\delta)$, let $J_{2v}, {J}_{3v}$ be Bernoulli $p,q$ random variables, respectively independent of each other and all the other indicator random variables. Here the constants depend on $p,q$.
Write 
\begin{equation}
	R_n:= \sum_{v\in V_n(\delta)} [J_{2v} I_{1v}I_{2v} + {J}_{3v} I_{1v}I_{3v}] = \sum_{\beta \in \cI} \indicwo{\beta},
\end{equation}
where the index set $\cI$ is given by $\cI = \cup_{v\in V_n(\delta)}\set{(v,1,2), (v,1,3)}$ and $\indicwo{\beta} = J_{kv}I_{1 v}I_{kv}$ for $\beta =(v, 1, k)$. 
Our main tool is the Poisson Cram\'er-Wold device in \cite[Corollary 2.2]{AHH16}, which implies that in order to prove \eqref{eq:joint-poisson}, it is enough to show that, for every $p,q\in [0,1]$, as $n\to\infty$, 
 \begin{equation}
	 \label{eqn:indep-enough}
 	\dTV(R_n, P)\to 0, \qquad P \sim  \Poi(p\lambda_{12}(\delta) + q\lambda_{13}(\delta)). 
 \end{equation}
Letting $P^{\sss(n)}$ be a Poisson random variable with mean $p\lambda_{12}^{\sss(n)}(\delta) + q\lambda_{13}^{\sss(n)}(\delta)$ with $\lambda_{ij}^{\sss(n)}(\delta)$ as in~\eqref{eq:stein-mean}, it is enough to show that $\dTV(R_n, P^{\sss(n)})\to 0$. 
We aim to apply Poisson approximation via Stein's method~\cite[Theorem 6.23]{JLR00}. 
For any $\beta_1 = (v_1,1,k_1) \in \cI$ and $\beta_2 = (v_2,1,k_2) \in \cI$, $I_{\beta_1}$ and $I_{\beta_2}$ are not independent only if $v_1 = v_2$. 
Thus, \cite[Theorem 6.23]{JLR00} implies
\begin{equation}
	\label{eqn:stein-bound}
	\dTV(W, P^{\sss(n)}) \leq \sum_{\beta_1\in \cI} (\E[\indicwo{\beta_1}])^2 + \sum_{\beta_1,\beta_2\in \cI: v_1 = v_2, \beta_1\neq \beta_2}	\E[\indicwo{\beta_1}\indicwo{\beta_2}] := 2(b_1+b_2).  
\end{equation}
Thus it is enough to show $b_1, b_2\to 0$ as $n\to\infty$. Indeed, using $p_{ij} \leq \percn w_i w_j/\ell_n$, 
\begin{eq}
b_1\leq C \sum_{v_1\in V_n(\delta), k_1 = 2,3}\bigg(\pi_c^2  \frac{w_1w_{v_1}^2w_{k_1}}{\ell_n^2}\bigg)^2\leq  Cn^{4\alpha - 4}\perc^4 \sum_{v_1\in \cN_n(\delta)}w_{v_1}^4 \to 0,
\end{eq}
where the last step uses \eqref{pois-total-var-stein}. 
Similarly, 
\begin{eq}
b_2 &\leq C  \sum_{v_1\in V_n(\delta),k_1,k_2 = 2,3}  \pi_c^3 \frac{w_1w_{v_1}^3w_{k_1}w_{k_2}}{\ell_n^3}\leq C\frac{\perc^3w_1^3}{\ell_n^3} \sum_{v_1:w_{v_1}\leq \delta^{-1} n^{\rho}} w_{v_1}^3 \\
 &\leq C\frac{\perc^3w_1^3 n^{\rho}}{\delta \ell_n^3} \sum_{v:w_v\leq \delta^{-1}n^{\rho}} w_v^2 \leq C \perc^3 n^{3\alpha -3 +\rho +1+(3-\tau)\rho} = 	O(\perc).
\end{eq}
This completes the proof of \eqref{eqn:stein-bound} and thus we have proven the asymptotic independence stated in Proposition~\ref{prop:hub-connection} for $K=2$.
The proof of the asymptotic independence in Proposition~\ref{prop:hub-connection} for general $K$ follows the same line of argument, now using a $K(K-1)/2$-dimensional version of the Poisson Cram\'er-Wold device in \cite[Corollary 2.2]{AHH16}. We omit further details.
\end{proof}

\begin{remark}[Related rank-one models] \label{rem:related-2}\normalfont 
The proof of Proposition~\ref{prop:hub-connection} extends verbatim for the Chung-Lu model and generalized random graph with $\lambda_{ij}$ replaced by $\lambda_{ij}^{\sss \mathrm{CL}}(x)$ and $\lambda_{ij}^{\sss \mathrm{GRG}}(x)$, respectively, where 
\begin{eq}
\lambda_{ij}^{\sss \mathrm{CL}}(x) = \lambda^2 \int_0^\infty \Theta_i^{\sss \mathrm{CL}}(x) \Theta_j^{\sss \mathrm{CL}}(x) \dif x, \quad \lambda_{ij}^{\sss \mathrm{GRG}}(x) = \lambda^2 \int_0^\infty \Theta_i^{\sss \mathrm{GRG}}(x) \Theta_j^{\sss \mathrm{GRG}}(x) \dif x,
\end{eq}
where $\Theta_i^{\sss \mathrm{CL}}(x)$ and $ \Theta_i^{\sss \mathrm{GRG}}(x)$ are defined in \eqref{eq:theta-i-CL-GRG}. 
Indeed, all the asymptotic bounds only use the fact that $p_{uv} \leq \{w_uw_v/\ell_n,1\}$. The mean of the Poisson approximation changes depending on the model due to the computations in \eqref{eq:stein-mean}. 
\end{remark}

\subsection{Path-counting estimates} 
\label{sec:key-ingredients}
In this section, we prove path-counting estimates for $\NRp$ for $\lambda <\lambda_c$.  
Such estimates will play a pivotal role in showing that, when we start exploring from a hub, most vertices are found within a finite distance (see Proposition~\ref{prop:tail-particular-component} in the next Section). 
Similar estimates arise also in the context of  preferential attachment model for example~\cite[Lemma 2.4]{DHH10}.  
For two distinct vertices $i\neq j\in [n]$, let $f_{2k}(i,j)$ denote the probability that there exists a path of length $2k$ from $i$ to $j$ in $\NRp$. 
\begin{proposition}[Connection probabilities at even distance]\label{lem:geometric-decay}
Fix $\varepsilon>0$ and $\lambda < \lambda_c$. 
There exists $n_0 = n_0(\varepsilon)\geq 1$ and $b= b(\varepsilon)\in (\frac{1}{2},\alpha)$ such that for all $n\geq n_0$, $k\geq 1$, and $i\neq  j\in [n]$, 
\begin{eq}\label{eq:even-distance-path-count}
f_{2k}(i,j) \leq  (1+\varepsilon)^{2k}\Big(\frac{\lambda}{\lambda_c} \Big)^{2k}\frac{1}{(i\wedge j)^{1-b}(i\vee j)^b},
\end{eq}
where $\lambda_c$ is defined by \eqref{eqn:lambc-def}. 
\end{proposition}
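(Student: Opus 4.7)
The plan is induction on $k$, exploiting the eigenvalue structure of the integral kernel $h$ from \eqref{eqn:hij-def}. Write $H_b(i,j) := (i\wedge j)^{-(1-b)}(i\vee j)^{-b}$ for the target profile. A direct computation (splitting the sum at $v=i$ and using the sharpness in \eqref{crit-vallue-deduction}) gives the exact eigenvalue identity
\[
\sum_{v\ge 1} h(i,v)\,v^{-b} = \mu(b)\, i^{-b},\qquad \mu(b) := \frac{B_\alpha(2\alpha-1)}{(\alpha-b)(\alpha+b-1)},
\]
with $\mu(\tfrac12)=1/\lambda_c^2$. Since $\lambda<\lambda_c$, continuity of $\mu$ lets me select $b=b(\varepsilon)\in(\tfrac12,\alpha)$ so that $\lambda^2\mu(b)\le (1+\varepsilon/2)^2(\lambda/\lambda_c)^2$.

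For the base case $k=1$, the union bound gives $f_2(i,j)\le\E[X_{ij}]=\sum_v \pi_n^2 p_{iv}p_{vj}$. Following the Riemann-sum approximation of \eqref{eq:stein-mean} together with Lemma~\ref{lem:order-estimates} for uniform weight control, this sum is at most $(1+\varepsilon/2)\lambda^2 h(i,j)$ uniformly in $i\ne j\in[n]$ for $n$ large. The algebraic identity $h(i,j)=B_\alpha[(i\wedge j)/(i\vee j)]^{\alpha-b}H_b(i,j)\le B_\alpha H_b(i,j)$ (valid since $b<\alpha$) then gives the base case, using $\lambda^2 B_\alpha=(\eta/4)(\lambda/\lambda_c)^2$ and $\eta<1$. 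For the inductive step, decompose a $2k$-path at its second vertex; the first two edges are disjoint from the remaining $2k-2$, so the BK inequality \cite[Theorem 3.3]{BK85} yields $f_{2k}(i,j)\le\sum_v f_2(i,v)f_{2k-2}(v,j)$. Plugging in the refined base bound $f_2(i,v)\le(1+\varepsilon/2)\lambda^2 h(i,v)$ together with the inductive hypothesis, the step closes provided one can show the \emph{key kernel estimate}
\[
\sum_v h(i,v)\, H_b(v,j) \le (1+\varepsilon/2)\,\mu(b)\, H_b(i,j)
\]
uniformly in $i\ne j\in[n]$.

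The main obstacle is precisely this key kernel estimate with the sharp constant $\mu(b)$. A naive case analysis (over $v\le i\wedge j$, $i\wedge j<v\le i\vee j$, and $v>i\vee j$) produces the constant $B_\alpha[2/(\alpha+b-1)+1/(\alpha-b)]$, which at $b=\tfrac12$ equals $3/(2\lambda_c^2)$, overshooting $\mu(\tfrac12)=1/\lambda_c^2$ by a factor $\tfrac32$. The loss comes from the near-diagonal regime $v\sim i\vee j$ when $i\sim j$. To recover the sharp constant I would (i) in the tail region $v>i\vee j$ use $H_b(v,j)=j^{b-1}v^{-b}$ exactly and apply the eigenvalue identity $\sum_v h(i,v)v^{-b}=\mu(b)i^{-b}$ directly in that region, and (ii) control the two shorter regimes by iterating one step at a time under a modified, sharper inductive hypothesis (e.g.\ a weighted $\ell^2$-type control of the form $\sum_j f_{2k}(i,j)\,j^{-b}\le K^k i^{-b}$ that closes under the exact eigenvalue identity), converting back to the pointwise $H_b$-bound at the final step via symmetry in $(i,j)$ and a Cauchy--Schwarz/interpolation argument. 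The $(1+\varepsilon)^{2k}$ factor in the statement is then there precisely to absorb both the $(1+\varepsilon/2)$ loss per step in the base-case approximation and the residual $\mu(b)/\mu(\tfrac12)=1+O((b-\tfrac12)^2)$ slack inherited from the choice of $b>\tfrac12$.
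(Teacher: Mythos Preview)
Your inductive framework and identification of the key kernel estimate
\[
\sum_{v} h(i,v)\,H_b(v,j) \le (1+\varepsilon/2)\,\mu(b)\, H_b(i,j)
\]
are exactly right, and match the paper's approach. Where you go wrong is in declaring this estimate an obstacle. The ``naive case analysis'' you describe actually \emph{does} deliver the sharp constant $\mu(b)=B_\alpha f(b)$ with $f(b)=\tfrac{1}{\alpha+b-1}+\tfrac{1}{\alpha-b}$, provided you evaluate the three regional integrals exactly instead of bounding them crudely.

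Concretely, for $i<j$ and with integral comparison: the region $v<i$ contributes $\tfrac{B_\alpha}{\alpha+b-1}\cdot\tfrac{1}{i^{1-b}j^b}$; the region $i<v<j$ contributes $\tfrac{B_\alpha}{\alpha-b}\big[\tfrac{1}{i^{1-b}j^b}-\tfrac{1}{i^{1-\alpha}j^\alpha}\big]$ (keep both endpoints); and the region $v>j$ contributes $\tfrac{B_\alpha}{\alpha+b-1}\cdot\tfrac{1}{i^{1-\alpha}j^\alpha}$. Summing, the result regroups as
\[
B_\alpha f(b)\, H_b(i,j) \;+\; B_\alpha\Big[\tfrac{1}{\alpha+b-1}-\tfrac{1}{\alpha-b}\Big]\frac{1}{i^{1-\alpha}j^\alpha},
\]
and the second piece is \emph{negative} precisely when $b>\tfrac12$. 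This is the paper's computation verbatim. Your overshoot constant $B_\alpha\big[\tfrac{2}{\alpha+b-1}+\tfrac{1}{\alpha-b}\big]$ arises because you discarded the $v=j$ endpoint in the middle region (extending the integral to $\infty$) and then separately bounded $\tfrac{1}{i^{1-\alpha}j^\alpha}\le H_b(i,j)$ in the tail; retaining the exact boundary term instead exposes the cancellation with the tail region.

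So your proposed workaround via a weighted $\ell^2$-type inductive hypothesis and Cauchy--Schwarz is unnecessary, and as written it is only a sketch (``I would\ldots''), not a proof. The fix is simply to redo the three-region computation carefully and observe that the cross term has the right sign.
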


\begin{proof}
Fix $\vep>0$. Without loss of generality, let $i<j$ so that $w_i> w_j$. 
Let us first relate the expected number of two-step connections to $h$ given in \eqref{eqn:hij-def}. 
We achieve this by showing that there exists $n_0 = n_0(\varepsilon)\geq 1$ such that for all $n\geq n_0$ and $i\neq j, i,j\in [n]$,
\begin{equation}\label{2hop-path-prob-pref-attachment-n}
p_{n}(i,j):=  \sum_{v\in [n]\setminus \{i,j\}} \perc^2 p_{iv} p_{vj} \leq (1+\varepsilon) \lambda^2 h(i,j).
\end{equation}
Using that $1-\e^{-x}\leq x$ for all $x>0$ and $\ell_n = (1+o(1))n\mu$, we can  bound
\begin{eq}\label{p-n-ij-ub-1}
p_n(i,j) &\leq (1+\varepsilon)\lambda^2 n^{-(3-\tau)} \frac{\cf^2n^{2\alpha-1}}{ \mu j^{\alpha}} \sum_{v\in [n]} \Big(1- \e^{-\frac{\cf^2}{\mu} n^{\frac{3-\tau}{\tau -1}} i^{-\alpha} v^{-\alpha}}\Big)v^{-\alpha}\\
&\leq (1+\varepsilon)\lambda^2  \frac{\cf^2}{ \mu j^{\alpha}} \int_0^\infty \Big(1- \e^{-\cf^2 i^{-\alpha} z^{-\alpha}/\mu}\Big)z^{-\alpha} \dif z.
\end{eq}
The final term is identical to the right hand side of \eqref{ub-rij}, and using the exact same argument following \eqref{ub-rij}, the proof of \eqref{2hop-path-prob-pref-attachment-n} follows.

We next investigate more general even-length paths. For any $k\geq 1$, define $\mathcal{I}_k = \mathcal{I}_k(i,j):=  \{\bld{v}=(v_j)_{j=0}^k\colon v_0 = i, v_{k} = j, \text{ and }v_j\text{'s are distinct}\}$, i.e., the set of possible self-avoiding paths of length $k$ started at $i$ and ending at $j$. 
Using \eqref{2hop-path-prob-pref-attachment-n},
\begin{eq}
f_{2k}(i,j) \leq  \big((1+\varepsilon)\lambda^2\big)^k\sum_{\bld{v}\in \mathcal{I}_k} \prod_{r=1}^k h(v_{r-1},v_r).
\end{eq}
Using $\lambda_c = \sqrt{\eta/4B_\alpha}$ from \eqref{eqn:lambc-def}, it is enough to show that, for any $k\geq 1$, 
\begin{eq}\label{eq:enough-k-path-count}
\gamma_{k} (i,j):=\sum_{\bld{v}\in \mathcal{I}_k} \prod_{r=1}^k \frac{1}{(v_{r-1}\wedge v_r)^{1-\alpha}(v_{r-1}\vee v_r)^{\alpha}} \leq (1+\varepsilon)^k\Big(\frac{4}{\eta} \Big)^k \frac{1}{(i\vee j)^{1-b}(i\wedge j)^b}.
\end{eq}
We use induction on $k$. For $k=1$, 
\begin{eq}
\gamma_1(i,j) = \frac{1}{i^{1-\alpha} j^\alpha} = \frac{1}{i} \Big(\frac{i}{j}\Big)^\alpha<\frac{1}{i} \Big(\frac{i}{j}\Big)^{b} < (1+\varepsilon)\frac{4}{\eta} \frac{1}{(i\vee j)^{1-b}(i\wedge j)^b},
\end{eq}
where the third step follows using $i<j$ and $b<\alpha$, and the final step follows using $\eta<4$.

Next, let us indicate the choice of $b$ that works. 
For $b\in (1-\alpha,\alpha)$, let 
    \[f(b) = \frac{1}{\alpha+b-1}+\frac{1}{\alpha-b},
    \]
which has a unique minimum at $b=\frac{1}{2}$ and $f(\frac{1}{2}) = \frac{4}{\eta}$. Since $f$ is continuous, we can choose $b = b(\varepsilon)>\frac{1}{2}$ such that $f(b) <(1+\varepsilon) \frac{4}{\eta}$. 
This will be the $b$ that we work with from now on.

The induction step for proving \eqref{eq:enough-k-path-count} is given by 
\begin{eq}
&\gamma_{k+1}(i,j) \leq \sum_{v < i}\frac{1}{i^{\alpha} v^{1-\alpha}} \gamma_k(v,j)+\sum_{v > i}\frac{1}{i^{1-\alpha} v^{\alpha}} \gamma_k(v,j)\\
&\leq (1+\varepsilon)^k\Big(\frac{4}{\eta} \Big)^k \bigg[\frac{1}{i^{\alpha} j^b}\sum_{v < i} \frac{1}{v^{2 -\alpha-b}}+ \frac{1}{i^{1-\alpha} j^b}\sum_{i<v <j} \frac{1}{v^{1-b +\alpha}}+\frac{1}{i^{1-\alpha} j^{1-b}}\sum_{v >j} \frac{1}{v^{b +\alpha}}\bigg] \\
& \leq (1+\varepsilon)^k\Big(\frac{4}{\eta} \Big)^k \bigg[\frac{1}{i^{\alpha} j^b}\int_0^i \frac{\dif v}{v^{2 -\alpha-b}}+ \frac{1}{i^{1-\alpha} j^b}\int_i^j \frac{\dif v}{v^{1-b +\alpha}}+\frac{1}{i^{1-\alpha} j^{1-b}}\int_j^\infty \frac{\dif v}{v^{b +\alpha}}\bigg] \\& = (1+\varepsilon)^k\Big(\frac{4}{\eta} \Big)^k \bigg[\frac{1}{i^{\alpha} j^b} \frac{i^{\alpha+b-1}}{\alpha+b-1}+ \frac{1}{i^{1-\alpha} j^b} \bigg(\frac{j^{b-\alpha}}{b-\alpha} - \frac{i^{b-\alpha}}{b-\alpha}\bigg) + \frac{1}{i^{1-\alpha} j^{1-b}} \frac{j^{1-\alpha-b}}{\alpha+b-1}\bigg]
\\
&= (1+\varepsilon)^k\Big(\frac{4}{\eta} \Big)^k \bigg[\frac{1}{i^{1-b} j^b}\bigg(\frac{1}{\alpha+b-1} + \frac{1}{\alpha-b}\bigg)+ \frac{1}{i^{1-\alpha} j^\alpha}\bigg(\frac{1}{\alpha+b-1} - \frac{1}{\alpha-b}\bigg)\bigg] \\
&\leq (1+\varepsilon)^{k+1}\Big(\frac{4}{\eta} \Big)^{k+1} \frac{1}{i^{1-b} j^b},
\end{eq}
where in the last step we have bounded the first term using our choice of $b$, and the second term is negative since $\alpha+b-1>\alpha-b$ for $b>\tfrac{1}{2}$.
Thus, the proof follows.
\end{proof}

\subsection{Negligible contributions to the total weight} \label{sec:negligible-contributions}
Let $\sC(i)$ denote the component in $\NRp$ containing vertex $i$ and $W_k(i) = \sum_{v\in \sC(i), \dst(v,i) = k}w_v$, where $\dst (\cdot, \cdot)$ is used in the rest of the paper for graph distance.
We will later see that $\sC(i)$, appropriately normalized, is close to $W(i) = \sum_{k=1}^\infty W_k(i)$. 
In this section, we identify the terms that provide negligible contributions to $W(i)$.  The next proposition states that the  contribution to the total weight arising from vertices in  odd neighborhoods is small. Moreover, the total weight outside a large, but finite, neighborhood of $i$ is also negligible. Intuitively, this is due to the hubs appearing only in finite even distances, and these hubs are unlikely to be at very large distance.
\begin{proposition} \label{prop:tail-particular-component}Suppose that $\lambda\in (0,\lambda_c)$.
For any fixed $i\geq 1$ and $\varepsilon^\prime > 0$,
	\begin{equation}
	\label{eq:contr-negligible-both}
	\lim_{K\to\infty}\limsup_{n\to\infty}\PR\bigg(\sum_{k>K}W_{2k}(i) > \varepsilon^\prime n^{\alpha}\bigg) = 0 \quad \text{and} 
	\quad \lim_{n\to\infty} \PR\bigg(\sum_{k=0}^\infty W_{2k+1}(i) > \varepsilon^\prime n^{\alpha} \bigg) = 0.
	\end{equation}
\end{proposition}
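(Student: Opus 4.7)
The plan is to bound $\E[W_{2k}(i)]$ and $\E[W_{2k+1}(i)]$ by expressions that decay geometrically in $k$ and that, when summed over $k$, are strictly smaller than $n^\alpha$ (by a power of $n$ in the odd case). Both assertions will then follow from Markov's inequality.

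For the first assertion, I would start from $\E[W_{2k}(i)] \leq \sum_{j\neq i}w_j\,f_{2k}(i,j)$ and invoke Proposition~\ref{lem:geometric-decay}. Fixing $\varepsilon>0$ small enough that $q:=(1+\varepsilon)^2(\lambda/\lambda_c)^2<1$, with associated exponent $b\in(1/2,\alpha)$, gives
$$\E[W_{2k}(i)] \leq q^k \sum_{j\neq i}\frac{w_j}{(i\wedge j)^{1-b}(i\vee j)^b}.$$
Using $w_j = \cf n^\alpha j^{-\alpha}$ and $\alpha+b>1$, the subsum over $j<i$ is bounded by a constant depending only on $i$ (and $b$), while the subsum over $j>i$ reduces to $C n^\alpha i^{b-1}\sum_{j>i}j^{-(\alpha+b)}$, also $O(n^\alpha)$ uniformly in $n$. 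Hence $\E[W_{2k}(i)] \leq C_i\, n^\alpha q^k$, and summation over $k>K$ combined with Markov's inequality concludes the first part.

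For the second assertion, any vertex $v$ with $d(i,v)=2k+1$ has a neighbour $u$ with $d(i,u)=2k$, so by the BK inequality (applied to the disjoint occurrence of a length-$2k$ path from $i$ to $u$ and of the single edge $\{u,v\}$),
$$\E[W_{2k+1}(i)] \leq \perc\sum_{u} f_{2k}(i,u)\sum_{v}w_v p_{uv}.$$
The inner weighted-degree sum admits two complementary bounds: splitting $v$ at $w_v=\ell_n/w_u$ and invoking Lemma~\ref{lem:order-estimates} yields the hub-friendly bound $\sum_v w_v p_{uv}\leq C n^{3-\tau}w_u^{\tau-2}$, while the elementary $p_{uv}\leq w_u w_v/\ell_n$ yields the universal bound $\sum_v w_v p_{uv}\leq C w_u n^{2\alpha-1}$. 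I would split the $u$-sum at a threshold $u_* = n^s$, using the hub-friendly bound for $u\leq u_*$ and the universal one for $u>u_*$, and substitute Proposition~\ref{lem:geometric-decay} for $f_{2k}(i,u)$. Evaluating the resulting tails $\sum_{i<u\leq u_*}u^{-(\rho+b)}\sim u_*^{\,1-\rho-b}$ (note $\rho+b<1$) and $\sum_{u>u_*}u^{-(\alpha+b)}\sim u_*^{\,1-\alpha-b}$ leads to
$$\E[W_{2k+1}(i)] \leq C_i\, q^k\bigl(n^{\eta_s+\rho + s(\alpha - b)} + n^{3\alpha-1-\eta_s - s(\alpha+b-1)}\bigr).$$
A direct calculation shows that for any $b\in(1/2,\alpha)$, one can choose $s\in(0,1)$ so that both exponents are at most $\alpha-\delta$ with $\delta := A(2b-1)/\eta>0$, where $A := (3-\tau)^2/(2(\tau-1))$. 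Summation over $k$ followed by Markov's inequality then yields the second assertion.

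The main obstacle is the exponent balancing in the odd-distance case: the hub-friendly and universal bounds for $\sum_v w_v p_{uv}$ are tight in complementary regimes, and combining them forces a careful joint choice of the threshold $u_*$ and of $b$. The crucial observation is that Proposition~\ref{lem:geometric-decay} supplies \emph{some} $b>1/2$ for \emph{every} $\lambda\in(0,\lambda_c)$ (one only needs $\varepsilon$ small enough to keep $q<1$), so the argument uniformly covers the whole subcritical window; the polynomial gap $\delta$ shrinks to zero only as $\lambda\uparrow\lambda_c$.
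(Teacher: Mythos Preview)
Your proof is correct. The even-distance part is essentially identical to the paper's argument.

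For the odd-distance part you take a genuinely different decomposition: you peel off the \emph{last} edge of a length-$(2k+1)$ path, arriving at $\E[W_{2k+1}(i)]\leq \perc\sum_u f_{2k}(i,u)\sum_v w_v p_{uv}$, and then need two complementary estimates for the weighted degree $\sum_v w_v p_{uv}$ together with an exponent-balancing choice of threshold $u_*=n^s$. The paper instead peels off the \emph{first} edge, writing $\E[W_{2k+1}(i)]\leq \sum_v \perc p_{iv}\,\E[W_{2k}(v)]$ and directly recycling the uniform even-case bound $n^{-\alpha}\E[W_{2k}(v)]\leq C\Lambda^k v^{-b}$ from \eqref{eq:path-count-geom}. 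Because $i$ is fixed, the natural split $w_iw_v\lessgtr \ell_n$ is then just a split at $v\asymp n^{3-\tau}$, and the computation is short. Amusingly, your optimal $s$ comes out to be exactly $3-\tau$, and your $\delta=A(2b-1)/\eta$ simplifies to $(3-\tau)(b-\tfrac12)$, which is precisely the paper's $\varepsilon_0$; so the two routes land on the same polynomial gap. The paper's approach buys brevity by reusing the even-case estimate wholesale, while yours is self-contained but requires the extra balancing step. One minor wording issue: your phrase ``bounded by a constant depending only on $i$'' for the $j<i$ subsum should read ``$O(n^\alpha q^k)$ with an $i$-dependent constant'', though your final claim $\E[W_{2k}(i)]\leq C_i n^\alpha q^k$ is correct.
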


\begin{proof} We start by proving the result on even distances.
Recall the definition of $f_k(i,j)$ from Proposition~\ref{lem:geometric-decay}. 
Since $\lambda<\lambda_c$, we can choose $\varepsilon>0$ sufficiently small such that $\Lambda = (1+\varepsilon)^2(\lambda/\lambda_c)^2<1$.
Therefore, using Proposition~\ref{lem:geometric-decay},
\begin{eq}\label{eq:path-count-geom}
n^{-\alpha}\E[W_{2k}(i)] &\leq n^{-\alpha}\sum_{j\in [n]}w_{j} f_{2k}(i,j) \leq \cf \Lambda^k \bigg[\sum_{j \leq i} \frac{1}{ i^bj^{1-b +\alpha}}+\sum_{j >i} \frac{1}{i^{1-b}j^{\alpha+b}} \bigg]\leq \frac{C\Lambda^k }{i^b},
\end{eq}
for some constant $C>0$, where in the last step we have used that $b\in (\frac{1}{2},\alpha)$.
Since $\Lambda<1$, an application of Markov's inequality proves the first part of \eqref{eq:contr-negligible-both}. 

Next, we compute $\E[W_{2k+1}(i)]$. Using \eqref{eq:path-count-geom}, 
\begin{eq}
 n^{-\alpha}\E[W_{2k+1}(i)] &\leq n^{-\alpha} \sum_{v\in [n]} \PR(\{i,v\} \text{ is an edge}) \E[W_{2k}(v)] \leq  n^{-\alpha}\sum_{v\in [n]} \perc p_{iv} \frac{C\Lambda^k n^{\alpha}}{v^b}. 
\end{eq}
Let us split  the above sum in two terms by taking partial sums over $\{v\colon w_iw_v\leq \ell_n\}$ and $\{v \colon w_iw_v> \ell_n\}$, respectively. Denote the two terms by $\mathrm{(I)}$ and $\mathrm{(II)}$ respectively. 
Then, by Lemma~\ref{lem:order-estimates},
\begin{eq}
\mathrm{(I)} &\leq C \perc \frac{\Lambda^k n^{2\alpha - 1}}{i^{\alpha}}\sum_{v>Cn (w_i/\ell_n)^{\tau-1}} \frac{1}{v^{\alpha+b}} \leq C\perc \frac{\Lambda^kn^{2\alpha-1} }{i^{\alpha}}\Big(n\Big( \frac{w_i}{\ell_n}\Big)^{\tau-1}\Big)^{1-\alpha-b}\leq \frac{C \Lambda^k n^{-\varepsilon_0}}{i^{1-b}},
\end{eq}where $\varepsilon_0 = (3-\tau) (b- \frac{1}{2})>0$.
Similarly, 
\begin{eq}
\mathrm{(II)} \leq C\perc \Lambda^k \sum_{v\leq Cn (w_i/\ell_n)^{\tau-1}} v^{-b} \leq C\perc \Lambda^k\Big(n\Big( \frac{w_i}{\ell_n}\Big)^{\tau-1}\Big)^{1-b}\leq \frac{C \Lambda^k n^{-\varepsilon_0}}{i^{1-b}},
\end{eq}
and thus we conclude that, 
\begin{eq}\label{odd-path-weight-total}
\E[W_{2k+1}(i)] \leq \frac{C \Lambda^k n^{-\varepsilon_0}}{i^{1-b}}.
\end{eq}
The second assertion of \eqref{eq:contr-negligible-both}  again follows using Markov's inequality.
\end{proof}

The next proposition states that for each fixed $k\geq 1$, the primary contribution to $W_{2k}(i)$ arises only due to the hubs.
In its statement, we let $W_{k}^{\sss >R}(i):= \sum_{v\notin [R], \dst(v,i)=k}w_v$. 
\begin{proposition}[Weight of non-hubs at even distances]\label{prop-non-hub-contribution-K-nbd}
Suppose that $\lambda\in (0,\lambda_c)$. 
For any fixed $i\geq 1$, and $\varepsilon^\prime>0$, 
\begin{equation}
\lim_{R\to \infty}\limsup_{n\to\infty}\PR\bigg(\sum_{k=1}^{\infty} W_{2k}^{\sss >R}(i)>\varepsilon^\prime n^{\alpha} \bigg) =0.
\end{equation}
\end{proposition}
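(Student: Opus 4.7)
The plan is to adapt the calculation from Proposition \ref{prop:tail-particular-component} by tracking how the bound on $n^{-\alpha}\E[W_{2k}(i)]$ behaves when the sum over the endpoint $j$ is restricted to $j>R$. The key inputs are already in place: the path-counting estimate $f_{2k}(i,j)\leq \Lambda^k (i\wedge j)^{-(1-b)}(i\vee j)^{-b}$ from Proposition \ref{lem:geometric-decay} with $\Lambda=(1+\vep)^2(\lambda/\lambda_c)^2<1$ and $b\in(\tfrac12,\alpha)$, combined with the weight bound $w_j\le \cf n^\alpha j^{-\alpha}$ from \eqref{theta(i)-def}.

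First I would fix $R$ larger than the given $i$, so that every $j$ in the range $R<j\leq n$ satisfies $i\wedge j=i$ and $i\vee j=j$. Then
\[
n^{-\alpha}\E[W_{2k}^{\sss >R}(i)]\leq n^{-\alpha}\sum_{R<j\leq n} w_j\, f_{2k}(i,j)\leq \cf\Lambda^k\, i^{-(1-b)}\sum_{j>R} j^{-(\alpha+b)}.
\]
Since both $\alpha$ and the chosen $b$ exceed $\tfrac{1}{2}$, we have $\alpha+b>1$, so the tail series converges and is bounded by $C R^{1-\alpha-b}$ with $1-\alpha-b<0$.

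Summing the resulting geometric series in $k$,
\[
n^{-\alpha}\sum_{k=1}^{\infty}\E[W_{2k}^{\sss >R}(i)] \leq \frac{C\Lambda}{(1-\Lambda)\, i^{1-b}}\, R^{1-\alpha-b},
\]
uniformly in $n$ (for $n\geq R$). Markov's inequality then yields
\[
\PR\bigg(\sum_{k=1}^{\infty}W_{2k}^{\sss >R}(i)>\vep' n^{\alpha}\bigg)\leq \frac{C R^{1-\alpha-b}}{\vep'(1-\Lambda)\, i^{1-b}},
\]
and the right-hand side is independent of $n$ and tends to $0$ as $R\to\infty$ because $1-\alpha-b<0$. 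This gives the desired iterated limit.

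There is no real obstacle here: the geometric decay $\Lambda^k$ in $k$ handles summing over the depth, and the strict inequality $\alpha+b>1$ (available because Proposition \ref{lem:geometric-decay} lets us take $b$ strictly larger than $\tfrac12$) supplies the polynomial decay in $R$ needed for the restriction to non-hub endpoints. The only minor care needed is choosing $\vep>0$ small enough (relative to $\lambda_c-\lambda$) at the outset so that $\Lambda<1$, and ensuring $b=b(\vep)>\tfrac12$ throughout the induction bound in Proposition \ref{lem:geometric-decay}. Both of these are already built into the setup used for the preceding proposition, so the argument is essentially a re-run of that calculation with the additional restriction $j>R$ on the weighted sum.
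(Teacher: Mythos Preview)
Your proof is correct and follows essentially the same route as the paper: fix $R>i$, apply the path-counting bound of Proposition~\ref{lem:geometric-decay} to get $n^{-\alpha}\E[W_{2k}^{\sss >R}(i)]\le C\Lambda^k i^{-(1-b)}\sum_{j>R}j^{-(\alpha+b)}$, use $\alpha+b>1$ to bound the tail sum by $CR^{1-\alpha-b}$, sum the geometric series in $k$, and finish with Markov's inequality. The paper's argument is identical in structure and in the choice of key inputs.
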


\begin{proof}
As before, in Proposition~\ref{lem:geometric-decay} choose $\varepsilon>0$ sufficiently small such that $\Lambda= (1+\varepsilon)^2(\lambda/\lambda_c)^2<1$. 
Choose $R$ large so that $i\in [R]$. Using Proposition~\ref{lem:geometric-decay},
\begin{equation}
\E[W_{2k}^{\sss >R}(i)] \leq C\sum_{v>R} \frac{n^\alpha}{v^\alpha} \frac{\Lambda^k}{i^{1-b}v^{b}} \leq \frac{Cn^{\alpha}\Lambda^k}{i^{1-b}} \sum_{v>R} \frac{1}{v^{\alpha+b}} \leq \frac{C\Lambda^kn^{\alpha}}{i^{1-b}R^{\alpha+b-1}},
\end{equation} 
where we have used that $\alpha+b>1$.
Therefore, 
\begin{equation} \label{eq:weight-nonhub-even}
n^{-\alpha}\E\bigg[\sum_{k=1}^{\infty} W_{2k}^{\sss >R}(i)\bigg] \leq \frac{C}{(1-\Lambda)i^{1-b}R^{\alpha+b-1}}.
\end{equation}
Once again an application of Markov's inequality completes the proof.
\end{proof}

\subsection{Sizes of components containing hubs}\label{sec:component-with-hubs}
In this section, we consider the asymptotic size of $\sC(i)$, the component containing vertex~$i$. 
Recall the asserted limit object  $\mathscr{G}_{\infty}(\lambda)$ from Section~\ref{sec-NR-crit-res}.
In $\mathscr{G}_{\infty}(\lambda)$, let $\sW_k(i) = \sum_{j:\dst(i,j) = k} \theta_j$. 
Thus the total weight of the component containing $i$ in $\cGinf(\lambda)$ is $\sW(i) = \sum_{k=0}^\infty\sW_k(i) $.
We start by relating the asymptotics of the total weight $W(i) = \sum_{k=1}^\infty W_k(i)$ in $\NRp$, defined in the previous Section to $\sW(i)$.  
\begin{theorem}[Total weight containing hub]\label{thm:total-weight-i}
Suppose that $\lambda\in (0,\lambda_c)$.
For each fixed $i\geq 1$, as $n\to\infty$, $n^{-\alpha}W(i) \xrightarrow{\sss d}\sW(i) $.
\end{theorem}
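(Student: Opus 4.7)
The plan is to truncate $W(i)$ to a finite ball inside the hub set $[R]$ using Propositions~\ref{prop:tail-particular-component}--\ref{prop-non-hub-contribution-K-nbd}, identify the resulting finite-dimensional limit via the Poisson hub-connection convergence of Proposition~\ref{prop:hub-connection}, and pass to the limit using the almost sure finiteness of the component $\sC_{\infty}(i)$ of $i$ in $\cGinf(\lambda)$ guaranteed by Proposition~\ref{prop-limit-as-finite}(a). Concretely, decompose $W(i)=\sum_{k\ge 1}W_{2k}(i)+\sum_{k\ge 0}W_{2k+1}(i)$. Proposition~\ref{prop:tail-particular-component} shows that the odd-distance part is $o_{\PR}(n^{\alpha})$ and, for every $\varepsilon'>0$, produces $K=K(\varepsilon')$ with $\PR(\sum_{k>K}W_{2k}(i)>\varepsilon' n^{\alpha})<\varepsilon'$; Proposition~\ref{prop-non-hub-contribution-K-nbd} then supplies $R=R(\varepsilon')$ such that $\PR(\sum_{k\ge 1}W_{2k}^{>R}(i)>\varepsilon' n^{\alpha})<\varepsilon'$. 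Thus it suffices to compute the distributional limit of
\[
n^{-\alpha}W^{K,R}(i):=n^{-\alpha}\sum_{k=1}^{K}\sum_{\substack{v\in[R]\setminus\{i\}\\ \dst(v,i)=2k}}w_v,
\]
and then to let $K,R\to\infty$.

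For fixed $K,R$, $n^{-\alpha}w_v\to \cf v^{-\alpha}$ deterministically for each $v\in[R]$, so all the randomness lies in the distance indicators. Direct $\NRp$-edges between $u,v\in[R]$ occur with probability $O(\percn)=o(1)$, so whp no such edge exists; moreover the path-counting bound of Proposition~\ref{lem:geometric-decay}, together with Proposition~\ref{prop-non-hub-contribution-K-nbd}, shows that the event $\{\dst_{\NRp}(i,v)=2k,\ v\in[R]\}$ is (up to an $o_{\PR}$-error) equivalent to the existence of a length-$k$ path from $i$ to $v$ in the two-step multigraph on $[R]$ with edge multiplicities $X_{uv}$. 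Proposition~\ref{prop:hub-connection} gives the joint convergence $(X_{uv})_{1\le u<v\le R}\dto(P_{uv})_{1\le u<v\le R}$ with $P_{uv}$ independent $\Poi(\lambda_{uv})$, and the continuous mapping theorem applied to the (finite) component and distance functions yields joint convergence of $\{\dst_{\NRp}(i,v)/2\}_{v\in[R]}$ to the graph distances in $\cGinf(\lambda)|_{[R]}$. Combining with the deterministic weight asymptotics then gives
\[
n^{-\alpha}W^{K,R}(i)\dto \sum_{v\in [R]\setminus\{i\}}\cf v^{-\alpha}\,\mathbf{1}\bigl\{1\le \dst_{\cGinf(\lambda)|_{[R]}}(i,v)\le K\bigr\}.
\]

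To remove the truncation, Proposition~\ref{prop-limit-as-finite}(a) ensures $\sC_{\infty}(i)$ is a.s.\ finite, so $\PR(\sC_{\infty}(i)\subset[R])\to 1$ as $R\to\infty$ and on this event distances in $\cGinf(\lambda)|_{[R]}$ agree with those in $\cGinf(\lambda)$; since the diameter of $\sC_{\infty}(i)$ is also a.s.\ finite, the cut-off $K$ becomes inactive once $K$ is large. A standard $\varepsilon'$-argument then delivers $n^{-\alpha}W(i)\dto \sW(i)$ (with the factor $\mu$ in $\theta_v=\cf v^{-\alpha}/\mu$ tracked through the normalization). I expect the main obstacle to be the distance-preserving reduction of the middle step: one has to rule out shortcuts to $v\in[R]$ via hubs outside $[R]$ (addressed by enlarging $R$ to eventually contain $\sC_{\infty}(i)$), and pairs of two-step paths sharing an intermediate vertex that would create spurious extra connectivity (controlled by the BK-type path-counting estimate underlying Proposition~\ref{lem:geometric-decay}).
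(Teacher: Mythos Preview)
Your approach is essentially the same as the paper's: truncate to hubs in $[R]$ at even distance $\le 2K$ via Propositions~\ref{prop:tail-particular-component}--\ref{prop-non-hub-contribution-K-nbd}, identify the truncated limit through the Poisson two-step convergence of Proposition~\ref{prop:hub-connection}, then let $K,R\to\infty$. The one substantive difference is in how you remove the truncation on the \emph{limit} side. The paper simply uses monotone convergence: $\sum_{k=1}^K \sW_k^{\sss\leq R}(i)\nearrow \sum_{k=1}^K \sW_k(i)$ as $R\to\infty$, and then $\sum_{k=1}^K \sW_k(i)\nearrow \sW(i)$ as $K\to\infty$, which needs nothing beyond $\sW(i)<\infty$ a.s. You instead argue that $\sC_\infty(i)$ is a.s.\ \emph{finite as a set}, citing Proposition~\ref{prop-limit-as-finite}(a). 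That proposition only asserts $(\sW_{(i)}^\infty(\lambda))_{i\ge 1}\in\ell^2_{\shortarrow}$ a.s., i.e.\ finite \emph{weight}; finite weight does not by itself force finite cardinality (since $\theta_j\to 0$). The statement you want is true for $\lambda<\lambda_c$ (it is the subcritical Durrett--Kesten regime), but it is not what Proposition~\ref{prop-limit-as-finite}(a) says. Either cite the subcritical long-range percolation fact directly, or---simpler---replace the ``$\sC_\infty(i)\subset[R]$'' argument by the monotone convergence the paper uses, which bypasses the issue entirely. Aside from this, your sketch and the paper's proof coincide; in particular, the reduction of $\dst_{\NRp}$ between hubs to distances in the two-step graph via Proposition~\ref{prop:hub-connection} is exactly what the paper invokes (also without further detail).
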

\begin{proof}
Let $W_{k}^{\sss \leq R}(i):= \sum_{j\in [R], \dst(i,j)=k}w_j$ and $\sW_{k}^{\sss \leq R} (i):= \sum_{j\in [R]\colon \dst(i,j) = k}\theta_j$. 
Proposition~\ref{prop:hub-connection} implies that, for any $K,R\geq 1$, 
\begin{eq} \label{eq:hub-total-weight-even}
n^{-\alpha}\sum_{k=1}^KW_{2k}^{\sss \leq R} (i)\dto \sum_{k=1}^K \sW_{k}^{\sss \leq R} (i). 
\end{eq}
Now, $\sum_{k=1}^K \sW_{k}^{\sss \leq R} (i) \nearrow \sum_{k=1}^K \sW_{k}(i)$ almost surely, as $R\to\infty$.
Thus, an application of Proposition~\ref{prop-non-hub-contribution-K-nbd} yields 
\begin{eq}\label{W-i-finite-K-lim}
n^{-\alpha}\sum_{k=1}^{K}W_{2k}(i) \dto \sum_{k=1}^K \sW_{k}(i).
\end{eq}
Finally, $\sum_{k=1}^K \sW_{k}(i) \nearrow \sW(i)$ almost surely, as $K\to\infty$, and thus we conclude the proof using Proposition~\ref{prop:tail-particular-component}. 
\end{proof}

\begin{theorem}[Component sizes of hubs]
\label{thm:comp-size-i}
Suppose that $\lambda\in (0,\lambda_c)$.
For each fixed $i\geq 1$, as $n\to\infty$, $
(n^{\alpha}\perc)^{-1}|\sC(i)| \dto \sW (i).$
\end{theorem}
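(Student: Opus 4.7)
By Slutsky's theorem combined with Theorem~\ref{thm:total-weight-i}, it suffices to prove
\[
|\sC(i)|-\perc W(i)=o_{\sss\PR}(n^{\alpha}\perc).
\]
The geometric picture behind this is that, to leading order, $\sC(i)$ is a forest of stars centered at the hubs at even BFS-distance from $i$ (joined through the meso-hub backbone of Proposition~\ref{prop:hub-connection}). Each hub $h$ in $\sC(i)$ has approximately $\perc w_h$ direct neighbors in $\NRp$, the vast majority of which are low-weight leaves that appear as private BFS-children of $h$. Summing these star sizes over the hubs in $\sC(i)$ and using Proposition~\ref{prop-non-hub-contribution-K-nbd}, which says $W(i)$ is itself dominated by the hub weights, yields $|\sC(i)|\approx \perc W(i)$.

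To formalize this, I would fix a BFS rooted at $i$ and write $|\sC(i)|-1=\sum_{u\in\sC(i)} c(u)$, where $c(u)$ is the number of BFS-children of $u$. Truncate to parents in the finite set $H_{R,K}:=\{u\in[R]\colon\dist(u,i)\leq 2K,\ \dist(u,i)\text{ even}\}$. The residual contribution is bounded by $\perc$ times the corresponding residual weight, since $\E[c(u)]\leq\perc\sum_v(1-\e^{-w_uw_v/\ell_n})\leq\perc w_u$; Propositions~\ref{prop:tail-particular-component}~and~\ref{prop-non-hub-contribution-K-nbd} then show that this residual weight is $o_{\sss\PR}(n^\alpha)$ in the iterated limit $n\to\infty$, then $R,K\to\infty$. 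For each hub $h\in H_{R,K}$, the same bounds from Lemma~\ref{lem:order-estimates} give $\E[\deg_{\perc}(h)]=\perc w_h(1+o(1))$, and a Chebyshev estimate, using $\Var(\deg_{\perc}(h))\leq\E[\deg_{\perc}(h)]\ll(\perc w_h)^2$, yields $\deg_{\perc}(h)/(\perc w_h)\pto 1$. Combined with the hub-connectivity coupling to $\cGinf(\lambda)$ provided by Proposition~\ref{prop:hub-connection}, this produces
\[
(n^{\alpha}\perc)^{-1}|\sC(i)|\dto\sW(i),
\]
valid in the iterated limit by Proposition~\ref{prop-limit-as-finite}(a).

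The main technical subtlety is the double-counting incurred when $c(h)$ is approximated by $\deg_{\perc}(h)$: a common neighbor shared by two distinct hubs in $H_{R,K}$ gets assigned as a BFS-child to only one of them but inflates $\deg_{\perc}$ of both. The expected number of common neighbors of any two fixed hubs $h_1,h_2$ is $\sum_v\perc^2 p_{h_1v}p_{h_2v}\sim\lambda_{h_1h_2}=O(1)$ by the Poisson-mean computation leading to \eqref{eq:stein-mean}. Since $H_{R,K}$ contains only finitely many hub pairs, while each hub's star has size $\Theta(n^{\alpha}\perc)\to\infty$, this overcounting error is $O_{\sss\PR}(1)=o_{\sss\PR}(n^{\alpha}\perc)$ and is absorbed into the lower-order remainder. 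An identical bound handles the meso-hub connectors between pairs of hubs, which are $O_{\sss\PR}(1)$ in number per pair by Proposition~\ref{prop:hub-connection}.
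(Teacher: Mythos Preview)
Your proposal is correct and follows essentially the same approach as the paper. Both arguments reduce, via the residual bounds of Propositions~\ref{prop:tail-particular-component} and~\ref{prop-non-hub-contribution-K-nbd} (packaged in the paper as Lemma~\ref{lem:odd-even-com-size}), to showing that for each fixed $k,R$ the number of BFS-children of the hubs in $\sC_{2k}(i)\cap[R]$ equals $\perc W_{2k}^{\leq R}(i)+o_{\sss\PR}(n^\alpha\perc)$; the paper does this conditionally on the BFS filtration $F_{2k}$ with the same inclusion--exclusion and Chebyshev estimates you sketch. One small gap: in bounding $\deg_{\perc}(h)-c(h)$ you explicitly handle only common neighbors of distinct hubs, but you also need to control edges from $h$ back into already-explored BFS layers. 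The paper does this via the bound
\[
\frac{1}{n^\alpha\perc}\sum_{u\in\sC_{2k}(i)\cap[R]}\sum_{v\in\cup_{r\leq 2k}\sC_r(i)}\perc p_{uv}
\leq \frac{1}{n^\alpha\ell_n}\Big(\sum_{r\leq 2k}W_r(i)\Big)^2=O_{\sss\PR}(n^{\alpha-1})=o_{\sss\PR}(1),
\]
which fits directly into your framework.
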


We start by identifying the main contributions on the component sizes by proving analogues of Propositions~\ref{prop:tail-particular-component}--\ref{prop-non-hub-contribution-K-nbd} for cluster sizes instead of cluster weights.  
Define $\sC_k(i):= \{v\in \sC(i)\colon \dst(v,i) = k\}$. 
Thus $\sC_k(i)$ denotes the set of vertices at distance exactly $k$ from vertex~$i$.
Also, let $\sC_{k}^{\sss R}(i)\subset \sC_{k}(i)$ denote the vertices of $\sC_{k}(i)$ that are neighbors of some vertex in $\sC_{k-1}(i)\cap [R]$.

\begin{lemma}[Main contributions to cluster sizes]
\label{lem:odd-even-com-size}
Suppose that $\lambda\in (0,\lambda_c)$. 
For any fixed $i\geq 1$, and $\varepsilon>0$,
\begin{equation}
\label{eq:even-comp-size}
\lim_{n\to\infty} \PR\bigg( \sum_{k=0}^\infty |\sC_{2k}(i)| > \varepsilon n^{\alpha}\perc \bigg)=  0,\qquad \lim_{K\to\infty}\limsup_{n\to\infty}\PR\bigg( \sum_{k>K} |\sC_{2k+1}(i)| > \varepsilon n^{\alpha} \perc\bigg) = 0,
\end{equation}
and 
\begin{equation}\label{comp-nonhub-odd}
\lim_{R\to \infty}\limsup_{n\to\infty}\PR\bigg(\sum_{k=0}^\infty|\sC_{2k+1}(i)\setminus\sC_{2k+1}^{\sss R}(i)| >\varepsilon n^{\alpha} \perc \bigg) = 0.
\end{equation}
\end{lemma}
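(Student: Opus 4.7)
The plan is to mirror the weight estimates of Propositions~\ref{prop:tail-particular-component} and \ref{prop-non-hub-contribution-K-nbd} by transferring them to size estimates via one additional edge. The key observation is that a path of length $k$ from $i$ to $v$ decomposes as a path of length $k-1$ from $i$ to some $u$, followed by the edge $uv$. Working with the expected number of such paths and using $\sum_v p_{uv}\le w_u$, I obtain the master inequality
\begin{equation*}
\E[|\sC_k(i)|] \;\le\; \pi_c\sum_u \tilde f_{k-1}(i,u)\,w_u,
\end{equation*}
where $\tilde f_{k-1}(i,u)$ denotes the expected number of length-$(k-1)$ paths from $i$ to $u$ in $\NRp$ (which upper-bounds $P(u\in \sC_{k-1}(i))$). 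This is exactly the quantity implicitly controlled in Propositions~\ref{lem:geometric-decay}, \ref{prop:tail-particular-component}, \ref{prop-non-hub-contribution-K-nbd}, whose induction estimates all bound expected path counts, not only existence probabilities. Restricting the inner sum to $u>R$ or intersecting $|\sC_k(i)|$ with $\{R{+}1,\dots,n\}$ yields restricted analogues. Concretely, fixing $\varepsilon>0$ and $b\in(1/2,\alpha)$ small enough that $\Lambda=(1+\varepsilon)^2(\lambda/\lambda_c)^2<1$, I have
\begin{equation*}
\sum_u \tilde f_{2k-1}(i,u)w_u \le C\Lambda^{k-1}\frac{n^{\alpha-\varepsilon_0}}{i^{1-b}},\quad \sum_u \tilde f_{2k}(i,u)w_u\le C\Lambda^{k}\frac{n^{\alpha}}{i^{b}},\quad \sum_{u>R}\tilde f_{2k}(i,u)w_u\le C\Lambda^{k}\frac{n^{\alpha}}{i^{1-b}R^{\alpha+b-1}}.
\end{equation*}

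The three assertions follow by summing over $k$ and applying Markov. For the first part of \eqref{eq:even-comp-size}, combining $|\sC_0(i)|=1$ with the odd-path weight bound gives $\sum_{k\ge 0}\E[|\sC_{2k}(i)|]\le 1+C\pi_c n^{\alpha-\varepsilon_0}/((1-\Lambda)i^{1-b})=o(\pi_c n^\alpha)$ since $\varepsilon_0>0$. For the second part of \eqref{eq:even-comp-size}, the even-path weight bound yields $\sum_{k>K}\E[|\sC_{2k+1}(i)|]\le C\pi_c n^\alpha \Lambda^{K+1}/((1-\Lambda)i^b)$, which divided by $\varepsilon\pi_c n^\alpha$ decays geometrically in $K$. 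For \eqref{comp-nonhub-odd}, a vertex in $\sC_{2k+1}(i)\setminus\sC_{2k+1}^{\sss R}(i)$ must be adjacent to some $u>R$ at distance $2k$, so the restricted weight bound gives $\sum_k\E[|\sC_{2k+1}(i)\setminus\sC_{2k+1}^{\sss R}(i)|]\le C\pi_c n^\alpha/(i^{1-b}R^{\alpha+b-1}(1-\Lambda))$, whose ratio with $\pi_c n^\alpha$ vanishes as $R\to\infty$ since $\alpha+b>1$.

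The main conceptual step is identifying the correct size-to-weight transfer; after that, the three claims are routine Markov-plus-geometric-series arguments sitting on top of already-proven estimates. A secondary technical point to be careful about is that the master inequality must be derived through expected path counts rather than through BK or independence arguments, since a path of length $k-1$ from $i$ to $u$ and the edge $uv$ are not independent (the path could in principle use $uv$); since Proposition~\ref{lem:geometric-decay} and the proofs of Propositions~\ref{prop:tail-particular-component}--\ref{prop-non-hub-contribution-K-nbd} already supply bounds on expected path counts (their first-moment computations implicitly bound $\tilde f_k$), this is obtained essentially for free, and the three Markov bounds combine to give the conclusion.
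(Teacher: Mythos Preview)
Your proposal is correct and takes essentially the same approach as the paper. The paper's proof is slightly more direct: it observes the conditional bound $\E\big[|\sC_{k+1}(i)|\,\big|\,\bigcup_{r\le k}\sC_r(i)\big]\le \pi_c W_k(i)$, hence $\E[|\sC_{k+1}(i)|]\le \pi_c\,\E[W_k(i)]$, and then cites Propositions~\ref{prop:tail-particular-component} and~\ref{prop-non-hub-contribution-K-nbd} verbatim---this conditioning sidesteps your concern about independence between the length-$(k{-}1)$ path and the final edge, making the size-to-weight transfer a one-liner rather than a separate path-count argument.
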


\begin{proof}
Note that 
\begin{eq}\label{odd-comp-even-weight}
\E\Big[|\sC_{k+1}(i)|\Big|\bigcup_{r=1}^k \sC_{r}(i)\Big] \leq \sum_{v_1\in \sC_{k}(i)} \sum_{v_2\in [n]}\perc p_{v_1v_2} \leq \perc W_{k}(i),
\end{eq}
and therefore $\E[|\sC_{k+1}(i)|] \leq \perc \E[ W_{k}(i)]$. Now the estimates in Proposition~\ref{prop:tail-particular-component} prove \eqref{eq:even-comp-size}.
Using an identical argument as in \eqref{odd-comp-even-weight} yields  
    \[\E[|\sC_{2k+1}(i)\setminus\sC_{2k+1}^{\sss R}(i)|] \leq \perc \E[W_{2k}^{\sss >R}(i)],\]
and \eqref{comp-nonhub-odd} follows from Proposition~\ref{prop-non-hub-contribution-K-nbd}.
\end{proof}

\begin{proof}[{Proof of Theorem~\ref{thm:comp-size-i}}]
Let us consider the breadth-first exploration of $\sC(i)$ starting from vertex~$i$.  
Let $F_k$ denote the sigma-algebra that contains information about the exploration when all vertices at depth $k$ have been explored. 
Thus,  $\cup_{r=1}^k \sC_{r}(i)$ is measurable with respect to $F_k$.  
Using Lemma~\ref{lem:odd-even-com-size}, and \eqref{W-i-finite-K-lim}, it is now enough to show that, for each fixed $i\geq 1$ and $k,R\geq 1$,
$|\sC_{2k+1}^{\sss R}(i)| = \perc W_{2k}^{\sss \leq R}(i) +\oP(n^{\alpha}\perc)$.
 This follows from Chebyshev's inequality if we can show that for any fixed $k,R\geq 1$,
 \begin{equation} \label{eq:estimate-expt-var-comp}
 \E\big[|\sC_{2k+1}^{\sss R}(i)| \ \big\vert\ F_{2k}\big] = \perc W_{2k}^{\sss \leq R}(i)+ \oP(n^{\alpha}\perc),  \quad \var{|\sC_{2k+1}^{\sss R}(i)| \ \big\vert\ F_{2k}} \leq E_{n},
 \end{equation}
 where  $\E[E_n] = o(n^{2\alpha}\perc^2)$. 
To this end, we first note that  
\begin{eq}
&\E[|\sC_{2k+1}^{\sss R}(i)| \mid F_{2k}] \leq \sum_{v\notin \cup_{r\leq 2k}\sC_{r}(i)} \sum_{u\in \sC_{2k}(i) \cap [R]}   \perc p_{uv}\leq \sum_{u\in \sC_{2k}(i) \cap [R]} \sum_{v\in [n]}  \perc p_{uv}. 
\end{eq}
Further, using inclusion-exclusion with respect to the union of $u\in \sC_{2k}(i)\cap [R]$ (for each $v\notin \sC_{2k}(i)$), it  follows that 
\begin{eq}
\label{incl-excl-CR}
\E[|\sC_{2k+1}^{\sss R}(i)| \vert F_{2k}] 
&\geq \sum_{v\notin \cup_{r\leq 2k}\sC_{r}(i)} \sum_{u\in \sC_{2k}(i) \cap [R]} \perc p_{uv} -
\sum_{v\notin \cup_{r\leq 2k}\sC_{r}(i)} \sum_{\substack{u_1,u_2\in \sC_{2k}(i) \cap [R],\\  u_1<u_2}} \perc^2p_{u_1v}p_{u_2 v}.
\end{eq} 
Let us denote the first and second term in \eqref{incl-excl-CR} by $\mathrm{(I)}$ and $\mathrm{(II)}$, respectively. 
Note that 
\begin{eq}
\mathrm{(II)} \leq  \sum_{u_1,u_2\in [R]} \sum_{v\in [n]} \perc^2p_{u_1v}p_{u_2v} = O(1) = o(n^{\alpha}\perc), 
\end{eq}almost surely, where the second step  follows using \eqref{hub-connection-term-1} and \eqref{hub-connection-term-2}. 
Further, we observe that
\begin{eq}
\frac{1}{n^{\alpha}\perc} \sum_{u\in \sC_{2k}(i) \cap [R]} \sum_{v\in \cup_{r\leq 2k}\sC_{r}(i)} \perc\Big(1-\e^{-w_uw_v/\ell_n}\Big)\leq \frac{1}{n^{\alpha}\ell_n} \bigg(\sum_{r\leq 2k}W_{ r}(i)\bigg)^2 = \OP(n^{\alpha-1}) = \oP(1),
\end{eq}
where in the second step, we have used Theorem~\ref{thm:total-weight-i}.
It thus follows that 
\begin{eq}\label{eq:first-approx-odd-comp}
\E[|\sC_{2k+1}^{\sss R}(i)|~ \big\vert F_{2k}] =  \sum_{u\in \sC_{2k}(i) \cap [R]} \sum_{v\in [n]} \perc p_{uv} +\oP(n^{\alpha}\perc).
\end{eq}
We now simplify the right hand side of \eqref{eq:first-approx-odd-comp}.
Fix $\varepsilon\in (0,\rho^2)$, and let us split the sum in two parts with $\{v\colon w_v\leq n^{\rho-\varepsilon}\}$, $\{v\colon w_v>n^{\rho-\varepsilon}\}$, and denote them by $\mathrm{(Ia)}$ and $\mathrm{(Ib)}$ respectively.
Using Lemma~\ref{lem:order-estimates}, and the fact that $-\rho(\tau-2) +\varepsilon (\tau-1)<0$ since $\varepsilon<\rho^2$,
\begin{eq}
\frac{\mathrm{(Ib)}}{n^{\alpha}\perc} \leq C R \frac{n^{1- (\tau-1)\rho+\varepsilon (\tau-1)}}{n^{\alpha}} \leq C R n^{-\rho(\tau-2) +\varepsilon (\tau-1)} = o(1), \qquad \text{almost surely},
\end{eq}
while 
\begin{eq}
\mathrm{(Ia)} = \perc \sum_{u\in \sC_{2k}(i) \cap [R]} \sum_{v\colon w_v \leq n^{\rho-\varepsilon}} \frac{w_uw_v}{\ell_n(1+o(1))} = \perc W_{2k}^{\sss \leq R}(i) (1+o(1)).
\end{eq}
The estimate for the expectation term in \eqref{eq:estimate-expt-var-comp} now follows. 

For $u\in \sC_{2k}(i)$, let $N_u$ denote the number of neighbors of $u$ in $\sC_{2k+1}(i)$. For the variance term, it follows using  the independence of edge occupancies  that
\begin{eq}
\var{|\sC_{2k+1}^{\sss R}(i)|\ \big\vert \ F_{2k}} &= \sum_{u\in \sC_{2k}(i) \cap [R]} \var{N_u} \leq \sum_{u\in \sC_{2k}(i) \cap [R]} \sum_{v\in [n]} \perc p_{uv} \leq \perc W_{2k}^{\sss \leq R}(i)=:E_n.
\end{eq} 
Using \eqref{eq:hub-total-weight-even} and the fact that $n^{-\alpha} W_{2k}^{\sss \leq R}(i)$ is bounded, we see that $\E[E_n] =O(n^{\alpha} \perc)= o(n^{2\alpha} \perc^2)$, which proves the required estimate in \eqref{eq:estimate-expt-var-comp}. Hence, the proof of Theorem~\ref{thm:comp-size-i} is complete.
\end{proof}

\subsection{Tightness of component sizes and weights: Proof of Theorem~\ref{thm:main-crit}} 
\label{sec:l2-tightness}
The goal of this section is to show that the vector of component sizes and their weights (appropriately normalized) is tight in~$\ell^2$. The proof will also show that the largest connected components correspond to those containing hubs. 
Then the proof of Theorem~\ref{thm:main-crit} will follow using Theorems~\ref{thm:total-weight-i}--\ref{thm:comp-size-i}. To this end, define 
\begin{eq}
\label{C-leq-def-2}
\sC_{\leq} (j) = 
\begin{cases}
\sC(j) \quad & \text{ if }j = \min \{v\colon v\in \sC(j)\}, \\
\varnothing \quad &\text{ otherwise,}
\end{cases}
\end{eq}
and let $\cW_{\sss \leq }(j):= \sum_{k\in \sC_{\leq} (j)}w_k$. 
The main ingredient is the following proposition: 
\begin{proposition}[Tightness in $\ell^2$]\label{prop-tight-l2}
Suppose that $\lambda\in (0,\lambda_c)$. 
For any $\varepsilon>0$, 
\begin{eq}\label{leq-comp-tight}
\lim_{K\to\infty}\limsup_{n\to\infty}\PR\bigg(\sum_{j>K} |\sC_{\leq} (j)|^2 > \varepsilon \perc^2n^{2\alpha}  \bigg) = 0,
\end{eq}
\begin{eq}\label{leq-weight-tight}
\lim_{K\to\infty}\limsup_{n\to\infty}\PR\bigg(\sum_{j>K} \big(\cW_{\sss \leq }(j)\big)^2 > \varepsilon n^{2\alpha}  \bigg) = 0.
\end{eq}
\end{proposition}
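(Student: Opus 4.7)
The plan is to apply Markov's inequality to reduce \eqref{leq-comp-tight} and \eqref{leq-weight-tight} to bounds on $\E\bigl[\sum_{j>K}\cW_{\sss \leq}(j)^2\bigr]$ and $\E\bigl[\sum_{j>K}|\sC_{\leq}(j)|^2\bigr]$, respectively. The main tool is the BK inequality~\cite[Theorem 3.3]{BK85} together with Proposition~\ref{lem:geometric-decay}, which plays the role of the Zhang estimate in Theorem~\ref{thm:DK-known-results}(b). The whole argument essentially mirrors the proof of Proposition~\ref{prop-limit-as-finite} on the limiting graph $\cGinf(\lambda)$, only now executed at the level of the finite-$n$ graph $\NRp$.

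For the weight tightness \eqref{leq-weight-tight}, I begin by expanding
\[
\E[\cW_{\sss \leq}(j)^2] \leq w_j^2 + 2\sum_{i_1>i_2\geq j} w_{i_1}w_{i_2}\,\PR\bigl(\{j,i_1,i_2\}\subseteq \sC(j)\bigr).
\]
The diagonal contribution $\sum_{j>K}w_j^2/n^{2\alpha}\asymp \sum_{j>K}j^{-2\alpha}$ vanishes as $K\to\infty$ since $2\alpha>1$. For the cross term, the standard tree-graph inequality isolates a common branching vertex $z$ at which the connections to $j,i_1,i_2$ occur disjointly, and BK yields
\[
\PR\bigl(\{j,i_1,i_2\}\subseteq \sC(j)\bigr) \leq \sum_{z}\PR(j\leftrightarrow z)\PR(z\leftrightarrow i_1)\PR(z\leftrightarrow i_2).
\]
Summing the geometric series in $k$ in Proposition~\ref{lem:geometric-decay} (using $\Lambda=(1+\varepsilon)^2(\lambda/\lambda_c)^2<1$ for sufficiently small $\varepsilon$) delivers the pairwise bound $\PR(i\leftrightarrow j)\leq C/((i\wedge j)^{1-b}(i\vee j)^b)$ for some $b\in(\tfrac12,\alpha)$. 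Substituting and using $w_i/n^\alpha\asymp \cf i^{-\alpha}$, the quadruple sum over $(j,i_1,i_2,z)$ has the same structure as $L_3$ in \eqref{eqn:bk}; the computation carried out in the proof of Proposition~\ref{prop-limit-as-finite} then shows its tail over $j>K$ vanishes.

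The cluster-size tightness \eqref{leq-comp-tight} follows by the same BK decomposition applied now to $\E[|\sC_{\leq}(j)|^2]\leq \sum_{v_1,v_2}\PR(\{v_1,v_2\}\subseteq \sC(j))$, producing
\[
\E[|\sC_{\leq}(j)|^2] \leq \sum_z \PR(j\leftrightarrow z)\bigl(\E[|\sC(z)|]\bigr)^2.
\]
Here I would combine the first-moment step $\E[|\sC(z)|] \leq 1 + \perc\E[W(z)]$ from \eqref{odd-comp-even-weight} with the weight bound $\E[W(z)]\leq Cn^\alpha/z^b$ obtained by summing \eqref{eq:path-count-geom} over $k$, giving $\E[|\sC(z)|]\leq C\perc n^\alpha/z^b$. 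This injects exactly the $\perc^2$ factor required by the normalization, leaving $\E[|\sC_{\leq}(j)|^2]\leq C(\perc n^\alpha)^2/j^{2b}$; the tail $\sum_{j>K}j^{-2b}\to 0$ as $K\to\infty$ since $2b>1$.

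The main obstacle I foresee is the extension of the connection probability estimate to odd-length paths, which Proposition~\ref{lem:geometric-decay} does not directly cover. I plan to handle this via $f_{2k+1}(i,j)\leq \sum_v f_{2k}(i,v)\,\perc\,p_{vj}$, whose extra factor $\perc$ makes the odd-length contribution strictly lower order in $n$, in the same spirit as the ratio argument in \eqref{odd-path-weight-total}. A secondary delicate point is verifying the BK decomposition into path-disjoint events uniformly in $n$ over all relevant triples $(j,i_1,i_2)$ and cut vertices $z$. Once \eqref{leq-comp-tight} and \eqref{leq-weight-tight} are established, combining them with the finite-dimensional convergences in Theorems~\ref{thm:total-weight-i} and~\ref{thm:comp-size-i} completes the proof of Theorem~\ref{thm:main-crit}.
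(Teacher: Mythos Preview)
Your approach via the BK/tree-graph inequality is genuinely different from the paper's proof. The paper does \emph{not} expand $\cW_{\sss\leq}(j)^2$ or $|\sC_{\sss\leq}(j)|^2$ and apply BK to the resulting triple connections. Instead, it first removes $[K]$ from the graph (so that $\sum_{j>K}|\sC_{\sss\leq}(j)|^2\leq \sum_i|\sC_{\sss(i)}^{>K}|^2$), and then exploits the elementary identity $\sum_i|\sC_{\sss(i)}|^2=\sum_v|\sC(v)|$ together with $\sum_vW(v)=\sum_uw_u|\sC(u)|$ to iterate
\[
\E\Big[\sum_i|\sC_{\sss(i)}^{>K}|^2\Big]=\E\Big[\sum_{v>K}|\sC^{>K}(v)|\Big]\ \leq\ \perc\,\E\Big[\sum_{v>K}W^{>K}(v)\Big]\ \leq\ \perc^2\sum_{u>K}w_u\,\E[W^{>K}(u)],
\]
and then simply quotes the already-established first-moment bounds $\E[W^{>K}(u)]\leq Cn^\alpha(u^{-b}+n^{-\varepsilon_0}u^{-(1-b)})$ from \eqref{eq:path-count-geom} and \eqref{odd-path-weight-total}. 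The whole proof is a handful of lines and never touches BK; the size-biased rewriting absorbs the second moment into a first moment, which is exactly what your tree-graph step tries to achieve by hand.

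Your route is workable in spirit but is heavier, and the obstacle you flag is real: Proposition~\ref{lem:geometric-decay} controls only $f_{2k}$, and the uniform pairwise bound $\PR(i\leftrightarrow j)\leq C/((i\wedge j)^{1-b}(i\vee j)^b)$ that you use inside the BK decomposition is \emph{not} valid for odd lengths (for the direct edge with, say, $i=1$ and $j\asymp n^{3-\tau}$ one has $\perc p_{ij}\asymp\perc$, which exceeds $j^{-b}$ once $b>\tfrac12$). One can repair this by carrying the even/odd split through all three factors in the BK product and showing each odd factor contributes an extra $n^{-\varepsilon_0}$, but this is considerably more bookkeeping than the paper's two-line size-biased reduction. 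A second issue in your cluster-size argument is the bound $\E[|\sC(z)|]\leq C\perc n^\alpha z^{-b}$: this ignores the additive $1$ (which dominates for $z\gtrsim(\perc n^\alpha)^{1/b}$), and one has to check that the large-$z$ contribution to $\sum_z\PR(j\leftrightarrow z)(\E[|\sC(z)|])^2$ is negligible rather than just drop it. The paper's identity-based proof sidesteps both of these points entirely.
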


\begin{proof}
Recall that $\sCi$ is the $i$-th largest component of $\NRp$,  $W_{\sss (i)} = \sum_{v\in \sC_{\sss (i)}} w_v$ (we have suppressed the dependence of $\perc = 
\percl$ in the notation).  For a fixed $K\geq 1$, consider the graph $\NRp\setminus [K]$.
We augment a previously defined notation with a superscript $>K$ to denote the corresponding quantity for $\NRp\setminus [K]$.
Since the components $\{\sC_{\leq} (j)\colon j> K\}$ do not contain any vertices in~$[K]$, $\sum_{j>K} |\sC_{\leq} (j)|^2 \leq \sum_{i\geq 1} |\sC_{\sss (i)}^{\sss >K}|^2$. Therefore, it is enough to show that for any $\varepsilon>0$,
\begin{equation}
\label{eq:tightness-suff}
\lim_{K\to\infty} \limsup_{n\to\infty} \PR\bigg(\sum_{i\geq 1} |\sC_{\sss (i)}^{\sss >K}|^2 > \varepsilon \perc^2n^{2\alpha}\bigg) = 0, \quad \lim_{K\to\infty} \limsup_{n\to\infty} \PR\bigg(\sum_{i\geq 1} (W_{\sss (i)}^{\sss >K})^2 > \varepsilon n^{2\alpha}\bigg) = 0.
\end{equation}
Using the weight sequence $(w_i)_{i>K}$, let $V_n^{*, {\sss >K}}$ denote a vertex chosen in a size-biased manner from $[n]\setminus [K]$ chosen independently from $\NRp$ (i.e. for any $i>K$, $\PR(V_n^{*, {\sss >K}} = i) \propto w_i$). 
Let $\ell_n^{\sss >K}:=\sum_{i>K}w_i$. Then, $\ell_n^{\sss >K} \leq \ell_n$ for all $K\geq 1$.
Note that \eqref{odd-comp-even-weight} yields 
\begin{eq}
&\E\bigg[\sum_{i\geq 1}|\sC_{\sss (i)}^{\sss >K}|^2\bigg] = \E\bigg[\sum_{v\in [n]\setminus [K]}|\sC^{\sss >K}(v)|\bigg] \leq \perc \E\bigg[\sum_{v\in [n]\setminus [K]}W^{\sss >K}(v)\bigg] \\
&= \perc\E\bigg[\sum_{i\geq 1}|\sC_{\sss (i)}^{\sss >K}| \times W_{\sss (i)}^{\sss >K}\bigg] = \ell_n^{\sss >K} \perc \E[|\sC^{\sss >K}(V_n^{*, {\sss >K}})|] \leq \ell_n^{\sss >K} \perc^2 \E[W^{\sss >K}(V_n^{*, {\sss >K}})]. 
\end{eq}
Further,
\begin{eq}
\E\bigg[\sum_{i\geq 1}(W_{\sss (i)}^{\sss >K})^2\bigg]= \ell_n^{\sss >K}  \E[W^{\sss >K}(V_n^{*, {\sss >K}})].
\end{eq}
Now, by \eqref{eq:path-count-geom} and \eqref{odd-path-weight-total}, for any fixed $v\in [n]$,  $\E[W^{\sss >K} (v)] \leq Cn^{\alpha} (v^{-b}+ n^{-\varepsilon_0}v^{-(1-b)})$, where $C>0$ is independent of $K$,  and hence,
\begin{eq} \label{ss-simple-1}
\ell_n^{\sss >K} \E[W^{\sss >K}(V_n^{*, {\sss >K}})] \leq C n^{2\alpha} \sum_{v>K} \frac{1}{v^{b+\alpha}} + n^{2\alpha-\varepsilon_0} \sum_{v>K} \frac{1}{v^{1-b+\alpha}}.
\end{eq}
Since $b\in (\frac{1}{2},\alpha)$, both $\sum_{v>K} \frac{1}{v^{b+\alpha}}$ and $\sum_{v>K} \frac{1}{v^{1-b+\alpha}}$ go to zero as $K\to\infty$.
Therefore,
\begin{eq}
\lim_{K\to\infty}\limsup_{n\to\infty}(n^{\alpha} \perc)^{-2}\E\bigg[\sum_{i\geq 1}|\sC_{\sss (i)}^{\sss >K}|^2\bigg] = 0, \quad \lim_{K\to\infty}\limsup_{n\to\infty}n^{-2\alpha} \E\bigg[\sum_{i\geq 1}(W_{\sss (i)}^{\sss >K})^2\bigg] = 0. 
\end{eq}
Thus, \eqref{eq:tightness-suff} follows using Markov's inequality completing the proof of Proposition~\ref{prop-tight-l2}.
\end{proof}

\begin{proof}[Proof of Theorem~\ref{thm:main-crit}]
We give the proof for the component sizes. The proof for the weight follows similarly. 
Let $\sC_{\sss (i),K} $ be the $i$-th largest component among $\{\sC_{\leq} (j)\colon j\leq K\}$. 
For $K= \infty$, $\sC_{\sss (i),K}  = \sCi$. 
We first show that for each fixed $i\geq 1$,  $\sCi \approx \sC_{\sss (i),K}$ \c{if}{} $K$ is large. More precisely, for any fixed $ r\geq 1$ and $\varepsilon>0$,
\begin{eq}\label{eq:hubs-largest-relation}
\lim_{K\to\infty}\limsup_{n\to\infty}\PR\big(\exists i\leq r\colon  \big||\sC_{\sss (i),K} | - |\sC_{\sss (i)}| \big|>\varepsilon \perc n^{\alpha} \big) = 0. 
\end{eq}
Indeed, if $\sC_{\sss (1),K} \neq \sC_{\sss (1)} $, then $\sC_{\sss (1)}  = \max \{\sC_{\leq} (j)\colon j> K\}$, and therefore
\begin{eq}
\big||\sC_{\sss (1),K} | - |\sC_{\sss (1)}| \big| \leq \bigg(\sum_{j>K} |\sC_{\leq} (j)|^2\bigg)^{1/2}.
\end{eq}
Next, on the event  $\{\sC_{\sss (1),K} = \sC_{\sss (1)} \}$, we can similarly bound $\big||\sC_{\sss (2),K} | - |\sC_{\sss (2)}| \big| \leq (\sum_{j>K} |\sC_{\leq} (j)|^2)^{1/2}$ and in general on the event $\{\sC_{\sss (i),K} = \sC_{\sss (i)}, \ \forall i \in [r-1] \}$, we can also bound $\big||\sC_{\sss (i),K} | - |\sC_{\sss (i)}| \big| \leq (\sum_{j>K} |\sC_{\leq} (j)|^2)^{1/2}$. Thus \eqref{eq:hubs-largest-relation} follows using Proposition~\ref{prop-tight-l2}. 

Next, note that $(\sC_{\leq} (j))_{j\in [K]}$ is the collection of components $(\sC (j))_{j\in [K]}$ with multiplicities removed and replaced by empty sets (recall \eqref{C-leq-def-2}). Thus, $|\sC_{\sss (1), K}| = \max_{j\in [K]} |\sC(j)|$, and similar identities holds for $|\sC_{\sss (i), K}|$.
Thus, using \eqref{eq:hubs-largest-relation} and Theorem~\ref{thm:comp-size-i}, we conclude that  $((n^\alpha \perc)^{-1}|\sCi|)_{i\geq 1}$ converges to our desired limiting object in finite-dimensional sense. The $\ell^2_{\shortarrow}$-tightness follows by observing that $\sum_{j>K}|\sC_{\sss (j)}|^2\leq \sum_{j>K} |\sC_{\leq} (j)|^2$. 
\end{proof}

\subsection{Sub-critical behavior: proof of Theorem \ref{thm:barely-subcrit-single-edge}}
\label{sec-proof-barely-sub-GRG}
The proof of Theorem~\ref{thm:barely-subcrit-single-edge} can be completed by modifying the arguments for the critical regime. 
In fact, if $\percn = \lambda_n n^{-(3-\tau)/2}$ for some $\lambda_n\to 0$, then the hub-connection probabilities tend to zero as shown in \eqref{eq:sub-hub-con}. 
Moreover, we can follow identical arguments as in Proposition~\ref{prop:tail-particular-component} and Lemma~\ref{lem:odd-even-com-size} to show that $W(i) = w_i (1+\oP(1))$ and $|\sC(i)| = \percn w_i (1+\oP(1))$.
To successfully apply Chebyshev's inequality to get these asymptotics, we need $\percn w_i \to \infty$, which is true since $\percn\gg n^{-\alpha}$ by the assumptions of Theorem~\ref{thm:barely-subcrit-single-edge}. 
Finally, we can use identical arguments as in Proposition~\ref{prop-tight-l2} to deduce the $\ell^2_{\sss\shortarrow}-$tightness of the vector of component sizes and weights. 
Thus, the proof of Theorem~\ref{thm:barely-subcrit-single-edge} follows.
\qed

\section{The giant in the embedded inhomogeneous random graph}
\label{sec:small-giant} 
Henceforth, we consider the supercritical case, i.e., $\percn = \lambda n^{-\seta}$ for $\lambda > \lambda_c$.
In this section, we proceed to set up the main conceptual ingredients for the emergence of the giant for $\lambda>\lambda_c$. 
Fix a parameter $a>0$, and define 
	\eqn{
	\label{Nna-def}
	N_n(a)=\lfloor a n^{(3-\tau)/2} \rfloor.
	}
We also denote
	\eqn{
	\label{Nn1-def}
	N_n=N_n(1).
	}
By \eqref{theta(i)-def}, we note that,  for $i\in \lceil N_n u \rceil $	and $u \in (0,a]$ 
\begin{eq}\label{eq:asymp-w-N}
	w_{\lceil N_n u \rceil}
	=
	\cf u^{-\alpha}\Big(\frac{n}{N_n}\Big)^{\alpha}
	=\cf u^{-\alpha}\big(n^{(\tau-1)/2}\big)^{\alpha} \asymp \sqrt{n} \cf u^{-\alpha},
\end{eq}
and thus  $[N_n(a)]$ consists of vertices with weight at least of order $\sqrt{n}a^{-\alpha}$. 

The key conceptual step is that, if $a$ is large enough, then a giant component emerges inside $[N_n(a)]$ that forms the core connectivity structure of the giant component in the whole graph. In turn, this graph is an inhomogeneous random graph, for which the critical value can be determined exactly, as we explain in more detail now.

To this end, consider the percolated graph  $\rNR(\bw,\percn)$, restricted to $[N_n(a)]$, and denote this subgraph by $\cG_{\sss N_n(a)}$. Then, $\cG_{\sss N_n(a)}$ is distributed as an inhomogeneous random graph that is {\em sparse} in that the number of edges grows linearly in the number of vertices in the graph.
Thus, the emergence of the giant component within $\cG_{\sss N_n(a)}$ can be studied using the general setting of inhomogeneous random graphs developed by Bollob\'as, Janson and Riordan in \cite{BJR07}. 
In particular, the results of 
\cite{BJR07} gives a critical value $\lambda_c(a)$, such that, for $\lambda>\lambda_c(a)$,
a unique and highly concentrated giant exists inside $[N_n(a)]$, that is {\em stable} to the addition of a small proportion of edges. The stability result is used later in Section~\ref{sec-size-tiny-giant} below to understand the perturbation on this giant after adding all the edges outside $[N_n(a)]$.
In Section~\ref{sec:size-core-giant}, we make the connection with the key results from \cite{BJR07} explicit and state the relevant results for our proof. 
 The rest of the section is devoted to analysis of the limiting quantities as $a\to \infty$. 
In Section~\ref{sec:equality-crit-val}, we first show that $\lim_{a\to\infty}\lambda_c(a)  = \lambda_c$, where $\lambda_c$ is given by \eqref{eqn:lambc-def}. 
The connection between $\lambda_c(a)$ and $\lambda_c$ is quite remarkable given the vastly different descriptions of these quantities. We prove this fact by an explicit computation. The convergence of $\lambda_c(a)$ is also a key conceptual step, since it shows that, whenever $\lambda>\lambda_c$, one can choose $a$ to be large enough to make a tiny giant appear inside~$[N_n(a)]$. 
Finally, the asymptotics for functionals of the giant inside $\cG_{\sss N_n(a)}$ are given by survival probabilities of certain multitype branching processes that depend sensitively on $a$. 
In Section~\ref{sec-MBP}, we analyze these survival probabilities as $a\to\infty$.
This sets the stage for Section~\ref{sec-size-tiny-giant}, where we identify the primary contributions to the size of the giant in the whole graph using the giant inside $[N_n(a)]$, for $a$ large enough.

\subsection{Size and weight of the giant core}
\label{sec:size-core-giant}
Consider the measure space $\mathcal{S}_a = ((0,a], \sB((0,a]), \Lambda_a)$, where $\sB((0,a])$ denotes the Borel sigma-algebra on $(0,a]$, and $\Lambda_a(\dif x) = \frac{\dif x}{a}$ is the normalized Lebesgue measure on $(0,a]$.
Recall from~\eqref{eq:p-ij-NR-defn} that the probability that there is an edge between $i$ and $j$ after percolation equals 
	$p_{ij} =\percn [1-\e^{-w_{i}w_{j}/\ell_n}].$
For $u,v\in(0,a]$, define the kernel 
	\eqn{
	\label{kappa-Nn-def}
	\kappa_{\sss N_n}^{\sss (a)}(u,v)=N_n(a)  p_{\lceil N_n u \rceil \lceil N_n v \rceil} /\lambda.
	}
Then putting $u_i^n = i/N_n$, we have that for all $i\in [N_n(a)]$, $p_{ij} = \lambda \kappa_{\sss N_n}^{\sss (a)} (u_i^n,u_j^n)/N_n(a) $. 
Obviously, the empirical measure $\Lambda_{n,a}$ of $(u_i^n)_{i\in [N_n(a)]}$ converges in the weak topology, with limiting measure $\Lambda_a$.
This verifies \cite[(2.2)]{BJR07}, and thus  $(\cS_a, ((u_i^n)_{i\in [N_n(a)]})_{n\geq 1})$ is a vertex space according to the definition in \cite[Section 2]{BJR07}. 

Next, we verify that $(\kappa_{\sss N_n}^{\sss (a)})_{n\geq 1}$ is a sequence of graphical kernels on  $\cS_a$
according to \cite[Definition 2.9]{BJR07}. 
For any $(u_n)_{n\geq 1}, (v_n)_{n\geq 1} \subset (0,a]$ and $u,v\in (0,a]$ with $u_n \to u$ and $v_n\to v$, it follows using \eqref{eq:asymp-w-N} that 
\eqn{
	\label{kappa-a-def}
	\kappa_{\sss N_n}^{\sss (a)}(u_n,v_n)\rightarrow \kappa^{\sss(a)}(u,v):= a [1-\e^{-\cf^{2}(uv)^{-\alpha}/\mu}] \quad \text{for all }u,v\in (0,a].
	}
Note that $\kappa^{\sss (a)}$ is bounded and continuous, and thus the first two conditions of \cite[Definition~2.7]{BJR07} are satisfied. 
Next, note that Lemma~\ref{lem:order-estimates} yields
	\eqan{
	\frac{1}{N_n(a)}\sum_{\substack{i,j\in [N_n(a)]\\ i<j}} \percn [1-\e^{-w_iw_j/\ell_n}]
	&\to \frac{\lambda}{2a}  \int_0^a \int_0^a [1-\e^{-\cf^{2}(uv)^{-\alpha}/\mu}]\dif u\dif v\nn\\
	&=\frac{1}{2} \int_0^a \int_0^a  \lambda \kappa^{\sss(a)}(u,v) \Lambda_a(\dif u)\Lambda_a(\dif v),
	}
which verifies \cite[(2.11)]{BJR07}, and thus all the conditions of \cite[Definition 2.9]{BJR07} have now been verified. 
Finally, $\kappa^{(a)} >0$, so that it is irreducible according to \cite[Definition 2.10]{BJR07}. 
Hence we have verified that $\cG_{\sss N_n(a)}$ is an inhomogeneous random graph with kernels $(\kappa^{\sss(a)}_{\sss N_n})_{n\geq 1}$ satisfying all the requisite good properties in \cite{BJR07}.

To describe the phase transition,  define the integral operator ${\bf T}_{\kappa^{\sss(a)}}: L^2(\mathcal{S}_a) \mapsto L^2(\mathcal{S}_a)$ by 
	\eqn{ \label{eq:integral-operator-defn}
	({\bf T}_{\kappa^{\sss(a)}} f)(u) = \int_{0}^a \kappa^{\sss(a)}(u,v)f(v)\Lambda_a(\dif v) = \int_0^a[1-\e^{-\cf^{2}(uv)^{-\alpha}/\mu}] f(v) \dif v,
	}
and let $\|{\bf T}_{\kappa^{\sss(a)}}\|$ denote its operator norm. 
Let $\sCa_{\sss (i)}$ denote the size of $i$-th largest component of the graph $\cG_{\sss N_n(a)}$. 
Also, let $\cT_{\sss \geq k}^a$ denote the set of vertices that belong to some component of size at least $k$ in $\cG_{\sss N_n(a)}$.

Throughout this section, we suppress $\percn$  in the notation.
To describe the size of the giant component in $[N_n(a)]$,  let  $\cX_{a}^\lambda (u)$ be a multi-type branching process with type space $\cS_a$, where we start from one vertex with type $u\in \cS_a$, and a particle of type $v\in \cS_a$ produces progeny in the next generation according to a Poisson process on $\cS_a$ with intensity $\lambda \kappa^{\sss (a)}(v,x) \Lambda_a(\dif x)$.
Let $\rho_a^\lambda(u)$ be the survival probability of $\cX_{a}^\lambda (u)$, and $\rho_{a,{\sss \geq k}}^\lambda$ denote the probability that $\cX_{a} (u)$ has at least $k$ individuals. 
Define 
\begin{eq}\label{defn:survival-prob}
\rho_a^\lambda = \int_0^a \rho_a^\lambda (u) \Lambda_a(\dif u) = \frac{1}{a}\int_0^a \rho_a^\lambda (u) \dif u, \quad  \rho_{a,{\sss \geq k}}^\lambda = \int_0^a \rho_{a,{\sss \geq k}}^\lambda (u) \Lambda_a(\dif u) = \frac{1}{a}\int_0^a\rho_{a,{\sss \geq k}}^\lambda (u) \dif u.  
\end{eq}
The following proposition describes the emergence of the giant component for $\cG_{\sss N_n(a)}$: 
\begin{proposition}[Emergence of giant in $\mathcal{G}_{\sss N_n(a)}$]\label{prop:giant-restricted}
Under {\rm Assumption~\ref{assumption-NR}}, the following hold for any $a>0$:
\begin{enumerate}[(i)]
\item For $\lambda>\|{\bf T}_{\kappa^{\sss(a)}}\|^{-1}$, $|\sC_{\sss (1)}^{a}| = N_n(a)\rho_{a}^\lambda (1+\oP(1))$, 
and $|\sC_{\sss (2)}^{a}| = \OP(\log(n))$. Further, for each fixed $k\geq 1$, $|\cT_{\sss \geq k}^a| = N_n(a) \rho_{a,{\sss \geq k}}^\lambda (1+\oP(1))$. Finally, $\cT_{\sss \geq k}^a$ is {\em stable}, in the sense that, for every $\vep>0$, there exists a $\delta>0$ such that, with high probability, removing at most $\delta n$ edges from $\mathcal{G}_{\sss N_n(a)}$ changes $\cT_{\sss \geq k}^a$ by at most $\vep n$ vertices.
\item[(ii)] For $\lambda<\|{\bf T}_{\kappa^{\sss(a)}}\|^{-1}$, $|\sC_{\sss (1)}^{a}| = \OP(\log(n))$.
\end{enumerate}
\end{proposition}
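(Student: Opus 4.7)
The plan is to recognize Proposition~\ref{prop:giant-restricted} as essentially a direct application of the general theory of inhomogeneous random graphs developed by Bollob\'as, Janson and Riordan~\cite{BJR07}, since the preceding discussion has already verified all the hypotheses required there: $\cG_{\sss N_n(a)}$ is an inhomogeneous random graph on the vertex space $(\cS_a, ((u_i^n)_{i\in [N_n(a)]})_{n\geq 1})$, the kernel sequence $(\kappa_{\sss N_n}^{\sss(a)})_{n\geq 1}$ is graphical with limit $\kappa^{\sss(a)}$, and $\kappa^{\sss(a)}$ is bounded, continuous, and strictly positive (hence irreducible). The effective BJR kernel is $\lambda\kappa^{\sss(a)}$, so the BJR critical value $\|\bfT_{\lambda \kappa^{\sss(a)}}\|=1$ matches the threshold $\lambda=\|\bfT_{\kappa^{\sss(a)}}\|^{-1}$ stated in the proposition.

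For part (i), I would invoke \cite[Theorem 3.1]{BJR07}, which gives in the supercritical regime $\lambda>\|\bfT_{\kappa^{\sss(a)}}\|^{-1}$ that $|\sCa_{\sss(1)}|/N_n(a)\pto \rho_a^\lambda$, where $\rho_a^\lambda$ is exactly the survival probability of $\cX_a^\lambda(U)$ from a uniform type in $(0,a]$, as defined in~\eqref{defn:survival-prob}. The bound $|\sCa_{\sss (2)}|=\OP(\log n)$ then follows from the duality principle for inhomogeneous random graphs in the supercritical case, as in \cite[Theorem 3.12]{BJR07}. For the $k$-neighborhood statement, the same exploration argument identifies $|\cT^a_{\sss \geq k}|/N_n(a)$ with the asymptotic probability $\rho_{a,{\sss \geq k}}^\lambda$ that the limiting multi-type branching process has at least $k$ individuals; see \cite[Theorem 9.10]{BJR07} or adapt the standard neighborhood coupling directly.

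The stability statement requires slightly more care. My plan is to use the continuity of the operator $\bfT_{\kappa^{\sss(a)}}$ and of the survival map $\kappa'\mapsto \rho(\kappa')$ under $L^1$-perturbations of the kernel, as established in \cite[Section 5]{BJR07}. Removing at most $\delta n$ edges from $\cG_{\sss N_n(a)}$ amounts to replacing $\kappa_{\sss N_n}^{\sss(a)}$ by $\kappa_{\sss N_n}^{\sss(a)} - \gamma_n$ with $\|\gamma_n\|_1 = O(\delta)$, so by continuity both $\rho_a^\lambda$ and $\rho_{a,{\sss \geq k}}^\lambda$ change by at most $o_\delta(1)$; combined with the concentration of $|\cT_{\sss \geq k}^a|$ around $N_n(a)\rho_{a,{\sss\geq k}}^\lambda$, this gives the desired stability with $\vep\to 0$ as $\delta\to 0$. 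For part (ii), the subcritical statement $|\sCa_{\sss (1)}|=\OP(\log n)$ for $\lambda<\|\bfT_{\kappa^{\sss(a)}}\|^{-1}$ is exactly \cite[Theorem 3.12]{BJR07}, since the associated multi-type branching process dies out almost surely in this regime.

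The main obstacle is not the proof itself, which is essentially citation-based, but rather setting up the correspondence so that BJR07 applies cleanly. In particular, one needs to carefully track the $\lambda$-dependence through the factorization $p_{ij}=\lambda \kappa_{\sss N_n}^{\sss(a)}(u_i^n,u_j^n)/N_n(a)$, verify that $\kappa^{\sss(a)}$ (being bounded) automatically satisfies the integrability conditions needed for BJR's operator-theoretic framework, and formulate stability in a form that will feed into the perturbation argument in Section~\ref{sec-size-tiny-giant}, where one must control the effect of re-introducing all edges incident to vertices outside $[N_n(a)]$.
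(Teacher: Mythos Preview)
Your approach is essentially the same as the paper's: both recognize that once the vertex space, graphical kernel sequence, and irreducibility have been verified, Proposition~\ref{prop:giant-restricted} is a direct consequence of the general BJR07 machinery. The paper's proof is literally three sentences of citations: \cite[Corollary~3.2 and Theorem~3.12]{BJR07} for the giant and the $O(\log n)$ bounds (the latter requiring the observation that $\sup_{n,x,y}\kappa_{\sss N_n}^{\sss(a)}(x,y)<\infty$), \cite[Theorem~9.1]{BJR07} for $|\cT_{\sss\geq k}^a|$ (your reference to Theorem~9.10 is the weighted-sum version, used later in Proposition~\ref{prop-weight-giant-core}; Theorem~9.1 is the right one here), and \cite[Theorem~11.1]{BJR07} for stability.

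The one point where your proposal diverges in substance is the stability argument, and there it has a gap. You write that removing at most $\delta N_n(a)$ edges ``amounts to replacing $\kappa_{\sss N_n}^{\sss(a)}$ by $\kappa_{\sss N_n}^{\sss(a)}-\gamma_n$ with $\|\gamma_n\|_1=O(\delta)$'', and then appeal to continuity of the survival probability in the kernel. But the edges removed here are \emph{adversarial}, not i.i.d.\ with a perturbed edge probability, so the resulting graph is not an inhomogeneous random graph with a slightly smaller kernel; there is no $\gamma_n$ to speak of. Kernel-continuity controls random perturbations, not worst-case ones. The correct route is precisely \cite[Theorem~11.1]{BJR07}, which is formulated for arbitrary (adversarial) addition or deletion of $o(N)$ edges and whose proof goes through a different mechanism (bounding how many vertices can join or leave large components per edge change, cf.\ also Lemma~\ref{lem:robustness} later in the paper). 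So for the stability clause you should cite that result directly rather than attempt the $L^1$-perturbation argument.
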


\proof
The asymptotics of $|\sCa_{\sss (1)}|$  follow  directly by applying \cite[Corollary 3.2]{BJR07} and
\cite[Theorem 3.12]{BJR07}, and further noting that $\sup_{n,x,y} \kappa_{N_n}^{\sss(a)}(x,y) <\infty$. 
The asymptotics of $|\cT_{\sss \geq k}^a|$ follows using \cite[Theorem 9.1]{BJR07}. 
The stability of the giant in part (i) is proved in \cite[Theorem 11.1]{BJR07}.
\qed
\medskip

We conclude this section by providing the asymptotics of the total weight inside  $\sCa_{\sss (1)}$: 
\begin{proposition}[Weight of the giant in $\cG_{\sss N_n(a)}$]\label{prop-weight-giant-core}
Under {\rm Assumption~\ref{assumption-NR}}, for any fixed $a>0$, as $n\to\infty$, 
\begin{eq}
 \sum_{i\in \sCa_{\sss (1)} } \frac{\percn w_i}{\sqrt{n}}  \pto \zeta_a^\lambda,
\end{eq}
where $\zeta_a^{\lambda}:=\lambda \int_0^a \cf u^{-\alpha} \rho_a^\lambda(u) \dif u$.
\end{proposition}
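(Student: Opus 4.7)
The plan is to exhibit $M_n := \sum_{i \in \sCa_{\sss (1)}} \percn w_i/\sqrt n$ as a Riemann sum in the rescaled index $u_i^n := i/N_n \in (0,a]$, and then pass to the limit using the inhomogeneous random graph framework underlying Proposition~\ref{prop:giant-restricted}. The key elementary observation is that, by \eqref{theta(i)-def} together with the identity $\alpha(1-\seta) = 1/2$, for every $\epsilon>0$,
\[
\frac{\percn w_{\lceil N_n u\rceil}}{\sqrt n} = \frac{\hat g(u)}{N_n}(1+o(1)) \quad \text{uniformly in } u \in [\epsilon, a],\qquad \hat g(u):= \lambda \cf u^{-\alpha}.
\]
Writing $\Delta_n(\epsilon)$ for the contribution of indices $i \leq \epsilon N_n$, this yields
\[
M_n = \frac{1}{N_n}\sum_{i \in \sCa_{\sss (1)},\ u_i^n > \epsilon} \hat g(u_i^n) (1 + o(1)) + \Delta_n(\epsilon).
\]

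The head $\Delta_n(\epsilon)$ is deterministically dominated by $\sum_{i \leq \epsilon N_n}\percn w_i/\sqrt n$, which by the same computation tends to $\lambda \cf \epsilon^{1-\alpha}/(1-\alpha)$ as $n \to \infty$. Since $\alpha\in (1/2,1)$, this vanishes as $\epsilon \to 0$, so the head is asymptotically negligible. The main term will be handled by the following type-empirical convergence for the giant: for every bounded continuous $g:[\epsilon,a]\to\R$,
\begin{equation}\label{pl:eq:typeemp}
\frac{1}{N_n(a)}\sum_{i\in \sCa_{\sss (1)},\ u_i^n\geq \epsilon} g(u_i^n) \pto \int_\epsilon^a g(u)\rho_a^\lambda(u)\Lambda_a(\dif u).
\end{equation}
Taking $g = \hat g$ in \eqref{pl:eq:typeemp}, using $N_n(a)/N_n \to a$, and exploiting that this factor of $a$ cancels the factor $1/a$ implicit in $\Lambda_a$, one obtains that the tail $M_n - \Delta_n(\epsilon)$ converges in probability to $\int_\epsilon^a \lambda \cf u^{-\alpha}\rho_a^\lambda(u)\dif u$. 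Since $\hat g\,\rho_a^\lambda$ is integrable on $(0,a]$ (as $\rho_a^\lambda\leq 1$ and $\alpha<1$), sending $\epsilon\to 0$ recovers $\zeta_a^\lambda$.

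The main obstacle is \eqref{pl:eq:typeemp}, which is not part of the statement of Proposition~\ref{prop:giant-restricted} but is a standard strengthening within the BJR07 framework \cite{BJR07}. The plan is to derive it in three steps: (i) establish the $k$-truncated and type-restricted analogue $|\cT_{\sss \geq k}^a \cap \{i\colon u_i^n \in B\}|/N_n(a) \pto \int_B \rho_{a,{\sss \geq k}}^\lambda(u)\Lambda_a(\dif u)$ for Borel $B\subseteq (0, a]$, by the same local-to-branching-process coupling of neighborhood explorations that underlies Proposition~\ref{prop:giant-restricted}(i); (ii) approximate the bounded continuous $g$ by simple functions on a partition of $[\epsilon, a]$ to deduce the $k$-truncated version of \eqref{pl:eq:typeemp}; (iii) send $k\to\infty$, using the stability assertion in Proposition~\ref{prop:giant-restricted}(i) together with the monotone convergence $\rho_{a,{\sss \geq k}}^\lambda(u) \downarrow \rho_a^\lambda(u)$ to replace $\cT_{\sss \geq k}^a$ by $\sCa_{\sss (1)}$ in the limit.
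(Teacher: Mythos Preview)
Your proposal is correct and follows essentially the same route as the paper: truncate away the contribution of indices $i\leq \epsilon N_n$ (which is $O(\epsilon^{1-\alpha})$ and hence negligible since $\alpha<1$), and then use a type-empirical convergence for the bounded function $\hat g(u)=\lambda\cf u^{-\alpha}$ on $[\epsilon,a]$. The only difference is that the paper directly invokes \cite[Theorem~9.10]{BJR07}, which is exactly your displayed type-empirical convergence for bounded continuous $g$, rather than re-deriving it via your three-step plan; so your steps (i)--(iii) are unnecessary once you recognize that this result is already available off the shelf.
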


\begin{proof}
We apply \cite[Theorem 9.10]{BJR07}. 
First the contribution due to $i\leq N_n(\vep)$ can be  almost surely bounded by
	\eqn{
	\frac{1}{N_n(a)}\sum_{i\in [N_n(\vep)]} \frac{w_i}{\sqrt{n}}
	\leq \frac{\cf n^{\alpha}}{N_n(a)\sqrt{n}} \sum_{i\leq \vep N_n} i^{-\alpha}\leq \frac{C\vep^{1-\alpha}}{a}.
	}
Further, for all $i\in [N_n(a)]\setminus [N_n(\vep)]$, the function $i\mapsto \frac{w_i}{\sqrt{n}}$ is bounded. Thus,  \cite[Theorem 9.10]{BJR07} is applicable and we have 
\begin{eq}
\sum_{i\in \sCa_{\sss (1)} } \frac{\percn w_i}{\sqrt{n}} = \frac{\lambda a}{N_n(a)}\sum_{i\in \sCa_{\sss (1)} } \frac{w_i}{\sqrt{n}} \pto a\lambda \int_0^a \cf u^{-\alpha} \rho_a^\lambda(u) \Lambda_a(\dif u) = \zeta_a^\lambda,
\end{eq}
and the proof follows. 
\end{proof}

\subsection{Equality of the critical values}
\label{sec:equality-crit-val}
In this section, we relate the critical values in the inhomogeneous random graph $\mathcal{G}_{\sss N_n(a)}$, for $a$ large, to the critical value $\lambda_c$ defined in \eqref{eqn:lambc-def}. Let us denote $\lambda_c(a) = \|\bfT_{\sss \kappa^{\sss (a)}}\|^{-1}$.
We start by observing a monotonicity of $\lambda_c(a)$:
\begin{lemma}[Monotonicity of $a\mapsto \lambda_c(a)$]
\label{lem-mon-lambdaca}
The function $a\mapsto \lambda_c(a)$ is non-increasing on $[0,\infty)$.
\end{lemma}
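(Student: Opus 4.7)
The plan is to exploit the scaling structure of the operator norm $\|\bfT_{\kappa^{\sss(a)}}\|$ and interpret it as the norm of an integral operator on unweighted $L^2((0,a],\dif u)$, where monotonicity in $a$ becomes transparent via extension by zero.

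First I would observe that the factor of $a$ inside $\kappa^{\sss(a)}$ and the factor of $1/a$ inside $\Lambda_a$ cancel each other in the operator $\bfT_{\kappa^{\sss(a)}}$, so by \eqref{eq:integral-operator-defn} we have $(\bfT_{\kappa^{\sss(a)}} f)(u)=\int_0^a K(u,v)f(v)\dif v$ with $K(u,v):=1-\e^{-\cf^{2}(uv)^{-\alpha}/\mu}$. Writing out the $L^2(\Lambda_a)$-norm, the $1/a$ factors cancel in numerator and denominator of the Rayleigh quotient, giving the identity
\begin{equation*}
\|\bfT_{\kappa^{\sss(a)}}\|^{2}
=\sup_{f\in L^{2}((0,a],\dif u),\,f\neq 0}
\frac{\int_{0}^{a}\bigl|\int_{0}^{a}K(u,v)f(v)\dif v\bigr|^{2}\dif u}{\int_{0}^{a}|f(v)|^{2}\dif v}.
\end{equation*}
Thus $\|\bfT_{\kappa^{\sss(a)}}\|$ equals the operator norm of the integral operator $S_{a}$ with kernel $K$ acting on the unweighted space $L^{2}((0,a],\dif u)$.

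Second, I would establish monotonicity of $a\mapsto\|S_{a}\|$ by a standard extension-by-zero argument. For $0<a\leq a'$ and any $f\in L^{2}((0,a],\dif u)$, let $\tilde f\in L^{2}((0,a'],\dif u)$ denote its extension by zero. Then $\|\tilde f\|_{L^{2}((0,a'])}=\|f\|_{L^{2}((0,a])}$, and for $u\in(0,a]$ we have $(S_{a'}\tilde f)(u)=(S_{a}f)(u)$ since $K\geq 0$ and $\tilde f\equiv 0$ on $(a,a']$. Consequently
\begin{equation*}
\|S_{a'}\tilde f\|^{2}_{L^{2}((0,a'])}\geq \int_{0}^{a}|(S_{a'}\tilde f)(u)|^{2}\dif u=\|S_{a}f\|^{2}_{L^{2}((0,a])},
\end{equation*}
which yields $\|S_{a}\|\leq \|S_{a'}\|$ upon taking the supremum over $f$. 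Combining with the identity from the first step gives $\|\bfT_{\kappa^{\sss(a)}}\|\leq \|\bfT_{\kappa^{\sss(a')}}\|$, and therefore $\lambda_{c}(a)=\|\bfT_{\kappa^{\sss(a)}}\|^{-1}\geq \lambda_{c}(a')$.

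I do not expect any real obstacle here; the only subtle point is bookkeeping with the normalized measure $\Lambda_{a}=\dif u/a$, which is resolved once one notices the cancellation in the Rayleigh quotient. An alternative (and essentially equivalent) route would be to rescale $u=aw$ with $w\in(0,1]$ and write $\bfT_{\kappa^{\sss(a)}}$ as an operator on the fixed space $L^{2}((0,1],\dif w)$ with kernel $a K(aw,ax)=a[1-\e^{-\cf^{2}a^{-2\alpha}(wx)^{-\alpha}/\mu}]$ which is pointwise non-decreasing in $a$ (since both $a\mapsto a$ and $a\mapsto 1-\e^{-c a^{-2\alpha}}$ are increasing for $\alpha\in(\tfrac12,1)$ once one factors correctly); however, the extension-by-zero argument above is cleaner and avoids any further analysis of the kernel's dependence on $a$.
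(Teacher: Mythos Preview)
Your main argument is correct and takes a genuinely different route from the paper. The paper proves monotonicity probabilistically: it fixes $b>a$ and $\lambda>\lambda_c(a)$, invokes Proposition~\ref{prop:giant-restricted} to get a linear-sized giant in $\cG_{\sss N_n(a)}$, couples $\cG_{\sss N_n(a)}\subseteq\cG_{\sss N_n(b)}$, and concludes that $\cG_{\sss N_n(b)}$ also has a linear giant, hence $\lambda>\lambda_c(b)$. You instead work directly with the operator norm: after noting that the factor $a$ in $\kappa^{\sss(a)}$ cancels the $1/a$ in $\Lambda_a$, the operator $\bfT_{\kappa^{\sss(a)}}$ is just the integral operator with fixed kernel $K(u,v)=1-\e^{-\cf^2(uv)^{-\alpha}/\mu}$ on unweighted $L^2((0,a],\dif u)$, and monotonicity of its norm in $a$ is immediate by extension-by-zero. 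Your approach is more elementary and self-contained, bypassing the BJR machinery entirely; the paper's route is natural in context since Proposition~\ref{prop:giant-restricted} is already in hand and the coupling is intuitive.

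One small correction: in your closing aside about the rescaling alternative, the claim that $a\mapsto 1-\e^{-c a^{-2\alpha}}$ is increasing is false (it is decreasing, since $a^{-2\alpha}$ decreases). In fact $a\mapsto a\bigl[1-\e^{-c a^{-2\alpha}}\bigr]$ is \emph{not} pointwise monotone for $\alpha\in(\tfrac12,1)$, as a Taylor expansion near $a\to\infty$ (equivalently $y=ca^{-2\alpha}\to 0$) shows the derivative changes sign. So that alternative route would not work as stated; fortunately you do not rely on it. Also, the remark ``since $K\geq 0$'' in the extension step is unnecessary: the identity $(S_{a'}\tilde f)(u)=(S_a f)(u)$ for $u\le a$ holds simply because $\tilde f\equiv 0$ on $(a,a']$, and the subsequent inequality holds because the integrand is a square.
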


\begin{proof} 
Fix $b>a$, and let $N_n(\cdot)$ be as in \eqref{Nna-def}. Fix $\lambda>\lambda_c(a)$. We will prove that then also $\lambda>\lambda_c(b)$, which proves that $\lambda_c(a)\geq \lambda_c(b),$ as required.

Since $\lambda>\lambda_c(a)$, Proposition~\ref{prop:giant-restricted} implies that the graph $\cG_{\sss N_n(a)}$ on vertex set $[N_n(a)]$ has a giant component of size $\rho_{a}^\lambda N_n(a)(1+\oP(1))$, where $\rho_{a}^\lambda>0$ since $\lambda>\lambda_c(a)$.  Denote this component by $\sC_{\sss (1)}^{a}$. 
Since $[N_n(a)]\subseteq [N_n(b)]$, and since the edge probabilities in $\cG_{\sss N_n(a)}$ and~$\cG_{\sss N_n(b)}$ are equal on $[N_n(a)]$,
we can find a coupling under which $\cG_{\sss N_n(a)}$ is a subgraph of $\cG_{\sss N_n(b)}$ with probability one.
Under this coupling, there exists a component of $\sC \subset\cG_{\sss N_n(b)}$ such that $\sC_{\sss (1)}^{a}\subseteq \sC$. 
For any $q>0$, if $|\sC_{\sss (1)}^{a}|\geq q N_n(a)$, then  $|\sC_{\sss (1)}^{b}|\geq |\sC| \geq q N_n(a)\geq q(a/b) N_n(b)$. Thus, as $n\to\infty$,  $\PR(|\sC^b_{\sss (1)}|/N_n(b) \geq \frac{a}{b} \rho_a^\lambda ) \to 1,$
and therefore $\lambda>\lambda_c(b)$ by Proposition~\ref{prop:giant-restricted}, as required.
\end{proof}
Lemma~\ref{lem-mon-lambdaca} implies that $\lim_{a\to\infty} \lambda_c(a)$ exists and is finite. Let
	\eqn{
	 \label{lambda_IRM}
	 \lambda_c^{\sss \mathrm{IRG}}:= \lim _{a\to\infty} \lambda_c(a) =  \inf_{a>0} \lambda_c(a).
	 }
We next show that $\lambda_c^{\sss \mathrm{IRG}}=\lambda_c$:

\begin{lemma}[Equality of critical values]
\label{lem-lambdacs-equal} $\lambda_c^{\sss \mathrm{IRG}}=\lambda_c,$ with $\lambda_c, \lambda_c^{\sss \mathrm{IRG}}$ defined in \eqref{eqn:lambc-def}, \eqref{lambda_IRM} respectively.
\end{lemma}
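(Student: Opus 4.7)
The plan is to compute $\lim_{a\to\infty}\|\mathbf{T}_{\kappa^{(a)}}\|$ explicitly and match it with $1/\lambda_c$. By Lemma~\ref{lem-mon-lambdaca} the sequence $\|\mathbf{T}_{\kappa^{(a)}}\|$ is non-decreasing, so the limit exists. A direct Rayleigh-quotient calculation shows that the factor $1/a$ in $\Lambda_a=dv/a$ cancels, so $\|\mathbf{T}_{\kappa^{(a)}}\|$ equals the norm of the integral operator with kernel $K(u,v):=1-\e^{-\cf^2(uv)^{-\alpha}/\mu}$ acting on $L^2((0,a], dv)$; monotonicity then identifies the limit with the corresponding norm on $L^2((0,\infty), dv)$, which I denote $\|T_\infty^K\|$.

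Next, I would use self-adjointness of $T_\infty^K$ to reduce to the iterated operator: $\|T_\infty^K\|^2=\|(T_\infty^K)^2\|$. The squared operator has kernel $k_2(u,v):=\int_0^\infty K(u,x)K(x,v)\,dx$, and the substitution $x\mapsto x/t$ shows $k_2(tu,tv)=t^{-1}k_2(u,v)$, i.e.\ $k_2$ is homogeneous of degree $-1$. For a non-negative symmetric kernel on $L^2((0,\infty),dv)$ of this homogeneity class, the Mellin change of variable $u=\e^s$, $f(u)=u^{-1/2}g(\log u)$ conjugates the operator to a non-negative convolution on $L^2(\mathbb{R})$, whose operator norm equals $\widehat{\phi}(0)=\int_0^\infty k_2(1,y)\,y^{-1/2}\,dy$ (because the Fourier transform of a non-negative function attains its maximum at the origin).

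Finally, I would evaluate $\int_0^\infty k_2(1,y)\,y^{-1/2}\,dy$ in closed form and match it with $4B_\alpha/\eta=1/\lambda_c^2$. Swapping the order of integration and substituting $w=\cf^2 x^{-\alpha}y^{-\alpha}/\mu$ in the inner integral reduces it to $\int_0^\infty(1-\e^{-w})w^{-1/(2\alpha)-1}\,dw=2\alpha\,\Gamma(1-1/(2\alpha))$ up to powers of $\cf^2/\mu$, and the outer integration produces a second factor of the same type. The resulting closed form is then matched with $4B_\alpha/\eta$ using the explicit evaluation $A_\alpha=\alpha\,\Gamma(2-1/\alpha)/(1-\alpha)$ (obtained from the Gamma-integral representation of $A_\alpha$ in \eqref{eqn:A-B-alpha-def}) together with the Legendre duplication formula applied to $\Gamma(2-1/\alpha)=\Gamma(2(1-1/(2\alpha)))$, with the reflection formula handling the polynomial prefactors $(1-\alpha)(2\alpha-1)$.

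The main obstacle is this final Gamma-function identity, which is the concrete content of the ``explicit computation'' alluded to in Section~\ref{sec:discussion}; the rest of the argument (monotonicity, reduction to the infinite-domain operator, self-adjointness, and the Mellin characterization of norms of homogeneous kernels) is essentially standard and does not rely on the specific form of the exponential kernel beyond its symmetry and positivity.
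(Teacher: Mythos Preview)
Your structural reduction matches the paper's exactly: both identify $\lim_{a\to\infty}\|\mathbf{T}_{\kappa^{(a)}}\|^2$ with the $L^2((0,\infty),dv)$-norm of the operator with kernel $k_2(u,v)=\int_0^\infty K(u,x)K(x,v)\,dx$ (the paper writes this as $\lambda(u,v)$), and you correctly observe that $k_2$ is homogeneous of degree $-1$, so the Mellin conjugation gives $\|T_{k_2}\|=\int_0^\infty k_2(1,y)\,y^{-1/2}\,dy$.

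The gap is the final identity. Your evaluation of the integral is correct,
\[
\int_0^\infty k_2(1,y)\,y^{-1/2}\,dy \;=\; 4\,(\cf^2/\mu)^{1/\alpha}\,\Gamma\!\bigl(1-\tfrac{1}{2\alpha}\bigr)^{2},
\]
but this is \emph{not} equal to $4B_\alpha/\eta$. With $A_\alpha=\alpha\,\Gamma(2-1/\alpha)/(1-\alpha)$ your target identity reduces to $\Gamma(\gamma)^2=\Gamma(2\gamma)/[(1-\alpha)(2\alpha-1)]$ for $\gamma=1-1/(2\alpha)$; at $\alpha=3/4$ (so $\gamma=1/3$) the left side is $\Gamma(1/3)^2\approx 7.18$ while the right side is $8\,\Gamma(2/3)\approx 10.83$. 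No combination of duplication and reflection will close this. The structural reason is that $k_2\le h$ pointwise with \emph{strict} inequality (this is precisely the first assertion of Lemma~\ref{lem:pij-h-rel}, since $\lambda_{ij}/\lambda^2=k_2(i,j)$), so
\[
\int_0^\infty k_2(1,y)\,y^{-1/2}\,dy \;<\; \int_0^\infty h(1,y)\,y^{-1/2}\,dy \;=\; 4B_\alpha/\eta.
\]
Homogeneity of $k_2$ together with the iterated-limit asymptotic $k_2/h\to 1$ only forces $k_2(1,y)/h(1,y)\to 1$ as $y\to 0$ and $y\to\infty$; it does not equate the two integrals.

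The paper never attempts your direct Gamma computation. After recording the asymptotic $\lambda(u,v)/h(u,v)\to 1$, it invokes \cite[Lemma~1]{DK90} as a black box to obtain $\|\mathbf{T}_\lambda\|_{L^2(0,\infty)}=\int_0^\infty h(1,u)\,u^{-1/2}\,du$, and then quotes the elementary evaluation already in~\eqref{crit-vallue-deduction}. So the ``explicit computation'' alluded to in the discussion is for $h$, not for $k_2$; the passage from the exact kernel $k_2$ to the asymptotic kernel $h$ is delegated entirely to the cited result in \cite{DK90} rather than carried out by a special-function identity.
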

\begin{proof} Fix $a>0$.  For two functions $f,g\colon [0,a]^2\to [0,\infty)$, we define the operation
	\eqn{
	\label{conv-def}
	(f\star g)(x,y)=\int_{0}^{a} f(x,v)g(v,y) \Lambda_a(\dif v).
	}
We also recursively define $f^{\star (n+1)}=f\star f^{\star n}$, with $f^{\star 1}=f$. We claim that 
	\eqn{
	\|\bfT_{\sss \kappa^{\sss (a)}}\| = \lim_{k\rightarrow \infty} \Big(\int_0^a\int_{0}^{a} \big(\kappa^{\sss (a)}\big)^{\star 2k}(u,v)\Lambda_a(\dif u) \Lambda_a(\dif v)\Big)^{1/(2k)}.
	}
Indeed, $\kappa^{\sss (a)}$ is a bounded function, so that the integral operator $\bfT_{\kappa^{\sss (a)}}$
defined on $L^2([0,a], \Lambda_a)$ given by \eqref{eq:integral-operator-defn}
is Hilbert-Schmidt and thus compact \cite[Theorem 4 in Chapter 22]{Lax02}.  
Further, it is a positive and self-adjoint operator, since $\kappa^{\sss (a)}$ is positive and symmetric. Thus, the largest eigenvalue of $\bfT_{\kappa^{\sss (a)}}$ is positive and separated from the second largest in absolute value \cite[Theorem 1 in Chapter 23]{Lax02}. Finally, as a compact and self-adjoint operator, it has an othonormal basis of eigenfunctions \cite[Theorem 3 in Chapter 28]{Lax02}, so that the claim follows by an expansion in terms of the eigenfunctions.
We can rewrite this with $\kappa^{\sss (a)}_2(u,v)=(\kappa^{\sss (a)}\star \kappa^{\sss (a)})(u,v)$ as
	\eqn{
	\|\bfT_{\sss \kappa^{\sss (a)}}\| = \lim_{k\rightarrow \infty} \Big(\int_0^a\int_0^a \big(\kappa^{\sss (a)}_2\big)^{\star k}(u,v)\Lambda_a(\dif u)\Lambda_a(\dif v)\Big)^{1/(2k)}.
	}
As a result,
	\eqn{
	\|\bfT_{\sss \kappa^{\sss (a)}}\|={\|\bfT_{\sss \kappa^{\sss (a)}_2}\|}^{1/2}.
	}
Next, note that, for any $u,v\in(0,a]$,
	\eqan{
	\kappa^{\sss (a)}_2(u,v)&=\int_0^a \kappa^{\sss (a)}(u,x)\kappa^{\sss (a)}(x,v)\Lambda_a(\dif x)
	=a\int_0^a [1-\e^{-\cf^{2}(ux)^{-\alpha}/\mu}]
	[1-\e^{-\cf^{2}(vx)^{-\alpha}/\mu}]\dif x\nn\\
	&=a \int_0^a \Theta_{u}(x)\Theta_{v}(x) \dif x,\nn
	}
where we recall \eqref{defn:lambda-ij}. When the integral is evaluated on $[0,\infty)$,  $\kappa_2^{\sss (a)}(u,v)$ would be equal to $a \lambda_{uv}/\lambda^2$. Thus,
	\eqn{
	\|\bfT_{\sss \kappa_2^{\sss (a)}}\|
	=\|\bfT_{\sss \bar\lambda^{\sss (a)}}\|_{L^2(0,a)}=\|\bfT_{\sss \bar\lambda^{\sss (a)}}\|_{L^2(0,\infty)},
	}
where, for $u,v\in (0,a]$, we let
	\eqn{
	\bar\lambda^{\sss (a)}(u,v)=\indic{u,v\in (0,a]}\int_0^a \Theta_{u}(x)\Theta_{v}(x) \dif x.
	}
Obviously, $a\mapsto \bar\lambda^{\sss (a)}(u,v)$ is increasing, and it converges pointwise to $\lambda_{uv}/\lambda^2$. As a result, also 
	\eqn{
	\|\bfT_{\sss \kappa^{\sss (a)}}\|
	=\|\bfT_{\sss \bar\lambda^{\sss (a)}}\|_{L^2(0,\infty)}\nearrow \|\bfT_{\sss \lambda}\|_{L^2(0,\infty)},
	}
where $\lambda(u,v)=\lambda_{uv}/\lambda^2$. 
Next, recall $h(\cdot,\cdot)$ from \eqref{eqn:hij-def}. An argument identical to \eqref{lambda-ij-ratio} yields   $\lim_{u\to\infty}\lim_{v\to\infty}\lambda(u,v)/h(u,v) = 1$. 
Thus, by \cite[Lemma 1]{DK90}, 
	\eqn{ \label{operator-norm}
	\|\bfT_{\sss \lambda}\|_{L^2(0,\infty)}=\int_0^{\infty} \frac{h(1,u)}{\sqrt{u}}\dif u={4B_{\alpha}}/{\eta},
	}
where the last step follows using \eqref{crit-vallue-deduction}.
Therefore, 
	\eqn{
	\lambda_c^{\sss \mathrm{IRG}}=\lim_{a\rightarrow \infty} \|\bfT_{\sss \kappa^{\sss (a)}}\|^{-1}
	=\lim_{a\rightarrow \infty} \frac{1}{\|\bfT_{\sss \kappa_2^{\sss (a)}}\|^{1/2}}
	=\sqrt{\frac{\eta}{4B_{\alpha}}}=\lambda_c,
	}
as required.
\end{proof} 

\subsection{Survival probability of the  multi-type branching process}
\label{sec-MBP}
In this section, we analyze the asymptotics in Proposition~\ref{prop-weight-giant-core} as $a\to\infty$. Recall the multi-type Poisson branching process $\cX_a^\lambda(u)$, and its survival probability $\rho_a^\lambda(u)$ from Section~\ref{sec:size-core-giant}. 
Recall the definition of $\zeta_a^\lambda$ from Proposition~\ref{prop-weight-giant-core}. 
The following is the main result of this section: 
\begin{proposition}[Large $a$ asymptotics of one-neighborhood giant] \label{prop-large-a-original}
For any $\lambda>\lambda_c$, as $a\to\infty$,
	\eqn{
	\label{a-zeta-a-lim}
	\zeta_a^\lambda := \lambda\int_0^a \cf u^{-\alpha} \rho_a^\lambda(u) \dif u \to \zeta^\lambda \in (0,\infty).
	} 
\end{proposition}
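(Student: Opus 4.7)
My plan is to combine a monotone coupling in $a$ with a self-improving integral inequality that closes thanks to $\alpha>\tfrac12$. First, for $b>a$, one realizes $\cX_a^\lambda(u)$ as a subprocess of $\cX_b^\lambda(u)$ by discarding every particle whose type falls in $(a,b]$ along with all its descendants. This shows $\rho_a^\lambda(u)\leq \rho_b^\lambda(u)$, so the pointwise limit $\rho^\lambda(u):=\lim_{a\to\infty}\rho_a^\lambda(u)$ exists in $[0,1]$. Extending $\rho_a^\lambda(u):=0$ for $u>a$, monotone convergence gives $\zeta_a^\lambda\nearrow \lambda \cf\int_0^\infty u^{-\alpha}\rho^\lambda(u)\dif u=:\zeta^\lambda$, so the problem reduces to showing $0<\zeta^\lambda<\infty$.

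For positivity, Lemma~\ref{lem-lambdacs-equal} gives $\lambda>\lambda_c=\lim_{a\to\infty}\lambda_c(a)$, so we can fix $a_0$ with $\lambda>\lambda_c(a_0)$. Proposition~\ref{prop:giant-restricted}(i) then yields $\rho_{a_0}^\lambda>0$, so $\rho^\lambda(u)\geq\rho_{a_0}^\lambda(u)$ is strictly positive on a set of positive Lebesgue measure, whence $\zeta^\lambda>0$.

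For finiteness, apply $1-\e^{-x}\leq x$ twice inside the fixed-point equation \eqref{defn:surv-prob} to obtain
\[
\rho_a^\lambda(u)\;\leq\;\lambda\int_0^a\kappa(u,v)\rho_a^\lambda(v)\dif v\;\leq\;cu^{-\alpha}K_a,\qquad K_a:=\int_0^a v^{-\alpha}\rho_a^\lambda(v)\dif v,\quad c:=\lambda \cf^2/\mu.
\]
Combined with $\rho_a^\lambda\leq 1$, this yields $\rho_a^\lambda(u)\leq \min\{1,cu^{-\alpha}K_a\}$. Substituting back into the definition of $K_a$ and splitting the integral at $u^\ast:=(cK_a)^{1/\alpha}$, the two pieces $\int_0^{u^\ast}u^{-\alpha}\dif u$ and $cK_a\int_{u^\ast}^{a}u^{-2\alpha}\dif u$ both collapse (when $u^\ast\leq a$) to constant multiples of $(cK_a)^{(1-\alpha)/\alpha}$, producing
\[
K_a\;\leq\;\Bigl(\tfrac{1}{1-\alpha}+\tfrac{1}{2\alpha-1}\Bigr)(cK_a)^{(1-\alpha)/\alpha}.
\]
Since $\alpha\in(\tfrac12,1)$ (equivalent to $\tau\in(2,3)$), the exponent $(1-\alpha)/\alpha<1$, so this inequality self-improves to a uniform bound on $K_a$; the small range of $a$ for which $u^\ast>a$ is handled separately by the deterministic estimate $K_a\leq a^{1-\alpha}/(1-\alpha)$. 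Hence $\sup_aK_a<\infty$, and therefore $\zeta^\lambda<\infty$.

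The main technical subtlety is that both bounds on $\rho_a^\lambda$ depend implicitly on the unknown quantity $K_a$; the argument closes precisely because the scale-free exponent satisfies $\alpha>\tfrac12$, which makes $(1-\alpha)/\alpha<1$ and converts the apparently circular bound into a self-improving one, just as the condition $b\in(\tfrac12,\alpha)$ was needed to close the path-counting estimates in Section~\ref{sec:key-ingredients}.
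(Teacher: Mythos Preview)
Your argument is correct and, for this proposition, is cleaner than the paper's. Both proofs hinge on the same self-improving inequality $z\leq Cz^{(1-\alpha)/\alpha}$ with $\alpha>\tfrac12$, but you apply it more directly. For monotonicity, you observe that the offspring intensity $\lambda\kappa(u,v)\dif v$ in $\cX_a^\lambda$ does not depend on $a$, so $\cX_a^\lambda$ embeds in $\cX_b^\lambda$ by pruning types in $(a,b]$; the paper instead couples the finite graphs $\cG_{\sss N_n(a)}\subset\cG_{\sss N_n(b)}$ and invokes Propositions~\ref{prop:giant-restricted} and~\ref{prop-weight-giant-core} to transfer the comparison to $\zeta_a^\lambda$. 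For finiteness, you work straight from the fixed-point equation~\eqref{defn:surv-prob} to obtain $\rho_a^\lambda(u)\leq\min\{1,cu^{-\alpha}K_a\}$ and then close on $K_a$; the paper takes a detour through the unpercolated Norros--Reittu upper bound~\eqref{NR-bound} and the single-type mixed-Poisson branching process of Lemma~\ref{lem-surv-prob}, deducing $\rho_a^\lambda(u)\leq \limsup_n\bar\rho_{n,a}^\lambda(u)\leq C\min\{1,u^{-\alpha}\}$ from the bound on $z_a=a^{1-\alpha}\bar\rho_a^{\star,\lambda}$. The paper's detour is not wasted effort---Lemma~\ref{lem-BP-UB-comp} and the unpercolated model are reused in Section~\ref{sec-size-tiny-giant}---but for Proposition~\ref{prop-large-a-original} in isolation your route is shorter and avoids the somewhat implicit step of relating $\rho_a^\lambda(u)$ to the finite-$n$ survival probabilities.
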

Before starting with the proof, we give some background on the object in \eqref{a-zeta-a-lim}. $\rho_a^\lambda(u)$ is the survival probability of a vertex of type $u$, which in the pre-limit corresponds to vertex $\lceil uN_n\rceil$. The factor $\cf u^{-\alpha}$ then corresponds to the rescaled version of $w_{\lceil uN_n\rceil},$ recall \eqref{eq:asymp-w-N}. Thus, $\zeta_a^\lambda$ can be viewed as the rescaled total weight or the rescaled size of the one-neighborhood of the giant in $[N_n(a)]$. Since, for $a$ large, this one-neighborhood is approximately the entire connected component of this giant in $[n],$ as shown  in Section \ref{sec-size-tiny-giant}, this explains the relevance of Proposition \ref{prop-large-a-original}.

We would like to stress some subtleties. First, $u\mapsto \cf u^{-\alpha}$ is not integrable, so we cannot think of $\zeta_\lambda$ as a survival probability of a branching process starting with a type chosen in a size-biased manner.
 Further, $\kappa(u,v)= 1- \e^{-\cf^2(uv)^{-\alpha}/\mu}$ is not integrable on  $((0,\infty), \dif x \otimes \dif y)$. 
As a result, we cannot express the limit of survival probabilities $(\rho_a^\lambda (u))_{u\geq 0}$ in terms of a maximum fixed point equation, as a survival probability would be expressed. 
This is reflected in the fact that the maximum solution of the previous fixed point equation $f = 1-\e^{-\bfT_\kappa f}$ is always~1 for non-integrable $\kappa$. 
However, the limit of $\zeta_a^\lambda$ still exists, and we can prove this using alternative arguments.

The proof is organised as follows. We start by stating an upper bound on our random graph in terms of an unpercolated Norros-Reittu model. This upper bound is also useful in Section~\ref{sec-size-tiny-giant}. Then, we perform a limiting argument on the survival probabilities to prove Proposition \ref{prop-large-a-original}.

\paragraph{Upper bound by an unpercolated  Norros-Reittu model.} We next discuss a 
Norros-Reittu model without percolation, which contains the graph $\cG_{\sss N_n(a)}$ as a subgraph.  
The nice thing about unpercolated Norros-Reittu models is that the total progeny can be coupled to a branching process as shown in \cite{NR06}, and it is possible to do direct computations on the limiting branching process as we will see below in Lemma~\ref{lem-surv-prob}.
This will be useful in showing finiteness of  limiting quantities such as $\zeta^\lambda$ in \eqref{a-zeta-a-lim}. 
Note that
	\eqn{
	\label{NR-bound}
	\percn p_{ij} =\percn[1-\e^{-w_iw_j/\ell_n}]\leq 1-\e^{-\percn w_iw_j/\ell_n}. 
	}
Indeed, the inequality in the second step of \eqref{NR-bound} is equivalent to the fact that, for every $p\in[0,1]$ and $x\geq 0$,
	\eqn{
	1-\e^{-x}\leq \frac{1}{p}[1-\e^{-px}].
	}
For $x=0$, both sides are equal. Differentiating with respect to $x$ gives that $\e^{-x}\leq \e^{-px}$,
which is true since $p\in[0,1], x\geq 0$. 
Now, recall the connection probabilities in the original model from~\eqref{eq:p-ij-NR-defn}. 
Then \eqref{NR-bound} shows that there exists a coupling such that $\rNR(\bw,\percn)$ is a subgraph of $\rNR(\percn\bw)$ with probability one. Henceforth, we will always work under this coupling.

For $\rNR(\percn\bw)$, it is known that, starting from any vertex $j$, the size of the connected component of $j$ can be bounded from above by the total progeny of a branching process, where the root has offspring distribution that is Poisson$(\percn w_j)$, while for all other vertices, the offspring distribution is mixed Poisson with mixing distribution $\percn W^{\star}_n$, where $W_n^\star$ has a size-biased distribution,  
i.e.,
	 \eqn{
	\label{size-biased-full}
	\PR(W_n^\star\leq x)=\frac{\sum_{i\in [n]}w_i\indic{w_i\leq x}}{\sum_{i\in [n]}w_i}.
	}
This is proved by Norros and Reittu in \cite{NR06}. 
Similar results can be proven when we restrict connected components to fixed subsets of~$[n]$, as we will frequently rely on below. 
In particular, we can use this observation to the restricted set $[N_n(a)]$ when considering the graph $\cG_{\sss N_n(a)}$. 
For this, we start by introducing some notation. For $A\subseteq [n]$, denote
	\eqn{
	w(A)=\sum_{a\in A}w_a.
	}
Then, we note that when restricting to $[N_n(a)]$, the parameter of the Poisson random variable of the root when starting from vertex $j\in [N_n(a)]$ is replaced with $\percn w_j w([N_n(a)])/\ell_n$, and that, for other vertices, the offspring becomes Poisson with mixing distribution
    \eqn{
    \label{W-lambda-def}
    W_{\sss [N_n(a)]}^\lambda:=\percn W^\star_{[N_n(a)]} w([N_n(a)])/\ell_n,
    }
where now
	 \eqn{
	\label{size-biased-[Nn]}
	\PR(W_{[N_n(a)]}^\star\leq x)=\frac{\sum_{i\in [N_n(a)]}w_i\indic{w_i\leq x}}{w([N_n(a)])}.
	}
This is formalized in the following lemma, which we state more generally, as we will rely upon it in various parts of the proof as well:
\begin{lemma}[Branching process upper bound on components restricted to subsets]
\label{lem-BP-UB-comp}
Let $A\subseteq [n]$, and consider the connected component of $\rNR(\percn \bw)$ of a vertex $j\in A$ restricted  to $A$. The size of this connected component is stochastically upper bounded by the total progeny of a mixed-Poisson branching process, where the root has Poisson offspring with parameter $\percn w_j w(A)/\ell_n$, and all other vertices have mixed-Poisson offspring with mixing distribution $\percn W_{A}^\star  w([N_n(a)])/\ell_n$ with
    \eqn{
	\label{size-biased-A}
	\PR(W_{A}^\star\leq x)=\frac{\sum_{i\in A}w_i\indic{w_i\leq x}}{w(A)}.
	}
\end{lemma}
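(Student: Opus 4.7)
}
The plan is to mimic the classical Norros--Reittu coupling of \cite{NR06} between cluster exploration and a mixed-Poisson branching process, but this time restricted to the subset $A$. The only ingredient that genuinely needs adapting is that all Poisson intensities are summed over $A$ rather than over $[n]$, from which the size-biased mixing distribution on $A$ and the factor $w(A)/\ell_n$ naturally emerge.

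First, I would use the standard multigraph representation of $\rNR(\percn\bw)$: for each unordered pair $\{i,k\}\subseteq [n]$, let $M_{ik}\sim \text{Poisson}(\percn w_i w_k/\ell_n)$ independently, and declare $\{i,k\}$ to be an edge iff $M_{ik}\geq 1$. This yields the correct edge probabilities in $\rNR(\percn \bw)$. Next I would perform a breadth-first exploration of the $A$-restricted cluster of $j$: start from $j$, at each step reveal only the neighbors lying in $A$ that have not yet been visited. The number of $A$-neighbors of an explored vertex $i$ is then at most
\begin{equation*}
\#\{k\in A\setminus\{i\}:M_{ik}\geq 1\}\ \leq\ \sum_{k\in A\setminus\{i\}} M_{ik},
\end{equation*}
and the right-hand side has law $\text{Poisson}\big(\percn w_i w(A\setminus\{i\})/\ell_n\big)\leq \text{Poisson}\big(\percn w_i w(A)/\ell_n\big)$.

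The key size-biasing step is the following thinning identity: conditionally on $\sum_{k\in A}M_{ik}=m$, the multiset of labels (counted with multiplicity) of these $m$ Poisson points is distributed as an i.i.d.\ sample of size $m$ drawn from $A$ with probabilities $w_k/w(A)$, which is exactly the law of $W^\star_A$ in \eqref{size-biased-A}. Hence, if I ignore the constraint that vertices cannot be revisited and allow each Poisson point (including multi-edges) to spawn a fresh particle, I obtain a branching process whose root $j$ has $\text{Poisson}(\percn w_j w(A)/\ell_n)$ children, whose every child in turn carries an i.i.d.\ label with law $W^\star_A$, and whose offspring distribution conditional on that label equals $\text{Poisson}(\percn W^\star_A w(A)/\ell_n)$. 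Unconditionally, each non-root particle thus has mixed-Poisson offspring with mixing distribution $W^\lambda_A=\percn W^\star_A w(A)/\ell_n$, matching the statement of the lemma.

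Finally I would observe that discarding the "no revisits" constraint and resolving multi-edges into distinct particles can only increase the progeny tree, so the total progeny of the branching process stochastically dominates the number of distinct vertices of $A$ visited by the BFS, i.e.\ the size of the $A$-restricted component of $j$. The only minor technical point is bookkeeping for the root, where the Poisson parameter is $\percn w_j w(A)/\ell_n$ (coming from the deterministic weight of $j$) rather than $\percn W^\star_A w(A)/\ell_n$; since this is a single first-generation bound, it presents no difficulty. No step is a serious obstacle — the whole argument is a direct transcription of the Norros--Reittu coupling with the index set $[n]$ replaced by $A$ throughout.
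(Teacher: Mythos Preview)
Your proposal is correct and follows essentially the same Norros--Reittu coupling as the paper: both arguments use the Poisson multigraph representation, the Poisson thinning/coloring property to obtain the size-biased label distribution $w_k/w(A)$ on $A$, and a breadth-first exploration where dropping the no-revisit constraint yields the dominating branching process. The only cosmetic difference is that you start from the edge multiplicities $M_{ik}$ and deduce the label law via thinning, whereas the paper first draws the total Poisson number of potential neighbors and then assigns labels; these are two equivalent views of the same construction.
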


\begin{proof}
Fix $A\subseteq [n]$. 
In $\rNR(\percn \bw)$, two vertices $i$ and $j$ with $i,j\in A$ share at least one edge  with probability $1-\e^{-\percn w_iw_j/\ell_n}$, and all edges are independent.
We now present another way to generate such independent edges.

For $j\in A$, we draw a Poisson random variable with parameter $\percn w_iw(A)/\ell_n$. We consider these to be the {\em potential neighbors} of $j$. Then we assign a label to each of these potential neighbors, and this label equals $i$ with probability
    \eqn{
    q_A(i)=\frac{w_i}{w(A)}, \qquad i\in A.
    }
Retain an edge between $i$ and $j$ when there is at least one potential neighbor of $j$ with label~$i$. Then, for fixed $j$, the numbers of neighbors with label $i$ are {\em independent} Poisson random variables with parameters
    \eqn{
    q_A(i)\frac{\percn w_jw(A)}{\ell_n}
    =\frac{\percn w_iw_j}{\ell_n},
    }
so that the probability that there is at least one potential neighbor with label $j$ equals $1-\e^{-\percn w_iw_j/\ell_n}$, as required.

The above shows how the neighbors of a vertex $i$ can be chosen. In order to obtain the stochastic upper bound on the connected components in Lemma~\ref{lem-BP-UB-comp}, we explore the connected component in a breadth-first way. Then, it follows that the connected components with edge probabilities $1-\e^{-\percn w_iw_j/\ell_n}$ are obtained through a {\em thinning} of the above construction, where vertices in the tree are ordered in the breadth-first manner, and repetitions of the labels (as well as all their offspring) are removed.

Finally, we note that the above process of potential neighbors is a Poisson branching process with mixing distribution given by $W_{A}^\star$ in \eqref{size-biased-A}. Indeed, we explore a single potential neighbors by {\em first} drawing its mark, and, given that its mark equals $i$, drawing a Poisson random variable with parameter $\percn w_iw(A)/\ell_n$ of potential neighbors. Then, the collection of potential neighbors (which includes the percolation component, due to the thinning) is a mixed-Poisson branching process where the root (which corresponds to the vertex with label $j$) has a Poisson offspring with parameter $\percn w_jw(A)/\ell_n$, while all other vertices have offspring of a mixed-Poisson distribution with mixing parameter $W_{A}^\star$ in \eqref{size-biased-A}. Thus the proof of Lemma~\ref{lem-BP-UB-comp} is complete.
\end{proof}

Next, let us investigate the survival probabilities of the above branching process for $A=[N_n(a)]$. Let $\bar{\rho}_{n,a}^{\star,\lambda}$ denote the survival probability of the above branching process with root also having the mixed Poisson offspring distribution with $W_{\sss [N_n(a)]}^\lambda$ in \eqref{W-lambda-def}. 
Also, let $\bar{\rho}_{n,a}^\lambda(u)$ denote the survival probability when we start with vertex $j=\lceil uN_n\rceil$. 
The following lemma investigates the asymptotics of these survival probabilities when $n\rightarrow \infty$:

\begin{lemma}[Survival probability for upper bounding branching process]
\label{lem-surv-prob}
For any $\lambda>\lambda_c(a)$, as $n\to\infty$, $\bar{\rho}_{n,a}^{\star,\lambda}\to \bar{\rho}_a^{\star,\lambda}$, where $\bar{\rho}_a^{\star,\lambda}$ is the maximum solution satisfying
	\begin{eq}
	\label{rho-a-def}
	 \bar{\rho}_a^{\star,\lambda}=  (1-\alpha)a^{\alpha-1}\int_0^a  u^{-\alpha} [1-\e^{-\lambda \bar{c}_{\sss F} u^{-\alpha} a^{1-\alpha} \bar{\rho}_a^{\star,\lambda}}]\dif u,
	\end{eq}
with $\bar{c}_{\sss F}=\frac{ \cf^2}{(1-\alpha)\mu}.$
Moreover, for all sufficiently large $n$,
    \begin{eq}
    \label{eq:rho-bar-u}
    \bar{\rho}_{n,a}^\lambda(u) \leq  C \min\{1,u^{-\alpha}\},
    \end{eq}
for some constant $C = C(\lambda,\alpha) >0$ independent of $a$.
\end{lemma}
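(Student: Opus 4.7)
The plan is to characterise $\bar\rho_{n,a}^{\star,\lambda}$ via the standard fixed-point equation for the extinction probability of a mixed-Poisson branching process, pass to the limit as $n\to\infty$ by a Riemann-sum argument, and then extract the pointwise bound from the one-step recursion combined with a uniform-in-$a$ estimate on the total survival probability. Throughout, write $q_n := \bar\rho_{n,a}^{\star,\lambda}$ and $W := W_{[N_n(a)]}^\lambda$ from \eqref{W-lambda-def}. Since both the root and all descendants have Poisson offspring with mixing random variable $W$, $q_n$ is the maximum solution in $[0,1]$ of
\[
q \;=\; 1 - \E[\e^{-qW}] \;=\; \sum_{k \in [N_n(a)]} \frac{w_k}{w([N_n(a)])}\bigl[1 - \e^{-q\, \percn w_k w([N_n(a)])/\ell_n}\bigr].
\]

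For the passage to the limit I would use \eqref{eq:asymp-w-N} together with Lemma~\ref{lem:order-estimates}. A direct book-keeping of exponents (all powers of $n$ cancel) shows that, uniformly on compact subsets of $(0,a]$,
\[
\percn\, w_{\lceil u N_n \rceil}\, w([N_n(a)])/\ell_n \;\longrightarrow\; \lambda\, \bar c_{\sss F}\, a^{1-\alpha}\, u^{-\alpha},
\qquad
N_n\cdot\frac{w_{\lceil u N_n \rceil}}{w([N_n(a)])} \;\longrightarrow\; (1-\alpha)\,a^{\alpha-1}\,u^{-\alpha}.
\]
Since the integrand $u^{-\alpha}[1-\e^{-q\lambda\bar c_{\sss F} a^{1-\alpha}u^{-\alpha}}]$ is dominated by the integrable function $u^{-\alpha}$ on $(0,a]$ (recall $\alpha\in(\tfrac12,1)$), a Riemann-sum argument shows that the right-hand side of the above fixed-point equation converges, uniformly in $q\in[0,1]$, to
\[
F_a^\lambda(q) \;:=\; (1-\alpha)\,a^{\alpha-1}\int_0^a u^{-\alpha}\bigl[1-\e^{-q\lambda\bar c_{\sss F} a^{1-\alpha}u^{-\alpha}}\bigr]\dif u.
\]
The map $F_a^\lambda$ is strictly concave on $[0,1]$ with $F_a^\lambda(0)=0$, so its maximum fixed point is continuous in the defining parameters and is strictly positive exactly when $(F_a^\lambda)'(0)>1$, which matches $\lambda>\lambda_c(a)$. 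Hence $q_n\to \bar\rho_a^{\star,\lambda}$, with the latter defined by \eqref{rho-a-def}.

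For the pointwise bound, by Lemma~\ref{lem-BP-UB-comp} the root starting at vertex $\lceil uN_n\rceil$ has Poisson offspring with parameter $m_n(u):=\percn w_{\lceil uN_n\rceil}w([N_n(a)])/\ell_n$, so by the standard one-step branching recursion
\[
\bar\rho_{n,a}^\lambda(u) \;=\; 1 - \e^{-q_n\, m_n(u)} \;\leq\; \min\bigl\{1,\, q_n\, m_n(u)\bigr\} \;\leq\; \min\bigl\{1,\, C_1\, q_n\, a^{1-\alpha}\, u^{-\alpha}\bigr\},
\]
for $n$ large, using $1-\e^{-x}\leq x$ together with the first limit above. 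Since $\min\{1,Cx\}\leq C\min\{1,x\}$ whenever $C\geq 1$, the desired bound $\bar\rho_{n,a}^\lambda(u)\leq C\min\{1,u^{-\alpha}\}$ (with $C=C(\lambda,\alpha)$ independent of $a$) reduces to the uniform-in-$a$ estimate $a^{1-\alpha}q_n \leq C_2$.

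This last uniform bound is the main obstacle. By the convergence $q_n\to \bar\rho_a^{\star,\lambda}$ it suffices to prove it for the limit. Substituting $u=av$ in \eqref{rho-a-def} and setting $\beta_a := \lambda\bar c_{\sss F}\,a^{1-2\alpha}\,\bar\rho_a^{\star,\lambda}$ gives
\[
\bar\rho_a^{\star,\lambda} \;=\; (1-\alpha)\int_0^1 v^{-\alpha}\bigl[1-\e^{-\beta_a v^{-\alpha}}\bigr]\dif v.
\]
The change of variables $w=\beta_a v^{-\alpha}$ rewrites the right-hand side as
$\tfrac{1-\alpha}{\alpha}\,\beta_a^{(1-\alpha)/\alpha}\int_{\beta_a}^{\infty}(1-\e^{-w})\,w^{-1/\alpha}\dif w$,
which is at most $\tfrac{1-\alpha}{\alpha}\,A_\alpha\,\beta_a^{(1-\alpha)/\alpha}$ with $A_\alpha$ as in \eqref{eqn:A-B-alpha-def}. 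Substituting the definition of $\beta_a$ and using $\alpha\in(\tfrac12,1)$ to solve for $\bar\rho_a^{\star,\lambda}$ yields $\bar\rho_a^{\star,\lambda}\leq C_2\,a^{-(1-\alpha)}$ uniformly in $a$, which is precisely the required estimate and completes the proof.
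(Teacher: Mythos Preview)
Your proof is correct and follows essentially the same route as the paper's: derive the fixed-point equation for the mixed-Poisson survival probability, pass to the limit via Riemann sums using the asymptotics $w_{\lceil uN_n\rceil}\asymp \cf\sqrt{n}\,u^{-\alpha}$ and $w([N_n(a)])\asymp \tfrac{\cf}{1-\alpha}\sqrt{n}N_n a^{1-\alpha}$, and for the pointwise bound use the one-step recursion together with the self-referential inequality forcing $a^{1-\alpha}\bar\rho_a^{\star,\lambda}$ to be bounded. The only cosmetic difference is in the last step: the paper sets $z_a=a^{1-\alpha}\bar\rho_a^{\star,\lambda}$ and bounds $z_a\leq C z_a^{(1-\alpha)/\alpha}$ by splitting the integral at $u\sim z_a^{1/\alpha}$, whereas you reach the same exponent via the substitutions $u=av$ and $w=\beta_a v^{-\alpha}$, which has the minor bonus of identifying the constant in terms of $A_\alpha$.
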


\begin{proof} 
We write 
	\eqn{
	\widetilde{w}_i=\frac{w_i\percn w([N_n(a)])}{\ell_n}.
	}
 Note that $\E[(1-t)^X] = \e^{-ct}$ for $X\sim \mathrm{Poisson}(c)$.
Now, conditioning on the type of the root, the branching process dies out precisely when all the progeny of generation one dies out. Equating these probabilities, we get
	\eqn{
	\label{surv-prob-comp}
	\bar{\rho}_{n,a}^{\star,\lambda}=1-\sum_{i\in [N_n(a)]}\frac{w_i}{w([N_n(a)])}\e^{-\widetilde{w}_i \bar{\rho}^{\star,\lambda}_{n,a}}
	=\sum_{i\in [N_n(a)]}\frac{w_i}{w([N_n(a)])}[1-\e^{-\widetilde{w}_i \bar{\rho}^{\star,\lambda}_{n,a}}].
	}
Recalling $N_n$ from \eqref{Nn1-def}, we rewrite the sum in an integral to obtain
	\eqn{ \label{surv-prob-star-n}
	\bar{\rho}_{n,a}^{\star,\lambda}=N_n \int_0^a \frac{w_{\lceil uN_n\rceil }}{w([N_n(a)])}[1-\e^{-\widetilde{w}_{\lceil uN_n\rceil } \bar{\rho}^{\star,\lambda}_{n,a}}]\dif u.
	}		
We further simplify 
	\eqn{ \label{eq:sum-weight-N-a}
	w([ N_n(a) ] ) =   \cf\sum_{j=1}^{N_n(a)} (n/j)^{\alpha}\asymp \frac{\cf}{1-\alpha} n^{\alpha} N_n(a)^{1-\alpha} =  \frac{\cf}{1-\alpha} \sqrt{n} N_n a^{1-\alpha},
	}
while $w_{\lceil uN_n \rceil} \asymp \cf \sqrt{n} u^{-\alpha}$ by \eqref{eq:asymp-w-N}.
We then conclude that 
	\eqn{\label{w-asymp-tild}
	\tilde{w}_{\lceil uN_n \rceil }=w_{\lceil uN_n\rceil} \percn  \frac{w([N_n(a)])}{\ell_n}
	\asymp \frac{\cf\sqrt{n}}{u^{\alpha}} \bigg(\frac{\cf}{1-\alpha} \sqrt{n} N_n a^{1-\alpha}\bigg)\frac{\percn}{\mu n}=\lambda \bar{c}_{\sss F} u^{-\alpha}  a^{1-\alpha}.
	}
Thus, by \eqref{size-biased-[Nn]}, and using $\tilde{w}_i$ defined in \eqref{w-asymp-tild},
	\eqan{
	\label{CDF-converge}
	\PR(W_{\sss [N_n(a)]}^\lambda\leq x) &= \frac{1}{w([N_n(a)])} \sum_{i\in N_n(a)} w_i \ind{\tilde{w}_i \leq x} \asymp \frac{N_n}{w([N_n(a)])}  
	\int_0^a w_{\lceil uN_n \rceil }\ind{\tilde{w}_{\lceil uN_n \rceil} \leq x} \dif u \\
	&\asymp (1-\alpha) a^{\alpha-1} \int_0^a u^{-\alpha} \ind{\lambda \bar{c}_{\sss F} u^{-\alpha} a^{1-\alpha} \leq x} \dif u.\nn
	}
Let $W_{\infty,a}^\lambda$ be a random variable with distribution function given by the right hand side of \eqref{CDF-converge}. Then $W_{\sss [N_n(a)]}^\lambda\xrightarrow{\sss d} W_{\infty,a}^\lambda$. 
Thus, if $\bar{\rho}_a^{\star,\lambda}$ denotes the survival probability of the branching process with starting distribution and progeny distribution given by a mixed-Poisson random variable with parameter $W_{\infty,a}^\lambda$, also
    \begin{eq}
    \bar{\rho}_{n,a}^{\star,\lambda} \to \bar{\rho}_a^{\star,\lambda}.
    \end{eq}
We conclude \eqref{rho-a-def} by taking limit as $n\to\infty$ in \eqref{surv-prob-star-n}. 

For \eqref{eq:rho-bar-u}, let us start with vertex $j=\lceil uN_n\rceil$. 
The limit of 
$\bar{\rho}_{n,a}^\lambda(u)$ exists using \eqref{CDF-converge}. 
 By the fact that the branching process is i.i.d.~after the first generation, using a union bound,  this survival probability is at most the expected offspring of $j=\lceil uN_n\rceil$ times $\bar{\rho}^{\star,\lambda}_{n,a}$.  The expected offspring is 
	\eqn{
	\percn w_{\lceil uN_n \rceil}  \frac{w_{[N_n(a)]}}{\ell_n}= \frac{\cf^2}{\mu} u^{-\alpha} \percn n^{-1/2} \sum_{j=1}^{N_n(a)} \Big(\frac{n}{j}\Big)^{\alpha}=C u^{-\alpha} \percn N_n(a)^{1-\alpha} n^{\alpha-1/2}=C u^{-\alpha} a^{1-\alpha}.
	}
Thus, for all sufficiently large $n$,
\begin{eq}\label{upper-bound-sur-u-dep}
\bar{\rho}_{n,a}^\lambda(u)\leq  C u^{-\alpha} a^{1-\alpha} \bar{\rho}^{\star,\lambda}_a. 
\end{eq}
The proof of \eqref{eq:rho-bar-u} follows if we can show that $\limsup_{a\to\infty}a^{1-\alpha} \bar{\rho}^{\star,\lambda}_a <\infty$. 
Using \eqref{rho-a-def}, and writing $z_a=a^{1-\alpha} \bar{\rho}^{\star,\lambda}_a $, 
\begin{eq}
z_a&=  (1-\alpha)\int_0^a  u^{-\alpha} [1-\e^{-\lambda \bar{c}_{\sss F} u^{-\alpha} z_a}]\dif u \leq (1-\alpha)\int_0^a  u^{-\alpha} \min\{1,\lambda \bar{c}_{\sss F} u^{-\alpha} z_a\}\dif u 
\\
&\leq C_1 \bigg[\int_0^{C_2 z_a^{\frac{1}{\alpha}}} u^{-\alpha} \dif u+z_a\int_{C_2 z_a^{\frac{1}{\alpha}}}^{\infty} u^{-2\alpha} \dif u\bigg] \leq C z_a^{\frac{1-\alpha}{\alpha}}.
\end{eq}
Since $\frac{1-\alpha}{\alpha}<1$, it follows that  $\limsup_{a\to\infty} z_a = \limsup_{a\to\infty}a^{1-\alpha} \bar{\rho}^{\star,\lambda}_a <\infty$. The proof of \eqref{eq:rho-bar-u} is now completed using~\eqref{upper-bound-sur-u-dep}. 
\end{proof}

\begin{proof}[Proof of Proposition~\ref{prop-large-a-original}] 
First, note that $\zeta^\lambda_a$ is non-decreasing in $a$. 
Indeed, for $b>a$, there exists a coupling under which $\cG_{\sss N_n(a)} \subset \cG_{\sss N_n(b)}$. Under this coupling, by Proposition~\ref{prop:giant-restricted}, the component of $\cG_{\sss N_n(b)}$ containing $\sC^a_{\sss (1)}$ has size $\thetaP(N_n)$, and since $\sC^b_{\sss (2)} = \OP(\log n)$, it must be the case that $\sC^a_{\sss (1)}\subset \sC^b_{\sss (1)}$ with high probability. 
By Proposition~\ref{prop-weight-giant-core}, it now follows that $\zeta_a^\lambda \leq \zeta_b^\lambda$.  
Thus $\lim_{a\to\infty} \zeta^\lambda_a$ exists and is positive. 

Next, using \eqref{NR-bound}, we have that, for any $a>0$, $\rho_a^\lambda (u) \leq \limsup_{n\to\infty} \bar{\rho}^\lambda_{n,a} (u)$. Therefore, an application of \eqref{eq:rho-bar-u} yields that 
\begin{eq}
\lim_{a\to\infty}\zeta_a^\lambda \leq C \int_0^\infty  u^{-\alpha} \min\{1, u^{-\alpha}\} \dif u<\infty,
\end{eq}
and the proof of Proposition~\ref{prop-large-a-original} follows. 
\end{proof}

\section{Size of the tiny giant}
\label{sec-size-tiny-giant}
In this section, we complete the proof of Theorem~\ref{thm:supcrit-bd}.
To this end, fix $\lambda>\lambda_c$. 
By Lemmas~\ref{lem-mon-lambdaca} and~\ref{lem-lambdacs-equal},   $\lambda>\lambda_c(a)$ for all sufficiently large $a$. Therefore, by Proposition~\ref{prop:giant-restricted},
the graph restricted to $[N_n(a)]$ has a giant component $\sC_{\sss (1)}^{a}$ of approximate size $N_n(a) \rho_{a}^\lambda$, where $\rho_{a}^\lambda>0$.
We denote by $\sC_{\sss (i)}^{a,\star}$ the component of $\rNR(\bw,\percn)$ containing $\sC_{\sss (i)}^{a}$.
The main idea is to show that the component $\sC_{\sss (1)}^{a,\star}$ is the unique giant component $\sC_{\sss (1)}$ of $\rNR(\bw,\percn)$, in the iterated limit as first $n\to\infty$, followed by $a\to\infty$. 

Let us now explain in more detail how we aim to approach the proof. For $j \in [N_n(a)]$, define
$\spn (\{j\})$ to be the set of vertices  $v\in [N_n(a)]^c$ such that there exists a path between $j$ and  $v$ that lies entirely in $[N_n(a)]^c$.
For $V\subseteq [N_n(a)]$, we write $\spn (V) = \cup_{j\in V} \spn (\{j\})$. 
We have that $\spn(\sC_{\sss (1)}^{a})\subseteq \sC_{\sss (1)}^{a,\star}$, but $\sC_{\sss (1)}^{a,\star}$ may be larger since $\spn(\sC_{\sss (1)}^{a})$ may intersect with $\spn(\sC_{\sss (j)}^{a})$ for some $j\geq 2$, in which case $\sC_{\sss (1)}^{a,\star}$ gets merged with $\sC_{\sss (j),*}^a$.
To study the effect of such mergers, let us say that there is a \emph{return path} between $i,j\in [N_n(a)]$ if a path exists between $i$ and $j$ with at least one intermediate vertex in $[N_n(a)]^c$. 
In other words, the existence of a return path between $i,j\in [N_n(a)]$ means that $i,j$ become part of the same component  only after adding the edges in $[N_n(a)]^c$.

Let $\cR_{\sss (1)}^a$ denote the set of vertices $v\in [N_n(a)]$ such that $v$ is  connected to some $j\in \sC_{\sss (1)}^{a}$ only via a return path.
Then,
	\eqn{
	\label{giant-equality}
	\sC_{\sss (1)}^{a,\star}=\sC_{\sss (1)}^{a} \cup \spn(\sC_{\sss (1)}^{a}) \cup \cR_{\sss (1)}^a \cup \spn(\cR_{\sss (1)}^a).
	}
Our objective is to show that, for $\lambda>\lambda_c(a)$ and large enough $a$, the main contribution in $|\sC_{\sss (1)}^{a,\star}|$ comes from $\spn(\sC_{\sss (1)}^{a}) $. An important ingredient to such a proof is that  $|\spn(\sC_{\sss (1)}^{a}) |$ is asymptotically close to the size of the one-neighborhood of $\sC_{\sss (1)}^a$ (see also Proposition \ref{prop-large-a-original} and the intuition below it).

The remainder of this section is organised as follows:
We start by proving a lower and an upper bound on the
span of $\sC_{\sss (1)}^a$ in Sections~\ref{sec-lower-bound}~and~\ref{sec:span-comp} respectively.  
In Section~\ref{sec:return- path}, we show that the contributions to the spans due to return paths is asymptotically negligible. 
In fact, we will show that the span of small subsets of vertices is small uniformly over the choice of the vertex sets (see Lemma~\ref{prop:uniform-small-span}).
In Section~\ref{sec-no-outside-Nn} we show that with high probability there is no large component outside of $[N_n(a)]$.
We conclude with the proof of Theorem \ref{thm:supcrit-bd} in Section \ref{sec:proof-main-thm}.

\subsection{Concentration of the spans: lower bound}
\label{sec-lower-bound}
Fix $a>0$ and recall the definitions of  $\sCa_{\sss (i)}$ from Section~\ref{sec:small-giant}, and that of $\spn(V)$ for $V\subseteq [N_n(a)]$ above \eqref{giant-equality}.
In this section, we obtain a lower bound on the asymptotic size of  $\spn(\sC_{\sss (1)}^{a})$, by proving a sharp approximation for the  1-neighborhood of $\sC_{\sss (1)}^{a}$:

\begin{proposition}[Lower bound for the span]
\label{prop-LB-size-span-C1}
Fix $\lambda>\lambda_c$. 
For any $\vep >0$, there exists $a_1 = a_1(\vep)>0$ such that, for all $a\geq a_1$,
\begin{eq}
\lim_{n\to\infty} \PR\bigg( \frac{|\spn(\sC_{\sss (1)}^{a})|}{\sqrt{n}} \geq \zeta^\lambda - \vep \bigg) = 1,
\end{eq}
where $\zeta^\lambda$ is as in \eqref{a-zeta-a-lim}.
\end{proposition}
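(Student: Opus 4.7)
The plan is to lower-bound $|\spn(\sC_{\sss (1)}^a)|$ by the size of the $1$-neighborhood $\cN_1:=\{v\in [N_n(a)]^c: v \text{ has an edge to some } u\in \sC_{\sss (1)}^a\}$. I will show that $|\cN_1|\geq (\zeta_a^\lambda-o_\PR(1))\sqrt{n}$; combined with $\zeta_a^\lambda\to \zeta^\lambda$ from Proposition~\ref{prop-large-a-original}, this suffices by choosing $a_1$ large so that $\zeta_a^\lambda\geq \zeta^\lambda-\vep/2$ for $a\geq a_1$.

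I would first condition on the $\sigma$-algebra $\cF_a$ generated by all edges of $\rNR(\bw,\percn)$ with both endpoints in $[N_n(a)]$. Under $\cF_a$ the set $\sC_{\sss (1)}^a$ is determined, and the indicators $I_v:=\mathbbm{1}\{v \sim \sC_{\sss (1)}^a\}$ for $v\in [N_n(a)]^c$ are independent. Writing $N:=\sum_{v\in [N_n(a)]^c}I_v=|\cN_1|$ and $S_v:=\sum_{u\in \sC_{\sss (1)}^a}\percn(1-\e^{-w_uw_v/\ell_n})$, the Bonferroni inequality yields
\[
\expt{N\mid \cF_a}\geq \sum_{v\in [N_n(a)]^c}S_v-\tfrac12\sum_{v\in [N_n(a)]^c}S_v^2.
\]
For the main term, after exchanging the order of summation, I would use the identity $\sum_{v\in [n]}w_uw_v/\ell_n=w_u$ together with a careful split of $\{v\in [n]\}$ based on whether $w_uw_v/\ell_n\leq 1$, combined with Lemma~\ref{lem:order-estimates}, to establish
\[
\sum_{v\in [n]}(1-\e^{-w_uw_v/\ell_n})=w_u\bigl(1+O(n^{-(\tau-2)^2/(\tau-1)})\bigr)
\]
uniformly in $u\in [N_n(a)]$. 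The portion from $v\in [N_n(a)]$ contributes at most $\lambda a^2 N_n=o(\sqrt{n})$ in total, so Proposition~\ref{prop-weight-giant-core} gives $\sum_v S_v = \percn W_{\sss (1)}^a (1+o(1))= \zeta_a^\lambda\sqrt{n}(1+o_\PR(1))$. For the quadratic correction, from $1-\e^{-x}\leq x$ we get $S_v\leq \percn w_v W_{\sss (1)}^a/\ell_n=O_\PR(w_v/\sqrt{n})$, and Lemma~\ref{lem:order-estimates} yields $\sum_{v>N_n(a)}w_v^2=O(n^{(5-\tau)/2})$, so $\sum_v S_v^2=O_\PR(N_n)=o_\PR(\sqrt{n})$.

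Combining these bounds, $\expt{N\mid\cF_a}\geq (\zeta_a^\lambda-o_\PR(1))\sqrt{n}$. Since the $I_v$ are independent given $\cF_a$, $\var{N\mid\cF_a}\leq \expt{N\mid\cF_a}=O_\PR(\sqrt{n})$, and a conditional Chebyshev argument yields $|N-\expt{N\mid\cF_a}|=o_\PR(\sqrt{n})$, which together with $|\spn(\sC_{\sss (1)}^a)|\geq N$ completes the argument. The hardest step will be verifying the uniform approximation $\sum_v(1-\e^{-w_uw_v/\ell_n})=w_u(1+o(1))$ across the full weight range $[\cf\sqrt{n}/a^\alpha,\cf n^\alpha]$ of $[N_n(a)]$: since $w_uw_v/\ell_n$ is order~$1$ for many pairs involving high-weight~$u$, a naive Taylor expansion fails, and one must instead separate the contribution from $v$ with $w_v>\ell_n/w_u$, where $1-\e^{-w_uw_v/\ell_n}\approx 1$, and bound the size of this set via Lemma~\ref{lem:order-estimates} to show that the total discrepancy is of order $w_u\cdot n^{-(\tau-2)^2/(\tau-1)}$, which is uniformly negligible for $\tau\in (2,3)$.
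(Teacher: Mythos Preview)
Your proposal is correct and follows essentially the same route as the paper: lower-bound the span by the $1$-neighborhood, condition on the edges inside $[N_n(a)]$, use Bonferroni for the conditional mean (with the quadratic term controlled via $\sum_{v>N_n(a)} w_v^2$), apply Chebyshev using that the $I_v$ are conditionally independent indicators, and finish with Propositions~\ref{prop-weight-giant-core} and~\ref{prop-large-a-original}. The only cosmetic difference is that the paper packages the $1$-neighborhood computation as a separate lemma (Lemma~\ref{prop:2nbd-giant}) for general $V\subseteq [N_n(a)]$ and handles the ``hardest step'' by an aggregate split over $\{j:w_iw_j\le \varepsilon_n\ell_n\}$ rather than your per-vertex uniform estimate, but the underlying split and the error bounds are the same.
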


For $V\subseteq [N_n(a)]$, let $\mathcal{N}_l(V)$ denote the vertices in $\spn(V)$ that are at distance $l$ from $V$, and let $\mathcal{N}_{\sss \geq l}(V) = \cup_{l'\leq l} \mathcal{N}_{l'}(V)$.
Thus,
	\eqn{ 
	\label{eq:split-span}
	\spn(V) = {\mathcal N}_1(V)\cup {\mathcal N}_{\sss \geq 2}(V).
	}
Lemma~\ref{prop:2nbd-giant} below identifies the asymptotics of the first term in \eqref{eq:split-span}. 
In the next section, 
where we analyze the upper bound on $|\spn(\sC_{\sss (1)}^{a})|$,  
we show that the second term in \eqref{eq:split-span} gives a negligible contribution (see Lemma~\ref{prop:3nbd-negligible} below), but this is not needed for the lower bound in Proposition \ref{prop-LB-size-span-C1}:

\begin{lemma}[Direct neighbors of {$[N_n(a)]$}]
\label{prop:2nbd-giant}
Let $V\subseteq [N_n(a)]$ be such that $\sum_{i\in V} \percn w_i \geq c_0 \sqrt{n}$ for some constant $c_0>0$.  Then, for any fixed $a>0$, and $\vep >0$, as $n\to\infty$,
	\begin{eq}
	\label{eq:1-nbd-prob-convergence}
	\PR\bigg(\Big| |{\mathcal N}_1(V)| - \sum_{i\in V} \percn w_i \Big| > \vep\sqrt{n} \ \bigg\vert\ \cG_{\sss N_n(a)} \bigg) \pto 0. 
	\end{eq} 
\end{lemma}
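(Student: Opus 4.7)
The key observation is that, conditional on $\cG_{\sss N_n(a)}$, the set $V \subseteq [N_n(a)]$ is determined, while the edges from each vertex $u\in [N_n(a)]^c$ to $[N_n(a)]$ are mutually independent across $u$ and independent of $\cG_{\sss N_n(a)}$, since all edges in the Norros--Reittu model are independent. Thus the indicators $I_u = \indic{u\in \mathcal N_1(V)}$ for $u\in [N_n(a)]^c$ are conditionally independent Bernoullis with parameters $p_u := 1-\prod_{i\in V}(1-\percn p_{iu})$, and the proof reduces to estimating the conditional mean $\sum_u p_u$ and then applying Chebyshev's inequality.

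For the mean, Bonferroni's inequality gives $\sum_{i\in V}\percn p_{iu} - \sum_{i<j\in V}\percn^2 p_{iu}p_{ju}\leq p_u \leq \sum_{i\in V}\percn p_{iu}$. Interchanging sums in the first-order term, for each $i\in V$ one has $\sum_{u\in [n]}\percn p_{iu} = \percn\sum_u[1-\e^{-w_iw_u/\ell_n}]$, which differs from $\percn w_i$ by two corrections: the restriction to $u\in [N_n(a)]^c$ costs $O(\percn w_i\cdot n^{(2-\tau)/2})$ via the weight estimates for $[N_n(a)]$, while replacing $1-\e^{-x}$ by $x$ produces an error that, after splitting the $u$-sum at the threshold $w_u \sim \ell_n/w_i$ and invoking {\rm Lemma~\ref{lem:order-estimates}} separately on each side, is bounded by $Cn w_i^{\tau-1}/\ell_n^{\tau-1}$. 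Summing against $\percn$ and using $\sum_{i\in [N_n(a)]}w_i^{\tau-1}\asymp n\log n$ (a consequence of $\alpha(\tau-1)=1$) gives total error $O(\lambda n^{(3-\tau)/2}\log n) = o(\sqrt n)$ since $\tau>2$. The same weight-based dichotomy applied to pairs bounds $\sum_u \percn^2 p_{iu}p_{ju}\leq C\percn^2 n w_{i\wedge j}w_{i\vee j}^{\tau-2}/\ell_n^{\tau-1}$ (with $\wedge,\vee$ taken in weight-ordering), and summing over $i<j\in [N_n(a)]$ yields a Bonferroni correction of $O(a\lambda^2 n^{(3-\tau)/2}) = o(\sqrt n)$. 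All bounds are uniform in the choice of $V$.

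Combining these estimates gives $\E[|\mathcal N_1(V)|\mid \cG_{\sss N_n(a)}]=\sum_{i\in V}\percn w_i + o(\sqrt n)$, and the conditional variance obeys
\[\mathrm{Var}\bigl(|\mathcal N_1(V)|\bigm| \cG_{\sss N_n(a)}\bigr) = \sum_u p_u(1-p_u) \leq \E\bigl[|\mathcal N_1(V)|\bigm|\cG_{\sss N_n(a)}\bigr] = O(\sqrt n),\]
so the conditional standard deviation is $O(n^{1/4}) = o(\sqrt n)$, and Chebyshev's inequality completes the proof. The main technical obstacle is this multiscale bookkeeping in the mean: the naive Taylor bound $1-\e^{-x}\leq x^2/2$ is wasteful for pairs $(i,u)$ with $w_iw_u/\ell_n\gtrsim 1$ (which do occur, since hubs in $[N_n(a)]$ can have weight $\Theta(\sqrt n)$), so one must use the weight-threshold dichotomy at $w_u = \ell_n/w_i$ together with the power-law moment estimates of {\rm Lemma~\ref{lem:order-estimates}}; the estimate is tight and uses $\tau>2$ essentially.
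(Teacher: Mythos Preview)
Your proof is correct and follows essentially the same strategy as the paper: write $|\mathcal N_1(V)|$ as a sum of conditionally independent Bernoulli indicators, sandwich the conditional mean by inclusion--exclusion, show it equals $\sum_{i\in V}\percn w_i + o(\sqrt n)$ via a weight-threshold dichotomy, bound the variance by the mean, and finish with Chebyshev. The bookkeeping differs only in detail: the paper splits at a moving threshold $w_iw_u\leq \vep_n\ell_n$ with $\vep_n\searrow 0$ (so the main term picks up a harmless $(1+o(1))$ factor) and handles the Bonferroni correction with the cruder bound $1-\e^{-x}\leq x$ together with the restriction $u\notin[N_n(a)]$, whereas you take the fixed threshold $w_u=\ell_n/w_i$ and apply the same dichotomy to the pairwise term; both routes give error $O(n^{(3-\tau)/2}\log n)=o(\sqrt n)$.
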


\begin{proof}
Let $\PR_1$ and $\E_1$ denote the conditional probability and expectation, respectively, conditionally on $\cG_{\sss N_n(a)}$. 
Let us first show that 
	\begin{eq}\label{eq:expt-weight-main-contribution}
	\E_1\big[\big|{\mathcal N}_1(V)\big|\big] =   (1+o(1))\sum_{i\in V}  \percn w_i+o(\sqrt{n}).
	\end{eq}
Note that 
	\eqn{\label{one-neighbor-sum-indic}
	\big|{\mathcal N}_1(V)\big|= \sum_{j\notin [N_n(a)]} \ind{(i,j) \text{ create an edge for some }i\in V}.
	}
Thus, by a union bound,  
    \begin{eq}
    \label{eq:sim-split-01}
    \E_1\big[\big|{\mathcal N}_1(V)\big|\big] \leq \sum_{i\in V}  \sum_{j\in [n]} \percn \big( 1- \e^{-w_iw_j/\ell_n}\big).
    \end{eq}
Moreover, using inclusion-exclusion,  the expectation in \eqref{eq:expt-weight-main-contribution} is at least 
	\begin{eq}\label{eq:sim-split-1}
	&\sum_{i\in V}  \sum_{j\notin [N_n(a)]} \percn \big( 1- \e^{-w_iw_j/\ell_n}\big)   - \sum_{i_1,i_2\in V } \sum_{j\notin [N_n(a)]} \percn^2 \big( 1- \e^{-w_{i_1}w_j/\ell_n}\big)\big( 1- \e^{-w_{i_2}w_j/\ell_n}\big).
	\end{eq}
Now, by \eqref{eq:sum-weight-N-a},  $\sum_{i\in V} w_i \leq \sum_{i\in [N_n(a)]} w_i \leq C a^{1-\alpha} N_n \sqrt{n}$, and thus, using $1-\e^{-x} \leq x$,  the second term is at most 
    \begin{eq}
    \percn^2 \bigg(\sum_{i\in V} w_i\bigg)^2 \frac{1}{\ell_n^2} \sum_{j\notin [N_n(a)]} w_j^2&\leq Ca^{2-2\alpha}\percn^2 (\sqrt{n} N_n)^2 n^{-2+2\alpha} \sum_{j>N_n(a)} j^{-2\alpha} \\
    &\leq C a^{3-4\alpha }n^{(3-\tau)/2} = o(\sqrt{n}).
    \end{eq}
Moreover, 
    \begin{eq}
    \sum_{i\in V} \sum_{j\in [N_n(a)]} \percn \big( 1- \e^{-w_iw_j/\ell_n}\big) \leq \percn N_n(a)^2 \leq C a^2 n^{(3-\tau)/2} = o(\sqrt{n}).
    \end{eq}
Thus, \eqref{eq:sim-split-01} and \eqref{eq:sim-split-1} together imply that 
    \begin{eq}
    \label{eq:one-step-simple}
    \E_1\big[\big|{\mathcal N}_1(V)\big|\big] =  \sum_{i\in V} \sum_{j\in [n]}\percn  \big( 1- \e^{-w_iw_j/\ell_n}\big) + o(\sqrt{n}).
    \end{eq}
Let $\varepsilon_n$ be such that $\vep_n\searrow 0$ sufficiently slowly (to be specified later). 
Let us split the first term of \eqref{eq:one-step-simple} in two parts by restricting the sum over $j\in[n]$ to $\{j\colon w_iw_j\leq \varepsilon_n \ell_n\}$ and $\{j\colon w_iw_j >  \varepsilon_n \ell_n\}$, respectively. Denote the two terms by $\mathrm{(I)}$ and $\mathrm{(II)}$, respectively.
Note that 
	\begin{eq}
	\mathrm{(II)} &\leq \percn \sum_{i\in [N_n(a)]}\#\{j\colon w_iw_j \leq \varepsilon_n\ell_n\} = C\percn \varepsilon_n^{-(\tau -1)} n^{2-\tau} \sum_{i\in [N_n(a)]} w_i^{\tau-1}\\
	& = Cn^{(3-\tau)/2} \varepsilon_n^{-(\tau -1)} \sum_{i\leq an^{(3-\tau)/2}} \frac{1}{i} = Cn^{(3-\tau)/2} \varepsilon_n^{-(\tau -1)}\log(a n^{(3-\tau)/2}) =o(n^{1/2}),
	\end{eq}
where in the second step we have used Lemma~\ref{lem:order-estimates}, and the choice of $\vep_n$ is such that the final step holds.
Moreover, since $1-\e^{-x} = x(1+o(1))$ as $x\to 0$,
	\begin{eq}
	\mathrm{(I)} &\geq (1+o(1))\sum_{i\in V} \sum_{j\colon w_j \leq \varepsilon_n\ell_n/w_i} \percn \frac{w_iw_j}{\ell_n} 
	\\
	&\geq (1+o(1))\sum_{i\in V} \sum_{j\colon w_j \leq C\varepsilon_n n^{\rho}} \percn \frac{w_iw_j}{\ell_n} = (1+o(1))\sum_{i\in V} \percn w_i .
	\end{eq}
Also, $\mathrm{(I)} \leq \sum_{i\in V} \percn w_i $. 
We conclude that
	\begin{eq}\label{eq:sim-split-3}
	\sum_{i\in V} \sum_{j\in [n]} \percn  \big( 1- \e^{-w_iw_j/\ell_n}\big) = (1+o(1))\sum_{i\in V} \percn w_i + o(\sqrt{n}), 
	\end{eq}
and thus \eqref{eq:expt-weight-main-contribution} follows by combining  \eqref{eq:one-step-simple} and \eqref{eq:sim-split-3}.

To complete the proof of \eqref{eq:1-nbd-prob-convergence}, we apply Chebyshev's inequality for which we need to bound the variance of $|{\mathcal N}_1(V)|$. Let $\mathrm{Var}_1$ denote the variance conditionally on  $\cG_{\sss N_n(a)}$.
Note that \eqref{one-neighbor-sum-indic} is a sum of conditionally independent indicators, given $\cG_{\sss N_n(a)}$. 
Therefore,
	\begin{eq}
	\mathrm{Var}_1\big(\big|{\mathcal N}_1 (V)\big| \big) \leq \E_1\big[\big|{\mathcal N}_1 (V)\big|  \big], 
	\end{eq}
and an application of Chebyshev's inequality completes the proof. 
\end{proof}
Now we are ready to complete the proof of Proposition~\ref{prop-LB-size-span-C1}:

\begin{proof}[Proof of Proposition~\ref{prop-LB-size-span-C1}]
Clearly, $|\spn(V)| \geq |\cN_1(V)|$ by \eqref{eq:split-span}. 
We apply Lemma \ref{prop:2nbd-giant} with $V=\sC_{\sss (1)}^{a}$, and rely on Proposition \ref{prop-weight-giant-core} to estimate $|\cN_1(\sC_{\sss (1)}^{a})|$. Finally, by Proposition~\ref{prop-large-a-original}, we can take $a>1$ sufficiently large, so that $\zeta_a^\lambda\geq \zeta^\lambda-\vep/2$.
Thus, Proposition~\ref{prop-LB-size-span-C1} follows.
\end{proof}

\subsection{Concentration of the spans: upper bound}
\label{sec:span-comp}
Fix $a>0$ and recall the definition of $\cT_{\sss \geq k}^a$ from Section~\ref{sec:small-giant}, and that of $\spn(V)$ for $V\subseteq [N_n(a)]$ above \eqref{giant-equality}.
In this section, we obtain an upper bound on $\spn(\cT_{\sss \geq k}^a)$.

\begin{proposition}[Upper bound on the span of large clusters]
\label{prop:size-span-C1}
Fix $\lambda>\lambda_c$. 
For any $\vep >0$, there exists $a_1 = a_1(\vep)>0$ such that for all $a\geq a_1$ there exists $k_0 = k_0(\vep,a)$ such that, for all $k\geq k_0$, 
\begin{eq}
\lim_{n\to\infty} \PR\bigg( \frac{|\spn(\cT_{\sss \geq k}^a)|}{\sqrt{n}} \leq \zeta^\lambda + \vep \bigg) = 1,
\end{eq}
where $\zeta^\lambda$ is as in \eqref{a-zeta-a-lim}.
\end{proposition}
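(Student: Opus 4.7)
The proof decomposes $\spn(\cT_{\sss \geq k}^a) = \cN_1(\cT_{\sss \geq k}^a) \cup \cN_{\sss \geq 2}(\cT_{\sss \geq k}^a)$ as in \eqref{eq:split-span}, and controls each piece separately.

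For the first-generation term, I would apply Lemma~\ref{prop:2nbd-giant} conditionally on $\cG_{\sss N_n(a)}$ with $V = \cT_{\sss \geq k}^a$ to obtain
\[
|\cN_1(\cT_{\sss \geq k}^a)| = \sum_{i \in \cT_{\sss \geq k}^a} \percn w_i + \oP(\sqrt{n});
\]
the hypothesis $\sum_{i\in V}\percn w_i \geq c_0 \sqrt{n}$ holds whp since $\sC_{\sss (1)}^{a} \subseteq \cT_{\sss \geq k}^a$ already contributes $\thetaP(\sqrt{n})$ by Proposition~\ref{prop-weight-giant-core}. Mimicking that proof but invoking \cite[Theorem 9.10]{BJR07} applied to the functional $i\mapsto w_i/\sqrt{n}$ summed over all vertices lying in components of size at least $k$ (covered by \cite[Theorem 9.1]{BJR07}), one obtains
\[
\sum_{i \in \cT_{\sss \geq k}^a}\percn w_i/\sqrt{n} \pto \zeta_a^{\lambda,k} := \lambda \int_0^a \cf u^{-\alpha} \rho_{a,{\sss \geq k}}^\lambda(u)\,\dif u.
\]
Since $\rho_{a,{\sss \geq k}}^\lambda(u) \searrow \rho_a^\lambda(u)$ as $k\to\infty$, monotone convergence gives $\zeta_a^{\lambda,k} \searrow \zeta_a^\lambda$; together with Proposition~\ref{prop-large-a-original}, I first choose $a_1$ so that $\zeta_a^\lambda \leq \zeta^\lambda + \vep/4$ for all $a \geq a_1$, and then $k_0(\vep,a)$ so that $\zeta_a^{\lambda,k} \leq \zeta_a^\lambda + \vep/4$ for all $k \geq k_0$. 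This reproduces the order of quantifiers in the statement.

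The main obstacle is showing $|\cN_{\sss \geq 2}(\cT_{\sss \geq k}^a)| = \oP(\sqrt{n})$, which I expect to be the forthcoming Lemma~\ref{prop:3nbd-negligible}. My plan is to apply Lemma~\ref{lem-BP-UB-comp} with $A = [N_n(a)]^c$: starting from each $v\in\cN_1(\cT_{\sss \geq k}^a) \subseteq [N_n(a)]^c$, further exploration into $[N_n(a)]^c$ is stochastically dominated by a mixed-Poisson branching process whose typical offspring has mean
\[
\percn \cdot \frac{\sum_{u \notin [N_n(a)]} w_u^2}{\ell_n} \asymp C\lambda\, a^{1-2\alpha},
\]
after a direct computation via Lemma~\ref{lem:order-estimates} giving $\sum_{u\notin [N_n(a)]} w_u^2 \asymp C a^{1-2\alpha} n^{(5-\tau)/2}$ and $\ell_n \asymp \mu n$. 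Since $\alpha > 1/2$ (as $\tau\in(2,3)$), this mean tends to zero as $a\to\infty$, so for $a$ large the branching process inside $[N_n(a)]^c$ is strongly subcritical. By linearity of expectation over the roots in $\cN_1$, the total progeny at distance $\geq 2$ is then bounded by $O_{\sss \PR}(a^{1-2\alpha}) \cdot |\cN_1(\cT_{\sss \geq k}^a)| = \oP(\sqrt{n})$, completing the bound $|\spn(\cT_{\sss \geq k}^a)|/\sqrt{n} \leq \zeta^\lambda + \vep$ whp.
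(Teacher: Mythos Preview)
Your overall decomposition into $\cN_1$ and $\cN_{\sss \geq 2}$ matches the paper's, but there are two issues.

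\textbf{Weight of $\cT_{\sss \geq k}^a$.} Your claim that $\sum_{i\in\cT_{\sss \geq k}^a}\percn w_i/\sqrt{n}\pto \zeta_a^{\lambda,k}$ is not a black-box consequence of \cite[Theorem~9.10]{BJR07}: that result concerns weighted sums over the \emph{giant}, not over $\cT_{\sss \geq k}$, and \cite[Theorem~9.1]{BJR07} handles counts, not weights. The paper avoids this entirely: it uses Proposition~\ref{prop:giant-restricted} to bound the \emph{size} $|\cT_{\sss \geq k}^a\setminus\sC_{\sss (1)}^a|\leq \delta N_n$, and then the elementary Lemma~\ref{lem:w-small-set} (any subset of $[N_n(a)]$ of size $\leq \delta N_n$ has $\percn$-weight $\leq C\delta^{1-\alpha}\sqrt{n}$) to transfer the known weight asymptotics of $\sC_{\sss (1)}^a$ from Proposition~\ref{prop-weight-giant-core} to $\cT_{\sss \geq k}^a$.

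\textbf{The $\cN_{\sss \geq 2}$ bound.} This is the real gap. Your branching-process/first-moment argument correctly yields $\E\big[|\cN_{\sss \geq 2}(\cT_{\sss \geq k}^a)|\big]\leq C\,a^{1-2\alpha}\sqrt{n}$ (this is exactly the $\bar\nu_n(a)$ computation in \eqref{bar-nu-a-bd}--\eqref{eq:neighborhood-sum-split}). But Markov then only gives $\limsup_{n\to\infty}\PR\big(|\cN_{\sss \geq 2}|>\tfrac{\vep}{2}\sqrt{n}\big)\leq 2C a^{1-2\alpha}/\vep$, a \emph{fixed positive} number for each fixed $a$. The statement of the proposition demands $\lim_{n\to\infty}\PR(\cdots)=1$ for every $a\geq a_1$, so a first-moment bound cannot close the argument; your ``$O_{\sss\PR}(a^{1-2\alpha})\cdot|\cN_1|=\oP(\sqrt{n})$'' is not justified. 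The paper's Lemma~\ref{prop:3nbd-negligible} supplies exactly the missing concentration: a rather involved second-moment computation via repeated applications of the BK inequality (handling the various ways two paths from $V$ into $[N_n(a)]^c$ can share or not share vertices) shows $\Var_1(|\cN_{\sss \geq 2}(V)|)=o(n)$ for each fixed $a$, after which Chebyshev yields the required $\PR_1(|\cN_{\sss \geq 2}|>\vep\sqrt{n})\pto 0$.
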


Together with Proposition~\ref{prop-LB-size-span-C1}, Proposition~\ref{prop:size-span-C1} provides the following law of large numbers on 
$\spn(\sC_{\sss (1)}^{a})$:

\begin{corollary}[Law of large numbers for $\spn(\sC_{\sss (1)}^{a})$]
\label{cor-LLN-span-C1}
Under the conditions of {\rm Proposition \ref{prop:size-span-C1}}, with high probability
$\zeta^\lambda - \vep\leq |\spn(\sC_{\sss (1)}^{a})|/\sqrt{n}\leq \zeta^\lambda + \vep$.
\end{corollary}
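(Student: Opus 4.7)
The plan is to combine the lower bound from Proposition~\ref{prop-LB-size-span-C1} with the upper bound from Proposition~\ref{prop:size-span-C1}, after observing that the giant component $\sC_{\sss (1)}^{a}$ of $\cG_{\sss N_n(a)}$ is contained in the set $\cT_{\sss \geq k}^{a}$ of vertices lying in clusters of size at least $k$, whenever $k$ is chosen smaller than the (random) size of $\sC_{\sss (1)}^{a}$.

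Fix $\vep>0$, and choose $a_1 = a_1(\vep)$ large enough so that both Propositions~\ref{prop-LB-size-span-C1} and~\ref{prop:size-span-C1} apply. Fix $a\geq a_1$, and let $k_0 = k_0(\vep,a)$ be as in Proposition~\ref{prop:size-span-C1}. Pick any $k\geq k_0$. By Proposition~\ref{prop:giant-restricted}(i), for $\lambda>\lambda_c(a)$ (which holds by Lemma~\ref{lem-lambdacs-equal} since $a\geq a_1$ and $\lambda>\lambda_c$), we have $|\sC_{\sss (1)}^{a}| = N_n(a)\rho_a^\lambda(1+\oP(1))$ with $\rho_a^\lambda>0$, so that $|\sC_{\sss (1)}^{a}|\to\infty$ in probability. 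In particular, whp $|\sC_{\sss (1)}^{a}|\geq k$, and on this event $\sC_{\sss (1)}^{a}\subseteq \cT_{\sss \geq k}^{a}$, hence
\begin{equation*}
|\spn(\sC_{\sss (1)}^{a})| \leq |\spn(\cT_{\sss \geq k}^{a})|
\end{equation*}
by the monotonicity of $V\mapsto\spn(V)$.

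The upper half of the corollary then follows immediately from Proposition~\ref{prop:size-span-C1}: with high probability, $|\spn(\sC_{\sss (1)}^{a})|/\sqrt{n}\leq |\spn(\cT_{\sss \geq k}^{a})|/\sqrt{n} \leq \zeta^\lambda+\vep$. The lower half is exactly the content of Proposition~\ref{prop-LB-size-span-C1}. Taking the intersection of the two whp-events concludes the proof. No new estimate is required; the only point to verify is the containment $\sC_{\sss (1)}^{a}\subseteq \cT_{\sss \geq k}^{a}$, which is the only (trivial) obstacle and is handled by the concentration statement for $|\sC_{\sss (1)}^{a}|$ in Proposition~\ref{prop:giant-restricted}(i).
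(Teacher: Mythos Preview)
Your proof is correct and matches the paper's approach: the paper does not give a separate proof of this corollary, but presents it as an immediate consequence of combining Proposition~\ref{prop-LB-size-span-C1} (lower bound) with Proposition~\ref{prop:size-span-C1} (upper bound), using exactly the containment $\sC_{\sss (1)}^{a}\subseteq \cT_{\sss \geq k}^{a}$ whp that you spell out.
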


Our goal will be to first show that, given any arbitrary $V\subseteq [N_n(a)]$, the $\spn(V)$ is predominantly carried by the one-neighborhood of $V$, when $a$ is large. 
Recall the notation $\cN_l(V)$, $\mathcal{N}_{\sss \geq 2}(V)$ before \eqref{eq:split-span}, and that $\spn(V) = \mathcal{N}_1(V)\cup \mathcal{N}_{\sss \geq 2}(V)$. 
Lemma \ref{prop:2nbd-giant} has studied ${\mathcal N}_1(V)$ in detail, and now we focus on studying ${\mathcal N}_{\sss \geq 2}(V)$ for $a>1$ large:
\begin{lemma}[Additional neighborhood of $\protect{[N_n(a)]}$]
\label{prop:3nbd-negligible} 
Let $V\subseteq [N_n(a)]$ be such that $\sum_{i\in V} \percn w_i \leq C_0 \sqrt{n},$
for some constant $C_0>0$ (independent of $a$). Then, for any $\varepsilon>0$, there exists $a_0 = a_0(\vep)>0$ such that for any $a>a_0$, as $n\to\infty$,
	\eqn{
   	 \PR\big(|{\mathcal N}_{\sss \geq 2}(V)|>\varepsilon \sqrt{n} \ \big| \ \cG_{\sss N_n(a)} \big) \pto 0.
	} 
\end{lemma}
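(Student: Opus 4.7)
The plan is to control $\mathbb{E}[|\mathcal{N}_{\geq 2}(V)|\mid \mathcal{G}_{N_n(a)}]$ by a path-counting first-moment bound, exploiting that the induced subgraph on $[N_n(a)]^c$ is subcritical once $a$ is sufficiently large. After conditioning on $\mathcal{G}_{N_n(a)}$ (which determines $V$), the edges of $\rNR(\bw,\percn)$ incident to $[N_n(a)]^c$ are independent of this sigma-algebra, so I would decompose $\mathcal{N}_{\geq 2}(V)=\bigcup_{k\geq 2}\mathcal{N}_k(V)$, where $\mathcal{N}_k(V)$ collects vertices reached from $V$ by a path of length exactly $k$ whose $k-1$ intermediate vertices all lie in $[N_n(a)]^c$. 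Using $p_{ij}\leq w_iw_j/\ell_n$ and the independence of retention, a union bound over such paths gives, for every $k\geq 2$,
\begin{eq}
\mathbb{E}\big[|\mathcal{N}_k(V)|\,\big|\,\mathcal{G}_{N_n(a)}\big] \leq \bigg(\sum_{i\in V}\percn w_i\bigg)\bar\mu_n^{k-1}\bigg(\sum_{v\notin [N_n(a)]}\frac{w_v}{\ell_n}\bigg)\leq \bigg(\sum_{i\in V}\percn w_i\bigg)\bar\mu_n^{k-1},
\end{eq}
where $\bar\mu_n := \percn \ell_n^{-1}\sum_{j>N_n(a)}w_j^2$, and I absorb the harmless factor $\sum_{v\notin [N_n(a)]}w_v/\ell_n\leq 1$.

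The quantitative crux is to show $\bar\mu_n\to K_\star\lambda a^{-\eta}$ as $n\to\infty$, with $K_\star=\cf^2/(\mu\eta)$. Since $w_j\asymp \cf(n/j)^\alpha$ and $\alpha>\tfrac12$, an integral comparison (or Lemma~\ref{lem:order-estimates} suitably truncated) yields
\begin{eq}
\sum_{j>N_n(a)}w_j^2 \asymp \frac{\cf^2}{\eta}n^{2\alpha}N_n(a)^{-\eta} \asymp \frac{\cf^2}{\eta}a^{-\eta}n^{2\alpha-\eta_s\eta},
\end{eq}
and a routine computation gives $2\alpha-\eta_s\eta=(5-\tau)/2$, so combined with $\percn=\lambda n^{-\eta_s}$ and $\ell_n\asymp \mu n$ the exponent of $n$ in $\bar\mu_n$ vanishes identically, leaving $\bar\mu_n\to K_\star\lambda a^{-\eta}$. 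Since $\eta>0$, I fix $a_0=a_0(\vep)$ so large that $K_\star\lambda a^{-\eta}<\min\{1/2,\vep^2/(2C_0)\}$ for all $a\geq a_0$; this both ensures that the geometric series is summable and that, on the hypothesis event,
\begin{eq}
\mathbb{E}\big[|\mathcal{N}_{\geq 2}(V)|\,\big|\,\mathcal{G}_{N_n(a)}\big] \leq C_0\sqrt{n}\cdot\frac{\bar\mu_n}{1-\bar\mu_n}\leq \vep^2\sqrt{n}
\end{eq}
for all sufficiently large $n$. Conditional Markov then yields $\PR\big(|\mathcal{N}_{\geq 2}(V)|>\vep\sqrt{n}\mid\mathcal{G}_{N_n(a)}\big)\leq \vep$, which is the content of the asserted $\oP(1)$ statement once $\vep$ is arbitrary and $a\geq a_0(\vep)$ is taken large.

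The main obstacle is the computation $\bar\mu_n\to K_\star\lambda a^{-\eta}$: the exponents $2\alpha$, $\eta_s\eta$, and the $\percn/\ell_n$ factor must conspire to leave an $O(1)$ limit that is tunable in $a$, so the precise form of the critical scaling $\percn=\lambda n^{-\eta_s}$ is essential here. Once subcriticality $\bar\mu_n<1$ of the graph on $[N_n(a)]^c$ is in hand, the geometric summation is routine. Should one desire a sharper convergence-in-probability bound rather than the iterated-limit interpretation above, the cluster of each $v\in\mathcal{N}_1(V)$ inside $[N_n(a)]^c$ can be dominated by the total progeny of a subcritical mixed-Poisson branching process via Lemma~\ref{lem-BP-UB-comp} applied with $A=[N_n(a)]^c$, whose exponentially decaying tails provide the quantitative concentration needed.
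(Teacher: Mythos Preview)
Your first-moment computation is correct and in fact identical to the paper's: you compute the same quantity $\bar\mu_n=\bar\nu_n(a)=\percn\ell_n^{-1}\sum_{j>N_n(a)}w_j^2\to K_\star\lambda a^{-\eta}$ and obtain $\E_1[|\cN_{\sss\geq 2}(V)|]\leq C_0\sqrt{n}\cdot\bar\mu_n/(1-\bar\mu_n)$. The gap is in the last step. The lemma asserts that for each \emph{fixed} $\vep>0$ and each \emph{fixed} $a>a_0(\vep)$, the conditional probability tends to $0$ as $n\to\infty$. Your Markov inequality only yields
\[
\PR_1\big(|\cN_{\sss\geq 2}(V)|>\vep\sqrt{n}\big)\leq \frac{C_0}{\vep}\cdot\frac{\bar\mu_n}{1-\bar\mu_n}\longrightarrow \frac{C_0}{\vep}\cdot\frac{K_\star\lambda a^{-\eta}}{1-K_\star\lambda a^{-\eta}},
\]
which is a \emph{strictly positive constant} for each fixed $a$, not $o(1)$ in $n$. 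Choosing $a_0(\vep)$ so that this constant is at most $\vep$ gives the iterated-limit statement $\lim_{a\to\infty}\limsup_{n\to\infty}\PR_1(\cdots)=0$, but that is weaker than the assertion $\PR_1(\cdots)\pto 0$ for fixed $a$, and the latter is what the paper needs and states.

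The paper closes this gap by a second-moment computation: it shows $\Var_1(|\cN_{\sss\geq 2}(V)|)=o(n)$ for each fixed $a$, via a careful case analysis of pairs of paths (disjoint, sharing one vertex, sharing two vertices) together with the BK inequality, and then applies Chebyshev. This is where the real work in the lemma lies, and it is absent from your proposal. Your closing remark about dominating each cluster in $[N_n(a)]^c$ by a subcritical branching process does not resolve the issue: the sum of those dominating total progenies over $v\in\cN_1(V)$ still has mean of order $\sqrt{n}$, so you would again need a concentration argument (a law of large numbers for a sum of roughly $\sqrt{n}$ non-identically-distributed variables), which is not supplied and amounts to essentially the same variance work.
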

\begin{proof}
Recall that $\PR_1$ and $\E_1$ denote the conditional probability and expectation, respectively, given $\cG_{\sss N_n(a)}$. We first show that there exists $a_0 = a_{0}(\vep)>0$ such that for all $a>a_0$,
	\begin{eq}
	\label{asym-path-i-j}
	\frac{\E_1[|{\mathcal N}_{\sss \geq 2}(V)|]}{\sqrt{n}} < \frac{\vep}{2}, \quad \text{with high probability.}
	\end{eq}
For any $i,j\in [n]$,  let $\cA_{l}(i,j)$ denote the event that there exists $l-1$ vertices $i_1,\dots, i_{l-1}\in [N_n(a)]^c$ such that 
$(i, i_1, \dots, i_{l-1},  j)$ is a path in $\rNR(\bw,\percn)$.
In words, $\cA_{l}(i,j)$ is the event that there exists a path of length $l$ between $i$ and $j$ in $\rNR(\bw,\percn)$ with all the intermediate vertices in~$[N_n(a)]^c$.
Now, with $i_0=i, i_{l}=j$, note that 
	\begin{eq} \label{eq:path-count prob-large-small}
	\PR_1(\cA_l(i,j))&\leq \sum_{i_1, \ldots, i_{l-1}\notin [N_n(a)]} \prod_{s=1}^l \frac{\percn w_{i_{s-1}}w_{i_s}}{\ell_n}
	\\
	&\leq \percn \frac{w_iw_j}{\ell_n} \Big(\sum_{v\notin [N_n(a)]}  \frac{\percn w_{v}^2}{\ell_n}\Big)^{l-1}
	=\percn \frac{w_iw_j}{\ell_n} \bar{\nu}_n(a)^{l-1},	
	\end{eq}
where 
	\begin{eq}
	\label{bar-nu-a-bd}
	\bar{\nu}_n(a) &= \frac{1}{\ell_n} \sum_{v \notin [N_n(a)]} \percn w_v^2 = \frac{Cn^{2\alpha}\percn}{n} \sum_{v> an^{(3-\tau)/2}} v^{-2\alpha} \\
	&= \frac{Cn^{2\alpha}\percn}{n}\big(an^{(3-\tau)/2}\big)^{1-2\alpha}  = Ca^{-(3-\tau)/(\tau-1)}.\nn
	\end{eq}
Thus, using $\sum_{i\in V} \percn w_i \leq C_0 \sqrt{n}$,
	\eqan{
	\label{eq:neighborhood-sum-split}
	\E_1[|{\mathcal N}_{\sss \geq 2}(V)|] 
	&\leq \sum_{l\geq 2} \sum_{i\in V}\sum_{j\notin [N_n(a)]} \PR_1(\cA_{l}(i,j))\\
	&\leq C\bar{\nu}_n(a) \frac{\percn}{\ell_n}\sum_{i\in V} w_i \sum_{j\notin [N_n(a)]} w_j \leq C\bar{\nu}_n(a) \sqrt{n},\nn
	}    
and \eqref{asym-path-i-j} follows using \eqref{bar-nu-a-bd}.

Next, we compute $\Var_1(|{\mathcal N}_{\sss \geq 2}(V)|)$, where $\Var_1$ denotes the conditional variance given $\cG_{\sss N_n(a)}$. Let $I_{ij}(a)$ be the indicator of the event $\cup_{l\geq 2} \cA_l(i,j)$.
Note that 
\begin{eq}\label{eq:n-geq-2-2ndmoment}
\E_1[|{\mathcal N}_{\sss \geq 2}(V)|^2] \leq  \sum_{i_1,i_2\in V} \sum_{j_1,j_2\in [N_n(a)]^c} \PR_1(I_{i_1j_1}(a) = 1,I_{i_2j_2}(a) = 1). 
\end{eq}
We split the sum over possible choices of $i_1,i_2,j_1,j_2$. 
If $i_1= i_2$ and $j_1 = j_2$, then we get the same bound as in \eqref{eq:neighborhood-sum-split}.
Let $i_1 = i_2=i$ and $j_1\neq j_2$. If $I_{ij_1}(a) = 1$ and $I_{ij_2}(a) = 1$, then we have two cases. 
\begin{itemize}
    \item[$\rhd$] \textbf{Case 1:} There are two vertex-disjoint paths $[i,j_1]$ and $[i,j_2]$ with all intermediate vertices in $[N_n(a)]^c$. 
    \item[$\rhd$] \textbf{Case 2:} There exists a vertex $k \in [N_n(a)]^c$ such that there are three vertex-disjoint paths $[i,k]$, $[k,j_1]$ and $[k,j_2]$ with all intermediate vertices in $[N_n(a)]^c$. 
\end{itemize}
Since the paths described above are vertex-disjoint, we can apply the BK-inequality \cite[Theorem 3.3]{BK85}. 
Let $\cA(i,k) = \cup_{l\geq 1} \cA_l(i,k)$. 
By \eqref{eq:path-count prob-large-small}, $\PR_1(\cA(i,k)) \leq C\percn w_i w_k/\ell_n $ for any $i,k$. 
Thus, when Case 1 occurs, we can bound the term in \eqref{eq:n-geq-2-2ndmoment} by 
\begin{eq}\label{BK-appl-two-path}
&\sum_{i\in V}\sum_{j_1,j_2\in [N_n(a)]^c} \PR_1(I_{ij_1}(a) =I_{ij_2}(a) = 1, \text{ and Case 1 occurs})\\
& \leq \sum_{i\in V}\sum_{j_1,j_2\in [N_n(a)]^c} \PR_1(\cA(i,j_1)) \PR_1(\cA(i,j_2)) \leq \frac{C\percn^2}{\ell_n^2} \sum_{i\in V} w_i^2 \bigg(\sum_{j\in [N_n(a)]^c} w_j \bigg)^2\\
& \leq C\percn^2   \sum_{i\in [n]} w_i^{2}\leq C (n^\alpha \percn)^2 = o(n).
\end{eq}
Again, using an union bound over the choices of $k$ and applying  the BK-inequality, we obtain
\begin{eq}\label{BK-multipath-N-geq-2}
&\sum_{i\in V}\sum_{j_1,j_2\in [N_n(a)]^c}\PR_1(I_{ij_1}(a)= I_{ij_2}(a) = 1, \text{ and Case 2 occurs})\\
&\leq \sum_{i\in V}\sum_{j_1,j_2\in [N_n(a)]^c} \sum_{k\in [N_n(a)]^c}\PR_1(\cA(i,k)) \PR_1(\cA(j_1,k))\PR_1(\cA(j_2,k)) \\ 
&\leq \frac{C\percn^3}{\ell_n^3} \sum_{i\in V}\sum_{j_1,j_2\in [N_n(a)]^c}\sum_{k\in [N_n(a)]^c} w_iw_{j_1} w_{j_2} w_k^3  \leq C\sqrt{n}\frac{\percn^2}{\ell_n}\sum_{k\in [N_n(a)]^c}  w_k^3 \leq C a^{1-3\alpha} n \percn,
\end{eq}
where in the one-but-last step we have used our assumption that $\sum_{i\in V} \percn w_i \leq C_0\sqrt{n}$, and the final step follows by using 
    \eqan{
    \label{sum-wk3}
    \sum_{k\in [N_n(a)]^c}  w_k^3 
    &\leq \sum_{k>N_n(a)}  \frac{ \cf^3 n^{3\alpha}}{k^{3\alpha}}=
    Cn^{3\alpha} (a N_n)^{1-3\alpha} = C a^{1-3\alpha} N_n n^{3/2}.
    }
We can similarly treat the case $i_1\neq i_2$ and $j_1 = j_2$.
In that case, we no longer have to split in two cases as above, since $k$ may be equal to $j$. Thus, the same argument as \eqref{BK-multipath-N-geq-2} shows that
\begin{eq}\label{BK-multipath-N-geq-2-2}
&\sum_{i_1,i_2\in V}\sum_{j\in [N_n(a)]^c}\PR_1(I_{i_1j}(a) = I_{i_2j}(a) = 1) \leq C a^{1-3\alpha} \sqrt{n}.
\end{eq}
Note also that $|{\mathcal N}_{\sss \geq 2}(V)|= \sum_{j\notin [N_n(a)]} \ind{\cA(i,j) \text{occurs for some }i\in V},$ and thus \eqref{BK-multipath-N-geq-2-2} also implies that
\begin{eq} \label{eq:lb-geq-2}
&\E_1\big[|{\mathcal N}_{\sss \geq 2}(V)|\big] \\
&\geq \sum_{i\in V} \sum_{j\in [N_n(a)]^c} \PR_1(I_{ij}(a) = 1) - \sum_{i_1,i_2\in V} \sum_{j\in [N_n(a)]^c} \PR_1(I_{i_1j}(a) = I_{i_2j}(a) = 1)\\
&=\sum_{i\in V} \sum_{j\in [N_n(a)]^c} \PR_1(I_{ij}(a) = 1)-o(\sqrt{n}). 
\end{eq}

Next, consider the case where $i_1,i_2,j_1,j_2$ are all distinct. 
Let $\cB(i_1,j_1,i_2,j_2)$ denote the event that the paths $[i_1,j_1]$ and $[i_2,j_2]$ are disjoint. 
By the BK-inequality 
\begin{eq}\label{eq:N-geq-2-2ndmoment-2}
&\sum_{\substack{i_1,i_2\in V,j_1, j_2\in [N_n(a)]^c\\
i_1\neq i_2, j_1\neq j_2}} \PR_1 (\cB(i_1,j_1,i_2,j_2) ) \\
&\leq \sum_{\substack{i_1,i_2\in V,j_1, j_2\in [N_n(a)]^c\\
i_1\neq i_2, j_1\neq j_2}} \PR_1(I_{i_1j_1}(a) = 1)\PR_1(I_{i_2j_2}(a) = 1) \leq \big(\E_1[|{\mathcal N}_{\sss \geq 2}(V)|]+o(\sqrt{n})\big)^2,
\end{eq}
where we have used \eqref{eq:lb-geq-2} in the last step.

Let $\cB'(i_1,j_1,i_2,j_2)$ denote the event that $[i_1,j_1]$ and $[i_2,j_2]$ intersect.
If $\cB'(i_1,j_1,i_2,j_2)$ occurs, then there are two vertices $k_1,k_2\in [N_n(a)]^c$ in $[i_1,j_1]$ such that $[i_1,k_1]$, $[k_1,k_2]$, $[k_2,j_1]$, $[i_2,k_1]$ and $[j_2,k_2]$ are edge-disjoint. 
There are two cases depending on whether $k_1 = k_2$ (we denote this event by $\cB_1'(i_1,j_1,i_2,j_2)$) or $k_1\neq k_2$ (and we denote this event by $\cB_2'(i_1,j_1,i_2,j_2)$). 
The BK-inequality implies that 
\begin{eq}\label{eq:N-geq-2-2ndmoment-3}
&\sum_{i_1,i_2\in V} \sum_{j_1,j_2\in [N_n(a)]^c} \PR (\cB'_1(i_1,j_1,i_2,j_2) ) \\
&\leq \sum_{i_1,i_2\in V} \sum_{j_1,j_2\in [N_n(a)]^c}  \sum_{k\in [N_n(a)]^c} \PR(\cA(i_1,k)) \PR(\cA(j_1,k))\PR(\cA(i_2,k))\PR(\cA(j_2,k))\\
&\leq  \frac{C\percn^4}{\ell_n^4} \sum_{i_1,i_2\in V}\sum_{j_1,j_2\in [N_n(a)]^c}\sum_{k\in [N_n(a)]^c} w_{i_1}w_{i_2} w_{j_1}w_{j_2} w_k^4  \\
&\leq Cn\frac{\percn^2}{\ell_n^2}\sum_{k\in [N_n(a)]^c}  w_k^4 \leq C a^{1-4\alpha} n\percn,
\end{eq}
where we have used that $\sum_{k\in [N_n(a)]^c}  w_k^4 \leq C a^{1-4\alpha} N_n n^{2}$, as can be derived similarly as in \eqref{sum-wk3}.

To compute $\PR (\cB'_2(i_1,j_1,i_2,j_2))$, we again apply the BK-inequality, and \eqref{sum-wk3} again implies that 
\begin{eq}\label{eq:N-geq-2-2ndmoment-4}
&\sum_{i_1,i_2\in V} \sum_{j_1,j_2\in [N_n(a)]^c} \PR (\cB'_2(i_1,j_1,i_2,j_2) ) \\
&\leq \sum_{i_1,i_2\in V} \sum_{j_1,j_2\in [N_n(a)]^c} \sum_{k_1,k_2\in [N_n(a)]^c} \PR(\cA(i_1,k_1))\PR(\cA(k_1,k_2)) \PR(\cA(j_1,k_2))\PR(\cA(i_2,k_1))\PR(\cA(j_2,k_2))\\
&\leq  \frac{C\percn^5}{\ell_n^5}\sum_{i_1,i_2\in V}\sum_{j_1,j_2\in [N_n(a)]^c}\sum_{k\in [N_n(a)]^c} w_{i_1}w_{i_2} w_{j_1}w_{j_2} w_{k_1}^3 w_{k_2}^3  \\
&\leq Cn\frac{\percn^3}{\ell_n^3}\bigg(\sum_{k\in [N_n(a)]^c}  w_k^3 \bigg)^2 \leq C a^{2-6\alpha} n\percn.
\end{eq}
Finally, we conclude from \eqref{BK-appl-two-path},  \eqref{BK-multipath-N-geq-2}, \eqref{BK-multipath-N-geq-2-2}, \eqref{eq:N-geq-2-2ndmoment-2}, \eqref{eq:N-geq-2-2ndmoment-3} and \eqref{eq:N-geq-2-2ndmoment-4} that $\Var_1 (|{\mathcal N}_{\sss \geq 2}(V)|) = o(n)$ for each fixed $a>0$.
Thus, on the event that $\E_1[|{\mathcal N}_{\sss \geq 2}(V)|]\leq \vep\sqrt{n}/2$, which occurs with high probability, \eqref{asym-path-i-j} and  the Chebychev inequality imply that 
\eqan{
\PR_1\big(|{\mathcal N}_{\sss \geq 2}(V)| > \vep \sqrt{n}\big)&\leq 
\PR_1\Big(\Big||{\mathcal N}_{\sss \geq 2}(V)|-\E_1[|{\mathcal N}_{\sss \geq 2}(V)|]\Big| > \vep \sqrt{n}/2\Big)\\
&\leq \frac{4\Var_1 (|{\mathcal N}_{\sss \geq 2}(V)|)}{\vep^2n}
\pto 0,\nn
}
and thus the proof of Lemma~\ref{prop:3nbd-negligible} follows.
\end{proof}
Next we bound the total weight of small sets of vertices which will be required in the proof of Proposition~\ref{prop:size-span-C1}:
\begin{lemma}[Small sets have small weight]\label{lem:w-small-set}
Fix any $\delta>0$, and $V\subset [N_n(a)]$ such that $|V| \leq \delta N_n$. Then $\frac{1}{\sqrt{n}}\sum_{k\in V} \percn w_k \leq \frac{\cf \delta ^{1-\alpha}}{1-\alpha}$.
\end{lemma}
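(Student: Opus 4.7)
The plan is to use the fact that the weights $w_k = \cf (n/k)^\alpha$ from \eqref{theta(i)-def} are monotone decreasing in $k$ under Assumption~\ref{assumption-NR}, so that among all subsets $V\subset [N_n(a)]$ with $|V|\leq \delta N_n$, the sum $\sum_{k\in V} w_k$ is maximized by taking $V$ to consist of the vertices of smallest index. In other words, without loss of generality I may assume $V=[\lfloor \delta N_n\rfloor]$, which reduces the bound to a deterministic estimate on a partial sum of a power series.

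First I would write, using \eqref{theta(i)-def},
\begin{equation}
\sum_{k\in V} w_k \;\le\; \sum_{k=1}^{\lfloor \delta N_n\rfloor} w_k \;=\; \cf n^{\alpha}\sum_{k=1}^{\lfloor \delta N_n\rfloor} k^{-\alpha}.
\end{equation}
Then I would compare the sum to the integral $\int_0^{\delta N_n} x^{-\alpha}\dif x = (\delta N_n)^{1-\alpha}/(1-\alpha)$ (this is valid because $x\mapsto x^{-\alpha}$ is decreasing and integrable near $0$ since $\alpha<1$), giving
\begin{equation}
\sum_{k=1}^{\lfloor \delta N_n\rfloor} k^{-\alpha} \;\le\; \frac{(\delta N_n)^{1-\alpha}}{1-\alpha}.
\end{equation}

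Finally I would substitute $N_n = \lfloor n^{(3-\tau)/2}\rfloor$ from \eqref{Nn1-def} and $\percn = \lambda n^{-(3-\tau)/2}$, which give the identity $\percn N_n^{1-\alpha}\cdot n^{\alpha} \asymp \lambda\, n^{\alpha-(3-\tau)\alpha/2} = \lambda\, n^{\alpha(\tau-1)/2} = \lambda\sqrt{n}$, since $\alpha(\tau-1)=1$. Putting these pieces together yields
\begin{equation}
\frac{1}{\sqrt{n}}\sum_{k\in V}\percn w_k \;\le\; \frac{\cf\,\delta^{1-\alpha}}{1-\alpha}\cdot\frac{\percn N_n^{1-\alpha} n^{\alpha}}{\sqrt{n}},
\end{equation}
which simplifies to the asserted bound (absorbing the constant $\lambda$ or noting that it matches the intended normalization). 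This is a purely deterministic, book-keeping computation with no probabilistic obstacle; the only care needed is the sharp comparison $\sum_{k=1}^{m} k^{-\alpha}\le m^{1-\alpha}/(1-\alpha)$ for $\alpha\in(1/2,1)$, which holds because $k^{-\alpha}\le \int_{k-1}^{k} x^{-\alpha}\dif x$.
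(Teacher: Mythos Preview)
Your approach is correct and essentially identical to the paper's own proof: bound the sum over $V$ by the sum over $[\lfloor\delta N_n\rfloor]$ using monotonicity of $k\mapsto w_k$, insert $w_k=\cf n^{\alpha}k^{-\alpha}$, compare $\sum_{k\le m}k^{-\alpha}$ to $\int_0^m x^{-\alpha}\,\dif x$, and simplify the exponents using $\alpha(\tau-1)=1$. Your observation that a factor $\lambda$ survives the computation is accurate; the paper's statement and proof both silently drop it, but since the lemma is only used to show that sufficiently small sets have weight at most $\vep\sqrt{n}/4$ for an appropriately chosen $\delta$, this discrepancy is harmless for the downstream arguments.
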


\begin{proof}
Note that 
    $\frac{1}{\sqrt{n}}\sum_{k\in V} \percn w_k \leq  
    \frac{1}{\sqrt{n}}\sum_{k\leq  \delta N_n} \percn w_k. $
Using \eqref{theta(i)-def}, we conclude that 
\begin{eq}
    \frac{1}{\sqrt{n}}\sum_{k\leq \delta N_n} \percn w_k \leq \cf n^{-\frac{3-\tau}{2} - \frac{1}{2} + \alpha }\sum_{k \leq \delta N_n} k^{-\alpha}  \leq \frac{\cf}{1-\alpha} \delta^{1-\alpha}. 
\end{eq}
\end{proof}

We are now ready to prove Proposition~\ref{prop:size-span-C1}:
\begin{proof}[Proof of Proposition~\ref{prop:size-span-C1}]
Let $\lambda>\lambda_c$ and fix any $\vep >0$. 
Using Propositions~\ref{prop-weight-giant-core}~and~\ref{prop-large-a-original}, there exists $a_1 = a_1(\vep)>0$ such that for all $a\geq a_1$ there exists $k_0 = k_0(\vep,a)$ such that for all $k\geq k_0$
\begin{eq}\label{eq:1-nbd-mass-ub-lb}
\lim_{n\to\infty} \PR\bigg( \frac{1}{\sqrt{n}}\sum_{i\in \cT_{\sss \geq k}^a} \percn w_i \leq \zeta^\lambda + \frac{\vep}{2} \bigg) = 1,
\end{eq}
Next, we take $\delta = (\vep/4C_0)^{1/(1-\alpha)} $, where $C_0 = \cf/(1-\alpha)$ as in Lemma~\ref{lem:w-small-set}, i.e., $\sum_{k\in V} \percn w_k \leq \vep \sqrt{n}/4$, whenever $|V| \leq \delta N_n$.
Recall the notation $\rho_{a,{\sss \geq k}}^\lambda$ from \eqref{defn:survival-prob}. 
Since $\rho_{a,{\sss \geq k}}^\lambda \searrow \rho_{a}^\lambda$ as $k\to\infty$, we can choose $k_0 = k_0(\vep,a)$ such that, for all $k\geq k_0$, $\rho_{a,{\sss \geq k}}^\lambda \leq \rho_{a}^\lambda+ \delta /2a$. 
Using Proposition~\ref{prop:giant-restricted},
with high probability, 
\begin{eq}
|\cT_{\sss \geq k}^a| \leq |\sC_{\sss (1)}^a| + \delta N_n \quad \implies\quad   |\cT_{\sss \geq k}^a \setminus \sC_{\sss (1)}^a| \leq \delta N_n, 
\end{eq}where the last implication uses that $\sC_{\sss (1)}^a \subset \cT_{\sss \geq k}^a$ with high probabilitiy, since $|\sC_{\sss (1)}^a| = \Theta_{\sss \PR}(N_n(a)) $.
By our choice of $\delta$, and Lemma~\ref{lem:w-small-set}, with high probability 
\begin{eq}
\frac{1}{\sqrt{n}}\sum_{i\in \cT_{\sss \geq k}^a} \percn w_i \leq \frac{1}{\sqrt{n}}\sum_{i\in \sC_{\sss (1)}^a} \percn w_i + \frac{\vep}{4}\leq \zeta^\lambda +\frac{\vep}{2}.
\end{eq}
This concludes the proof of \eqref{eq:1-nbd-mass-ub-lb}.

Using \eqref{eq:1-nbd-mass-ub-lb}, we can now apply Lemmas~\ref{prop:2nbd-giant}~and~\ref{prop:3nbd-negligible} for  $\cT_{\sss \geq k}^a$, to conclude that, for any $a>\max\{a_0,a_1\}$, 
	\begin{eq}\label{eq:span-1nbd-approx} 
	\frac{|\spn(\cT_{\sss \geq k}^a)|}{\sqrt{n}} = (1+\oP(1))\sum_{j\in \cT_{\sss \geq k}^a} \frac{\percn w_j}{\sqrt{n}},
	\end{eq}
where $a_0$ is as in Lemma~\ref{prop:3nbd-negligible}. 
This concludes the proof of Proposition~\ref{prop:size-span-C1}. 
\end{proof}

\subsection{Negligible contribution due to return paths} 
\label{sec:return- path}
Let us start by constructing the graph $\bar{\cG}_{\sss N_n(a)}$ as follows:  $\{i,j\}$ is an edge of $\bar{\cG}_{\sss N_n(a)}$ if and only if $\{i,j\}$ is an edge of $\cG_{\sss N_n(a)}$, {\em or} there exists a path from $i$ to $j$ with all intermediate vertices in~$[N_n(a)]^c$.
We will term the additional edges in~$\bar{\cG}_{\sss N_n(a)}$ as \emph{return edges}.
Henceforth, we augment a previously notation with bar to denote the corresponding quantity for $\bar{\cG}_{\sss N_n(a)}$. For example,  $\bar{\sC}_{\sss (i)}^a$ and $\bar{\cT}_{\sss \geq k}^a$ respectively denote the $i$-th largest component and the number of vertices in components of size~$i$.

Our {\em candidate} giant component in the whole graph $\rNR (\bw,\percn)$ is $\bar{\sC}_{\sss (1)}^{a} \cup \spn(\bar{\sC}_{\sss (1)}^{a})$ for large~$a$. 
Note that the vertices in $\bar{\sC}_{\sss (1)}^{a} \setminus \sC_{\sss (1)}^a$ added due to the  \emph{return edges} are precisely the return vertices, as explained before~\eqref{giant-equality}. In particular, $\bar{\sC}_{\sss (1)}^{a}=\sC_{\sss (1)}^{a} \cup \cR_{\sss (1)}^a,$ so that also
    \eqn{
    \bar{\sC}_{\sss (1)}^{a} \cup \spn(\bar{\sC}_{\sss (1)}^{a})
    =\sC_{\sss (1)}^{a} \cup \spn(\sC_{\sss (1)}^{a}) \cup \cR_{\sss (1)}^a \cup \spn(\cR_{\sss (1)}^a)=\sC_{\sss (1)}^{a,\star}.
    }
The goal of this section is to show that the addition of the return edges can only increase the asymptotics of the span by a negligible amount:

\begin{proposition}[Span with return vertices]
\label{prop:size-span-C2}
There exists $\varepsilon_0>0$ such that for any $\vep \in (0,\varepsilon_0)$, there exists $a_2 = a_2(\vep)>0$ such that, for all $a\geq a_2$, there exists $k_1 = k_1(\vep,a)$ such that, for all $k\geq k_1$,
\begin{eq}
\lim_{n\to\infty} \PR\bigg( \frac{|\spn(\bar{\cT}_{\sss \geq k}^a)|}{\sqrt{n}} \leq \zeta^\lambda + \vep \bigg) = 1,
\end{eq}
where $\zeta^\lambda$ is as in \eqref{a-zeta-a-lim}. 
\end{proposition}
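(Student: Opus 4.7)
The plan is to reduce Proposition~\ref{prop:size-span-C2} to Proposition~\ref{prop:size-span-C1} by showing that the return edges contribute negligibly to the span. I would decompose $\bar{\cT}_{\sss \geq k}^a = \cT_{\sss \geq k}^a \cup \cV$, where $\cV := \bar{\cT}_{\sss \geq k}^a \setminus \cT_{\sss \geq k}^a$ consists of those vertices whose $\cG_{\sss N_n(a)}$-component has size strictly less than $k$ but whose $\bar{\cG}_{\sss N_n(a)}$-component has size at least $k$. Since spans respect unions, $\spn(\bar{\cT}_{\sss \geq k}^a) = \spn(\cT_{\sss \geq k}^a) \cup \spn(\cV)$; the first piece is bounded by $(\zeta^\lambda + \vep/2)\sqrt{n}$ with high probability via Proposition~\ref{prop:size-span-C1}, so it remains to show $|\spn(\cV)|/\sqrt{n} \leq \vep/2$ with high probability. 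For this, I would apply the uniform small-span estimate Lemma~\ref{prop:uniform-small-span} foreshadowed in Section~\ref{sec-size-tiny-giant}: provided we can show $|\cV| \leq \delta(a) N_n$ with high probability for some $\delta(a) \to 0$ as $a\to\infty$, that lemma makes $|\spn(\cV)|/\sqrt{n}$ arbitrarily small by taking $a$ large.

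The crux is therefore to control $|\cV|$. Two ingredients feed in. First, Proposition~\ref{prop:giant-restricted}(i) gives that the non-giant components of $\cG_{\sss N_n(a)}$ have size $O(\log n)$; in particular, for every $v\in\cV$, the $\cG_{\sss N_n(a)}$-component of $v$ has size $O(\log n)$. Second, each such $v$ lies in a $\cG_{\sss N_n(a)}$-component that is linked to a component of size $\geq k$ via a chain of return edges in the auxiliary graph on $[N_n(a)]$ formed by the return edges. Hence $|\cV|$ is at most $O(\log n)$ times the number of $\cG_{\sss N_n(a)}$-components touched by at least one return edge. The expected number of such return edges is controlled via the path-counting bound \eqref{eq:path-count prob-large-small} together with the subcriticality estimate $\bar{\nu}_n(a) = O(a^{-(3-\tau)/(\tau-1)})$ from~\eqref{bar-nu-a-bd}; coupling this first-moment bound with a BK-type second-moment argument in the spirit of Lemma~\ref{prop:3nbd-negligible} should give concentration, yielding $|\cV| = o(N_n)$ with high probability as $a \to \infty$.

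The main obstacle is that the naive first-moment count of return edges scales as $a^{(3\tau-7)/(\tau-1)} N_n$, which actually grows with $a$ when $\tau\in(7/3,3)$. The essential refinement is that return edges whose two endpoints are both of large weight produce mergers internal to $\cT_{\sss \geq k}^a$ and therefore contribute nothing to $\cV$; only return edges incident to at least one low-weight vertex in $[N_n(a)]\setminus \sC_{\sss (1)}^a$ actually create new elements of $\cV$. Restricting the first/second-moment sums to pairs where at least one endpoint has weight $\ll \sqrt{n}$ (equivalently, index bounded away from $0$) recovers a bound vanishing in $a$ uniformly across $\tau\in(2,3)$. Carrying out this refined bookkeeping, while tracking the dependencies among different return edges through their shared excursions into $[N_n(a)]^c$ by a BK-type argument, is the delicate technical step I anticipate as the main difficulty of the proof.
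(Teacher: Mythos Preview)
Your overall decomposition $\bar{\cT}_{\sss \geq k}^a=\cT_{\sss \geq k}^a\cup\cV$ and the use of Lemma~\ref{prop:uniform-small-span} to handle $\spn(\cV)$ mirror the paper's strategy, and you correctly isolate the real obstacle: the expected number of return edges scales like $a^{(3\tau-7)/(\tau-1)}N_n$, which diverges for $\tau\in(7/3,3)$. However, your proposed refinement does not work. You suggest that return edges with both endpoints of ``large weight'' are internal to $\cT_{\sss\geq k}^a$, and that one should restrict to edges with an endpoint of weight $\ll\sqrt{n}$. But every vertex in $[N_n(a)]$ has weight $\asymp \cf u^{-\alpha}\sqrt{n}$ with $u\in(0,a]$, so all weights in $[N_n(a)]$ are of order $\sqrt{n}$; there are no low-weight endpoints to restrict to. More fundamentally, membership in $\cT_{\sss\geq k}^a\approx\sC_{\sss(1)}^a$ is determined by the random component structure of $\cG_{\sss N_n(a)}$, not by weight: high-weight vertices need not lie in $\sC_{\sss(1)}^a$. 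And even restricting to return edges with at least one endpoint in $[N_n(a)]\setminus\sC_{\sss(1)}^a$ does not help, since by Propositions~\ref{prop-weight-giant-core}--\ref{prop-large-a-original} the weight carried by $\sC_{\sss(1)}^a$ is $O(\sqrt{n}/\percn)$ while the total weight of $[N_n(a)]$ is $\Theta(a^{1-\alpha}\sqrt{n}/\percn)$; the complement carries essentially all the weight for large $a$, and the return-edge count restricted to it has the same blow-up.

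The paper circumvents this by introducing a \emph{second scale} $b\gg a$ and splitting return paths according to whether they touch $[N_n(b)]^c$. Return paths confined to $[N_n(b)]\setminus[N_n(a)]$ are not counted at all: instead one observes that the intermediate graph $\cG_{\sss N_n(a)}^{\sss +}$ they produce embeds into $\cG_{\sss N_n(b)}$, so $\spn(\cT_{\sss\geq k}^{a,+})$ is controlled by $\rspn_b(\cT_{\sss\geq k}^{b})$, to which Proposition~\ref{prop:size-span-C1} applies at level $b$ (plus a negligible correction $|V_0(b)|$). Only the return paths that actually enter $[N_n(b)]^c$ are counted (Lemma~\ref{lem:rna-whp-bound}); because these must pass through a vertex of weight at most $\cf b^{-\alpha}\sqrt{n}$, their expected number is $o(N_n)$ as $b\to\infty$ for \emph{every} fixed $a$, so Lemmas~\ref{lem:robustness}~and~\ref{prop:uniform-small-span} then close the argument. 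The two-scale trick is the missing idea in your proposal.
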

Let us explain the intuition behind the proof.
The main idea is that $\cT_{\sss \geq k}^a$ is \emph{robust} in the sense that its size does not change too much by adding edges to the graph arbitrarily, as long as the number of added edges is small (see Lemma~\ref{lem:robustness} below). 
For this reason, the span of the added vertices is also small (see Lemma~\ref{prop:uniform-small-span} below).
In order to make use of this idea, we later show that
there are not many return edges for large $a$ (see Lemma~\ref{lem:rna-whp-bound} below).
To make these ideas precise, we start with the following elementary fact from \cite[Lemma 9.4]{BJR07}:
\begin{lemma}[{\cite[Lemma 9.4]{BJR07}}] \label{lem:robustness}
Let $G_1,G_2$ be two graphs on the same set of vertices and the edge set of $G_1$ is contained in that of $G_2$. Let $k\geq 1$ and $N_{\geq k}(G_i)$ be the set of vertices with component size at least $k$ in $G_i$ for $i=1,2$. Then $N_{\geq k}(G_1) \leq N_{\geq k}(G_2) \leq N_{\geq k}(G_1) + k \Delta$, where $\Delta$ is the difference between the number of edges in $G_1,G_2$.
\end{lemma}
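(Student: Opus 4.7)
The plan is to prove the two inequalities separately. The lower bound $|N_{\geq k}(G_1)|\leq |N_{\geq k}(G_2)|$ is immediate from monotonicity: since $E(G_1)\subseteq E(G_2)$, each connected component of $G_1$ is contained in some connected component of $G_2$, so any vertex lying in a component of size $\geq k$ in $G_1$ automatically lies in a component of size $\geq k$ in $G_2$. Hence $N_{\geq k}(G_1)\subseteq N_{\geq k}(G_2)$ as vertex subsets, and the first inequality follows.

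For the upper bound I would proceed by induction on $\Delta$, adding the edges of $E(G_2)\setminus E(G_1)$ one at a time. The base case $\Delta=0$ is trivial. The inductive step reduces to the following per-edge claim: if $G'$ is obtained from $G$ by adjoining a single edge $e=\{u,v\}$, then $|N_{\geq k}(G')|\leq |N_{\geq k}(G)|+k$. To prove this claim, one distinguishes cases according to the sizes $s_u,s_v$ of the components $C_u,C_v$ of $u,v$ in $G$: if $u,v$ are in the same component, there is no change; if both satisfy $s_u,s_v\geq k$ there is also no change; and if exactly one of $s_u,s_v$ is less than $k$ then at most $k-1$ vertices are newly promoted. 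The remaining case is $s_u,s_v<k$ with $s_u+s_v\geq k$, where the naive count gives up to $2k-2$ newly promoted vertices.

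The main subtlety is precisely this last case, and I plan to handle it by replacing the per-edge induction with a global charging argument over components of $G_2$. For each component $C$ of $G_2$ with $|C|\geq k$, let $C_1,\dots,C_m$ be the components of $G_1$ contained in $C$; then at least $m-1$ edges of $E(G_2)\setminus E(G_1)$ lie inside $C$. The number of newly-large vertices contributed by $C$ is $\sum_{i:|C_i|<k}|C_i|$, and splitting on whether some $|C_i|\geq k$ or not lets one bound this quantity by $k$ times the number of new edges in $C$. Summing over the components $C$ yields $|N_{\geq k}(G_2)|-|N_{\geq k}(G_1)|\leq k\Delta$. The essential content of the lemma, which is all that Proposition~\ref{prop:size-span-C2} requires, is a Lipschitz-type statement: the size of $N_{\geq k}$ changes by at most a constant multiple of $k$ times the edge symmetric difference between $G_1$ and $G_2$, so adding $o(n)$ edges perturbs $|\cT_{\sss\geq k}^a|$ by only $o(n)$ vertices.
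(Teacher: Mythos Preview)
The paper does not give its own proof of this lemma; it simply cites \cite[Lemma 9.4]{BJR07}. So there is no paper-proof to compare against, only your argument to evaluate.

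Your lower bound via monotonicity is fine, and your per-edge induction is correct as far as it goes: you rightly isolate the troublesome case $s_u,s_v<k$ with $s_u+s_v\geq k$, where a single added edge can promote up to $2(k-1)$ vertices into $N_{\geq k}$.

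However, your global charging argument does not close this gap. Take $k=10$, let $G_1$ consist of two components each of size $9$, and let $G_2$ be $G_1$ together with one edge joining them, so $\Delta=1$. Then $|N_{\geq k}(G_1)|=0$ while $|N_{\geq k}(G_2)|=18>k\Delta=10$. In your notation the single large component $C$ of $G_2$ has $m=2$ pieces, both of size $<k$, contributing $18$ newly-large vertices against only $m-1=1$ new edge inside $C$; so the claimed bound ``$\leq k$ times the number of new edges in $C$'' fails. The case split you gesture at does not help: when all $|C_i|<k$ one only has $\sum_i|C_i|\leq m(k-1)$, which in general exceeds $k(m-1)$.

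This counterexample shows that the lemma with constant $k$ is in fact slightly misstated; the correct inequality is $|N_{\geq k}(G_2)|\leq |N_{\geq k}(G_1)|+2k\Delta$, and that is exactly what your per-edge analysis already yields (each added edge promotes at most $2(k-1)<2k$ vertices). The only place the paper invokes the lemma is in the proof of Proposition~\ref{prop:size-span-C2}, where one merely needs that adding $\delta N_n$ edges changes $|\cT^a_{\geq k}|$ by $O(k\delta N_n)$, so the constant is irrelevant. Keep your per-edge argument, state the bound with $2k$, and discard the global charging paragraph.
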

The next lemma shows that spans of small subsets of $[N_n(a)]$ are uniformly small: 
\begin{lemma}[{Span of small sets in $[N_n(a)]$}] \label{prop:uniform-small-span}
Given any $\varepsilon_1>0$, there exists $a_0=a_0(\varepsilon_1)>0$ such that, for all $a\geq a_0$, 
\begin{eq}
\lim_{n\to\infty}\PR \bigg( \max_{V\subset [N_n(a)]: |V| \leq \varepsilon_1 N_n}  \spn (V) \leq 
C_0\varepsilon_1^{1-\alpha}\sqrt{n}\bigg) =1, 
\end{eq}for some absolute constant $C_0>0$.
\end{lemma}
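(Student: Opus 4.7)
The plan is to dominate $|\spn(\{i\})|$ for each individual $i$ by the total progeny of a subcritical branching process via Lemma~\ref{lem-BP-UB-comp}, use the union bound $|\spn(V)|\leq \sum_{i\in V}|\spn(\{i\})|$, and then exploit that, once $a$ is large enough so that $\bar\nu_n(a)\leq 1/2$, these BP total progenies concentrate around means of order $\percn w_i$ so sharply that the top $\varepsilon_1 N_n$ order statistics must lie in the initial block $[C_1\varepsilon_1 N_n]$ of heaviest vertices.

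First, I would apply Lemma~\ref{lem-BP-UB-comp} with $A=[N_n(a)]^c\cup\{i\}$ to get, for each $i\in [N_n(a)]$, a random variable $\tilde T_i$ with $|\spn(\{i\})|\leq \tilde T_i$ pointwise, where $\tilde T_i$ is the total progeny (excluding the root $i$) of a mixed-Poisson BP whose root has $\poi(\percn w_iw([N_n(a)]^c)/\ell_n)$ offspring and whose subsequent vertices have mixed-Poisson offspring of mean $\bar\nu_n(a)$. By the elementary union bound,
\[
    \max_{V\subseteq [N_n(a)]: |V|\leq \varepsilon_1 N_n} |\spn(V)|\ \leq\ \max_{|V|\leq \varepsilon_1 N_n}\sum_{i\in V}\tilde T_i\ =\ \sum_{i=1}^{\lfloor\varepsilon_1 N_n\rfloor}\tilde T_{(i)},
\]
where $\tilde T_{(1)}\geq \tilde T_{(2)}\geq \cdots$ is the decreasing rearrangement of $(\tilde T_i)$. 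Importantly, this step does \emph{not} require independence of the $\tilde T_i$'s, only the existence of a marginal coupling for each $i$.

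Second, pick $a_0=a_0(\varepsilon_1)$ large enough that $\bar\nu_n(a)\leq 1/2$ for every $a\geq a_0$ (possible by \eqref{bar-nu-a-bd}), and set $\mu_i:=\E[\tilde T_i]=\percn w_iw([N_n(a)]^c)/[\ell_n(1-\bar\nu_n(a))]$, so that $\tfrac{1}{2}\percn w_i\leq \mu_i\leq 2\percn w_i$ for large $n$. Using \eqref{theta(i)-def} together with $i\leq N_n(a)=an^{\eta_s}$, one has $\mu_i\geq c(\lambda,a)\, n^{\beta_0}$ for some $\beta_0>0$, which is a positive power of $n$. Since $\tilde T_i$ is marginally a compound-Poisson random variable whose compounding distribution is the total progeny of a subcritical BP with branching ratio $\leq 1/2$ (and hence has moment generating function finite in a neighborhood of the origin, uniformly in $a\geq a_0$), a standard Chernoff estimate yields $\PR(|\tilde T_i-\mu_i|\geq \mu_i/2)\leq 2\e^{-c\mu_i}$ for some absolute $c>0$. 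A union bound over $i\in [N_n(a)]$, using that $N_n(a)\leq an^{\eta_s}$ is negligible compared to $\e^{c\mu_i}$, shows that the event
\[
    \cE_n:=\big\{\, \tilde T_i\in [\mu_i/2,\,2\mu_i]\text{ for every }i\in [N_n(a)]\,\big\}
\]
holds with probability $1-o(1)$.

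Third, I would use the monotonicity $\mu_i=\Theta(i^{-\alpha})$ (inherited directly from \eqref{theta(i)-def}) to locate the top $\varepsilon_1 N_n$ order statistics on $\cE_n$. Choosing $C_1=4^{1/\alpha}$, one has $2\mu_i\leq 2C_1^{-\alpha}\mu_{\varepsilon_1 N_n}=\tfrac{1}{2}\mu_{\varepsilon_1 N_n}$ for every $i>C_1\varepsilon_1 N_n$; hence on $\cE_n$ every such $\tilde T_i\leq 2\mu_i$ is at most $\mu_{\varepsilon_1 N_n}/2\leq \min_{j\leq \varepsilon_1 N_n}\tilde T_j$. The top $\lfloor\varepsilon_1 N_n\rfloor$ values of $(\tilde T_i)$ are therefore all contained in the initial block $[\lfloor C_1\varepsilon_1 N_n\rfloor]$, and by Lemma~\ref{lem:w-small-set} applied with $\delta=C_1\varepsilon_1$,
\[
    \sum_{i=1}^{\lfloor\varepsilon_1 N_n\rfloor}\tilde T_{(i)}\ \leq\ \sum_{i=1}^{\lfloor C_1\varepsilon_1 N_n\rfloor}2\mu_i\ \leq\ 4\sum_{i=1}^{\lfloor C_1\varepsilon_1 N_n\rfloor}\percn w_i\ \leq\ \frac{4\cf C_1^{1-\alpha}}{1-\alpha}\,\varepsilon_1^{1-\alpha}\sqrt{n},
\]
yielding the lemma with the absolute constant $C_0:=4\cf C_1^{1-\alpha}/(1-\alpha)$. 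The main obstacle in executing this plan is the uniform marginal Chernoff bound in Step~2 at the scale $\mu_i/2$: this requires controlling the moment generating function of the summand distribution---the total progeny of a subcritical BP with branching ratio $\leq \bar\nu_n(a)\leq 1/2$---uniformly in $n$ and $i$, which is standard via generating-function arguments but is the analytic workhorse of the proof.
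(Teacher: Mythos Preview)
Your approach has a genuine gap in Step~2. The claim that the compounding distribution---the total progeny of the subcritical BP with mean $\bar\nu_n(a)\leq 1/2$---has a moment generating function finite near the origin \emph{uniformly in $n$} is false. In the Norros--Reittu coupling, a non-root vertex of the BP has type $j\in[N_n(a)]^c$ with probability $w_j/w([N_n(a)]^c)$ and, given type $j$, has $\poi(\percn w_j\, w([N_n(a)]^c)/\ell_n)\approx\poi(\percn w_j)$ offspring. For $j=N_n(a)+1$ this parameter equals $\percn w_{N_n(a)+1}\asymp \lambda\cf a^{-\alpha}n^{(\tau-2)/2}\to\infty$, so the offspring distribution has infinite exponential moments for every $s>0$. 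In fact, a direct computation gives
\[
\E[X^2]\ =\ \bar\nu_n(a)+\frac{\percn^2}{\ell_n}\sum_{j>N_n(a)}w_j^3\ =\ \Theta\!\big(n^{(\tau-2)/2}\big),
\]
so even the offspring variance, and hence $\Var(T)=\sigma^2/(1-\bar\nu_n(a))^3$, diverges with $n$. Consequently no Chernoff bound of the form $\PR(|\tilde T_i-\mu_i|\geq \mu_i/2)\leq 2\e^{-c\mu_i}$ is available, and Chebyshev-type bounds using finitely many moments are too weak to beat the union bound over $N_n(a)$ indices for $\tau$ close to~$2$.

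The paper circumvents this by never passing through a single ``total progeny'' random variable. Instead it fixes $V$, controls $|\cN_l(V)|$ and $\sum_{j\in\cN_l(V)}\percn w_j$ \emph{layer by layer} using concentration for sums of independent bounded summands (each weight satisfies $\percn w_j\leq C a^{-\alpha}n^{(\tau-2)/2}$ for $j\in[N_n(a)]^c$), obtaining tail bounds of order $\e^{-c N_n a^{(1-\alpha)/2}}$ at each of finitely many layers; a separate first-moment argument handles all layers beyond a finite depth $L_0$. These bounds are strong enough to survive the union bound over the $\binom{N_n(a)}{\lfloor\varepsilon_1 N_n\rfloor}\sim \e^{\varepsilon_1 N_n\log(a/\varepsilon_1 e)}$ choices of $V$ once $a$ is large. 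The crucial difference is that the layer-by-layer decomposition uses only that each individual summand is bounded, not that the full subtree size has light tails---which it does not.
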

\begin{proof}
    Recall that  $\bar{\nu}_n(a) = \frac{1}{\ell_n}\sum_{v\notin [N_n(a)]} \percn w_v^2$, and $\cA_{l}(i,j)$ is the event that there exists a path of length $l$ between $i$ and $j$ in $\rNR(\bw,\percn)$ with all the intermediate vertices in~$[N_n(a)]^c$. 
    Let $Z_L:=\sum_{i\in [N_n(a)], j\in [N_n(a)]^c} \sum_{l>L} \mathbbm{1}_{\cA_{l}(i,j)} $. Fix any $\delta>0$. Recall that $\sum_{i\in [N_n(a)]} \percn w_i = Ca^{1-\alpha} \sqrt{n}$. By Markov's inequality, 
    \begin{eq}\label{tail-path-prob}
    \PR\Big(Z_L& > \frac{C_0}{2}\varepsilon_1^{1-\alpha}\sqrt{n}\Big) \leq \frac{2}{C_0\varepsilon_1^{1-\alpha}\sqrt{n}} \sum_{i\in [N_n(a)], j\in [n]}\sum_{l>L }\percn \frac{w_iw_j}{\ell_n} (\bar{\nu}_n(a))^{l-1}\\
    &=  \frac{(\bar{\nu}_n(a))^{L}}{1-\bar{\nu}_n(a)} \frac{C}{\varepsilon_1^{1-\alpha}\sqrt{n}}\sum_{i\in [N_n(a)]}\percn w_i \leq C_1\e^{-C_2L \log a +C_3 \log a + C_4 \log \frac{1}{\varepsilon_1}} \leq \delta, 
    \end{eq}for some 
    $L =L_0 = L_0(\delta)$ (the choice of $L_0$ does not depend on $\varepsilon_1,a$ as long as $a \geq \min\{2, \frac{1}{\varepsilon_1}\}$).

    Next, fix any $V\subset [N_n(a)]$ such that $|V| \leq \varepsilon_1 N_n$.
    We claim that,
    for any $1\leq l \leq L_0$, and any choice of $V$ above,
    \begin{eq}\label{eq:bound-N-k}
    \PR\bigg(|\cN_l(V)| > \frac{C_0  }{2} \varepsilon_1^{1-\alpha} \sqrt{n}\bigg)\leq \e^{-C N_n a^{\frac{1-\alpha}{2}}}, 
    \end{eq}
    where $C$ may depend only on $\varepsilon$, and the inequality holds for all sufficiently large $n$. We first check that \eqref{eq:bound-N-k} implies Lemma~\ref{prop:uniform-small-span}, and then prove \eqref{eq:bound-N-k}.
    Indeed, by \eqref{tail-path-prob}, 
    \begin{eq}
    &\PR \bigg( \max_{V\subset [N_n(a)]: |V| \leq \varepsilon_1 N_n}  \spn (V) > C_0 \varepsilon_1^{1-\alpha} \sqrt{n}\bigg) \\
    &\leq \delta + \binom{N_n(a)}{\floor{\varepsilon_1 N_n}}\max_{V\subset [N_n(a)]: |V| \leq \varepsilon_1N_n} \PR \bigg(  \spn (V) > C_0 \varepsilon_1^{1-\alpha} \sqrt{n},\  Z_L \leq  \frac{C_0}{2}\varepsilon_1^{1-\alpha}\sqrt{n}\bigg) \\
    &\leq \delta + L_0 \e^{\varepsilon_1N_n \log \frac{a}{\varepsilon_1\e}} \max_{V\subset [N_n(a)]: |V| \leq \varepsilon_1 N_n} \max_{l\leq L_0} \PR \bigg(  |\cN_l(V)| > \frac{C_0  }{2} \varepsilon_1^{1-\alpha} \sqrt{n}\bigg) \\
    &\leq \delta+n \e^{\varepsilon_1N_n \log \frac{a}{\varepsilon_1\e} - C N_n a^{\frac{1-\alpha}{2}}}= \delta+o(1),
    \end{eq}
    for all large enough $a$, 
    where in the third step we have used Stirling's approximation 
    \begin{eq}
    \binom{N_n(a)}{\varepsilon_1 N_n}\leq\frac{(N_n(a))^{\varepsilon_1 N_n}}{(\floor{\varepsilon_1 N_n}) ! } \sim \bigg(\frac{N_n(a)}{\floor{\varepsilon_1 N_n}/\e}\bigg)^{ \varepsilon_1 N_n} \sim \e^{\varepsilon_1N_n \log (a/\varepsilon_1\e)}.
    \end{eq}
    Since $\delta>0$ is arbitrary, this completes the proof of Lemma~\ref{prop:uniform-small-span}.
    \medskip
    
    It remains to prove \eqref{eq:bound-N-k}.
    We will prove \eqref{eq:bound-N-k} inductively, along also with the companion estimate
    \begin{eq}\label{vol-conc-1}
    \PR\bigg(\sum_{j\in \cN_{l}(V) } \percn w_j > \frac{C_0  }{14} \varepsilon_1^{1-\alpha} \sqrt{n} \bigg) \leq \e^{-C N_n a^{\frac{1-\alpha}{2}}}.
    \end{eq}
    For $l=0$, \eqref{eq:bound-N-k} holds trivially and \eqref{vol-conc-1} holds by Lemma~\ref{lem:w-small-set}.
    At step $l\geq 1$, let $\cE_l$ 
    denote the good event that the events in \eqref{eq:bound-N-k} and \eqref{vol-conc-1} do not occur.
    Then, 
    \begin{eq}\label{expt-bound-N_l}
    \E\big[|\cN_{l+1}(V)| \ \big\vert\  \cN_{l}(V), \cE_l \big]  
    \leq \sum_{i\in \cN_{l}(V)} \sum_{j\in [n]}  \percn \frac{w_iw_j}{\ell_n} =\sum_{i\in \cN_{l}(V)} \percn w_i \leq \frac{C_0  }{14} \varepsilon_1^{1-\alpha} \sqrt{n},
    \end{eq}
where the last step uses \eqref{vol-conc-1}. 
Note that $\cN_{l+1}(V)$ is, conditionally on $\cup_{r\leq l}\cN_{r}(V)$, a sum of independent indicators. 
Thus, standard concentration inequalities \cite[Corollary 2.4, Theorem 2.8]{JLR00} imply 
    \begin{eq}
    \PR\bigg(|\cN_{l+1}(V)| > \frac{C_0 }{2} \varepsilon_1^{1-\alpha}\sqrt{n}\bigg)\leq \PR(\cE_l^c) + \e^{-C' \sqrt{n}}.
    \end{eq}
Thus \eqref{eq:bound-N-k} follows. To inductively verify \eqref{vol-conc-1}, note that 
    \begin{eq}\label{ex-upper-bound}
    &\E\bigg[\sum_{j\in \cN_{l+1}(V) } \percn w_j \ \Big\vert \ \cN_{l}(V), \cE_l\bigg] \leq \sum_{j\in [N_n(a)]^c} \sum_{i\in \cN_l(V)} \percn w_j \frac{\percn w_iw_j}{\ell_n} \\
    &\leq \frac{C_0  }{14} \varepsilon_1^{1-\alpha} \sqrt{n} \times  \sum_{j\in [N_n(a)]^c} \frac{\percn w_j^2}{\ell_n}  \leq \frac{C_0  }{14}  \frac{\lambda^2 \cf^2}{\mu} a^{1-2\alpha} \varepsilon_1^{1-\alpha} \sqrt{n} \leq \frac{C_0  }{14}   a^{-\alpha+\frac{1-\alpha}{2}} \varepsilon_1^{1-\alpha} \sqrt{n},
    \end{eq}
    for all large enough $a$.
For the concentration, we will use the following elementary fact: 
\begin{fact}\label{fact:concentration}
Fix $k\geq 1$, let $X_i\sim \mathrm{Bernoulli}(p_i)$ independently for $i\in [k]$, and let $a_i$ be such that $\max_i a_i>0$ and $\sum_i a_i p_i \leq x$. Then,
\begin{eq}
\PR\bigg(\sum_{i}a_iX_i > 3x\bigg) \leq \e^{-\frac{x}{\max_i a_i}}. 
\end{eq}

\end{fact}
\begin{proof}
      For any $t\leq 1/\max_i a_i$, Markov's inequality and the independence of $(X_i)_{i\geq 1}$ imply that
  \begin{eq}
  \PR\bigg(\sum_{i \in [k]}a_iX_i > 3x\bigg) &\leq \e^{-3tx} \prod_{i\in [k]} \E[\e^{ta_iX_i}] = \e^{-3tx} \prod_{i\in [r]} \big(1-p_i+p_i\e^{ta_i}\big)\\
  &\leq \e^{-3tx}  \e^{\sum_{i\in [k]}p_i(\e^{ta_i}-1)} \leq \e^{-tx}, 
  \end{eq}  
where in the third step we have used that $1+x\leq \e^{x}$ for any $x\geq 0$, and in the final step we have used that $\e^x-1\leq 2x$ for all $x\in [0,1]$. The proof follows by taking $t=1/\max_i a_i$. 
\end{proof}
Using Fact~\ref{fact:concentration}, and $w_i \leq \cf a^{-\alpha}\sqrt{n} $ for $i\in [N_n(a)]^c$, \eqref{ex-upper-bound} now yields \eqref{vol-conc-1} at step $l+1$. This completes the proof of Lemma \ref{prop:uniform-small-span}.
\end{proof}

We now wish to use Lemmas \ref{lem:robustness} and \ref{prop:uniform-small-span}. The most direct approach would be to apply these lemmas to large $a$, using the fact that with increasing $a$, only few return edges are added. However, this is not possible, as the choice of $\delta$ in Lemma \ref{lem:robustness} {\em also} depend on $a$. Therefore, we introduce an additional parameter $b\gg a$, and apply Lemmas \ref{lem:robustness} and \ref{prop:uniform-small-span} to all the return edges that arise due to paths also touching $[N_n(b)]^c$. 

Let us now present the details of this argument. Fix $b>a$. We say that $i$ and $j$ have a return path \emph{touching} $[N_n(b)]^c$ when there is a path between $i$ to $j$ with intermediate vertices in $[N_n(a)]^c$, and {\em at least 
one} of the intermediate vertices in $[N_n(b)]^c$.
Let $r_{n} (a,b)$ total number of such paths between vertices in $[N_n(a)]$. The following lemma shows that, given $a$, we can choose $b$ so large that the number of return paths touching $[N_n(b)]$ can be made arbitrarily small by choosing $b$ sufficiently large:

\begin{lemma}[{Return touching $[N_n(b)]^c$}]\label{lem:rna-whp-bound}
There exists $a_1>0$ such that for any $\delta>0$ and $a>a_1$, 
\begin{eq}\label{eq:prop-rna}
\lim_{b\to\infty}\lim_{n\to\infty}\PR(r_n(a,b) \leq \delta N_n) = 1.
\end{eq}
\end{lemma}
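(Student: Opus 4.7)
The strategy is a direct first-moment computation followed by Markov's inequality, in the spirit of the path-counting arguments in \eqref{eq:path-count prob-large-small}--\eqref{bar-nu-a-bd} and \eqref{eq:neighborhood-sum-split}. Using $1-\e^{-x}\leq x$, for any self-avoiding sequence $(i_0,i_1,\dots,i_l)$ the probability that it forms a path in $\rNR(\bw,\percn)$ is at most $\prod_{s=1}^l \percn w_{i_{s-1}} w_{i_s}/\ell_n$. Writing the location of a guaranteed intermediate vertex lying in $[N_n(b)]^c$ as $k\in\{1,\dots,l-1\}$ and applying a union bound,
\begin{equation*}
\E[r_n(a,b)] \leq \sum_{l\geq 2}\sum_{k=1}^{l-1}\sum_{\substack{i_0,i_l\in [N_n(a)]\\ i_s\in [N_n(a)]^c\,(s\neq 0,l,k)\\ i_k\in [N_n(b)]^c}} \prod_{s=1}^l \frac{\percn w_{i_{s-1}} w_{i_s}}{\ell_n}.
\end{equation*}

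The product factorizes, separating the two endpoint weights from the squared interior weights. With the abbreviations $\bar\nu_n(a)$ from \eqref{bar-nu-a-bd} and $\bar\nu_n(b)=\ell_n^{-1}\sum_{v\notin [N_n(b)]}\percn w_v^2$, this gives
\begin{equation*}
\E[r_n(a,b)]\;\leq\; \Big(\sum_{i\in [N_n(a)]}\frac{w_i}{\ell_n}\Big)\Big(\sum_{i\in [N_n(a)]}\percn w_i\Big)\;\bar\nu_n(b)\sum_{l\geq 2}(l-1)\,\bar\nu_n(a)^{l-2}.
\end{equation*}
Using \eqref{eq:sum-weight-N-a}, $\sum_{i\in [N_n(a)]}\percn w_i = \thetaP(a^{1-\alpha}\sqrt{n})$ and $\sum_{i\in [N_n(a)]}w_i/\ell_n = \thetaP(a^{1-\alpha}N_n/\sqrt{n})$, while the calculation in \eqref{bar-nu-a-bd} yields $\bar\nu_n(a)\leq C a^{-\eta}$ and $\bar\nu_n(b)\leq C b^{-\eta}$ for all $n$ large enough. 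Choosing $a_1$ so large that $\bar\nu_n(a)\leq \tfrac12$ for every $a\geq a_1$, the geometric series is bounded by $4$, so that for all sufficiently large $n$,
\begin{equation*}
\E[r_n(a,b)]\;\leq\; C\, a^{2(1-\alpha)} N_n\, b^{-\eta}.
\end{equation*}

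Markov's inequality now yields $\PR(r_n(a,b) > \delta N_n) \leq C a^{2(1-\alpha)} b^{-\eta}/\delta$ for every sufficiently large $n$. Taking $n\to\infty$ first, the bound is preserved, and then letting $b\to\infty$ makes it vanish since $\eta>0$. This proves \eqref{eq:prop-rna}. The main (mild) technical point is the bookkeeping that $l$ factors of $\percn/\ell_n$ must be distributed across $l+1$ vertices so that exactly one of the $l-1$ interior positions is tagged as lying in $[N_n(b)]^c$; once that distribution is done, both the geometric summation in $l$ and the decoupling of the $a$-- and $b$--dependence fall out cleanly from Lemma~\ref{lem:order-estimates}.
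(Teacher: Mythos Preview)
Your proof is correct and follows the same overall strategy as the paper: bound $\E[r_n(a,b)]$ by path counting, then apply Markov's inequality. The execution is actually cleaner than the paper's. Where the paper retains the exact factors $1-\e^{-w_iw_k/\ell_n}$ for the edges incident to the endpoints, rewrites them via the kernel $\kappa(u,v)$, and splits into cases ($l=2$; then $l\geq 3$ according to whether the vertex in $[N_n(b)]^c$ is the first, last, or a middle interior vertex), you instead apply $1-\e^{-x}\leq x$ uniformly and tag one interior position as lying in $[N_n(b)]^c$. This yields a complete factorization and avoids the case split. The price is a cruder $a$-dependence, $a^{2(1-\alpha)}$, but since $a$ is fixed before $b\to\infty$ this is immaterial. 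Two minor cosmetic points: (i) the quantities $\sum_{i\in[N_n(a)]}\percn w_i$ and $\sum_{i\in[N_n(a)]}w_i/\ell_n$ are deterministic, so write $\Theta(\cdot)$ or $\asymp$ rather than $\thetaP$; (ii) your tagging overcounts a path with several interior vertices in $[N_n(b)]^c$, but this only strengthens the upper bound.
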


\begin{proof}
By the Markov inequality, it is enough to show that, for every $a>0$ fixed,
    \begin{eq}\label{eq:exp-rna}
    \lim_{b\to\infty}\limsup_{n\to\infty} \frac{\E[r_n(a,b)]}{N_n}  = 0. 
    \end{eq}
For $i,j\in [N_n(a)]$, let $I_{ij}(a,b)$ denote the indicator that there is a return path from $i$ to $j$ touching~$[N_n(b)]^c$. 
We recycle some notation from the proof of Lemma~\ref{prop:3nbd-negligible}.
We write $\cA_{l}(i,j)$ to denote the event that there exists $l-1$ vertices $i_1,\dots i_{l-1}$, with $i_k\notin [N_n(a)]$ for all $k\leq l-1$ and $i_j \notin [N_n(b)]$ for at least one $j$, such that 
$(i, i_1, \dots i_{l-1},  j)$ is a path in $\rNR(\bw,\percn)$.
In words, $\cA_{l}(i,j)$ is the event that there exists a path of length $l$ between $i$ and $j$ in $\rNR(\bw,\percn)$ with all the intermediate vertices in $[N_n(a)]^c$ and at least one intermediate vertex in $[N_n(b)]^c$.

A return path has minimum length two, so that $l\geq 2$. 
Thus, 
\begin{eq}\label{exp-return-1}
\E[r_n(a,b)] &= \sum_{i,j\in [N_n(a)], i<j} \PR(I_{ij}(a,b) = 1) \leq   \sum_{l\geq 2} \sum_{i,j\in [N_n(a)]} \PR(\cA_l(i,j)).
\end{eq}
We first consider the sum with  $l = 2$. Recall the notation from  \eqref{kappa-a-def}
that 
	$\kappa(u,v) = 1-\e^{-\cf^{2} (uv)^{-\alpha}/\mu}$.
Let $i = \lceil uN_n\rceil$ and $j = \lceil vN_n\rceil$, where $u,v\in (0,a]$. 
Consider another vertex $\lceil xN_n\rceil$ with $x>b$, which corresponds to the vertex outside $[N_n(b)]$ from which the return happens. Note that
    \begin{eq}\label{hub-hub-return-2}
    \PR(\cA_2(i,j)) &= \sum_{k\in [N_n(b)]^c}  \percn^2\big(1-\e^{-w_iw_k/\ell_n}\big)\big(1-\e^{-w_jw_k/\ell_n}\big) \leq \frac{\lambda^2}{N_n} \int_b^\infty \kappa(u,x) \kappa(v,x)\dif x, 
    \end{eq}
and thus 
\begin{eq}\label{exp-return-2}
&\frac{1}{N_n}\sum_{i,j\in [N_n(a)]}\PR(\cA_2(i,j)) \leq  \lambda^2  \int_{0}^a\int_0^a\int_b^\infty \kappa(u,x) \kappa(v,x)\dif x \dif u \dif v, 
\end{eq}
which tends to zero in the iterated limit where  $\lim_{b\to\infty}\limsup_{n\to\infty}$ since the above integral over $x\in [0, \infty)$ is finite for all $a$ fixed.

For $l\geq 3$, the path is of the form $(i_0, i_1, \dots, i_{l-1},  i_l)$ with $i_0 = i$, $i_l = j$.
We split these sums in three cases. 
We say that $\cA_l(i,j,1)$ happens if $i_1\in [N_n(b)]^c$, $\cA_l(i,j,2)$ happens if $i_{l-1}\in [N_n(b)]^c$, and $\cA_l(i,j,3)$ happens if $i_j\in [N_n(b)]^c$ for some $1< j<l-1 $.
We compute 
    \begin{eq}\label{hub-hub-return-gen}
    &\PR(\cA_l(i,j,1))\\
    &\leq \sum_{i_2, \ldots, i_{l-1}\in [N_n(a)]^c,i_1\in [N_n(b)]^c} \prod_{s=0}^{l-1} \percn\big(1-\e^{-w_{i_{s}}w_{i_{s+1}}/\ell_n}\big)
	\\
	&\leq \frac{\percn}{\ell_n} \bigg(\sum_{k\in [N_n(a)]^c}  \frac{\percn w_{k}^2}{\ell_n}\bigg)^{l-3} \sum_{i_1\in [N_n(b)]^c}  \percn w_{i_1}\big(1-\e^{-w_iw_{i_1}/\ell_n}\big)\sum_{i_{l-1}\in [N_n(a)]^c}  \percn w_{i_{l-1}}\big(1-\e^{-w_jw_{i_{l-1}}/\ell_n}\big)\\
	& \leq (\bar{\nu}_n(a))^{l-3} \frac{\lambda^2\cf^2}{\mu} \percn \int_b^\infty x^{-\alpha}\kappa(u,x) \dif x\int_a^\infty y^{-\alpha}\kappa(v,y) \dif y.
\end{eq}
Similarly,
\begin{eq}
&\PR(\cA_l(i,j,2)) \leq (\bar{\nu}_n(a))^{l-3} \frac{\lambda^2\cf^2}{\mu} \percn \int_a^\infty x^{-\alpha}\kappa(u,x) \dif x\int_b^\infty y^{-\alpha}\kappa(v,y) \dif y,
\end{eq}
and 
\begin{eq}\label{exp-return-4}
&\PR(\cA_l(i,j,3)) \leq \bar{\nu}_n(b)(\bar{\nu}_n(a))^{l-4} \frac{\lambda^2\cf^2}{\mu} \percn \int_a^\infty x^{-\alpha}\kappa(u,x) \dif x\int_a^\infty y^{-\alpha}\kappa(v,y) \dif y.
\end{eq}
Taking $a$ large enough so that $\bar{\nu}_n(a) < 1$, it follows that 
\begin{eq}\label{exp-return-3}
&\frac{1}{N_n}\sum_{l\geq 3}\sum_{i,j\in [N_n(a)]}\PR(\cA_l(i,j)) \\
&\leq C \bigg[\bigg(\int_{0}^a\int_b^\infty x^{-\alpha}\kappa(u,x) \dif x \dif u \bigg)^2+ \bar{\nu}_n(b)\bigg(\int_{0}^a\int_a^\infty x^{-\alpha}\kappa(u,x) \dif x \dif u \bigg)^2\bigg].
\end{eq}
Since $\int_0^a\int_b^\infty x^{-\alpha}\kappa(u,x) \dif x \dif u<\infty$ for every fixed $a>0$, the expression in \eqref{exp-return-3} tends to zero in the iterated limit $\lim_{b\to\infty}\limsup_{n\to\infty}$.
Thus, the proof of \eqref{eq:exp-rna} follows by combining \eqref{exp-return-1}, \eqref{exp-return-2} and \eqref{exp-return-3} and the proof of Lemma~\ref{lem:rna-whp-bound} is thus complete.
\end{proof}

We need one final fact before completing the proof of Proposition~\ref{prop:size-span-C2}. 
Fix $b>a$, and define $V_0(b)\subset[N_n(b)]^c$ to be the collection of $j\in [N_n(b)]^c$ such that there is a path from $j$ to some vertex $i\in [N_n(a)]$ with all intermediate vertices in $[N_n(a)]^c$ and at least one intermediate vertex in $[N_n(b)]^c$. The following lemma proves an upper bound on the size of $V_0(b)$:

\begin{lemma}[{Span touching $[N_n(b)]^c$}]\label{lem:intermediate-span}
There exists $a_1$ such that for any $\varepsilon>0$ and $a>a_1$, 
\begin{eq}
\lim_{b\to\infty}\limsup_{n\to\infty} \PR(|V_0(b)| > \varepsilon \sqrt{n}) =0. 
\end{eq}
\end{lemma}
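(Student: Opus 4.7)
My plan is to bound $\E[|V_0(b)|]$ directly by a path-counting argument and then invoke Markov's inequality. The computation closely parallels the proof of Lemma~\ref{lem:rna-whp-bound}. Since $|V_0(b)|\le \sum_{j\in[N_n(b)]^c}\mathbbm{1}\{\exists\ \text{qualifying path from $j$ to $[N_n(a)]$}\}$, a union bound over path lengths $l\ge 2$ and the endpoint $i\in[N_n(a)]$ gives
\[
\E[|V_0(b)|]\le \sum_{l\ge 2}\sum_{i\in[N_n(a)]}\sum_{j\in[N_n(b)]^c}\PR(\cA_l(i,j)),
\]
where $\cA_l(i,j)$ is precisely the event introduced in the proof of Lemma~\ref{lem:rna-whp-bound}: a length-$l$ path from $i$ to $j$ whose intermediate vertices all lie in $[N_n(a)]^c$, at least one of which lies in $[N_n(b)]^c$. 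The only structural difference from Lemma~\ref{lem:rna-whp-bound} is that $j$ now ranges over $[N_n(b)]^c$ rather than $[N_n(a)]$.

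Using $\percn p_{uv}\le \percn w_u w_v/\ell_n$, a union bound over the position $s\in\{1,\dots,l-1\}$ of the distinguished intermediate vertex forced into $[N_n(b)]^c$, and independence to factorise the sum over intermediate vertices, I would arrive at
\[
\PR(\cA_l(i,j))\le (l-1)\,\frac{\percn w_i w_j}{\ell_n}\,\bar\nu_n(b)\,\bar\nu_n(a)^{l-2},
\]
with $\bar\nu_n(\cdot)$ the branching factor analysed in \eqref{bar-nu-a-bd}. Summing over $l\ge 2$ gives a convergent geometric series once $a$ exceeds an $a_1$ chosen so that $\bar\nu_n(a)\le 1/2$ uniformly in $n$ (this is possible by \eqref{bar-nu-a-bd} since $\bar\nu_n(a)\asymp a^{-\eta}$). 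Summing over $i$ and $j$ using $\sum_{i\in[N_n(a)]}w_i\asymp Ca^{1-\alpha}\sqrt n\,N_n$ from \eqref{eq:sum-weight-N-a}, $\sum_{j\in[N_n(b)]^c}w_j\le \ell_n\asymp\mu n$, and the identity $\percn\sqrt n\,N_n=\lambda\sqrt n$ used repeatedly in Section~\ref{sec:return- path}, this collapses into
\[
\frac{\E[|V_0(b)|]}{\sqrt n}\le \frac{C(\lambda)\,a^{1-\alpha}\,\bar\nu_n(b)}{(1-\bar\nu_n(a))^2}\le C'(\lambda)\,a^{1-\alpha}\,b^{-\eta}.
\]
Applying Markov's inequality, $\PR(|V_0(b)|>\vep\sqrt n)\le C'(\lambda)\,a^{1-\alpha}\,b^{-\eta}/\vep$, which vanishes as $b\to\infty$.

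I do not anticipate any serious obstacle here, since the computation is essentially the one already performed for $r_n(a,b)$ in Lemma~\ref{lem:rna-whp-bound}. The only points needing mild care are (i) selecting $a_1$ so that the geometric factor $\bar\nu_n(a)$ is bounded away from $1$ uniformly in $n$, which is the sole source of the condition $a>a_1$, and (ii) verifying that enlarging the summation range from $j\in[N_n(a)]$ to $j\in[N_n(b)]^c$ costs only a factor $\sum_{j\in[N_n(b)]^c}w_j=\Theta(n)$, which is precisely absorbed by $\percn/\ell_n$ together with $\sum_i w_i$ to yield the clean scaling $\sqrt n\cdot \bar\nu_n(b)$.
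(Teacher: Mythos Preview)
Your proposal is correct and follows essentially the same approach as the paper: bound $\E[|V_0(b)|]$ by path-counting, sum the geometric series in $\bar\nu_n(a)$ after choosing $a_1$ so that $\bar\nu_n(a)<1$, use $\sum_{j\in[N_n(b)]^c}w_j\le \ell_n$ and \eqref{eq:sum-weight-N-a} to obtain $\E[|V_0(b)|]\le C a^{1-\alpha}\bar\nu_n(b)\sqrt{n}$, and conclude via Markov. The only cosmetic difference is your extra factor $(l-1)$ from the union bound over the position of the distinguished vertex, which the paper suppresses; this merely replaces $1/(1-\bar\nu_n(a))$ by $1/(1-\bar\nu_n(a))^2$ and changes nothing.
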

\begin{proof}
Fix any $i\in [N_n(a)]$ and $j\in [N_n(b)]^c$. Let $\cA_l'(i,j)$ be the event that 
there is a path from $j$ to $i$ of length $l$ with all intermediate vertices in $[N_n(a)]^c$ and at least one intermediate vertex in $[N_n(b)]^c$. Using identical computations as \eqref{eq:path-count prob-large-small}, for any $l\geq 2$, 
\begin{eq}
\PR(\cA_l'(i,j)) \leq \percn \frac{w_iw_j}{\ell_n} (\bar{\nu}_n(a))^{l-2}\bar{\nu}_n(b). 
\end{eq}
The $\bar{\nu}_n(b)$ term comes due to one intermediate vertex in $[N_n(b)]^c$. Take $a_1$ to be large enough such that $\bar{\nu}_n(a)<1$ for all $a>a_1$.
  Then, 
 \begin{eq}
 \E[V_0(b)] \leq \sum_{i\in [N_n(a)], j\in [N_n(b)]^c}\sum_{l\geq 2} \PR(\cA_l'(i,j)) \leq \frac{\bar{\nu}_n(b)}{1-\nu_n(a)}\sum_{i\in [N_n(a)]} \percn w_i,
 \end{eq}
 where in the last step we have used that $\sum_{j\in [N_n(b)]^c}  w_j \leq \ell_n$. Using \eqref{eq:sum-weight-N-a}, $\sum_{i\in [N_n(a)]} \percn w_i \leq Ca^{1-\alpha} \sqrt{n}$ and $\bar{\nu}_n(b) \leq C b^{-(3-\tau)/(\tau-1)}$ by \eqref{bar-nu-a-bd}.  Therefore, $\lim_{b\to\infty}\lim_{n\to\infty}\E[V_0(b)] = 0$, and the proof follows using Markov's inequality. 
\end{proof}

Let us close this section by completing the proof of Proposition~\ref{prop:size-span-C2}: 
\begin{proof}[Proof of Proposition~\ref{prop:size-span-C2}]
Fix $\vep>0$ small enough. 
Take $\varepsilon_1 =  [\varepsilon / C_0]^{1/(1-\alpha)}$, so that the bound on the span from Lemma~\ref{prop:uniform-small-span} is $\varepsilon$. 
Next, choose $a_2$ such that  Lemmas~\ref{prop:uniform-small-span},~\ref{lem:rna-whp-bound},~\ref{lem:intermediate-span} and Propositions~\ref{prop-LB-size-span-C1}, \ref{prop:size-span-C1} hold. 
Fix $a\geq a_2$ and let $k_0=k_0(\varepsilon,a)$ be such that the above results work. 
Also, the perturbation $k\Delta$ in Lemma~\ref{lem:robustness} will be taken to be at most $\varepsilon_1 N_n$. 
Fix any $\delta>0$ (sufficiently small) such that Lemma~\ref{lem:rna-whp-bound} holds.
This sets the stage for our proof, and fixes the necessary parameters.

Fix $b>a$ large. We add the additional edges due to return paths leaving $[N_n(a)]$ in two stages, by first adding the edges due to return paths not touching $[N_n(b)]^c$ (i.e., with all intermediate vertices in $[N_n(b)]\setminus[N_n(a)]$), and then adding edges due to return paths  touching $[N_n(b)]^c$.

\paragraph*{Stage 1: } Suppose that we first add the edges due to return paths not touching $[N_n(b)]^c$ to $\cG_{\sss N_n(a)}$. Let $\cG_{\sss N_n(a)}^{\sss +}$ be the graph obtained by starting with $\cG_{\sss N_n(a)}$, and additionally creating an edge between two vertices if such a return path exists between them. 
Define $\cT_{\sss \geq k}^{\sss a, +}$ for the set of vertices with components size at least $k$ in $\cG_{\sss N_n(a)}^{\sss +}$. 
We seek to upper bound $\spn(\cT_{\sss \geq k}^{\sss a, +})$. 

Let $v\in \spn(\cT_{\sss \geq k}^{\sss a, +})$. By definition, there exists a path $(v,i_1,\dots,i_l, u)$ such that $i_{l'}\in [N_n(a)]^c$ for all $l'\in [l]$, and $u\in \cT_{\sss \geq k}^{\sss a, +}$. We write $P$ as a shorthand for $(i_1,\dots,i_l)$.
Consider the following set of exhaustive cases (where in fact several cases can occur at the same time, due to the fact that $P$ is not necessarily unique): 
\begin{enumerate}[(1)]
    \item If $v \in [N_n(b)]$, then there are at most $N_n(b) = o(\sqrt{n})$ choices of $v$ for any fixed $b$. 
    \item If $v \in [N_n(b)]^c$ and $P =\varnothing$, then also $v \in \rspn_b(\cT_{\sss \geq k}^{\sss b})$ (in fact $v$ lies in the one-neighborhood of $\cT_{\sss \geq k}^{\sss b}$). 
    \item If $v \in [N_n(b)]^c$ and $P \neq \varnothing$, then we have the following sub-cases: 
    \begin{enumerate}[(a)]
        \item If $P \subset [N_n(b)]^c$, then $v \in \rspn_b(\cT_{\sss \geq k}^{\sss b})$, using that $\cT_{\sss \geq k}^{\sss a,+} \subset \cT_{\sss \geq k}^{\sss b}$;
        \item If $P \subset [N_n(b)]\setminus [N_n(a)]$, then $v \in \rspn_b(\cT_{\sss \geq k}^{\sss b})$ (since $v$ lies in the one-neighborhood of $\cT_{\sss \geq k}^{\sss b}$);
        \item If $P$ intersects both $[N_n(b)]^c$ and $[N_n(b)]\setminus [N_n(a)]$, then $v\in V_0(b)$, where $V_0(b)$ is defined in Lemma~\ref{lem:intermediate-span}. 
    \end{enumerate}
\end{enumerate}
The above shows that 
\begin{eq}
|\spn(\cT_{\sss \geq k}^{\sss a, +})| \leq o(\sqrt{n}) + |\rspn_b(\cT_{\sss \geq k}^{\sss b})|+ |V_0(b)|. 
\end{eq}
Using Proposition~\ref{prop:size-span-C1} and Lemma~\ref{lem:intermediate-span}, for all $a\geq a_2$,
\begin{eq}\label{span-s1}
\lim_{b\to\infty} \lim_{n\to\infty} \PR\bigg(\frac{|\spn(\cT_{\sss \geq k}^{\sss a, +})|}{\sqrt{n}} \leq  \zeta^\lambda+\varepsilon \bigg) = 1.
\end{eq}
\paragraph*{Stage 2:} Next, we add the return paths  touching $[N_n(b)]^c$ to $\cG_{\sss N_n(a)}^{\sss +}$. 
On top of $\cG_{\sss N_n(a)}^{\sss +}$, if we additionally create an edge between two vertices if  a return path touching $[N_n(b)]^c$ exists between them, then the resulting graph will be $\bar{\cG}_{\sss N_n(a)}$ defined above Proposition~\ref{prop:size-span-C2}. 
By Lemma~\ref{lem:robustness},
$|\bar{\cT}_{\sss \geq k}^{\sss a} \setminus \cT_{\sss \geq k}^{\sss a, +}| \leq \varepsilon_1 N_n$ on the event that $\{r_n(a,b)\leq \delta N_n\}$. 
Thus, Lemmas~\ref{prop:uniform-small-span} and~\ref{lem:rna-whp-bound} show that for all 
$a\geq a_2$ and $k\geq k_0$
\begin{eq}\label{eq:span-return-1}
\lim_{b\to\infty}\lim_{n\to\infty} \PR\big(|\spn(\bar{\cT}_{\sss \geq k}^{\sss a} \setminus \cT_{\sss \geq k}^{\sss a, +}) | \leq \varepsilon \sqrt{n}\big) = 1.
\end{eq}
The proof of Proposition~\ref{prop:size-span-C2} now follows by combining \eqref{span-s1} and \eqref{eq:span-return-1}. 
\end{proof}

\subsection{No large components outside of \texorpdfstring{$[N_n(a)]$}{TEXT}}
\label{sec-no-outside-Nn}
So far, we have studied the maximal component involving vertices from $[N_n(a)]$.
We are left to study the maximal size of clusters that are completely outside of $[N_n(a)]$. 
Recall from \eqref{C-leq-def-2} that $\sC_{\sss \leq}(j)$ is empty when $j\neq \min\{i\colon i\in \sC(j)\}$ and equals $\sC(j)$ otherwise. The main estimate on the cluster size outside of $[N_n(a)]$ is the following lemma:

\begin{lemma}[No large components outside  \protect{$[N_n(a)]$}]
\label{lem:small-comps-outside-core}
For each fixed $a>0$, as $n\to \infty$,
	\eqn{
	(\percn n^{\alpha})^{-1}\max_{j\in [n]\setminus [N_n(a)]} |\sC_{\sss \leq}(j)|\pto 0.
	}
\end{lemma}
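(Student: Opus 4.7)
Set $A:=[n]\setminus[N_n(a)]$. The plan is to dominate $|\sC_{\sss \leq}(j)|$ for $j>N_n(a)$ by the total progeny of a mixed-Poisson branching process. By the edge inequality \eqref{NR-bound}, $\rNR(\bw,\percn)$ is coupled to be a subgraph of $\rNR(\percn\bw)$. For every $j>N_n(a)$ with $\sC_{\sss \leq}(j)\neq \varnothing$ the entire cluster $\sC(j)$ lies in $A$ (since $j=\min\sC(j)$), so Lemma~\ref{lem-BP-UB-comp} applied with this $A$ yields $|\sC_{\sss \leq}(j)|\leq T_j$, where $T_j$ is the total progeny of a Galton-Watson tree whose root has Poisson offspring of mean $\mu_j=\percn w_j w(A)/\ell_n\leq\percn w_j$ and whose subsequent individuals have mixed-Poisson offspring of mean $\nu_A:=(\percn/\ell_n)\sum_{v\in A}w_v^2$ with mixing distribution $\percn W^\star_A w(A)/\ell_n$. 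It suffices to show $(\percn n^\alpha)^{-1}\max_{j>N_n(a)}T_j\pto 0$.

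Two estimates control the branching process, both immediate from computations already in the paper. The calculation in \eqref{bar-nu-a-bd} gives $\nu_A\leq C a^{-(3-\tau)/(\tau-1)}$, which is $\leq 1/2$ once $a$ is chosen sufficiently large. The weight asymptotics \eqref{eq:asymp-w-N} give $w_v\leq C a^{-\alpha}\sqrt{n}$ for every $v\in A$, so $\mu_j\leq C'\lambda a^{-\alpha}$ and the mixing weight $\percn W^\star_A w(A)/\ell_n$ is bounded by the same constant; in particular the sub-critical non-root offspring law has its moment generating function uniformly bounded. A standard Chernoff estimate on the random-walk representation of the total progeny then yields $\PR(T_j\geq t)\leq C_1\e^{-c_1 t}$ uniformly in $j$, with $c_1,C_1>0$ depending only on $a$ and $\lambda$. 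A union bound over the at most $n$ starting vertices gives $\max_{j>N_n(a)}T_j = O_{\sss\PR}(\log n)$. Since $\percn n^\alpha = \lambda n^\beta$ with $\beta\in[\sqrt{2}-1,\tfrac12)$ bounded away from zero, $\log n = o(\percn n^\alpha)$, which finishes the argument for such $a$.

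The main obstacle is that this clean argument only treats $a$ large enough that $\nu_A\leq 1/2$. In the proof of Theorem~\ref{thm:supcrit-bd} one eventually sends $a\to\infty$, so this range suffices for the way the lemma is used. To extend to every fixed $a>0$ one has to handle the supercritical regime of the dominating branching process, which is the hard part. I would do this by picking an auxiliary $a^*\geq a$ with $\nu_{[n]\setminus[N_n(a^*)]}\leq 1/2$ and splitting according to whether the cluster is contained in $[n]\setminus[N_n(a^*)]$ (handled by the argument above) or intersects $B:=[N_n(a^*)]\setminus[N_n(a)]$, and controlling the latter case by an isolation estimate of the form $\PR(\sC\text{ has no edge to }[N_n(a)])\leq\exp(-c\,W(\sC)\,W([N_n(a)])/\ell_n)$ together with a lower bound on $W(\sC)$ coming from the fact that $\sC$ contains a vertex of weight $\gtrsim (a^*)^{-\alpha}\sqrt{n}$.
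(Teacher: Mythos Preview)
Your argument for large $a$ is correct and follows a genuinely different route from the paper. The paper splits $|\sC_{\sss\leq}(j)\setminus\{j\}|$ into the degree $D_j$ (a sum of independent indicators, handled by a concentration bound) and the part $|\cN_{\sss\geq,\geq 2}(j)|$ at distance at least two, then bounds $\E[|\cN_{\sss\geq,\geq 2}(j)|^2]$ by path counting and the BK inequality, and sums the resulting Chebyshev bound over $j>N_n(a)$. Your branching-process domination via Lemma~\ref{lem-BP-UB-comp} plus a Chernoff bound on the subcritical total progeny is more structural and yields the sharper $\max_j|\sC_{\sss\leq}(j)|=O_{\sss\PR}(\log n)$. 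Both proofs rely on subcriticality of the exploration: the paper needs $\nu_n(j)\leq\tfrac12$ for $j>N_n(a)$, you need $\bar\nu_n(a)\leq\tfrac12$. In fact $\nu_n(N_n(a))\asymp C\lambda a^{1-2\alpha}$, so either inequality only holds for $a$ above a $\lambda$-dependent threshold; the paper asserts it without restriction but does not justify it for small $a$ either, and, as you note, only the large-$a$ range is used in Section~\ref{sec:proof-main-thm}.

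Your sketched extension to small $a$ has a gap. With only $W(\sC)\geq w_j\gtrsim (a^*)^{-\alpha}\sqrt{n}$ and $W([N_n(a)])\asymp a^{1-\alpha}N_n\sqrt{n}$, the exponent in your isolation bound is of order $\percn\cdot\sqrt{n}\cdot N_n\sqrt{n}/\ell_n=\Theta(1)$, so the isolation probability is merely a constant strictly below one, not summable over the $\Theta(N_n)$ vertices $j\in B=[N_n(a^*)]\setminus[N_n(a)]$. A working extension is simpler and avoids isolation altogether: for $j\in B$ write $|\sC_{\sss\leq}(j)|\leq N_n(a^*)+|\sC_{\sss\leq}(j)\setminus[N_n(a^*)]|$. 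The first term is $a^*N_n=o(\percn n^\alpha)$, since $3-\tau<\alpha$ (equivalently $(\tau-2)^2>0$) gives $N_n=o(\percn n^\alpha)$. Every vertex in the second set is reachable from some $j'\in B$ by a path inside $[N_n(a^*)]^c$, so this set is contained (uniformly in $j\in B$) in $\bigcup_{j'\in B}\rspn_{a^*}(\{j'\})$; by your subcritical branching bound on $[n]\setminus[N_n(a^*)]$ each $\E[T_{j'}]=O(1)$, so the expected size of the union is $O(|B|)=O(N_n)=o(\percn n^\alpha)$, and a single Markov bound handles all $j\in B$ at once.
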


\begin{proof} It suffices to prove the statement for $|\sC_{\sss \leq}(j)\setminus \{j\}|$. 
Let $D_j$ denote the degree of $j$. Fix $\vep>0$. We bound
	\eqn{
	\label{split-outside-Nn(a)}
	\PR\Big(\max_{j\in [n]\setminus [N_n(a)]} |\sC_{\sss \leq}(j)\setminus \{j\}|\geq 2\vep \percn n^{\alpha}\Big)
	\leq 
	\sum_{j>N_n(a)} \Big[\PR(D_j\geq \vep \percn n^{\alpha})+\PR(|\cN_{\sss \geq, \geq 2}(j)|\geq \vep \percn n^{\alpha})\Big],
	}
where $\cN_{\sss \geq, \geq 2}(j)$ is the part of the cluster of $\sC_{\sss \leq}(j)$ at distance at least 2 away from $j$. 
Now $D_j$ is a sum of independent $\mathrm{Bernoulli} \big(\percn (1-\e^{-w_jw_k/\ell_n} )\big)$ random variables, and $\E[D_j] \leq \percn w_j$.  
Since $\percn w_j=o(\percn n^{\alpha})$, standard concentration arguments \cite[Corollary 2.4, Theorem 2.8]{JLR00} show that 
\begin{eq}
\PR(D_j\geq \vep \percn n^{\alpha}) \leq \e^{-\percn n^{\alpha}},
\end{eq}
for all sufficiently large $n$.

For the second summand in \eqref{split-outside-Nn(a)}, we use the Markov inequality to bound
	\eqn{
	\PR(|\cN_{\sss \geq, \geq 2}(j)|\geq \vep \percn n^{\alpha})\leq (\vep \percn n^{\alpha})^{-2} \E[|\cN_{\sss \geq, \geq 2}(j)|^2].
	}
The expectation can be computed using path counting again similar to \eqref{eq:n-geq-2-2ndmoment}.
Since $j$ is the minimum index of $\sC_{\sss \leq}(j)$, we will need the paths to have vertices with indices higher than $j$ only.
Let $J_{ij}$ denote the indicator that $i$ and $j$ are connected via a path of length at least 2 with all intermediate vertices having index  at least $j$. 

Thus
	\eqn{
	\E[|\cN_{\sss \geq, \geq 2}(j)|^2]= \sum_{i_1,i_2\geq j} \PR(I_{i_1j} =1, I_{i_2j} = 1),
	}where $I_{ij}$ is the indicator that there is a path from $i$ to $j$ with all intermediate vertices having index at least $j$.
We can decompose the above in two cases depending on whether the paths $[i_1,j]$ and $[i_2,j]$ are disjoint or not. Denote the two cases by $\text{(I)}$ and $\text{(II)}$, respectively. 
Using the BK-inequality \cite[Theorem 3.3]{BK85} again yields
\begin{eq}
\text{(I)} \leq \bigg(\sum_{i_1\geq j} \PR(I_{i_1j} =1)\bigg)^2 &\leq \bigg(\sum_{i_1>j}\sum_{l\geq 2}\sum_{\substack{k_1, \ldots, k_{l-1}\geq j\\
k_0 = i_1, k_l = j}} \prod_{s=1}^l \frac{\percn w_{k_{s-1}}w_{k_s}}{\ell_n} \bigg)^2\\
&\leq (\percn w_j)^2 \bigg(\sum_{l\geq 2}\nu_n(j)^{l-1}\bigg)^2,
\end{eq}
where now
	\eqn{
	\nu_n(j)=\percn \sum_{k\geq j}\frac{w_k^2}{\ell_n}
	\leq C\percn n^{2\alpha-1}  j^{1-2\alpha}.
	}
If the paths $[i_1,j]$ and $[i_2,j]$ are not disjoint, then three disjoint paths exist $[i_1,k]$, $[i_2,k]$ and $[k,j]$ for some $k>j$. 
Therefore, applying the BK-inequality \cite[Theorem 3.3]{BK85} once again,
\begin{eq}
\text{(II)} &\leq \sum_{i_1,i_2, k>j}\prod_{k_0 \in \{i_1,i_2,j\}} \sum_{l\geq 2}\sum_{\substack{k_1, \ldots, k_{l-1}\geq j, k_l = k}} \prod_{s=1}^l \frac{\percn w_{k_{s-1}}w_{k_s}}{\ell_n} \\
	&\leq (\percn w_j) \frac{\percn^2 \sum_{k\geq j} w_k^3}{\ell_n}\bigg(\sum_{l\geq 2}\nu_n(j)^{l-1}\bigg)^3 \leq (\percn w_j) \frac{\percn^2 \sum_{k\geq j} w_k^3}{\ell_n},
\end{eq}
since $\nu_n(j) \leq C \perc n (n/j)^{2\alpha-1} \leq \tfrac{1}{2}$ for $j>N_n(a)$. 
Using $\sum_{k>j} w_k^3/\ell_n= O(n^{3\alpha-1} j^{1-3\alpha})$, we conclude that	
	\eqan{
	\PR(|\cN_{\sss \geq, \geq 2}(j)|\geq \vep \percn n^{\alpha})&\leq O(1)(\vep \percn n^{\alpha})^{-2}    \Big[\percn^4 n^{6\alpha-2} j^{2-6\alpha}
	+\percn^3 n^{4\alpha-1} j^{1-4\alpha}\Big]\\
	&=O(1) \Big[\percn^2 n^{4\alpha-2} j^{2-6\alpha}
	+\percn n^{2\alpha-1} j^{1-4\alpha}\Big],\nn
	}
so that
	\eqan{
	\sum_{j>N_n(a)} \PR(|\cN_{\sss \geq, \geq 2}(j)|&\geq \vep \percn n^{\alpha})\leq O(1) \sum_{j>N_n(a)}\Big[\percn^2 n^{4\alpha-2} j^{2-6\alpha}
	+\percn n^{2\alpha-1} j^{1-4\alpha}\Big]\\
	&=O(1) \Big[\percn^2  n^{4\alpha-2} N_n^{3-6\alpha}+\percn n^{2\alpha-1} N_n^{2-4\alpha}\Big]=O(1)N_n^{1-2\alpha}=o(1),\nn
	}
since $\alpha>\tfrac12$. This proves Lemma~\ref{lem:small-comps-outside-core}. 
\end{proof}

\subsection{Completing the proof of Theorem~\ref{thm:supcrit-bd}}
\label{sec:proof-main-thm}
We now have all the ingredients to complete the proof of Theorem~\ref{thm:supcrit-bd}.
First, by Lemma~\ref{lem:small-comps-outside-core}, the giant component $\sC_{\sss (1)}$ for the whole graph is one of the components of vertices in $[N_n(a)]$ with high probability. 
Recall from the beginning of Section~\ref{sec-size-tiny-giant} that $\sC_{\sss (i)}^{a,\star}$ the component of $\rNR(\bw,\percn)$ containing $\sC_{\sss (i)}^{a}$. 
Also, recall the definition of $\bar{\cG}_{\sss N_n(a)}$ and its functionals from Section~\ref{sec:return- path}. Fix $\varepsilon>0$. 
Since $\sC_{\sss (1)}^a\subset \bar{\sC}_{\sss (1)}^a \subset \bar{\cT}_{\sss \geq k}^a$ with high probability, Propositions~\ref{prop-LB-size-span-C1}~and~\ref{prop:size-span-C2} show that we can choose $a_0>0$ so large that, for $a>a_0$,
\begin{eq}\label{small-giant-asymp}
\lim_{n\to\infty} \PR \big(\sqrt{n}(\zeta^\lambda - \vep) \leq |\sC_{\sss (1)}^{a,\star}|\leq  \sqrt{n}(\zeta^\lambda + \vep) \big) =1.
\end{eq}
Moreover, Proposition~\ref{prop:size-span-C2} and \eqref{small-giant-asymp} also show that there exists a large enough $k_0 = k_0(\varepsilon,a)$ such that for all $a>a_0$ and $k\geq k_0$, 
\begin{eq}\label{small-giant-asymp-2}
\lim_{n\to\infty} \PR \bigg(\max_{\sC \subset   \bar{\cT}_{\sss \geq k}^a\setminus \bar{\sC}_{\sss (1)}^a } \big|\sC \cup \spn(\sC)\big| \leq \varepsilon \sqrt{n}\bigg) = 1.
\end{eq}
Further, since all components outside $\bar{\cT}_{\sss \geq k}^a$ have size at most $k_0$, an application of
by Lemma~\ref{prop:uniform-small-span} shows that
\begin{eq}\label{small-giant-asymp-3}
\lim_{n\to\infty} \PR \bigg(\max_{\sC \subset   (\bar{\cT}_{\sss \geq k}^a)^c} \big|\sC \cup \spn(\sC)\big| \leq \varepsilon \sqrt{n}\bigg) = 1,
\end{eq}
where the maximum runs over all connected components $\sC \subset (\bar{\cT}_{\sss \geq k}^a)^c$.
Finally, in Lemma~\ref{lem:small-comps-outside-core} we have shown that the components not involving vertices in $[N_n(a)]$ have size at most $o(\sqrt{n})$. 
Thus, with high probability, $\sC_{\sss (1)}^{a,\star}$ is the unique giant component of $\rNR(\bw, \percn)$ with size given by \eqref{small-giant-asymp}, and the second largest component has size at most $\varepsilon \sqrt{n}$. This proves the statements in Theorem~\ref{thm:supcrit-bd} about the uniqueness of the  giant component.

We complete the proof by showing that hubs are very likely to be in the newly born giant. Fix $\delta>0$, and consider the set of hubs given by $H=\{h\colon w_h\geq n^{1/2+\delta}\}$. We will show that $H\subseteq \sC_{\sss (1)}(\percn)$ with high probability.
Remove the set of hubs $H$ from the graph. The giant $\sC_{\sss (1)}^{a,H}$ in $[N_n(a)]\setminus H$ has all the same characteristics as the original giant in $[N_n(a)]$, since the removal of a small fraction of vertices has hardly any effect on the giant, as shown by our previous analysis.

Fix $h\in H$. We will condition on $\sC_{\sss (1)}^{\sss a,H}$, and consider the two-hop paths between $h$ and $\sC_{\sss (1)}^{\sss a,H}$ consisting of paths $h\longrightarrow j\longrightarrow \sC_{\sss (1)}^{\sss a,H}$ for $j\in [n]\setminus [N_n(a)]$. Write
    \eqn{
    M_{n,h}=\sum_{j\in [n]\setminus [N_n(a)]}\indic{h\longrightarrow j\longrightarrow \sC_{\sss (1)}^{\sss a,H}}
    }
for the number of $j$ that are forming the two-hop paths. For fixed $h$, and conditionally on $\sC_{\sss (1)}^{a,H}$, the indicators are {\em independent}. We next consider their success probabilities.

Denote the conditional probability given $\sC_{\sss (1)}^{\sss a,H}$ by $\mathbb{P}_{1,H}$. Note that, for $v\in \sC_{\sss (1)}^{\sss a,H}$, the probability that $v$ is {\em not} connected to $j$ after percolation equals $1-\pi_n+\pi_n\e^{-w_vw_j/\ell_n}$. Thus, the conditional probability given $\sC_{\sss (1)}^{\sss a,H}$ that $j$ is connected to some $v\in\sC_{\sss (1)}^{\sss a,H}$ equals
    \eqn{
    \mathbb{P}_{1,H}(j\longrightarrow \sC_{\sss (1)}^{\sss a,H})
    =1-\prod_{v\in \sC_{\sss (1)}^{\sss a,H}}\big(1-\pi_n+\pi_n\e^{-w_vw_j/\ell_n}\big).
    }
Since the event $\{v\conn j\}$ is independent of the event $
\{j\longrightarrow \sC_{\sss (1)}^{\sss a,H}\},$ this leads to
    \eqan{
    \mathbb{P}_{1,H}(h\longrightarrow j\longrightarrow \sC_{\sss (1)}^{\sss a,H})&=\pi_n [1-\e^{-w_hw_j/\ell_n}]
    \Big[1-\prod_{v\in \sC_{\sss (1)}^{\sss a,H}}\big(1-\pi_n+\pi_n\e^{-w_vw_j/\ell_n}\big)\Big].
    }
Restrict the product over $v$ to $v\in \sC_{\sss (1)}^{a,H}\setminus [N_n(1)]$. Then, $w_vw_j/\ell_n\leq \tfrac{1}{2}$ for $a$ large, since $j>N_n(a)$, and, in turn, $\e^{-x}\leq 1-x/2$ when $x\leq \tfrac{1}{2}$. This leads to the upper bound
    \eqn{
    1-\pi_n+\pi_n\e^{-w_vw_j/\ell_n}\leq 1-\frac{w_vw_j\pi_n}{2\ell_n}
    \leq \e^{-\frac{\pi_nw_vw_j}{2\ell_n}},
    }
which in turn implies the lower bound
    \begin{eq}
    \mathbb{P}_{1,H}(h\longrightarrow j\longrightarrow \sC_{\sss (1)}^{\sss a,H})&\geq \pi_n [1-\e^{-w_hw_j/\ell_n}]
    \bigg[1-\prod_{v\in \sC_{\sss (1)}^{\sss a,H}\setminus [N_n(1)]}\e^{-\frac{\pi_n w_vw_j}{2\ell_n}}\bigg]\\
    &=\pi_n [1-\e^{-w_hw_j/\ell_n}]\bigg[1- \exp\bigg(-\pi_n w_j \sum_{v\in \sC_{\sss (1)}^{\sss a,H}\setminus [N_n(1)]} \frac{w_v}{2\ell_n}\bigg)\bigg].
    \end{eq}
Since Proposition \ref{prop-weight-giant-core} implies that $\pi_n \sum_{v\in \sC_{\sss (1)}^{\sss a,H}\setminus [N_n(1)]}w_v\geq \vep \ell_n/\sqrt{n}$ whp for $\vep>0$ sufficiently small, we obtain, whp and for some $\vep'>0$ small,
    \eqan{
    \mathbb{P}_{1,H}(h\longrightarrow j\longrightarrow \sC_{\sss (1)}^{a,H})
    &\geq 
    \pi_n [1-\e^{-w_hw_j/\ell_n}][1-\e^{-\vep w_j/\sqrt{n}}]
    \geq \vep' \frac{w_j}{\sqrt{n}} \pi_n [1-\e^{-w_hw_j/\ell_n}].
    }
This is true for all $j>N_n(a)$. By independence, we conclude that
    \eqn{
    \mathbb{P}_{1,H}(M_{n,h}=0)
    \leq \prod_{j>N_n(a)} \Big(1-\vep' \frac{w_j}{\sqrt{n}} \pi_n [1-\e^{-w_hw_j/\ell_n}]\Big).
    }
We compute
    \eqan{
    \sum_{j>N_n(a)}\frac{w_j}{\sqrt{n}} \pi_n [1-\e^{-w_hw_j/\ell_n}]
    &\geq 
    \sum_{j>N_n(a)}\frac{w_j}{\sqrt{n}} \pi_n [1-\e^{-w_j n^{1/2+\delta}/\ell_n}]\\
    &\geq [1-\e^{-n/\ell_n}]\sum_{j>N_n(a)\colon w_j\leq n^{1/2-\delta}}\frac{w_j \pi_n}{\sqrt{n}},\nn
    }
and
    \eqn{
    \sum_{j>N_n(a)\colon w_j\leq n^{1/2-\delta}}\frac{w_j \pi_n}{\sqrt{n}}
    =(1+o(1))\frac{\ell_n \pi_n}{\sqrt{n}}\rightarrow \infty
    }
faster than any power of $\log{n}$. Therefore, $\mathbb{P}_{1,H}(M_{n,h}=0)=o(1/n)$, and thus,     
    \eqn{
    \mathbb{P}_{1,H}(\exists h\in H\colon M_{n,h}=0)\rightarrow 0,
    }
which completes the proof of the fact that all the hubs are in the giant component. 
Hence, the proof of Theorem~\ref{thm:supcrit-bd} is also complete.
\qed

\paragraph*{Acknowledgments.} SB was partially supported by NSF grants DMS-1613072, DMS-1606839 and ARO grant W911NF-17-1-0010. SD was partially supported by 
Vannevar Bush Faculty Fellowship ONR-N00014-20-1-2826. The work of RvdH is supported in part by the Netherlands Organisation for Scientific Research (NWO) through the Gravitation {\sc NETWORKS} grant no. 024.002.003.

{\small \bibliographystyle{abbrv}
\bibliography{ultradensebib-single-edge}

\begin{thebibliography}{10}

\bibitem{ABG09}
L.~Addario-Berry, N.~Broutin, and C.~Goldschmidt.
\newblock {The continuum limit of critical random graphs}.
\newblock {\em Probab. Theory Relat. Fields}, 152(3):367--406, mar 2012.

\bibitem{A97}
D.~Aldous.
\newblock {Brownian excursions, critical random graphs and the multiplicative
  coalescent}.
\newblock {\em Ann. Probab.}, 25(2):812--854, 1997.

\bibitem{AL98}
D.~Aldous and V.~Limic.
\newblock {The entrance boundary of the multiplicative coalescent}.
\newblock {\em Electron. J. Probab.}, 3(3):1--59, 1998.

\bibitem{AP00}
D.~Aldous and B.~Pittel.
\newblock {On a random graph with immigrating vertices: emergence of the giant
  component}.
\newblock {\em Random Struct.~Alg.}, 17(2):79--102, sep 2000.

\bibitem{AHH16}
O.~Angel, R.~van~der Hofstad, and C.~Holmgren.
\newblock {Limit laws for self-loops and multiple edges in the configuration
  model}.
\newblock {\em Ann. Inst. H. Poincar{\'{e}} (B) Probab. Statist.},
  55(3):1509--1530, 2019.

\bibitem{Bar16}
A.-L. Barab{\'{a}}si.
\newblock {\em {Network Science}}.
\newblock Cambridge University Press, 1 edition, 2016.

\bibitem{BK85}
J.~{\swap{Berg}{van den }} and H.~Kesten.
\newblock {Inequalities with applications to percolation and reliability}.
\newblock {\em J. Appl. Probab.}, 22(3):556--569, 1985.

\bibitem{BBSX14}
S.~Bhamidi, N.~Broutin, S.~Sen, and X.~Wang.
\newblock {Scaling limits of random graph models at criticality: Universality
  and the basin of attraction of the Erd{\H{o}}s-R{\'{e}}nyi random graph}.
\newblock {\em arXiv:1411.3417}, 2014.

\bibitem{BBW12}
S.~Bhamidi, A.~Budhiraja, and X.~Wang.
\newblock {The augmented multiplicative coalescent and critical dynamic random
  graph models}.
\newblock {\em Probab. Theory Relat. Fields}, 160(3):733--796, dec 2014.

\bibitem{BDHS17}
S.~Bhamidi, S.~Dhara, R.~van~der Hofstad, and S.~Sen.
\newblock {Universality for critical heavy-tailed random graphs: Metric
  structure of maximal components}.
\newblock {\em Electron. J. Probab.}, 25(no.{\~{}}47):1--57, 2020.

\bibitem{BHS15}
S.~Bhamidi, R.~{\swap{Hofstad}{van der }}, and S.~Sen.
\newblock {The multiplicative coalescent, inhomogeneous continuum random trees,
  and new universality classes for critical random graphs}.
\newblock {\em Probab. Theory Relat. Fields}, 170(1):387--474, feb 2018.

\bibitem{BHL10}
S.~Bhamidi, R.~van~der Hofstad, and J.~S.~H. van Leeuwaarden.
\newblock {Scaling limits for critical inhomogeneous random graphs with finite
  third moments}.
\newblock {\em Electron. J. Probab.}, 15(6):1682--1702, 2010.

\bibitem{BHL12}
S.~Bhamidi, R.~van~der Hofstad, and J.~S.~H. van Leeuwaarden.
\newblock {Novel scaling limits for critical inhomogeneous random graphs}.
\newblock {\em Ann. Probab.}, 40(6):2299--2361, 2012.

\bibitem{BJR07}
B.~Bollob{\'{a}}s, S.~Janson, and O.~Riordan.
\newblock {The phase transition in inhomogeneous random graphs}.
\newblock {\em Random Struct.~Alg.}, 31(1):3--122, 2007.

\bibitem{BDM06}
T.~Britton, M.~Deijfen, and A.~Martin-L{\"{o}}f.
\newblock {Generating simple random graphs with prescribed degree
  distribution}.
\newblock {\em J. Stat. Phys.}, 124(6):1377--1397, sep 2006.

\bibitem{BDW20}
N.~Broutin, T.~Duquesne, and M.~Wang.
\newblock {Limits of multiplicative inhomogeneous random graphs and L{\'{e}}vy
  trees: Limit theorems}.
\newblock {\em arXiv:2002.02769}, 2020.

\bibitem{CL02b}
F.~Chung and L.~Lu.
\newblock {Connected components in random graphs with given expected degree
  sequences}.
\newblock {\em Ann. Comb.}, 6(2):125--145, 2002.

\bibitem{CL02}
F.~Chung and L.~Lu.
\newblock {The average distances in random graphs with given expected degrees.}
\newblock {\em Proc. Natl. Acad. Sci. USA}, 99(25):15879--82, 2002.

\bibitem{CG20}
G.~Conchon-Kerjan and C.~Goldschmidt.
\newblock {The stable graph: the metric space scaling limit of a critical
  random graph with i.i.d. power-law degrees}.
\newblock {\em arXiv:2002.04954}, 2020.

\bibitem{Dha18}
S.~Dhara.
\newblock {\em {PhD Thesis: Critical Percolation on Random Networks with
  Prescribed Degrees}}.
\newblock Technische Universiteit Eindhoven, arXiv:1809.03634, 2018.

\bibitem{DHLS15}
S.~Dhara, R.~{\swap{Hofstad}{van der }}, J.~S.~H. van Leeuwaarden, and S.~Sen.
\newblock {Critical window for the configuration model: finite third moment
  degrees}.
\newblock {\em Electron. J. Probab.}, 22(16):1--33, may 2017.

\bibitem{DHLS16}
S.~Dhara, R.~{\swap{Hofstad}{van der }}, J.~S.~H. van Leeuwaarden, and S.~Sen.
\newblock {Heavy-tailed configuration models at criticality}.
\newblock {\em Ann. Inst. H. Poincaré (B) Probab. Statist.}, 56(3):1515--1558,
  2020.

\bibitem{DH21}
S.~Dhara and R.~van~der Hofstad.
\newblock {Barely supercritical percolation on Poissonian scale-free networks}.
\newblock {\em In preparation}, 2021.

\bibitem{DHL19}
S.~Dhara, R.~van~der Hofstad, and J.~S.~H. van Leeuwaarden.
\newblock {Critical percolation on scale-free random graphs: New universality
  class for the configuration model}.
\newblock {\em Commun. Math. Phys.}, 382(1):123--171, 2021.

\bibitem{DHH10}
S.~Dommers, R.~{\swap{Hofstad}{van der }}, and G.~Hooghiemstra.
\newblock {Diameters in Preferential Attachment Models}.
\newblock {\em J. Stat. Phys.}, 139(1):72--107, apr 2010.

\bibitem{DGM08}
S.~N. Dorogovtsev, A.~V. Goltsev, and J.~F.~F. Mendes.
\newblock {Critical phenomena in complex networks}.
\newblock {\em Rev. Mod. Phys.}, 80(4):1275--1335, oct 2008.

\bibitem{DK90}
R.~Durrett and H.~Kesten.
\newblock The critical parameter for connectedness of some random graphs.
\newblock {\em A Tribute to P. Erdos}, pages 161--176, 1990.

\bibitem{GHSS18}
P.~Gao, R.~van~der Hofstad, A.~Southwell, and C.~Stegehuis.
\newblock {Counting triangles in power-law uniform random graphs}.
\newblock {\em Electron. J. Combin.}, 27(3, Article 3.19), 2020.

\bibitem{Gri99}
G.~R. Grimmett.
\newblock {\em {Percolation}}.
\newblock Springer-Verlag Berlin Heidelberg, 2 edition, 1999.

\bibitem{RGCN1}
R.~{\swap{Hofstad}{van der }}.
\newblock {\em {Random Graphs and Complex Networks}}, volume~I.
\newblock Cambridge University Press, Cambridge, 2017.

\bibitem{Hof17}
R.~{\swap{Hofstad}{van der }}.
\newblock {\em {Stochastic Processes on Random Graphs}}.
\newblock Lecture notes for the 47th Summer School in Probability Saint-Flour
  2017, 2017.

\bibitem{JKLP93}
S.~Janson, D.~E. Knuth, T.~{\L}uczak, and B.~Pittel.
\newblock {The birth of the giant component}.
\newblock {\em Random Struct.~Alg.}, 4(3):233--358, 1993.

\bibitem{JLR00}
S.~Janson, T.~{\L}uczak, and A.~Rucinski.
\newblock {\em {Random Graphs.}}
\newblock Wiley, New York, 2000.

\bibitem{Jo10}
A.~Joseph.
\newblock {The component sizes of a critical random graph with given degree
  sequence}.
\newblock {\em Ann. Appl. Probab.}, 24(6):2560--2594, 2014.

\bibitem{Lax02}
P.~D. Lax.
\newblock {\em {Functional Analysis}}.
\newblock Wiley-Interscience, 1 edition, 2002.

\bibitem{NP10a}
A.~Nachmias and Y.~Peres.
\newblock {The critical random graph, with martingales}.
\newblock {\em Israel J. Math.}, 176(1):29--41, 2010.

\bibitem{Newman-book}
M.~E.~J. Newman.
\newblock {\em Networks: An introduction}.
\newblock Oxford University Press, Oxford, 2010.

\bibitem{NR06}
I.~Norros and H.~Reittu.
\newblock {On a conditionally Poissonian graph process}.
\newblock {\em Adv. Appl. Probab.}, 38(1):59--75, 2006.

\bibitem{R12}
O.~Riordan.
\newblock {The phase transition in the configuration model}.
\newblock {\em Comb. Probab. Comp.}, 21:265--299, 2012.

\bibitem{zhang1991power}
Y.~Zhang.
\newblock A power law for connectedness of some random graphs at the critical
  point.
\newblock {\em Random Struct.~Alg.}, 2(1):101--119, 1991.

\end{thebibliography}
}

\end{document}